\long\def\beginpgfgraphicnamed#1#2\endpgfgraphicnamed{\includegraphics{#1}}
\newcommand{\ser}{\ensuremath\mathbin{\bowtie}}
\newcommand{\serq}{\ensuremath\mathbin{\bowtie_q}}
\newcommand{\tvser}[1]{\ensuremath\mathbin{\bowtie^#1}}
\newcommand{\tvserq}[1]{\ensuremath\mathbin{\bowtie^#1_q}}
\newcommand{\pll}{\ensuremath\mathbin{\|}}
\newcommand{\pllq}{\ensuremath\mathbin{\|_q}}
\newcommand{\tvpll}[1]{\ensuremath\mathbin{\|^#1}}
\newcommand{\tvpllq}[1]{\ensuremath\mathbin{\|^#1_q}}
\numberwithin{equation}{section}  
\begin{document}

\title{\vspace*{-2cm}
      Linear bound in terms of maxmaxflow \\
      for the chromatic roots of series-parallel graphs}

\author{
 \\
 {\small Gordon F.~Royle}                                    \\[-2mm]
 {\small\it School of Mathematics and Statistics}  \\[-2mm]
 {\small\it University of Western Australia} \\[-2mm]
 {\small\it 35 Stirling Highway} \\[-2mm]
 {\small\it Nedlands, WA 6009, AUSTRALIA}                         \\[-2mm]
 {\small\tt GORDON.ROYLE@UWA.EDU.AU}   \\[5mm]
 {\small Alan D.~Sokal\thanks{Also at Department of Mathematics,
        University College London, London WC1E 6BT, United Kingdom.}}  \\[-2mm]
 {\small\it Department of Physics}       \\[-2mm]
 {\small\it New York University}         \\[-2mm]
 {\small\it 4 Washington Place}          \\[-2mm]
 {\small\it New York, NY 10003 USA}      \\[-2mm]
 {\small\tt SOKAL@NYU.EDU}               \\[-2mm]
 {\protect\makebox[5in]{\quad}}  
 \\
}

\date{July 5, 2013}  

\maketitle
\thispagestyle{empty}   

\begin{abstract}
We prove that the (real or complex) chromatic roots of
a series-parallel graph with maxmaxflow $\Lambda$ lie
in the disc $|q-1| < (\Lambda-1)/\log 2$.
More generally, the same bound holds for the (real or complex) roots
of the multivariate Tutte polynomial
when the edge weights lie in the ``real antiferromagnetic regime''
$-1 \le v_e \le 0$.
This result is within a factor $1/\log 2 \approx 1.442695$
of being sharp.
\end{abstract}

\bigskip
\noindent
{\bf Key Words:}  Chromatic polynomial; multivariate Tutte polynomial;
  antiferromagnetic Potts model; chromatic roots;
  maxmaxflow; series-parallel graph.

\bigskip
\noindent
{\bf Mathematics Subject Classification (MSC 2010) codes:}
05C31 (Primary);
05A20, 05C15, 05C99, 05E99, 30C15, 37F10, 37F45, 82B20 (Secondary).

\bigskip
\noindent
{\bf Abbreviated title:}  Bound on chromatic roots in terms of maxmaxflow

\clearpage

\newtheorem{defin}{Definition}[section]
\newtheorem{definition}[defin]{Definition}
\newtheorem{prop}[defin]{Proposition}
\newtheorem{proposition}[defin]{Proposition}
\newtheorem{lem}[defin]{Lemma}
\newtheorem{lemma}[defin]{Lemma}
\newtheorem{guess}[defin]{Conjecture}
\newtheorem{ques}[defin]{Question}
\newtheorem{question}[defin]{Question}
\newtheorem{prob}[defin]{Problem}
\newtheorem{problem}[defin]{Problem}
\newtheorem{thm}[defin]{Theorem}
\newtheorem{theorem}[defin]{Theorem}
\newtheorem{cor}[defin]{Corollary}
\newtheorem{corollary}[defin]{Corollary}
\newtheorem{conj}[defin]{Conjecture}
\newtheorem{conjecture}[defin]{Conjecture}

\newtheorem{pro}{Problem}
\newtheorem{clm}{Claim}
\newtheorem{con}{Conjecture}

%
%
\newcounter{example}[section]
\newenvironment{example}%
{\refstepcounter{example}
\bigskip\par\noindent{\bf Example \thesection.\arabic{example}.}\quad
}%
{\quad $\Box$}
\def\bexam{\begin{example}}
\def\eexam{\end{example}}

\renewcommand{\theenumi}{\alph{enumi}}
\renewcommand{\labelenumi}{(\theenumi)}
\def\prf{\par\noindent{\bf Proof.\enspace}\rm}
\def\rmk{\par\medskip\noindent{\bf Remark.\enspace}\rm}

\newcommand{\be}{\begin{equation}}
\newcommand{\ee}{\end{equation}}
\newcommand{\<}{\langle}
\renewcommand{\>}{\rangle}
\newcommand{\widebar}{\overline}
\def\reff#1{(\protect\ref{#1})}
\def\spose#1{\hbox to 0pt{#1\hss}}
\def\ltapprox{\mathrel{\spose{\lower 3pt\hbox{$\mathchar"218$}}
\raise 2.0pt\hbox{$\mathchar"13C$}}}
\def\gtapprox{\mathrel{\spose{\lower 3pt\hbox{$\mathchar"218$}}
\raise 2.0pt\hbox{$\mathchar"13E$}}}
\def\textprime{${}^\prime$}
\def\proof{\par\medskip\noindent{\sc Proof.\ }}
\newcommand{\qed}{\quad $\Box$ \medskip \medskip}
\def\proofof#1{\bigskip\noindent{\sc Proof of #1.\ }}
\def\half{ {1 \over 2} }
\def\third{ {1 \over 3} }
\def\twothird{ {2 \over 3} }
\def\smfrac#1#2{{\textstyle{#1\over #2}}}
\def\smhalf{ \smfrac{1}{2} }
\newcommand{\real}{\mathop{\rm Re}\nolimits}
\renewcommand{\Re}{\mathop{\rm Re}\nolimits}
\newcommand{\imag}{\mathop{\rm Im}\nolimits}
\renewcommand{\Im}{\mathop{\rm Im}\nolimits}
\newcommand{\sgn}{\mathop{\rm sgn}\nolimits}
\def\hboxscript#1{ {\hbox{\scriptsize\em #1}} }
\newcommand{\arctanh}{\mathop{\rm arctanh}\nolimits}
\newcommand{\arccoth}{\mathop{\rm arccoth}\nolimits}

\newcommand{\restrict}{\upharpoonright}
\renewcommand{\emptyset}{\varnothing}

\def\Z{{\mathbb Z}}
\def\ZZ{{\mathbb Z}}
\def\R{{\mathbb R}}
\def\C{{\mathbb C}}
\def\Cbar{{\overline{\mathbb C}}}
\def\Chat{{\widehat{\mathbb C}}}
\def\Ctilde{{\widetilde{\mathbb C}}}
\def\CC{{\mathbb C}}
\def\N{{\mathbb N}}
\def\NN{{\mathbb N}}
\def\Q{{\mathbb Q}}
\def\K{{\mathbb K}}

\newcommand{\scra}{{\mathcal{A}}}
\newcommand{\scrb}{{\mathcal{B}}}
\newcommand{\scrc}{{\mathcal{C}}}
\newcommand{\scrf}{{\mathcal{F}}}
\newcommand{\scrg}{{\mathcal{G}}}
\newcommand{\scrh}{{\mathcal{H}}}
\newcommand{\scrl}{{\mathcal{L}}}
\newcommand{\scro}{{\mathcal{O}}}
\newcommand{\scrp}{{\mathcal{P}}}
\newcommand{\scrr}{{\mathcal{R}}}
\newcommand{\scrs}{{\mathcal{S}}}
\newcommand{\scrt}{{\mathcal{T}}}
\newcommand{\scrv}{{\mathcal{V}}}
\newcommand{\scrw}{{\mathcal{W}}}
\newcommand{\scrz}{{\mathcal{Z}}}
\newcommand{\scrbt}{{\mathcal{BT}}}
\newcommand{\scrbf}{{\mathcal{BF}}}

\newcommand{\bv}{ {\bf v} }
\newcommand{\bw}{ {\bf w} }
\newcommand{\bzero}{ {\bf 0} }
\newcommand{\bone}{ {\bf 1} }
\def\qvbf{{q, \bv}}
\def\veff{{v_{{\rm eff}}}}
\newcommand\zgst{Z_G^{(s \leftrightarrow t)}}
\newcommand\zgsnt{Z_G^{(s \kern1pt\not\kern-1pt\leftrightarrow t)}}
\newcommand{\sfG}{{\sf G}}
\newcommand{\sfH}{{\sf H}}
\newcommand{\sfW}{{\sf W}}
\newcommand{\sfD}{{\sf D}}
\newcommand{\sfKbar}{{\bar{{\sf K}}}}

\newcommand{\Reg}{{\mathfrak{R}}}
\newcommand{\Irr}{{\mathfrak{I}}}
\newcommand{\Preper}{{\cal P}}
\newcommand{\Repelling}{{\cal R}}
\newcommand{\Trivial}{{\cal T}}

\newcommand{\comment}[1]{{\sc Comment from GFR: {\tt #1}}}



\newenvironment{sarray}{
	  \textfont0=\scriptfont0
	  \scriptfont0=\scriptscriptfont0
	  \textfont1=\scriptfont1
	  \scriptfont1=\scriptscriptfont1
	  \textfont2=\scriptfont2
	  \scriptfont2=\scriptscriptfont2
	  \textfont3=\scriptfont3
	  \scriptfont3=\scriptscriptfont3
	\renewcommand{\arraystretch}{0.7}
	\begin{array}{l}}{\end{array}}

\newenvironment{scarray}{
	  \textfont0=\scriptfont0
	  \scriptfont0=\scriptscriptfont0
	  \textfont1=\scriptfont1
	  \scriptfont1=\scriptscriptfont1
	  \textfont2=\scriptfont2
	  \scriptfont2=\scriptscriptfont2
	  \textfont3=\scriptfont3
	  \scriptfont3=\scriptscriptfont3
	\renewcommand{\arraystretch}{0.7}
	\begin{array}{c}}{\end{array}}

\section{Introduction}

The roots of the chromatic polynomial of a graph,
and their location in the complex plane,
have been extensively studied both by combinatorial mathematicians
and by statistical physicists
\cite{Jackson_03,MR2187739,Royle_BCC2009}.
Combinatorial mathematicians were originally motivated
by attempts (thus far unsuccessful)
to use analytic techniques to prove the four-colour theorem
\cite[p.~357]{Birkhoff_46},
while statistical physicists are motivated by the deep connections
to the partition function of the $q$-state Potts model
and the Yang--Lee theory of phase transitions
\cite{MR2187739}.

For both groups of researchers, one of the fundamental questions
that arises is to find bounds on the location of chromatic roots
in terms of graph structure or parameters of the graph.
Early conjectures that there might be {\em absolute}\/ bounds
on the location of chromatic roots,
such as being restricted to the right half-plane \cite{MR558614},
were disproved by the following strong result:

\begin{thm} 
{$\!\!$ \bf \protect\cite[Theorems 1.1--1.4]{MR2047238} \ }
\label{thm_sokaldense}
Chromatic roots are dense in the whole complex plane.
Indeed, even the chromatic roots of the generalized theta graphs
$\Theta^{(s,p)}$
are dense in the whole complex plane with the possible exception
of the disc $|q-1| < 1$.\footnote{
   The generalized theta graph $\Theta^{(s,p)}$ consists of
   a pair of endvertices joined by $p$ internally disjoint paths,
   each path consisting of $s$ edges.
   The letters $s$ and $p$ are chosen to indicate ``series''
   and ``parallel'', respectively.
}
\end{thm}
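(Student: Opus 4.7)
The plan is to begin by writing down the chromatic polynomial of $\Theta^{(s,p)}$ in closed form. Using the $q \times q$ colour-change matrix $M = J - I$, whose eigenvalues are $q-1$ (on the all-ones vector) and $-1$ (of multiplicity $q-1$), the number of proper $q$-colourings of a single path of $s$ edges between prescribed end-colours $i,j$ equals $(M^s)_{ij}$, which is $a_s(q) := [(q-1)^s + (-1)^s(q-1)]/q$ when $i=j$ and $b_s(q) := [(q-1)^s - (-1)^s]/q$ when $i\neq j$. Summing over the $q$ monochromatic and $q(q-1)$ bichromatic choices of endpoint colours, and using independence of the $p$ parallel paths, yields
\[
P(\Theta^{(s,p)}, q) \;=\; q\, a_s(q)^p \;+\; q(q-1)\, b_s(q)^p,
\]
so every nontrivial chromatic root of $\Theta^{(s,p)}$ satisfies $(a_s(q)/b_s(q))^p = -(q-1)$.

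Next I would fix $s \ge 2$, let $p \to \infty$, and apply the Beraha--Kahane--Weiss theorem to the family of polynomials $f_p(q) = \alpha_1(q) \lambda_1(q)^p + \alpha_2(q) \lambda_2(q)^p$ with $\lambda_1 = a_s$, $\lambda_2 = b_s$, $\alpha_1 = q$, $\alpha_2 = q(q-1)$. Because $a_s/b_s$ is a non-constant rational function of $q$ for every $s \ge 2$, Beraha--Kahane--Weiss guarantees that the zeros of $f_p$ accumulate \emph{densely} on the real-algebraic set
\[
B_s \;:=\; \{q \in \C \,:\, |a_s(q)| = |b_s(q)|\}.
\]
Thus the task reduces to showing that $\bigcup_{s \ge 2} B_s$ is dense in $\{q : |q-1| \ge 1\}$. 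The first sentence of the theorem---density throughout the whole plane---then follows by substituting for $\Theta^{(s,p)}$ a slight modification (e.g.\ a theta graph with a pendant edge or a block of parallel edges attached), whose effect at the level of the effective multivariate Tutte weight is to apply a M\"obius-type transformation of $q$ that carries the exterior of the disc onto a region intersecting its interior.

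For the final step I would work in the open region $|q-1| > 1$. Dividing numerator and denominator of $a_s/b_s$ by $(q-1)^s$ and summing the resulting geometric series gives the exact identity
\[
\frac{a_s(q)}{b_s(q)} \;=\; 1 \,+\, \frac{q}{(1-q)^s - 1},
\]
so to leading order the condition $|a_s/b_s| = 1$ becomes $\Re\bigl(q/(1-q)^s\bigr) = 0$, i.e.\ $\arg q - s\,\arg(1-q) \equiv \pi/2 \pmod{\pi}$. Fixing a target $q_0$ with $|q_0-1| > 1$ and $\arg(1-q_0)/\pi$ irrational, Weyl equidistribution supplies infinitely many $s$ for which this congruence holds with error decaying like $|q_0-1|^{-s}$; a one-variable implicit-function argument then places an actual point of $B_s$ within $O(|q_0-1|^{-s})$ of $q_0$. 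The measure-zero set of exceptional $q_0$ is handled by a small perturbation, and density on the bounding circle $|q-1|=1$ follows by approach from outside.

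The step I expect to be the hardest is the careful invocation of Beraha--Kahane--Weiss: one must verify the non-degeneracy hypotheses ($\alpha_j \not\equiv 0$ and $\lambda_1/\lambda_2$ not identically a root of unity) for every $s \ge 2$, and, more subtly, confirm that zeros of $f_p$ really do accumulate on \emph{all} of $B_s$ rather than merely on some proper subset of it. Once that is in hand, the asymptotic and equidistribution portions of the argument are comparatively routine.
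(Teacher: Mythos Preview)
This theorem is not proved in the paper under review; it is simply quoted from Sokal's earlier paper \cite{MR2047238}, so there is no ``paper's own proof'' against which to compare your sketch.

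That said, your outline is close in spirit to the argument in \cite{MR2047238}: explicit chromatic polynomial of $\Theta^{(s,p)}$, Beraha--Kahane--Weiss to get accumulation on the equimodular curves $B_s = \{|a_s| = |b_s|\}$, then an asymptotic analysis of $B_s$ as $s\to\infty$ to show that $\bigcup_s B_s$ is dense in $\{|q-1|\ge 1\}$. The identity $a_s/b_s = 1 + q/((1-q)^s - 1)$ is correct and is essentially what drives the analysis there as well. One genuine gap in your sketch is the passage from density outside the disc to density in the whole plane. Attaching a pendant edge only multiplies $P_G(q)$ by $q-1$ and does not move the roots at all; attaching a block of parallel edges does not induce a M\"obius transformation in $q$ either. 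In \cite{MR2047238} the full-plane statement is obtained by a different mechanism: one takes the \emph{join} $G + K_n$ (adding $n$ universal vertices), which has the effect $P_{G+K_n}(q) = q(q-1)\cdots(q-n+1)\,P_G(q-n)$ and hence translates all chromatic roots by $n$. Since the discs $|q-1-n|<1$ for $n=0,1,2,\ldots$ do not cover $\C$, this suffices. Your equidistribution step is also more delicate than you suggest (real $q$ and $q$ on the boundary circle require separate treatment), but the overall strategy is sound.
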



Biggs, Damerell and Sands \cite{MR0294172}
were the first to suggest, in the early 1970s, that the degree (i.e.\ valency)
of a regular graph might be relevant to the location of its chromatic roots.
They conjectured (on rather limited evidence) the existence of
a function $f$ such that the chromatic roots of a regular graph
of degree $r$ lie in the disc $|q| \leq f(r)$.
Two decades later, Brenti, Royle and Wagner \cite{MR1260339}
extended this conjecture to not-necessarily-regular graphs
of maximum degree $r$.\footnote{
   This latter conjecture actually follows from the former one,
   as indicated by Thomassen \cite[p.~505]{Thomassen_97}:
   If $r$ is odd, then there is a graph $H$ with all vertices but one
   having degree $r$ and the remaining vertex having degree 1.
   Then, given any graph $G$ of maximum degree $r-1$ or $r$,
   we glue enough copies of this ``gadget'' $H$ (using its vertex of degree 1)
   to the vertices of degree less than $r$ in $G$,
   thereby yielding a regular graph of degree $r$
   whose chromatic roots are the union of the chromatic roots
   of the original graph $G$ and those of $H$.
}
This latter conjecture was finally confirmed by one of us,
who used cluster-expansion techniques from statistical physics
to show that taking $f(r) \approx 8r$ would suffice:

\begin{thm}  
{$\!\!$ \bf \protect\cite[Corollary~5.3 and Proposition~5.4]{MR1827809} \ }
\label{sokalbound}
The chromatic roots of a graph of maximum degree $\Delta$
lie in the disc $|q| \leq 7.963907 \Delta$.
\end{thm}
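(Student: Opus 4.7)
My plan is to prove the stronger statement that the multivariate Tutte polynomial $Z_G(q,\mathbf{v}) \neq 0$ whenever $|v_e| \leq 1$ for every edge $e$ and $|q| > C\,\Delta$ for a suitable constant $C$; specializing to $v_e = -1$ recovers the chromatic polynomial bound. The natural tool is the polymer (cluster) expansion from statistical mechanics.

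The first step is to cast $Z_G/q^{|V(G)|}$ as a polymer partition function. Starting from $Z_G(q,\mathbf{v}) = \sum_{A\subseteq E} q^{k(A)}\prod_{e\in A} v_e$ and using $k(A) = |V| - \mathrm{rank}(A)$, I decompose $A$ into its connected edge components and obtain
$$\frac{Z_G(q,\mathbf{v})}{q^{|V(G)|}} \;=\; \sum_{\{\gamma_1,\ldots,\gamma_k\}} \prod_{i=1}^k w(\gamma_i),$$
where the sum runs over families of vertex-disjoint polymers --- a \emph{polymer} $\gamma$ being a nonempty connected edge subset of $G$ with vertex support $V(\gamma)$ --- and the activity is $w(\gamma) = q^{1-|V(\gamma)|}\prod_{e\in E(\gamma)} v_e$.

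The second step is to apply the Koteck\'y--Preiss convergence criterion: if one can exhibit a function $a(\gamma)\geq 0$ such that $\sum_{\gamma\not\sim\gamma_0} |w(\gamma)|\,e^{a(\gamma)} \leq a(\gamma_0)$ for every polymer $\gamma_0$ (with $\gamma\not\sim\gamma_0$ meaning $V(\gamma)\cap V(\gamma_0)\neq\varnothing$), then $\log(Z_G/q^{|V|})$ is absolutely convergent, which forces $Z_G\neq 0$. Taking the ansatz $a(\gamma) = \alpha\,|V(\gamma)|$ with $\alpha>0$ to be optimized, and using the bound $|w(\gamma)| \leq |q|^{1-|V(\gamma)|}$ coming from $|v_e|\leq 1$, a union bound over the shared vertex reduces the criterion to the single-vertex estimate
$$\sup_{v\in V(G)}\;\sum_{\gamma\ni v} |q|^{\,1-|V(\gamma)|}\, e^{\alpha\,|V(\gamma)|} \;\leq\; \alpha.$$
The maximum degree enters through a classical enumeration: the number of connected edge subsets of $G$ containing a prescribed vertex and spanning exactly $n$ vertices is at most of order $(e\Delta)^{n-1}$ (a tree-counting bound). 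Substituting this, the criterion becomes an explicit one-variable inequality relating $|q|/\Delta$ and $\alpha$.

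The final step is to optimize over $\alpha$; the specific constant $7.963907$ emerges from this numerical optimization, possibly sharpened by a Dobrushin-type refinement of Koteck\'y--Preiss that replaces the single exponential by a two-factor structure. The main obstacle I anticipate is the combinatorial enumeration of connected edge-subgraphs: a naive bound roughly doubles the constant, and only a careful spanning-tree count --- together with a well-chosen ansatz for $a$ --- yields the claimed value. Once the complex bound is in place, the accompanying real-root improvement (Proposition~5.4 in the cited paper) should follow by an elementary sign argument applied to the chromatic polynomial on the positive real axis.
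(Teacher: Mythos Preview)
This theorem is not proved in the present paper at all: it is quoted as a known result, with the proof deferred entirely to the cited reference \cite[Corollary~5.3 and Proposition~5.4]{MR1827809}. So there is no ``paper's own proof'' to compare against here.

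That said, your sketch does correctly capture the strategy of the cited source: rewrite $q^{-|V|}Z_G(q,\mathbf{v})$ as a hard-core polymer gas whose polymers are connected edge subsets with activity $q^{1-|V(\gamma)|}\prod_{e\in\gamma}v_e$, then apply a Koteck\'y--Preiss/Dobrushin convergence criterion together with a bound on the number of connected subgraphs through a given vertex in a graph of maximum degree $\Delta$. The constant $7.963907$ indeed arises from optimizing the resulting one-variable inequality. Your outline is accurate as a high-level summary, though the actual derivation of that specific constant in \cite{MR1827809} uses a somewhat more refined tree-counting/Penrose-identity argument than the crude $(e\Delta)^{n-1}$ bound you mention; the naive bound would give a worse constant.
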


\noindent
Moreover, almost the same bound holds when the largest degree $\Delta$
is replaced by the second-largest degree $\Delta_2$:
namely, all the chromatic roots
lie in the disc $|q| \leq 7.963907 \Delta_2+1$
\cite[Corollary~6.4]{MR1827809}.\footnote{
   Note that it is {\em not}\/ possible to go farther and obtain a bound
   in terms of the third-largest degree $\Delta_3$,
   as the chromatic roots of the generalized theta graphs $\Theta^{(s,p)}$
   --- which have $\Delta = \Delta_2 = p$ but $\Delta_3 = 2$ ---
   are dense in the whole complex plane with the possible exception
   of the disc $|q-1| < 1$.
}
The constant $7.963907$ (see also \cite{Borgs_06})
is an artifact of the proof and is not likely to be close to the true value.
Fern\'andez and Procacci \cite{MR2396349}
have recently improved the constant in Theorem~\ref{sokalbound}
to 6.907652 (see also \cite{Jackson-Procacci-Sokal}),
but this is probably still far from best possible.
Of course, the linear dependence on $\Delta$ is indeed best possible,
since the complete graph $K_{\Delta+1}$
has chromatic roots $0,1,2,\ldots,\Delta$.\footnote{
   Perhaps surprisingly, the complete graph $K_{\Delta+1}$ is {\em not}\/
   the extremal graph for this problem (except presumably for $\Delta =1,2,3$),
   and a bound $|q| \le \Delta$ is {\em not}\/ possible.
   In fact, a non-rigorous (but probably rigorizable) asymptotic analysis,
   confirmed by numerical calculations, shows \cite{Salas-Sokal_bipartite}
   that the complete bipartite graph $K_{\Delta,\Delta}$
   has a chromatic root $\alpha \Delta + o(\Delta)$, where
   $\alpha = - 2 / W(-2/e)  \approx 0.678345 + 1.447937 i$;
   here $W$ denotes the principal branch of the Lambert $W$ function
   (the inverse function of $w \mapsto w e^w$) \cite{Corless_96}.
   So the constant in Theorem~\ref{sokalbound} cannot be better than
   $|\alpha| \approx 1.598960$.
   One of us has conjectured \cite[Conjecture~6.6]{Royle_BCC2009}
   that, for $\Delta \ge 4$, the complete bipartite graph $K_{\Delta,\Delta}$
   has the chromatic root of largest modulus
   (and also largest imaginary part)
   among all graphs of maximum degree $\Delta$.
   Furthermore, it seems empirically that the largest modulus
   of a chromatic root of $K_{\Delta,\Delta}$, divided by $\Delta$,
   is an increasing function of $\Delta$.
   If these conjectures are correct, then the optimal constant in
   Theorem~\ref{sokalbound} would be $|\alpha|$.
}

The parameters $\Delta$ and $\Delta_2$ are, however,
unsatisfactory in various ways.
For example, $\Delta$ and $\Delta_2$ can be made arbitrarily large
by gluing together blocks at a cut-vertex,
yet this operation does not alter the chromatic roots.
The underlying reason for this discrepancy is that
the chromatic polynomial is essentially a property of the cycle matroid
of the graph, but vertex degrees are not.
Therefore it would be of great interest to find a matroidal parameter
that could play the role of maximum degree (or second-largest degree)
in results of this type.
Motivated by some remarks of Shrock and Tsai \cite{Shrock_98e,shrocktsai99a},
a few years ago
Sokal \cite[Section~7]{MR1827809}
and Jackson and Sokal \cite{jacksonsokal2}
suggested considering a graph parameter
that they called {\em maxmaxflow}\/, defined as follows:
If $x$ and $y$ are distinct vertices in a graph $G$,
then let $\lambda_G(x,y)$ denote the maximum flow from $x$ to $y$:
\begin{subeqnarray}
\lambda_G(x,y)
& = & \text{max. number of edge disjoint paths from $x$ to $y$}\\
& = & \text{min. number of edges separating $x$ from $y$.}
  \label{def.lambdaG}
\end{subeqnarray}
Then define the maxmaxflow $\Lambda(G)$ to be the maximum of these values
over all pairs of distinct vertices:
\begin{equation}
\Lambda(G) \;=\; \max_{x \neq y} \lambda_G(x,y)  \;.
\label{def_Lambda}
\end{equation}
Although this definition appears to use the non-matroidal concept
of a ``vertex'' in a fundamental way,
Jackson and Sokal \cite{jacksonsokal2}
proved that maxmaxflow has a ``dual'' formulation
in terms of cocycle bases:
namely,
\begin{equation}
\Lambda(G) \;=\; \min_{\cal B} \max_{C \in {\cal B}} |C|
\label{def_maxmaxflow_matroidal}
\end{equation}
where $M(G)$ is the cycle matroid of the graph $G$,
the min runs over all bases ${\cal B}$ of the cocycle space of $M(G)$
[over $GF(2)$], and the max runs over all cocycles in the basis ${\cal B}$.
Thus, by taking \eqref{def_maxmaxflow_matroidal} as the {\em definition}\/
of $\Lambda(M)$ for an arbitrary binary matroid $M$,
we obtain a matroidal parameter that specializes to maxmaxflow
for a graphic matroid.
Furthermore, for graphs, $\Lambda(G)$ behaves exactly
as one would wish with respect to gluing together blocks at a cut-vertex:
namely, the maxmaxflow of a graph is the maximum of the maxmaxflows
of its blocks.
Furthermore, from either description it is immediate that
\begin{equation}
\Lambda(G) \;\leq\; \Delta_2(G)  \;.
\end{equation}
It is therefore natural to make the following conjecture
\cite{MR1827809,jacksonsokal2},
which if true would extend the known bound on chromatic roots
in terms of second-largest degree:

\begin{conjecture} 
{$\!\!$ \bf \cite[Conjecture~1.1]{jacksonsokal2} \ }
\label{conj.chrom.1}
There exist universal constants $C(\Lambda) < \infty$ such that all
the chromatic roots (real or complex) of all loopless graphs of
maxmaxflow $\Lambda$ lie in the disc $|q| \le C(\Lambda)$.
Indeed, it is conjectured that
$C(\Lambda)$ can be taken to be linear in $\Lambda$.
\end{conjecture}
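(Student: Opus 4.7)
The plan is to attack this conjecture by combining a matroidal structural decomposition of graphs with bounded maxmaxflow with a transfer-matrix analysis of the multivariate Tutte polynomial. Since $\Lambda(G) \le \Delta_2(G)$, the bound of Theorem~\ref{sokalbound} (and its extension to the second-largest degree) already handles the regime in which $\Delta_2$ is moderate; the essential new difficulty is controlling graphs in which $\Delta_2$ is huge but all pairwise min-cuts remain small. Such graphs are forced to have a ``bottleneck'' structure, and the strategy is to exploit this structure explicitly.

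For the structural step, I would use the matroidal formulation \reff{def_maxmaxflow_matroidal}: fix a basis $\mathcal{B}$ of the cocycle space with every cocycle of size at most $\Lambda$, so that $E(G)$ is organized into edge cuts each of width $\le \Lambda$. Dually, iteratively applying Menger's theorem to separate pairs of high-degree vertices gives a Gomory--Hu-type tree whose nodes are ``high-degree clusters'' glued along edge-cuts of width $\le \Lambda$. In either picture, $G$ is expressed as the edge-sum of smaller pieces along cuts of bounded size, playing the role for arbitrary graphs that the series--parallel decomposition plays in the main result of this paper. One checks that each piece itself inherits a maxmaxflow bound of $\Lambda$, so the decomposition can be iterated.

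For the analytic step, I would use the Fortuin--Kasteleyn subset-expansion of $Z_G(q,\mathbf{v})$. When $G$ is split along an edge cut of size $k \le \Lambda$ incident to a vertex boundary $V_0$, the partition function factors as a bilinear pairing of two vectors indexed by set partitions of $V_0$, living in a space of dimension at most the Bell number $B_{|V_0|}$. Iterating along the decomposition tree expresses $Z_G(q,-1)$ as a contraction of a bounded number of transfer matrices per ``glue'', each of size depending only on $\Lambda$. The \textbf{existence} half of the conjecture then reduces to a standard exercise: for $|q|$ large enough as a function of the matrix dimension, each matrix becomes diagonally dominant (the diagonal entries are polynomials in $q$ of maximal degree, while the off-diagonal entries have strictly smaller degree), forcing $Z_G \ne 0$ and yielding some finite $C(\Lambda)$.

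The main obstacle, by far, is the linear-in-$\Lambda$ growth conjectured in the second sentence. The transfer-matrix argument above naively produces $C(\Lambda) = O(B_\Lambda)$, which grows faster than exponentially and is very far from linear. To extract the conjectured linear behavior one would need either (i) a random-cluster / probabilistic interpretation in which the ``effective edge weight'' transmitted across a cut remains $O(1)$ uniformly in the cut size, in the spirit of the Dobrushin-uniqueness and polymer-expansion estimates underlying Theorem~\ref{sokalbound} and its improvement by Fern\'andez--Procacci, or (ii) a genuinely new combinatorial cancellation in the partition-indexed bilinear pairing that annihilates the Bell-number overcount. Either route appears to require a substantially new idea beyond the tools available for the series-parallel case treated in this paper, which is presumably the reason Conjecture~\ref{conj.chrom.1} has remained open.
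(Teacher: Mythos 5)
You should first note that the statement you were asked to prove is Conjecture~\ref{conj.chrom.1}: the paper does not prove it, and explicitly says that serious obstacles remain; what the paper actually establishes is only the series-parallel case (Theorems~\ref{thm_main} and \ref{thm_main_sec5}), by a quite different route that tracks the effective weight $v_{\rm eff}$ under series and parallel reduction through nested regions $S_1 \subseteq \cdots \subseteq S_{\Lambda-1}$ in the $t$-plane. So there is no ``paper proof'' to match, and your text is in any case a research plan rather than a proof --- you yourself concede that the linear-in-$\Lambda$ half ``appears to require a substantially new idea''.

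More importantly, even the ``existence'' half of your plan has a concrete gap in the structural step, so it does not reduce to a ``standard exercise''. A cocycle basis realizing \reff{def_maxmaxflow_matroidal} consists of cuts of size at most $\Lambda$, but these cuts may cross arbitrarily and do not organize $G$ into a tree of pieces glued along bounded interfaces; and the Gomory--Hu tree, while its edges do correspond to non-crossing cuts of size at most $\Lambda$, has single vertices of $G$ as its nodes, so the ``pieces'' do not shrink and there is nothing to iterate. When you root that tree and try to run the dynamic program, the state at a given node is not a partition of at most $2\Lambda$ boundary vertices: edges crossing the cuts to the as-yet-unprocessed subtrees can land anywhere in the already-processed region, so the set of vertices whose connectivity classes must be remembered grows with the number of children (roughly $\Lambda$ times the number of unprocessed subtrees), and the transfer-matrix dimension is not bounded by any function of $\Lambda$ alone. (This is exactly the known obstruction: bounded maxmaxflow does not bound treewidth, branchwidth, or any interface width --- e.g.\ the $n\times n$ grid has $\Lambda=4$ but unbounded treewidth, and its Gomory--Hu cuts are essentially single-vertex cuts $\delta(v)$, which give no useful recursion.) Finally, even if one had bounded matrices, diagonal dominance of each individual matrix does not by itself force a contraction of an unbounded number of them to be nonzero; one needs a uniform cone or norm estimate propagated along the whole tree, which is precisely the kind of inductive invariant (the analogue of the nested sets $S_k$ and Lemma~\ref{lem_constituents}) that the series-parallel proof supplies and that is missing in general. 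So the proposal neither proves the conjecture nor its existence half, which, as the paper indicates, remains open.
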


\noindent
However, there are some serious difficulties in modifying
the existing cluster-expansion proof of Theorem~\ref{sokalbound}
to get an analogous bound in terms of $\Lambda$;
and although some progress has been made in this direction
\cite{jacksonsokal2}, a number of obstacles remain.\footnote{
   See \cite[Section~9.2]{MR2187739} for a brief discussion.
}

In this paper, we restrict our attention to {\em series-parallel graphs}\/
and use an entirely different approach to prove the following main result:

\begin{thm}
\label{thm_main}
Fix an integer $\Lambda \ge 2$,
and let $G$ be a loopless series-parallel graph of maxmaxflow at most $\Lambda$.
Then all the roots (real or complex) of the chromatic polynomial $P_G(q)$
lie in the disc $|q-1| < (\Lambda-1) / \log 2 \approx 1.442695 (\Lambda-1)$.
\end{thm}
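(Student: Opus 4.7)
The plan is to work with the two-terminal series-parallel decomposition of $G$, track an effective edge weight at each node of the decomposition tree, and exhibit a complex region in which this effective weight always lies---one from which the forbidden value corresponding to a chromatic root is excluded.

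First, for each 2-terminal SP subgraph $(H, s, t)$ I would introduce the Potts-model partition functions $A_H$ (with the terminals $s, t$ constrained to the same colour) and $B_H$ (with $s, t$ in distinct colours), so that $Z_H = q A_H + q(q-1) B_H = q B_H (R_H + q - 1)$ where $R_H := A_H / B_H$; to show $Z_H \ne 0$ one needs both $B_H \ne 0$ and $R_H \ne 1 - q$. Standard calculations---summing over the colour of the shared internal vertex in the series case---yield the recursions
\begin{equation*}
R_{\rm par} \;=\; \prod_i R_i, \qquad R_{\rm ser} \;=\; \frac{R_1 R_2 + q-1}{R_1 + R_2 + q-2},
\end{equation*}
together with $B_{\rm par} = \prod_i B_i$ and $B_{\rm ser} = B_1 B_2 (R_1 + R_2 + q-2)$; higher-arity series composition follows by iteration of the binary formula. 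Each leaf edge $e$ contributes $R_e = 1 + v_e$, equal to $0$ in the chromatic case and lying in $[0,1]$ in the antiferromagnetic regime $v_e \in [-1, 0]$.

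The core of the proof is to exhibit, for each $q$ with $|q-1| \geq (\Lambda-1)/\log 2$, a complex region $\Omega = \Omega(q, \Lambda) \subset \mathbb{C}$ such that (i) $\Omega \supseteq [0, 1]$; (ii) $\Omega$ is closed under products of at most $\Lambda$ factors; (iii) $\Omega$ is closed under the binary series M\"obius map $(R_1, R_2) \mapsto (R_1 R_2 + q-1)/(R_1 + R_2 + q-2)$, with nonvanishing denominator on $\Omega \times \Omega$; and (iv) $1 - q \notin \Omega$. Property (ii) is precisely where the maxmaxflow hypothesis enters: at each parallel node of the SP tree, the flow between the two terminals equals the sum of the flows between the terminals of the children, each of which is at least $1$; hence a $k$-ary parallel node forces a flow of at least $k$ between the terminals of that subgraph, itself bounded by $\Lambda(G) \leq \Lambda$. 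Series nodes can have arbitrary arity but are handled via iteration of the binary map in (iii). Given such an $\Omega$, an induction on the SP decomposition tree delivers $R_H \in \Omega$ and $B_H \neq 0$ for every subgraph $H$, and in particular $Z_G \neq 0$.

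The main obstacle is constructing $\Omega$ explicitly and obtaining the sharp $(\Lambda-1)/\log 2$ threshold. I would pass to the series-additive coordinate $\zeta := \log\bigl((q+v_{\rm eff})/v_{\rm eff}\bigr)$, under which series becomes simple addition ($\zeta_{\rm ser} = \sum_i \zeta_i$), the chromatic leaf value is $\zeta_0 = \log(1-q)$, and the forbidden value is $\zeta = -\infty$; the natural candidate for $\Omega$ is then a half-plane in $\zeta$-coordinates aligned with $\zeta_0$. Series closure is automatic. Closure under the binary parallel map is controlled by the M\"obius relation $u = (w+q-1)/(w-1)$ linking $w = e^\zeta$ to $u = 1 + v$, together with a `parallel-resistor' identity of the form $1/(w_{\rm par}-1) = \sum_i 1/(w_i - 1) + (\text{correction in } q)$, from which each binary parallel merge may be shown to cost at most $\log 2$ in the direction of $\Re\zeta$. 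A $\Lambda$-ary parallel node absorbs at most $\Lambda - 1$ such binary merges, and the resulting loss budget must be covered by the initial margin dictated by $|q-1|$. The subtle step is converting this per-merge `$\log 2$' bookkeeping---which in isolation would yield only the exponential bound $|q-1| > 2^{\Lambda-1}$---into the linear bound $(\Lambda-1)/\log 2$; this will require an exact (not merely leading-order) analysis of the parallel recursion together with careful tracking of complex phases. The extension to antiferromagnetic edge weights $v_e \in [-1, 0]$ then follows by the same argument, since $R_e \in [0,1]$ imposes no stronger constraint than the chromatic case $R_e = 0$.
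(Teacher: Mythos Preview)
Your framework sets up the right two-terminal recursion (your $R_H$ is the paper's $y_{\rm eff}$, and your $\zeta$ is essentially $-\log t_{\rm eff}$), but it has a genuine gap at exactly the point you flag as ``subtle'': with a \emph{single} region $\Omega$, closure under the series map together with closure under parallel products forces exponential growth, and no amount of phase-tracking will rescue the linear bound. The reason is structural. Your condition~(ii), ``closed under products of at most $\Lambda$ factors'', is not a meaningful constraint on a single set: if $\Omega$ is closed under binary products it is closed under all finite products, and if it is not, your induction breaks at the first parallel node whose children are themselves non-leaf SP graphs (their $R$-values are already arbitrary elements of $\Omega$, not leaf values). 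So either $\Omega$ must absorb arbitrary products---in which case, after one series step you are back where you started and the loss accumulates without bound---or the induction fails. This is precisely the obstruction that produces the $2^{\Lambda-1}$ you mention; it is not a technicality but the heart of the problem.

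The missing idea is to replace the single region by a \emph{nested family} $S_1 \subseteq S_2 \subseteq \cdots \subseteq S_{\Lambda-1}$ indexed by the between-terminals flow, and to track that flow through the decomposition tree. Parallel composition \emph{adds} flows, so one asks only that $S_k \pll S_\ell \subseteq S_{k+\ell}$ for $k+\ell \le \Lambda-1$; series composition takes the \emph{minimum} of the flows, so one asks that $S_k \bowtie S_\ell \subseteq S_{\min(k,\ell)}$. The maxmaxflow hypothesis (via the observation that proper constituents have terminal flow $\le \Lambda-1$) caps the index at $\Lambda-1$, which is what halts the growth and yields a linear bound. Concretely, in the $t$-plane one takes $S_i = \{1/(1-q)\} \cup D(r_i)$ (``point plus disc''); series is then pure multiplication, the condition $S_k \bowtie S_\ell \subseteq S_{\min(k,\ell)}$ reduces to $D(\rho)^2 \subseteq D(\rho^2) \subseteq S_1$, and the parallel condition becomes a M\"obius recursion $r_{s+1} = F(r_1,r_s)$ on the radii that can be solved in closed form. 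The constant $(\Lambda-1)/\log 2$ falls out of the resulting equation $(1+\rho)^\Lambda = 2(1+\rho^2)^{\Lambda-1}$ for the threshold value of $\rho = 1/|q-1|$. Your half-plane in $\zeta = -\log t$ is a reasonable heuristic, but without the flow stratification it cannot close.
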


\noindent
Since there are series-parallel graphs of maxmaxflow $\Lambda$
having chromatic roots arbitrarily close to
every point of the circle $|q-1| = \Lambda-1$
(see Appendix~\ref{app_tree} below),
the constant in Theorem~\ref{thm_main} is non-sharp by at most a factor
$1/\log 2 \approx 1.442695$.
Moreover, a bound $|q-1| \le \Lambda-1$ {\em cannot}\/ hold in general,
since at least for $\Lambda=3$ we can exhibit a 94-vertex series-parallel graph
with a chromatic root at $|q-1| \approx  2.009462$
(see Section~\ref{sec.Lambda=3}).

Let us also remark that in this paper we use only the
definition \reff{def_Lambda} of maxmaxflow;
we do not use the result \reff{def_maxmaxflow_matroidal}.

The essence of our approach is to view the chromatic polynomial $P_G(q)$
as a special case of the multivariate Tutte polynomial
$Z_G(q, \bv)$ of a graph equipped with edge weights
${\bf v} = \{v_e\}_{e\in E}$:
namely, the case in which all the edge weights
take the special value $v_e = -1$.\footnote{
See \cite{MR2187739} for a review on the multivariate Tutte polynomial
(which is also known in statistical physics as the partition function
of the $q$-state Potts model in the Fortuin--Kasteleyn representation).
}
By working within the more flexible framework
of the multivariate Tutte polynomial,
we can use the rules for series and parallel reduction
\cite[Sections~4.4 and 4.5]{MR2187739}
to transform a graph $G$ into a smaller graph
with different edge weights and the same (or closely related)
multivariate Tutte polynomial.
In particular, a series-parallel graph can be transformed
into a one-edge graph with a complicated weight
(a messy rational function of $q$ and $\{v_e\}$) on its single edge.
Although this weight is complicated,
we are able in certain circumstances to bound 
where it lies in the complex plane and thereby to ensure that
the multivariate Tutte polynomial is nonvanishing.
After some fairly straightforward real and complex analysis,
we can prove Theorem~\ref{thm_main}.

We shall actually prove a result that is slightly stronger than
Theorem~\ref{thm_main} in two ways:
First of all, the chromatic roots will be shown to lie in a disc
$|q-1| < Q^\star_\Lambda$, where $Q^\star_\Lambda$
is the solution of a particular polynomial equation of degree $2\Lambda-3$
and satisfies $Q^\star_\Lambda < (\Lambda-\smfrac{3}{2}\log 2) / \log 2
                 < (\Lambda-1) / \log 2$.
[Note that $\smfrac{3}{2}\log 2 \approx 1.039721$.]
Secondly, the chromatic polynomial $P_G(q)$ can be replaced by
the multivariate Tutte polynomial $Z_G(q, \bv)$
where the edge weights ${\bf v} = \{v_e\}_{e\in E}$ lie in a suitable set.
See Theorem~\ref{thm_main_sec5} for details.

At this point the reader might well wonder:
Since series-parallel graphs form a tiny subset of planar graphs,
which in turn form a tiny subset of all graphs,
what is the interest of a result restricted to the former?
The answer is that Theorem~\ref{thm_sokaldense}
already shows that even series-parallel graphs can exhibit ``wild'' behavior
in their chromatic roots.
If one wishes to bound those roots,
then {\em some}\/ additional parameter is clearly needed.
It is a nontrivial fact that maxmaxflow is such a parameter.
Whether or not this is good evidence for the truth of the more general
Conjecture~\ref{conj.chrom.1} remains to be seen.

The techniques used in proving Theorem~\ref{thm_main}
lend themselves to a number of direct extensions.
For example, one fairly easy extension is to permit the original graph
to have edge weights throughout the ``real antiferromagnetic regime'',
i.e.\ taking $v_e \in [-1,0]$ independently for each edge $e$.
It turns out that exactly the same bound holds:

\begin{thm}
\label{thm_main_AF}
Fix an integer $\Lambda \ge 2$.
Let $G=(V,E)$ be a loopless series-parallel graph
of maxmaxflow at most $\Lambda$,
and let the edge weights $\bv = \{v_e\}_{e \in E}$
satisfy $v_e \in [-1,0]$ for all $e$.
Then all the roots (real or complex) of the
multivariate Tutte polynomial $Z_G(q, \bv)$
lie in the disc $|q-1| < (\Lambda-1) / \log 2 \approx 1.442695 (\Lambda-1)$.
\end{thm}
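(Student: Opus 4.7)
The plan is to follow the same strategy used to prove Theorem~\ref{thm_main}, working within the multivariate Tutte polynomial framework. Fix $q \in \C$ with $|q-1| \geq (\Lambda-1)/\log 2$, and suppose for contradiction that $Z_G(q, \bv) = 0$ for some series-parallel graph $G$ with $\Lambda(G) \leq \Lambda$ and real edge weights $v_e \in [-1, 0]$. Since $G$ is series-parallel, its two-terminal representative can be built up inductively from single edges by series and parallel composition. At the level of the multivariate Tutte polynomial these take the form: parallel composition of weights $v_1, v_2$ yields a single edge of effective weight $\veff = (1+v_1)(1+v_2) - 1$; series composition yields effective weight $\veff = v_1 v_2 / (q + v_1 + v_2)$ together with an overall scalar prefactor $(q + v_1 + v_2)/q$. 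Iterating these reductions collapses $G$ to a single edge of some effective weight $\tilde v$, times an accumulated product of such prefactors, so that $Z_G(q, \bv)$ equals that product times $q(q + \tilde v)$. To produce a root it is therefore necessary either that some series denominator $q + v_1 + v_2$ vanish during the reduction or that $q + \tilde v = 0$ at the end.

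The core of the argument is the construction of a complex ``safe region'' $\mathcal{W} = \mathcal{W}(q, \Lambda) \subseteq \C$ of effective edge weights with the following closure properties: (i) $\mathcal{W}$ is closed under the parallel-composition map $(v_1, v_2) \mapsto v_1 + v_2 + v_1 v_2$, (ii) $\mathcal{W}$ is closed under the series-composition map $(v_1, v_2) \mapsto v_1 v_2/(q + v_1 + v_2)$, modulo bookkeeping that respects the maxmaxflow constraint $\Lambda$, (iii) every series denominator produced remains nonzero, and (iv) $q + v \neq 0$ for every $v \in \mathcal{W}$. An induction on the series-parallel decomposition then shows that if every initial weight $v_e$ lies in $\mathcal{W}$, so does every effective weight generated during the reduction, and hence $Z_G(q, \bv) \neq 0$. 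This is exactly the engine driving the proof of Theorem~\ref{thm_main}: there the only input needed is that $v_e = -1$ lies in $\mathcal{W}$ whenever $|q-1| \geq Q^\star_\Lambda$.

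The new ingredient for Theorem~\ref{thm_main_AF} is the verification that the entire real interval $[-1, 0]$, rather than just the single point $v = -1$, lies in $\mathcal{W}$. Parallel composition poses no difficulty for this check: $(v_1, v_2) \mapsto (1+v_1)(1+v_2) - 1$ carries $[-1, 0] \times [-1, 0]$ into $[-1, 0]$, so the interval is preserved along that branch of the reduction. The main obstacle is the series composition, since $(v_1, v_2) \mapsto v_1 v_2/(q + v_1 + v_2)$ with $q$ complex sends real arguments to genuinely complex effective weights that leave $[-1, 0]$; one must confirm that these images still land in the larger complex region $\mathcal{W}$ engineered for Theorem~\ref{thm_main}. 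This reduces to the geometric inequality defining $\mathcal{W}$: for $|q-1| \geq (\Lambda-1)/\log 2$ every $v \in [-1, 0]$ satisfies that inequality (indeed $v = -1$ typically sits on the extremal locus, so the rest of the interval sits safely inside). Once this inclusion is established, the proof of Theorem~\ref{thm_main} goes through verbatim with $v_e \in [-1, 0]$ in place of $v_e = -1$.
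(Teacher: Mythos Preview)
Your outline has the right overall philosophy --- use the abstract ``safe-region'' machinery from Theorem~\ref{thm_abstract}/\ref{regions} and then check that the new initial weights lie in the starting region --- but the reduction you describe does not actually go through, for a structural reason.

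The regions in the chromatic proof are not a single set $\mathcal W$ but a nested family $S_1\subseteq\cdots\subseteq S_{\Lambda-1}$ in the $t$-plane, and for Theorem~\ref{thm_main} they take the ``point + disc'' form $S_k=\{t_0\}\cup D(r_k)$ with $t_0=1/(1-q)$ and $\rho^2\le r_1<\cdots<r_{\Lambda-1}=\rho$ where $\rho=1/|q-1|$. The reason this family is closed under the parallel rule is the \emph{special absorbing identity} $t_0\pllq t=t_0$, i.e.\ $(-1)\pll v=-1$ for every $v$: the isolated point $t_0$ simply stays put under parallel composition with anything. That identity fails for every other $v\in(-1,0]$. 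Your step ``check that $[-1,0]\subseteq\mathcal W$ and then the chromatic proof goes through verbatim'' therefore breaks in two places. First, the image $\scrc_q=\{v/(q+v):v\in[-1,0]\}$ is an arc reaching from $0$ out to $t_0$ with $|t_0|=\rho>r_1$, so it is \emph{not} contained in $S_1=\{t_0\}\cup D(r_1)$; the inclusion you want to verify is already false. Second, even if one enlarges $S_1$ to contain $\scrc_q$, the closure under $\pllq$ is no longer automatic: a point of $\scrc_q$ composed in parallel with a point of $D(r_k)$ can wander, and one must control where it lands. You also have the obstacle backwards: in the $t$-variable series is just multiplication (hence preserves discs trivially); it is \emph{parallel} that is delicate.

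What the paper actually does is replace the single point $\{t_0\}$ by the whole arc $\scrc_q$, obtaining ``stalk + disc'' regions
\[
   S_k \;=\; \bigl(\scrc_q \pllq D(r_{k-1})\bigr)\,\cup\, D(r_k),
\]
and then proves the key new estimate (Lemma~\ref{lemma_Drho_Cq}) that $\scrc_q\pllq D(\rho)\subseteq D(\rho)$, which (via Lemma~\ref{lemma4.3}) restores closure under series for these enlarged regions. A combination lemma (Lemma~\ref{lemma_CD}) shows the parallel condition also survives. This step --- identifying the need to fatten the regions and proving $\scrc_q\pllq D(\rho)\subseteq D(\rho)$ --- is precisely the new content of the antiferromagnetic extension, and it is missing from your sketch.
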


\noindent
Once again, we shall actually prove a slightly stronger result,
in which the chromatic roots are shown to lie in the disc
$|q-1| < Q^\star_\Lambda$,
and in which the edge weights ${\bf v} = \{v_e\}_{e\in E}$
are allowed to lie in a set that is somewhat larger than $[-1,0]$.
See Theorem~\ref{thm_main_AF_sec7}.

A second extension is to consider graphs that are not series-parallel
but are nevertheless built up by using series and parallel compositions
from a fixed ``starting set'' of graphs. 
For instance, we can prove the following:

\begin{thm}
  \label{thm_main_Wheatstone}
Let $\sfG = (G,s,t)$ be a 2-terminal graph that
can be obtained from $K_2$ and the Wheatstone bridge $\sfW$
by successive series and parallel compositions.\footnote{
   The {\em Wheatstone bridge}\/ is the 2-terminal graph $\sfW = (W,s,t)$
   obtained from $W = K_4- e$ by taking the two vertices of degree $2$
   to be the terminals $s$ and $t$.
   See Section~\ref{subsec.Wheatstone}.
}
If $G$ has maxmaxflow at most $\Lambda$ (where $\Lambda \ge 3$),
then all the roots (real or complex) of the chromatic polynomial $P_G(q)$
lie in the disc $|q-1| < (\Lambda- \log 2) / \log 2$.
\end{thm}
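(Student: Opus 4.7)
The plan is to imitate the proof of Theorem~\ref{thm_main_AF_sec7} (the strengthened version of Theorem~\ref{thm_main} that the paper develops), but to add the Wheatstone bridge as a third primitive composition alongside series and parallel. Write $P_G(q) = Z_G(q, \bv)\big|_{v_e = -1}$, and for each 2-terminal subgraph $\sfH$ of $\sfG$ define its effective transmissivity $\veff(\sfH; q)$ so that $\sfH$ can be replaced, as far as the multivariate Tutte polynomial is concerned, by a single edge of weight $\veff$ times a nonvanishing prefactor. Proving that $P_G(q) \ne 0$ in the disc $|q-1| < R := (\Lambda - \log 2)/\log 2$ then reduces to showing that $\veff$ remains in a suitable ``safe'' region throughout the recursive reduction of $\sfG$ to a single edge.

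Concretely, for each integer $\lambda \ge 1$, one should construct the same (or a slightly adjusted) region $\mathcal{R}_\lambda(q) \subset \C$ used in the proof of Theorem~\ref{thm_main_AF_sec7}, parameterized by $q$ with $|q-1| < R$, satisfying:
(a) $-1 \in \mathcal{R}_1(q)$;
(b) $v_1 \in \mathcal{R}_{\lambda_1},\ v_2 \in \mathcal{R}_{\lambda_2} \ \Rightarrow\ v_1 \pllq v_2 \in \mathcal{R}_{\lambda_1 + \lambda_2}$;
(c) $v_1 \in \mathcal{R}_{\lambda_1},\ v_2 \in \mathcal{R}_{\lambda_2} \ \Rightarrow\ v_1 \serq v_2 \in \mathcal{R}_{\min(\lambda_1, \lambda_2)}$;
(d) if $v_1, \ldots, v_5$ lie in $\mathcal{R}_{\lambda_1}, \ldots, \mathcal{R}_{\lambda_5}$ in the five edge-slots of $\sfW$, then the corresponding Wheatstone-composed effective weight lies in $\mathcal{R}_{\lambda_\sfW}$, where $\lambda_\sfW$ is the $s$-$t$ maxflow of the resulting 2-terminal graph;
(e) every $v \in \mathcal{R}_\Lambda(q)$ gives a nonvanishing $Z$ at the end of the reduction. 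With (a)--(e) in hand, an induction on the depth of the series/parallel/Wheatstone decomposition of $\sfG$ shows $P_G(q) \ne 0$ throughout the disc.

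Properties (a)--(c) and (e) are exactly the content of Theorem~\ref{thm_main_AF_sec7} (applied with the radius $R$ rather than $Q^\star_\Lambda$), so the new work is property~(d). The Wheatstone bridge's effective transmissivity is a rational function of $(q, v_1, \ldots, v_5)$ that can be obtained by deletion--contraction on the chord edge $ab$: this expresses $\veff(\sfW)$ in terms of two purely series-parallel reductions of the four ``rim'' edges (one with $ab$ deleted, one with $ab$ contracted), combined using the $v$ of the chord. The task is therefore to verify by direct calculation that, whenever the $v_i$'s sit in the regions $\mathcal{R}_{\lambda_i}$, this rational function lands in $\mathcal{R}_{\lambda_\sfW}$; a case split on the $\lambda_i$'s keeps the algebra finite. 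The weakening of the radius from $(\Lambda-1)/\log 2$ to $(\Lambda - \log 2)/\log 2$ should correspond precisely to the extra margin that this Wheatstone verification demands: one loses one full ``series unit'' ($1$ in the numerator) and gains only ``one parallel unit'' ($\log 2$) from the chord edge's contribution.

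The main obstacle will be step~(d). Unlike series and parallel composition, the Wheatstone bridge is genuinely a five-variable operation with no simple associativity, so one cannot reduce it to iterated binary operations on the safe regions; the region-containment argument must be carried out globally on the explicit rational expression for $\veff(\sfW)$. Isolating the tight case(s) that determine the constant $\log 2$ in the radius, and checking that these are handled by an honest margin for all admissible flow vectors $(\lambda_1, \ldots, \lambda_5)$ subject to the constraint $\lambda_\sfW \le \Lambda$, is the heart of the matter and likely requires the most care.
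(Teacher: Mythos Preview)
You have misread the theorem: the Wheatstone bridge $\sfW$ is a primitive \emph{graph} (a base graph, i.e.\ a leaf of the decomposition tree), not a primitive \emph{composition operation}. The class $\scrw$ consists of graphs built from copies of $K_2$ and $\sfW$ using only the ordinary series and parallel compositions; nothing is ever substituted into the five edges of $\sfW$. Consequently your property~(d) --- closure of the regions under a five-input Wheatstone operation with arbitrary $v_1,\dots,v_5$ in $\mathcal{R}_{\lambda_1},\dots,\mathcal{R}_{\lambda_5}$ --- is both unnecessary and quite possibly false for the specific ``point + disc'' regions used in Section~\ref{sec_discs}.

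The paper's actual argument is far simpler. Since we are computing the chromatic polynomial, every edge of every copy of $\sfW$ carries $v_e=-1$, so each Wheatstone leaf contributes a \emph{single fixed} effective transmissivity
\[
   t_{\rm eff}(\sfW) \;=\; \frac{2}{(q-1)(q-2)} \,,
\]
and the between-terminals flow of $\sfW$ is $2$. The generalized abstract theorem (Theorem~\ref{thm_genabstract}) then says: use the \emph{same} regions $S_1\subseteq\cdots\subseteq S_{\Lambda-1}$ as in Section~\ref{sec_discs}, and simply check the one extra condition $2/[(q-1)(q-2)]\in S_2$, i.e.\ $|2/[(q-1)(q-2)]|\le r_2$. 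With the radii \reff{eq.choice2} this becomes the explicit inequality
\[
   \frac{2}{|q-2|} \;\le\; \frac{X^2-1}{1-\rho X^2}\,,
\]
and a short calculus argument (Lemmas~\ref{lemma_rhodoublestar} and \ref{lemma_rkbound_log2_bis}) shows it holds whenever $|q-1|\ge(\Lambda-\log 2)/\log 2$. No five-variable analysis is needed; the whole new content is verifying that one extra point lands in $D(r_2)$.
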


\noindent
Once again, we shall actually prove a slightly stronger result:
see Theorem~\ref{thm_main_Wheatstone_sec8}
and Corollary~\ref{cor_main_Wheatstone_sec8}.

\bigskip

The plan of this paper is as follows:
In Section~\ref{sec2} we review the properties of the
multivariate Tutte polynomial, with emphasis on its behavior under
series and parallel composition.
In Section~\ref{sec.serpar} we discuss series-parallel graphs
and decomposition trees for 2-terminal graphs.
In Section~\ref{sec.abstract} we state and prove an abstract result
that gives a sufficient condition for the multivariate Tutte polynomial
to be nonzero, involving sets
$S_1 \subseteq S_2 \subseteq \cdots \subseteq S_{\Lambda-1}$
in the complex plane satisfying certain conditions.
In Section~\ref{sec_discs} we prove Theorem~\ref{thm_main}
(and the stronger Theorem~\ref{thm_main_sec5})
by constructing suitable sets 
$S_1 \subseteq S_2 \subseteq \cdots \subseteq S_{\Lambda-1}$.
In Section~\ref{sec.Lambda=3} we give a slightly sharper result
for the case $\Lambda=3$.
In Section~\ref{sec.antiferro} we prove Theorem~\ref{thm_main_AF}
(and the stronger Theorem~\ref{thm_main_AF_sec7})
by a slight generalization of our previous construction.
In Section~\ref{sec.Wheatstone} we prove Theorem~\ref{thm_main_Wheatstone}
(and the stronger Theorem~\ref{thm_main_Wheatstone_sec8}
 and Corollary~\ref{cor_main_Wheatstone_sec8}).
In Appendix~\ref{app_riemann} we define parallel and series connection
for edge weights lying in the Riemann sphere.
In Appendix~\ref{app_tree} we prove Theorem~\ref{thm.leaf-joined}
on the chromatic roots of leaf-joined trees
by using methods from the theory of holomorphic dynamics.

\section{The multivariate Tutte polynomial}  \label{sec2}

In this section we begin by reviewing the definition and elementary properties
of the multivariate Tutte polynomial (Section~\ref{subsec.elementary}).
We then discuss the technical tools
that will play a central role in this paper:
parallel and series reduction of edges (Section~\ref{subsec.serpar}),
the partial multivariate Tutte polynomials and
``effective weights'' $v_{\rm eff}$ for 2-terminal graphs
(Section~\ref{subsec.partial}),
and the parallel and series composition of 2-terminal graphs
(Section~\ref{subsec.tutte.serpar.2term}).

\subsection{Definition and elementary properties}  \label{subsec.elementary}

Let $G = (V,E)$ be a finite undirected graph
(which may have loops and/or multiple edges).
Then the {\em multivariate Tutte polynomial}\/ of $G$ is the polynomial
\begin{equation}
Z_G(q,\bv) \;=\; \sum_{A \subseteq E} q^{k(A)} \prod_{e \in A} v_e  \;,
\label{multivariatetutte}
\end{equation}
where $q$ and ${\bf v} = \{v_e\}_{e\in E}$ are commuting indeterminates
and $k(A)$ is the number of connected components in the subgraph $(V,A)$.
See \cite{MR2187739} for a review on the multivariate Tutte polynomial.
In this paper we shall sometimes consider $Z_G(q,\bv)$ algebraically
as a polynomial belonging to the polynomial ring $\Z[q,\bv]$ or $\C[q,\bv]$,
but we shall most often take an analytic point of view
and consider $Z_G(q,\bv)$ to be a polynomial function
of the complex variables $q$ and $\{v_e\}$.

If $q$ is a positive integer, then the multivariate Tutte polynomial
is equal to the partition function of the {\em $q$-state Potts model}\/
in statistical mechanics, which is defined by
\begin{equation}
Z_G^{\rm Potts}(q,\bv)   \;=\;
\sum_{ \sigma \colon\, V \to \{ 1,2,\ldots,q \} }
\; \prod_{e \in E}  \,
\biggl[ 1 + v_e \delta(\sigma(x_1(e)), \sigma(x_2(e))) \biggr]
\;.
\label{def.ZPotts}
\end{equation}
where the sum runs over all maps $\sigma\colon\, V \to \{ 1,2,\ldots,q \}$,
the $\delta$ is the Kronecker delta
\begin{equation}
\delta(a,b)   \;=\;    \begin{cases}
1  & \text{if $a=b$} \\
0  & \text{if $a \neq b$}
\end{cases}
\end{equation}
and $x_1(e), x_2(e) \in V$ are the two endpoints of the edge $e$
(in arbitrary order).
More precisely, we have:

\begin{theorem}[Fortuin--Kasteleyn representation of the Potts model]
\label{thm.FK}
\hfill\break
\vspace*{-0.4cm}
\par\noindent
For integer $q \ge 1$,
\begin{equation}
Z_G^{\rm Potts}(q, \bv) \;=\;  Z_G(q, \bv)  \;.
\label{eq.FK.identity}
\end{equation}
That is, the Potts-model partition function
is simply the specialization of the multivariate Tutte polynomial
to $q \in \Z_+$.
\end{theorem}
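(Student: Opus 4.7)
The plan is to prove \reff{eq.FK.identity} by a direct distributive expansion of the edge product in the Potts-model definition \reff{def.ZPotts}, followed by interchange of the two finite summations.

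First, for each edge $e$, I would rewrite the local factor trivially as $1 + v_e \delta_e = \sum_{a_e \in \{0,1\}} v_e^{a_e} \delta_e^{a_e}$, with the abbreviation $\delta_e = \delta(\sigma(x_1(e)), \sigma(x_2(e)))$. Applying this at every edge and reindexing the resulting sum over $a \in \{0,1\}^E$ by the subset $A = \{e : a_e = 1\}$ yields the purely algebraic identity
\begin{equation}
\prod_{e \in E} \bigl[1 + v_e \delta_e\bigr] \;=\; \sum_{A \subseteq E} \Bigl(\prod_{e \in A} v_e\Bigr) \prod_{e \in A} \delta_e \,.
\end{equation}
Substituting this into \reff{def.ZPotts} and swapping the (finite) sums over $\sigma$ and $A$, I obtain
\begin{equation}
Z_G^{\rm Potts}(q,\bv) \;=\; \sum_{A \subseteq E} \Bigl(\prod_{e \in A} v_e\Bigr) \sum_{\sigma \colon V \to \{1,\ldots,q\}} \prod_{e \in A} \delta(\sigma(x_1(e)), \sigma(x_2(e))) \,.
\end{equation}

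The combinatorial heart of the proof is then to evaluate the inner $\sigma$-sum for a fixed $A \subseteq E$. Each Kronecker factor forces $\sigma$ to take equal values at the two endpoints of $e$; by transitivity of equality along paths in the spanning subgraph $(V,A)$, the product of these factors equals $1$ precisely when $\sigma$ is constant on every connected component of $(V,A)$, and $0$ otherwise. Such ``component-constant'' maps $\sigma$ are in bijection with arbitrary assignments of one of $q$ labels to each of the $k(A)$ components, so the inner sum equals $q^{k(A)}$. Plugging this back in and comparing with \reff{multivariatetutte} yields $Z_G^{\rm Potts}(q,\bv) = Z_G(q,\bv)$, as required.

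I do not anticipate any genuine obstacle here: the argument is a single distributive expansion combined with the elementary fact that the Kronecker-delta constraints decouple across the connected components of $(V,A)$. The only point that merits a line of care is the claim that the product of deltas truly enforces ``constant on each component'' rather than some weaker equivalence; this follows immediately from transitivity of equality, and no analytic continuation in $q$ is needed, since the identity is being established separately for each positive integer $q$.
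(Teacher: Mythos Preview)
Your argument is correct and is precisely the standard Fortuin--Kasteleyn expansion; the paper does not write out its own proof but simply cites \cite[Section~2.2]{MR2187739}, where exactly this distributive-expansion argument appears. There is nothing to add.
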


\noindent
See e.g.\ \cite[Section~2.2]{MR2187739} for the easy proof.

We shall adopt the terminology from statistical mechanics
to designate various sets of values for the edge weights $v_e$.
In particular, we shall say that a real weight $v_e$
is {\em ferromagnetic}\/ if $v_e \geq 0$
and {\em antiferromagnetic}\/ if $-1 \leq v_e \leq 0$.
We shall also sometimes say that a complex weight $v_e$
is {\em complex ferromagnetic}\/ if $|1+v_e| \ge 1$
and {\em complex antiferromagnetic}\/ if $|1+v_e| \le 1$.
Finally, we shall say that a set of weights ${\bf v} = \{v_e\}_{e\in E}$
is ferromagnetic or antiferromagnetic if {\em all}\/ of the $v_e$ are.

The {\em zero-temperature limit of the antiferromagnetic Potts model}\/
arises when $v_e = -1$ for all edges $e$:
then $Z_G^{\rm Potts}$ gives weight 1 to each proper coloring
and weight 0 to each improper coloring,
and so counts the proper colorings.
It follows from Theorem~\ref{thm.FK} that the number of proper $q$-colorings
of $G$ is in fact the restriction to $q \in \Z_+$
of a polynomial in $q$, namely the {\em chromatic polynomial}\/
\begin{equation}
P_G(q) \;=\; Z_G(q,\{-1\})  \;.
\end{equation}

The multivariate Tutte polynomial factorizes in a simple way
over connected components and blocks.
If $G$ is the disjoint union of $G_1$ and $G_2$, then trivially
\begin{equation}
Z_G(q,\bv)  \;=\;  Z_{G_1}(q,\bv) \, Z_{G_2}(q,\bv)
\;.
\label{eq.components}
\end{equation}
If $G$ consists of subgraphs $G_1$ and $G_2$ joined at a single cut vertex,
then it is not hard to see \cite[Section~4.1]{MR2187739} that
\begin{equation}
Z_G(q,\bv)  \;=\;  {Z_{G_1}(q,\bv) \, Z_{G_2}(q,\bv)  \over q}
\;.
\label{eq.blocks}
\end{equation}
Therefore, when studying the multivariate Tutte polynomial,
it suffices to restrict attention to {\em nonseparable}\/ graphs $G$.\footnote{
   See Section~\ref{sec.serpar} for a precise definition
   of ``nonseparable'' for graphs that may contain loops.
}

Note also that a loop $e$ contributes a trivial prefactor $1+v_e$
to $Z_G(q,\bv)$.
If $v_e = -1$ (as it is e.g.\ for the chromatic polynomial),
this causes $Z_G$ to be identically zero as a polynomial in $q$;
if $v_e \neq -1$, the loop does not affect the roots of $Z_G$ at all.
Since in this paper we want to allow $v_e = -1$,
we shall assume in our main theorems that the graph $G$ is loopless.

Finally, if $G$ consists of a single vertex and no edges (i.e.\ $G=K_1$),
then $Z_G(q,\bv) = q$.
So we can assume without loss of generality that
$G$ is loopless, nonseparable and contains at least one edge.

There are several reasons why it can be advantageous to consider
the multivariate Tutte polynomial, even when the ultimate goal is
to obtain results on the chromatic polynomial.
The first reason is that $Z_G(q, \bv)$ is {\em multiaffine}\/
in the variables $\bv$ (i.e.\ of degree 1 in each $v_e$ separately);
and often a multiaffine polynomial in many variables is easier to handle
than a general polynomial in a single variable
(e.g.\ it may permit simple proofs by induction
on the number of variables).
Secondly, allowing unequal edge weights $v_e$
permits more flexibility in inductive proofs;
indeed, in some cases the stronger result is much {\em easier}\/ to prove.
In particular, local operations on graphs can be reflected in
local changes to the edge weights of the affected edges,
which is impossible if all edge weights are constrained to be equal.\footnote{
   One striking example of this phenomenon is the three-line proof
   of the multivariate Brown--Colbourn property for series-parallel graphs
   \cite[Remark 3 in Section 4.1]{MR1827809}
   \cite[Theorem 5.6(c) $\Longrightarrow$ (a)]{Royle-Sokal},
   which contrasts with the 20-page proof of the corresponding
   univariate result \cite{Wagner_00}.
   See \cite{Jackson-Sokal_zerofree} for several further instances
   in which results on the chromatic polynomial can be proven
   more easily by working within the more general framework of the
   multivariate Tutte polynomial.
}
In this context, two of the most important such ``local operations''
are parallel and series reductions,
to be discussed in the next subsection.

\subsection{Parallel and series reduction}  \label{subsec.serpar}

We say that edges $e$, $f \in E$ are {\em in parallel}\/
if they connect the same pair of distinct vertices $x$ and $y$.
In this case they can be replaced, without changing the value of
the multivariate Tutte polynomial,
by a single new edge with ``effective weight''
\begin{equation}
v_{\rm eff}  \;=\; (1+v_e) (1+v_f)- 1  \;.
\end{equation}
This operation of replacing two parallel edges by a single edge
is called {\em parallel reduction}\/, and
we write $v_e \pll v_f$ as a shorthand for $(1+v_e)(1+v_f)-1$.

We say that edges $e$, $f \in E$ are
{\em in series (in the narrow sense)}\/\footnote{
Note that this definition of ``edges in series''
is more restrictive than the matroidal definition of elements in series,
but the distinction is not important in our context.
See \cite[Section~4.5]{MR2187739} for further discussion.
}
if there are vertices $x,y,z \in V$ with 
$x \not= y$ and $y\not= z$ such that
$e$ connects $x$ and $y$, $f$ connects $y$ and $z$, and $y$ has degree 2.
In this case, replacing the edges $e$ and $f$
with a single edge of effective weight
\begin{equation}
v_{\rm eff} \;=\; \frac{v_{e} v_{f}} {q+v_{e}+v_{f}}
\label{series_reduction}
\end{equation}
yields a graph whose multivariate Tutte polynomial ---
when multiplied by the prefactor $q+v_{e}+v_{f}$ ---
is the same as that of the original graph,
provided that $q+v_{e}+v_{f} \neq 0$.
More formally, we can consider the new graph
to be obtained from $G$ by contracting $f$,
and we can write
\begin{equation}
Z_G(q, \bv_{\neq e,f}, v_e, v_f)  \;=\;
(q + v_e + v_f) \,
Z_{G / f}(q, \bv_{\neq e,f},
v_e v_f / (q + v_e + v_f))
\;.
\label{eq.series2}
\end{equation}
See \cite[Section~4.5]{MR2187739} for the easy proof.
Naturally this operation is called {\em series reduction}\/,
and we write
\be
v_e \serq v_f  \;=\;
\begin{cases}
\frac{\displaystyle v_{e} v_{f}} {\displaystyle q+v_{e}+v_{f}} 
& \text{\rm if $q+v_{e}+v_{f} \neq 0$}   \\[2mm]
\hbox{undefined}
& \text{\rm if $q+v_{e}+v_{f} = 0$}
\end{cases}
\label{eq.v.serq}
\ee
where ``undefined'' is a special value (not a complex number).
We furthermore declare that any $\pll$ or $\serq$ operation
in which one or both of the inputs is ``undefined''
yields an output that is also ``undefined''.
The operators $\pll$ and $\serq$ are thus maps
$\Chat \times \Chat \to \Chat$,
where $\Chat = \C \cup \{\hbox{undefined}\}$.\footnote{
But see the Remark at the end of this subsection,
as well as Appendix~\ref{app_riemann}.
}

There are other ways to parametrize the edge weights
occurring in the multivariate Tutte polynomial,
and there are often advantages in using the variables
that give the simplest expression for the immediate task at hand.
In particular, in this paper we will use {\em three}\/ sets of variables,
namely the edge weights $\{v_e\}$,
the {\em transmissivities}\/ $\{t_e\}$ defined by
\begin{equation}
t_e \,=\, \frac{v_e}{q+v_e}, \qquad v_e \,=\, \frac{qt_e}{1-t_e} \;,
\label{def_te}
\end{equation}
and a third set of variables $\{y_e\}$ given by
\begin{equation}
y_e \,=\, 1 + v_e, \qquad v_e \,=\, y_e  - 1 \;.
\label{def_ye}
\end{equation}
There are two main reasons for using these different sets of variables.
The first reason is that the variables $\{t_e\}$ and $\{y_e\}$ 
each make one of the operations of series and parallel reduction trivial.
More precisely, let $\pll^V$, $\pll^T$ and $\pll^Y$ denote
the parallel-reduction operation expressed in the $v$, $t$ and $y$ variables,
respectively, and similarly for $\ser^V$, $\ser^T$ and $\ser^Y$. Then we have
\begin{eqnarray}
v_e \pll^V v_f & = &(1+v_e)(1+v_f)-1  \label{eq_vpar} \\[1mm]
v_e \ser_q^V v_e & = &\frac{v_{e} v_{f}} {q+v_{e}+v_{f}}
\label{eq_vser} \\[5mm]
t_e \pll^T_q t_f & = & \frac{t_e + t_f + (q-2) t_e t_f}{1+(q-1) t_e t_f}
\label{eq_tpar} \\[1mm]
t_e \ser^T t_f & = & t_e t_f \\[5mm]
y_e \pll^Y y_f & = & y_e y_f \\[1mm]
y_e \ser_q^Y y_f & = & \frac {q-1+y_ey_f} {q-2+y_e+y_f}
\label{eq_yser}
\end{eqnarray}
where it is understood in \reff{eq_vser}/\reff{eq_tpar}/\reff{eq_yser}
that the result is declared to be ``undefined''
whenever the denominator vanishes, as in \reff{eq.v.serq}.
We have given the operators a $q$-subscript
whenever the corresponding expression depends on $q$.
Note that series reduction is particularly easy in the $t$-variables,
while parallel reduction is particularly easy in the $y$-variables.
We shall also use the obvious notations
\begin{eqnarray}
A \pll^V B    & = &  \{a \pll^V b \colon\;  a \in A,\, b \in B \}  \\[1mm]
A \ser_q^V B  & = &  \{a \ser_q^V b \colon\;  a \in A,\, b \in B \}
\end{eqnarray}
when $A$ and $B$ are subsets of the complex plane,
and analogously for the other variables.

The second reason for introducing these different sets of variables
is that the regions we are attempting to bound have different shapes
in the complex $v$-plane, $t$-plane and $y$-plane,
and we will ultimately choose the variables in which the regions
are the easiest to effectively bound.
Of course, since the maps \reff{def_te} and \reff{def_ye}
are M\"obius transformations, discs in any one of these planes
always map to discs (or their exteriors) in any other one of these planes;
but discs centered at the origin do not in general map
to discs centered at the origin,
and concentric discs do not in general map to concentric discs.
It is convenient, as we shall see, to choose variables in which
we can use discs centered at the origin.

To avoid notational overload, we will normally specify
the variables being used in each section of the paper
and use the convention that $\pll$ and $\ser$ with no superscript
refer to the expressions applicable to the current choice.

\medskip

{\bf Remark.}  The definitions given in this section concerning the
use of the value ``undefined'' are convenient for the main purposes of
this paper, where we will be dealing with regions that belong to the
finite plane simultaneously in both the $v$- and $t$-variables,
but they are somewhat unnatural because the conditions for being ``undefined''
in \reff{eq_vser}/\reff{eq_yser} and \reff{eq_tpar} do {\em not}\/ correspond:
$q+v_e+v_f=0$ is {\em not}\/ equivalent to $1+ (q-1) t_e t_f = 0$.
A more natural approach is to define the operations $\pll$ and $\ser$
on the Riemann sphere $\Cbar = \C \cup \{\infty\}$
in such a way that the conditions for being ``undefined''
are the same no matter which variables are used.
See Appendix~\ref{app_riemann} for a brief description of this approach.
We shall employ this approach in Appendix~\ref{app_tree}
in studying the chromatic roots of leaf-joined trees.

\subsection{Partial multivariate Tutte polynomials and $v_{\rm eff}$
for 2-terminal graphs}   \label{subsec.partial}

A {\em 2-terminal graph}\/ $\sfG = (G,s,t)$
is a graph $G$ with two distinguished vertices $s$ and $t$ ($s \neq t$),
called the {\em terminals}\/.
(We do not insist here that $G$ be connected,
but in practice it always will be.)
Given a 2-terminal graph $(G,s,t)$, we define 
the {\em partial multivariate Tutte polynomials}\/
\begin{eqnarray}
\zgsnt(q,\bv)   & = &
\!\!\!
\sum_{\begin{scarray}
E' \subseteq E \\
E' \, {\rm does\,not\,connect} \, s \, {\rm to} \, t
\end{scarray}
}
\!\!\!\!\!  q^{k(E')} \;  \prod_{e \in E'}  v_e
\label{def.zgsnt} \\[4mm]
\zgst(q,\bv)   & = & \!\!\!
\sum_{\begin{scarray}
E' \subseteq E \\
E' \, {\rm connects} \, s \, {\rm to} \, t
\end{scarray}
}
\!\!\!\!\!  q^{k(E')} \;  \prod_{e \in E'}  v_e
\label{def.zgst}
\end{eqnarray}
{}From \reff{multivariatetutte} we have trivially
\begin{equation}
Z_G(q,\bv)  \;=\;  \zgsnt(q,\bv) \,+\, \zgst(q,\bv)
\label{eq2.G}
\;.
\end{equation}
Since clearly $k(E') \ge 2$ (resp.\ 1)
whenever $E'$ does not connect (resp.\ connects) $s$ to $t$,
it is convenient to define
\begin{eqnarray}
A_{G,s,t}(q, \bv)   & = &  q^{-2} \zgsnt(\qvbf)
\label{def.AGst}  \\[2mm]
B_{G,s,t}(q, \bv)   & = &  q^{-1} \zgst(\qvbf)
\label{def.BGst}
\end{eqnarray}
$A_{G,s,t}(q, \bv)$ and $B_{G,s,t}(q, \bv)$ are thus defined by
sums like \reff{def.zgsnt}/\reff{def.zgst} but in which
only those connected components
{\em not containing one or both of the terminals $s,t$}\/
receive a factor $q$.
We also define the ``effective weight''
\begin{equation}
v_{\rm eff}(G,s,t)  \;\equiv\;
{B_{G,s,t}(q,\bv) \over A_{G,s,t}(q,\bv)}
\;=\;  {q \zgst(q,\bv) \over \zgsnt(q,\bv)}
\;,
\label{def.veff}
\end{equation}
which is a rational function of $q$ and $\{v_e\}$.
[Note that the polynomial $\zgsnt(q,\bv)$ cannot vanish identically,
 because the term $E' = \emptyset$ in \reff{def.zgsnt} contributes
 $q^{|V(G)|}$.]
More precisely:

\begin{lemma}
   \label{lemma_veff_nontrivial}
Let $(G,s,t)$ be a 2-terminal graph.
\begin{itemize}
   \item[(a)]  If $G$ contains an $st$-path, then 
      $v_{\rm eff}(G,s,t)$ is a rational function
      of $q$ and $\{v_e\}$ that depends nontrivially on $\{v_e\}$.
   \item[(b)]  If $G$ does not contain an $st$-path, then 
      $v_{\rm eff}(G,s,t) \equiv 0$.
\end{itemize}
\end{lemma}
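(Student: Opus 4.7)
The plan is to read off both claims directly from the combinatorial definitions \reff{def.zgsnt}--\reff{def.veff}.

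Part (b) is nearly immediate. If $G$ contains no $st$-path, then neither does any spanning subgraph $(V,E')$ with $E' \subseteq E$, since every path in $(V,E')$ uses only edges of $G$. Therefore the sum \reff{def.zgst} is empty, giving $\zgst(q,\bv) \equiv 0$ and hence $v_{\rm eff}(G,s,t) \equiv 0$ via \reff{def.veff}.

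For part (a) I would first check that $v_{\rm eff}$ is a well-defined element of $\C(q,\{v_e\})$: the term $E' = \varnothing$ contributes $q^{|V(G)|}$ to $\zgsnt$, so the denominator in \reff{def.veff} is not identically zero. To see that $\zgst$ is likewise nonzero, pick any $st$-path $P$ in $G$ and let $E_P$ be its edge set. Then $E_P$ appears in the sum \reff{def.zgst}, and since distinct subsets $E'$ produce distinct monomials in the algebraically independent variables $\{v_e\}$, no cancellation is possible: the coefficient of $\prod_{e \in E_P} v_e$ in $\zgst(q,\bv)$ is exactly $q^{k(E_P)}$, which is a nonzero polynomial in $q$ (as $k(E_P) \ge 1$).

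Finally, to rule out the possibility that $v_{\rm eff}$ depends only on $q$, I would argue by contradiction. Suppose $v_{\rm eff}(q,\bv) = h(q) \in \C(q)$. Clearing the denominator of $h$ in the resulting identity $q\,\zgst(q,\bv) = h(q)\,\zgsnt(q,\bv)$ and then specializing to $\bv = \bzero$ gives $0 = h(q)\, q^{|V(G)|}$, which forces $h \equiv 0$ and hence $\zgst \equiv 0$, contradicting the previous paragraph. Since the whole argument is essentially an unfolding of definitions, I do not anticipate any real obstacle; the only minor point to watch is ensuring that the $\bv$-monomial singled out in part (a) cannot be cancelled by other terms, which is guaranteed by the algebraic independence of the edge-weight indeterminates.
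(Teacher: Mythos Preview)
Your proof is correct and takes essentially the same approach as the paper: both hinge on the fact that every monomial of $\zgst$ contains at least one $v_e$ factor while $\zgsnt$ has the $v_e$-free term $q^{|V(G)|}$, so $\zgst$ cannot be a $q$-rational multiple of $\zgsnt$. Your specialization $\bv=\bzero$ is just a clean way to isolate that constant-in-$\bv$ term, equivalent to the paper's direct monomial comparison.
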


\begin{proof}
(a) If $G$ contains an $st$-path, then $B_{G,s,t}(q,\bv) \not\equiv 0$,
and every monomial in $B_{G,s,t}(q,\bv)$ contains at least one factor $v_e$.
On the other hand, $A_{G,s,t}(q,\bv)$ contains a monomial $q^{|V(G)|-2}$
(coming from $E' = \emptyset$) that contains no factors $v_e$.
Therefore, it cannot happen that
$B_{G,s,t}(q,\bv) = f(q) \, A_{G,s,t}(q,\bv)$.

(b) is trivial.
\qed
\end{proof}

\bigskip

{\bf Remarks.}
1.  The ``effective transmissivity''
$t_{{\rm eff}} \equiv v_{{\rm eff}} / (q + v_{{\rm eff}})$
is given by the simple formula
\begin{equation}
   t_{{\rm eff}}(G,s,t)
    \;=\;  {\zgst(q,\bv) \over Z_G(q,\bv)}
\end{equation}
and thus represents the ``probability'' that $s$ is connected to $t$.
In fact, when $\bv \ge 0$ this is a true probability in the
random-cluster model \cite{Grimmett_06}.

2.  If $G$ is a graph and $s,t$ are distinct vertices of $G$,
we define $G/st$ to be the graph in which $s$ and $t$ are contracted
to a single vertex.  (N.B.:  If $G$ contains one or more edges $st$,
then these edges are {\em not}\/ deleted, but become loops in $G/st$.)
It is then easy to see that
\begin{equation}
Z_{G/st}(q,\bv)  \;=\;  \zgst(q,\bv) \,+\, q^{-1} \zgsnt(q,\bv)
\;.
\label{eq2.Gst}
\end{equation}
One convenient way of calculating $\zgst$ and $\zgsnt$
is to first calculate $Z_G$ and $Z_{G/st}$
(for instance, by deletion-contraction)
and then solve \reff{eq2.G}/\reff{eq2.Gst} for $\zgst$ and $\zgsnt$.
See \cite[Section~4.6]{MR2187739} for more information
on the partial multivariate Tutte polynomials.
\qed

\medskip

Let us now justify the name $v_{\rm eff}$ by showing that
when $(G,s,t)$ is inserted inside a larger graph,
it acts essentially (modulo a prefactor) as a single edge
with effective weight $v_{\rm eff}(G,s,t)$.
The precise construction is as follows:
Let $H$ be a graph, and let $e_\star$ be an edge of $H$
with endpoints $a$ and $b$.\footnote{
   The result of Proposition~\ref{prop.veff.blackbox} below
   is valid even when $a=b$ (i.e.\ $e_\star$ is a loop),
   although we will never use it in this situation.
}
Let us denote by $H[(e_\star,a,b) \to (G,s,t)]$
the graph obtained from the disjoint union of $H \setminus e_\star$ and $G$
by identifying  $s$ with $a$ and $t$ with $b$.
So the edge set of $H[(e_\star,a,b) \to (G,s,t)]$
can be identified with $(E(H) \setminus \{e_\star\}) \cup E(G)$.
Now put weights $\bv = \{v_e\}_{e \in E(H)}$ on the edges of $H$
and weights $\bw = \{w_e\}_{e \in E(G)}$ on the edges of $G$,
so that $v_{\rm eff}(G,s,t) = B_{G,s,t}(q,\bw)/A_{G,s,t}(q,\bw)$
is a rational function of $q$ and $\bw$.
We use the notation
$\bv_{\neq e_\star} = \{v_e\}_{e \in E(H) \setminus \{e_\star\}}$
and hence $Z_H(q,\bv) = Z_H(q,\bv_{\neq e_\star},v_{e_\star})$.
We then have:

\begin{proposition}
   \label{prop.veff.blackbox}
When a 2-terminal graph $(G,s,t)$ is inserted into a graph $H$ as above,
\be
   Z_{H[(e_\star,a,b) \to (G,s,t)]}(q,\bv_{\neq e_\star},\bw)
   \;=\;
   A_{G,s,t}(q,\bw) \:
   Z_H(q,\bv_{\neq e_\star},v_{\rm eff}(G,s,t))
   \;.
 \label{eq.prop.veff.blackbox}
\ee
\end{proposition}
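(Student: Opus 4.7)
The plan is to expand both sides of \reff{eq.prop.veff.blackbox} directly from the subgraph-sum definition \reff{multivariatetutte} and match them via a case analysis. Let $K = H[(e_\star,a,b) \to (G,s,t)]$, and decompose any $F \subseteq E(K)$ uniquely as $F = F_H \cup F_G$ with $F_H \subseteq E(H) \setminus \{e_\star\}$ and $F_G \subseteq E(G)$. The central combinatorial observation is a formula for the number of components of $K$ restricted to $F$:
\be
  k_K(F_H \cup F_G) \;=\; k_H(F_H) + k_G(F_G) - 2 + \chi_H\, \chi_G,
\ee
where $\chi_H = 1$ iff $a \leftrightarrow b$ in $F_H$ (and $0$ otherwise), $\chi_G = 1$ iff $s \leftrightarrow t$ in $F_G$, and $k_H$ is computed with respect to the vertex set $V(H)$. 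This formula is obtained by tracking the two identifications $a = s$ and $b = t$ applied to the disjoint union $(V(H), F_H) \sqcup (V(G), F_G)$: the first identification always merges two distinct components, and the second merges two components \emph{unless} $\chi_H = \chi_G = 1$, in which case the two endpoints already lie in a common component after the first identification.

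Introducing the two partial sums on $H \setminus e_\star$,
\be
  U^{\leftrightarrow} \;=\! \sum_{F_H:\, a \leftrightarrow b}\! q^{k_H(F_H)} \!\prod_{e \in F_H}\! v_e, \qquad U^{\not\leftrightarrow} \;=\! \sum_{F_H:\, a \not\leftrightarrow b}\! q^{k_H(F_H)} \!\prod_{e \in F_H}\! v_e,
\ee
a four-way split of $Z_K$ by the values of $(\chi_H, \chi_G)$, with the inner $F_G$-sums identified as $\zgst$ and $\zgsnt$, produces after a short calculation
\be
  q^2 \, Z_{H[(e_\star,a,b) \to (G,s,t)]}(q,\bv_{\neq e_\star},\bw) \;=\; (\zgsnt + q\zgst)\, U^{\leftrightarrow} \,+\, Z_G\, U^{\not\leftrightarrow}.
\ee

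Separately, splitting $Z_H(q, \bv_{\neq e_\star}, v_{e_\star})$ according to whether $e_\star \in F_H$ and noting that adjoining $e_\star$ reduces the component count by $1$ precisely when $a \not\leftrightarrow b$ in $F_H \setminus \{e_\star\}$ gives
\be
  Z_H(q, \bv_{\neq e_\star}, v_{e_\star}) \;=\; (1 + v_{e_\star})\, U^{\leftrightarrow} \,+\, q^{-1}(q + v_{e_\star})\, U^{\not\leftrightarrow}.
\ee
Substituting $v_{e_\star} = v_{\rm eff}(G,s,t) = q\zgst/\zgsnt$ and multiplying through by $A_{G,s,t}(q,\bw) = q^{-2}\zgsnt$ reproduces exactly the expression for $Z_K$ above, which is precisely \reff{eq.prop.veff.blackbox}. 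The rational substitution is legitimate because $\zgsnt$ is not the zero polynomial (Lemma \ref{lemma_veff_nontrivial}); clearing denominators converts the identity into one in $\Z[q, \bv, \bw]$.

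The main obstacle is the component-count formula in the first step; in particular, one must isolate the doubly-connected case in which the second vertex identification becomes redundant. Everything afterwards is routine algebraic matching.
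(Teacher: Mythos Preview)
Your proof is correct and follows essentially the same approach as the paper's, namely a bijective splitting of subsets of $E(K)$ according to whether the $G$-part connects the terminals. The only difference is presentation: you work out the component-count formula $k_K = k_H + k_G - 2 + \chi_H\chi_G$ explicitly and carry the four cases through by hand, whereas the paper absorbs that bookkeeping into the normalizations $A_{G,s,t} = q^{-2}\zgsnt$ and $B_{G,s,t} = q^{-1}\zgst$ and states the correspondence in two sentences.
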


\begin{proof}
The sets $A \subseteq (E(H) \setminus \{e_\star\}) \cup E(G)$
contributing to the multivariate Tutte polynomial \reff{multivariatetutte}
of $H[(e_\star,a,b) \to (G,s,t)]$
can be classified according to whether $a$ is or is not connected to $b$
via edges in $E(G)$.
Those that do not connect $a$ to $b$ give a factor $A_{G,s,t}(q,\bw)$
and correspond to the sets $A' \not\ni e_\star$
contributing to the multivariate Tutte polynomial \reff{multivariatetutte}
of $H$,
while those that connect $a$ to $b$ give a factor $B_{G,s,t}(q,\bw)$
and correspond to the sets $A' \ni e_\star$
contributing to the multivariate Tutte polynomial \reff{multivariatetutte}
of $H$.
Since $v_{\rm eff}(G,s,t) = B_{G,s,t}(q,\bw)/A_{G,s,t}(q,\bw)$,
the formula \reff{eq.prop.veff.blackbox} is an immediate consequence
of this correspondence.
\qed
\end{proof}

\medskip

{\bf Remarks.}
1.  The graphical construction of inserting $(G,s,t)$ inside $H$
depends on the chosen order of endpoints for the edge $e_\star$,
but the resulting multivariate Tutte polynomial does not.
That is, $H[(e_\star,a,b) \to (G,s,t)]$ and $H[(e_\star,b,a) \to (G,s,t)]$
are in general nonisomorphic as graphs,
but Proposition~\ref{prop.veff.blackbox}
shows that they have the same multivariate Tutte polynomial.

2.  The formula \reff{eq.series2} for series reduction is
a special case of \reff{eq.prop.veff.blackbox},
in which the inserted graph $(G,s,t)$ is a two-edge path.

\subsection{Parallel and series composition of 2-terminal graphs}
\label{subsec.tutte.serpar.2term}

If $\sfG_1 = (G_1,s_1,t_1)$ and $\sfG_2 = (G_2,s_2,t_2)$ 
are 2-terminal graphs on disjoint vertex sets,
then their {\em parallel composition}\/ is the 2-terminal graph
\begin{equation}
\sfG_1 \pll \sfG_2 \;=\; (H,s_1, t_1)
\end{equation}
where $H$ is obtained from $G_1 \cup G_2$ by identifying
$s_2$ with $s_1$ and $t_2$ with $t_1$,
and their {\em series composition}\/ is the 2-terminal graph
\begin{equation}
\sfG_1 \ser \sfG_2 \;=\; (H, s_1, t_2)
\end{equation}
where $H$ is obtained from $G_1 \cup G_2$ by identifying $t_1$ with $s_2$.
For future use (see Section~\ref{subsec.decomp}),
let us say that a 2-terminal graph is {\em prime}\/
if it cannot be written as the parallel or series composition of
two strictly smaller 2-terminal graphs.\footnote{
   We say ``{\em strictly}\/ smaller'' because every 2-terminal graph $\sfG$
   can be written as $\sfG = \sfG \pll \sfKbar_2$
   where $\sfKbar_2$ is the graph with two vertices (the terminals)
   and no edges.  It is to exclude this trivial type of parallel composition
   that we write ``each have at least one edge''
   in Lemmas~\ref{lem_niceness_0}(d) and \ref{lem_serpar_0}(c).
   In Section~\ref{subsec.decomp} and thereafter,
   this trivial case will be excluded by requiring that
   all graphs appearing in a decomposition tree be connected.
   We could avoid all these technicalities by requiring connectedness
   from the start, but we refrain from doing so because connectedness
   plays no role in the formulae of the present section.
 \label{footnote_trivial_parallel}
}

\bigskip

{\bf Remark.}  We trivially have $\sfG_1 \pll \sfG_2 = \sfG_2 \pll \sfG_1$.
On the other hand, $\sfG_1 \ser \sfG_2 \neq \sfG_2 \ser \sfG_1$,
if only because the terminals are different in the two cases;
moreover, even the graphs underlying
$\sfG_1 \ser \sfG_2$ and $\sfG_2 \ser \sfG_1$
(ignoring the terminals) need not be isomorphic,
as can be seen by simple examples.
But this subtlety will play no role in this paper,
because $\sfG_1 \ser \sfG_2$ and $\sfG_2 \ser \sfG_1$
will have the same multivariate Tutte polynomial;
indeed, they will have the same partial multivariate Tutte polynomials
\reff{def.zgsnt}/\reff{def.zgst} and hence also the same $v_{\rm eff}$.
This is a reflection of the fact that the multivariate Tutte polynomial
of a graph $G$ depends only on the graphic matroid $M(G)$
[except for an overall prefactor $q^{|V(G)|}$]
and that series connection of {\em matroids}\/
does not depend on any orientation.
\qed

\bigskip

Let us now show how the partial multivariate Tutte polynomials
$\zgsnt$ and $\zgst$ of a parallel or series composition
of 2-terminal graphs $(G_1,s_1,t_1)$ and $(G_2,s_2,t_2)$
can be computed from the partial multivariate Tutte polynomials
of the two input graphs.
It is convenient to use the modified partial multivariate Tutte polynomials
$A_{G,s,t}$ and $B_{G,s,t}$ defined in \reff{def.AGst}/\reff{def.BGst}.

\begin{proposition}
   \label{prop.serpar.AB}
\quad \par
\begin{itemize}
\item[(a)]  Consider a parallel composition
$(G,s,t) \,=\, (G_1,s_1,t_1) \,\pll\, (G_2,s_2,t_2)$.
Writing $A = A_{G,s,t}$ and $A_i = A_{G_i,s_i,t_i}$ for $i=1,2$
and likewise for $B$, we have
\begin{eqnarray}
A    & = &   A_1 A_2   
\label{eq.par.A12}  \\[2mm]
B    & = &   A_1 B_2 + A_2 B_1 + B_1 B_2
\label{eq.par.B12}
\end{eqnarray}
and in particular
\be
   \hspace*{-1.2cm}
   A+B  \;=\;  (A_1 + B_1) (A_2 + B_2)
\label{eq.par.A+B}
\ee
and
\be
  \,
v_{\rm eff}(G,s,t)  \;=\;
v_{\rm eff}(G_1,s_1,t_1) \,\pll\, v_{\rm eff}(G_2,s_2,t_2)
\;.
\label{eq.par.veff}
\ee
\item[(b)]  Consider a series composition
$(G,s,t) \,=\, (G_1,s_1,t_1) \,\ser\, (G_2,s_2,t_2)$.
Writing $A = A_{G,s,t}$ and $A_i = A_{G_i,s_i,t_i}$ for $i=1,2$
and likewise for $B$, we have
\begin{eqnarray}
A  & = &   A_1 B_2 + A_2 B_1 + q A_1 A_2
\label{eq.ser.A12}  \\[2mm]
B  & = &   B_1 B_2
\label{eq.ser.B12}
\end{eqnarray}
and in particular
\be
   \hspace*{-1.2cm}
   qA+B  \;=\;  (qA_1 + B_1) (qA_2 + B_2)
\label{eq.ser.qA+B}
\ee
and
\be
  \:
v_{\rm eff}(G,s,t)  \;=\;
v_{\rm eff}(G_1,s_1,t_1) \,\serq\, v_{\rm eff}(G_2,s_2,t_2)
\;.
\label{eq.ser.veff}
\ee
\end{itemize}
\end{proposition}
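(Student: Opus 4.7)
The plan is to prove parts (a) and (b) by direct expansion of the definitions \reff{def.zgsnt}/\reff{def.zgst}/\reff{def.AGst}/\reff{def.BGst}. Any subgraph $E' \subseteq E(G)$ in a parallel or series composition decomposes uniquely as $E' = E'_1 \sqcup E'_2$ with $E'_i \subseteq E(G_i)$, and we can partition the sum according to whether each $E'_i$ connects its own terminals $s_i$ to $t_i$ in $G_i$. This gives four cases, and in each case we need to (i) determine whether $s$ is connected to $t$ in $G$, and (ii) compute $k(E')$ in terms of $k(E'_1)$ and $k(E'_2)$.

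For the parallel composition, the identifications $s_1 = s_2$ and $t_1 = t_2$ merge the $s_i$-components (and similarly the $t_i$-components) across $G_1$ and $G_2$. A careful count shows: if neither $E'_i$ connects $s_i$ to $t_i$, then $s \not\leftrightarrow t$ in $G$ and the number of components not containing $s$ or $t$ equals $(k_1-2)+(k_2-2)$, yielding contribution $A_1 A_2$ to $A$; the three remaining cases all give $s \leftrightarrow t$ in $G$ and contribute $A_1 B_2$, $A_2 B_1$, and $B_1 B_2$ respectively to $B$. This gives \reff{eq.par.A12}--\reff{eq.par.B12}. For the series composition, the middle vertex $m = t_1 = s_2$ plays a crucial role: if neither $E'_i$ connects its terminals then $m$ sits in its own ``middle component'' that is not a special component of $(G,s,t)$, introducing an extra factor of $q$ and yielding the $qA_1 A_2$ term in \reff{eq.ser.A12}; the other three cases follow by analogous bookkeeping, noting that $s \leftrightarrow t$ in $G$ if and only if \emph{both} $E'_1$ connects $s_1$ to $t_1$ and $E'_2$ connects $s_2$ to $t_2$.

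Once \reff{eq.par.A12}--\reff{eq.par.B12} and \reff{eq.ser.A12}--\reff{eq.ser.B12} are in hand, the factorizations \reff{eq.par.A+B} and \reff{eq.ser.qA+B} are immediate algebraic consequences. The $v_{\rm eff}$ formulas then follow purely formally from $v_{\rm eff}(G_i,s_i,t_i) = B_i/A_i$: for the parallel case, $(1+B_1/A_1)(1+B_2/A_2) - 1 = (A_1 B_2 + A_2 B_1 + B_1 B_2)/(A_1 A_2)$, matching $B/A$ and confirming \reff{eq.par.veff}; for the series case, $(B_1 B_2/(A_1 A_2))/(q + B_1/A_1 + B_2/A_2) = B_1 B_2/(qA_1 A_2 + A_1 B_2 + A_2 B_1)$, matching $B/A$ and confirming \reff{eq.ser.veff}, with the understanding that the series $\serq$ is declared ``undefined'' precisely when the denominator $qA_1A_2 + A_1B_2 + A_2B_1$ vanishes.

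The main (and essentially only) obstacle is the component bookkeeping in the series case, where one must be careful that the vertex $m$ is not a terminal of $G$ and hence the component containing $m$ (when $s,t$ are all separated from $m$) does contribute a factor of $q$ in $Z_G$ that is not absorbed by the $q^{-2}$ prefactor defining $A$. This is what produces the otherwise mysterious $qA_1 A_2$ term. Once this is handled correctly, the remainder of the proof is routine algebra.
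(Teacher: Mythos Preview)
Your proposal is correct and follows essentially the same approach as the paper: decompose each spanning subgraph as $E' = E'_1 \sqcup E'_2$, split into four cases according to whether each $E'_i$ connects its own terminals, and track the non-terminal components, with the key observation being the extra non-terminal component at the inner vertex $m = t_1 = s_2$ in the series case when both $E'_i$ fail to connect their terminals. The paper's proof is slightly terser (it phrases the bookkeeping directly in terms of ``non-terminal components'' summing additively, with the one exception in the series case) but the content is identical.
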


\begin{proof}
We recall that $A_{G,s,t}(q, \bv)$ and $B_{G,s,t}(q, \bv)$ are defined by
sums like \reff{def.zgsnt}/\reff{def.zgst} but in which
only those connected components
not containing one or both of the terminals $s,t$
(let us call these ``non-terminal components'') receive a factor $q$.

For a parallel composition,
$s$ is connected to $t$ in a spanning subgraph of $G$
if and only if it is connected in the corresponding spanning subgraph
of $G_1$ or $G_2$ or both;
and the number of non-terminal components in $G$
is the sum of those in $G_1$ and $G_2$.
This proves \reff{eq.par.A12}/\reff{eq.par.B12};
then \reff{eq.par.A+B} and \reff{eq.par.veff} are an immediate consequence.

For a series composition,
$s$ is connected to $t$ in a spanning subgraph of $G$
if and only if $s_i$ is connected to $t_i$
in the corresponding spanning subgraph of $G_i$
for {\em both}\/ $i=1$ and $i=2$;
and the number of non-terminal components in $G$
is the sum of those in $G_1$ and $G_2$
{\em except that}\/
there is an extra non-terminal component containing the
``inner terminal'' $s_2 = t_1$
whenever $s_i$ is disconnected from $t_i$ in $G_i$
for {\em both}\/ $i=1$ and $i=2$
[this explains the factor $q$ in front of $A_1 A_2$ in \reff{eq.ser.A12}].
This proves \reff{eq.ser.A12}/\reff{eq.ser.B12};
then \reff{eq.ser.qA+B} and \reff{eq.ser.veff} are an immediate consequence.
\qed
\end{proof}

Of course, it is no accident that $v_{\rm eff}$ satisfies
\reff{eq.par.veff} and \reff{eq.ser.veff}
under parallel and series composition:
by Proposition~\ref{prop.veff.blackbox},
$v_{\rm eff}$ {\em must}\/ behave under parallel and series composition
exactly like the parallel and series connection of single edges.
Indeed, this argument gives an alternate way of proving
\reff{eq.par.veff} and \reff{eq.ser.veff}.

\section{Series-parallel graphs and decomposition trees}  \label{sec.serpar}

In this section we begin by making some further remarks on series and parallel
composition of 2-terminal graphs (Section~\ref{subsec.nice});
we then discuss series-parallel graphs (Section~\ref{subsec.serpargraphs}),
decomposition trees for 2-terminal graphs (Section~\ref{subsec.decomp}),
and the use of decomposition trees to compute the multivariate
Tutte polynomial (Section~\ref{subsec.computing}).
Finally, we introduce an important family of example graphs,
the leaf-joined trees (Section~\ref{subsec.leaf-joined}).

Before starting, however,
we need to clarify our usage of the term ``nonseparable''
as concerns graphs with loops.
So let us call a graph {\em separable}\/ if it is either disconnected
or can be obtained by gluing at a vertex two graphs that each have
at least one edge;  otherwise we call it {\em nonseparable}\/.
Equivalently, a graph is nonseparable if it is either 
a single vertex with no edges,
a single vertex with a single loop,
a pair of vertices connected by one or more edges,
or a 2-connected graph.
Note in particular that, in our definition,
a nonseparable graph must be loopless
unless it consists of a single vertex with a single loop.
(By contrast, the usual definition of ``separable'' for connected graphs ---
namely, a graph with a cut-vertex --- deems a single vertex with
multiple loops to be nonseparable.
This definition has the disadvantage of not being invariant under
planar duality.)
Our definition of ``nonseparable'' agrees with
the usual definition when restricted to loopless graphs.

\subsection{Nice 2-terminal graphs}  \label{subsec.nice}

As preparation for a more detailed study of
series and parallel composition of 2-terminal graphs,
we wish to single out a class of 2-terminal graphs that are ``well behaved''
in the sense that they connect the terminals without containing
``dangling ends''.
More precisely, let us say that a 2-terminal graph $(G,s,t)$ is {\em nice}\/
if $G$ is connected and $G+st$ is nonseparable.
(Here $G+st$ denotes the graph obtained from $G$ by adding a new edge
 from $s$ to $t$, irrespective of whether or not such an edge was
 already present.)
Equivalently, $(G,s,t)$ is nice if either
$G$ is nonseparable or else $G$ is a block path (with more than one block)
in which $s$ lies in one endblock and $t$ in the other
and neither of them is a cut vertex.
In the latter case $(G,s,t)$ can be written uniquely
as a series composition $\sfH_1 \ser \sfH_2 \ser \cdots \ser \sfH_k$
where $k \ge 2$ and all the $\sfH_i$ are nonseparable.\footnote{
   Saying ``$\sfH_i$ is nonseparable'' is a convenient shorthand for
   the more precise but pedantic statement
   ``$\sfH_i = (H_i,s_i,t_i)$ with $H_i$ nonseparable''.
   In what follows we shall repeatedly use this shorthand
   in order to avoid ponderous locutions.
}
Conversely, if $(G,s,t)$ is nice and {\em not}\/ the series composition
of two smaller 2-terminal graphs, then $G$ must be nonseparable.

The following facts are easily verified:

\begin{lem}
  \label{lem_niceness_0}
Let $\sfG_1 = (G_1,s_1,t_1)$ and $\sfG_2 = (G_2,s_2,t_2)$ 
be 2-terminal graphs.  Then:
\begin{itemize}
   \item[(a)]  The series composition $\sfG_1 \ser \sfG_2$
      is always separable.
   \item[(b)]  The series composition $\sfG_1 \ser \sfG_2$ is nice
      if and only if both $\sfG_1$ and $\sfG_2$ are nice.
   \item[(c)]  If $\sfG_1$ and $\sfG_2$ are nice,
      then the parallel composition $\sfG_1 \pll \sfG_2$
      is nonseparable (and hence also nice).
   \item[(d)]  Conversely, if $G_1$ and $G_2$ each have at least one edge
      and the parallel composition $\sfG_1 \pll \sfG_2$ is nice,
      then $\sfG_1$ and $\sfG_2$ are both nice
      (and hence $\sfG_1 \pll \sfG_2$ is nonseparable).
\end{itemize}
In particular, any 2-terminal graph formed by successive
series and parallel compositions of nice 2-terminal graphs is nice.
\end{lem}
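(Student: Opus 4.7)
The plan is to leverage the equivalent characterization of niceness stated just before the lemma: $(G,s,t)$ is nice iff either $G$ is nonseparable, or $G$ is a block-path (with more than one block) in which $s$ and $t$ lie in opposite endblocks as non-cut vertices. For part (a), I will observe that in $\sfG_1 \ser \sfG_2$ the identification vertex $v = t_1 = s_2$ separates $V(G_1) \setminus \{t_1\}$ from $V(G_2) \setminus \{s_2\}$, and both of these sets are nonempty since $s_i \neq t_i$. So either $v$ is a cut vertex or the composition is outright disconnected, and in either case the graph is separable.

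For part (b), the forward direction is just concatenation: if $\sfG_i = \sfH_i^{(1)} \ser \cdots \ser \sfH_i^{(k_i)}$ are the block-path decompositions (with each $\sfH_i^{(j)}$ nonseparable and $k_i \ge 1$), then joining the two lists displays $\sfG_1 \ser \sfG_2$ as a block-path with $k_1 + k_2 \ge 2$ nonseparable blocks, with $s_1$ and $t_2$ in the terminal blocks as non-cut vertices. For the reverse direction, I use part (a) to conclude that $\sfG_1 \ser \sfG_2$ must be a genuine block-path (rather than a single nonseparable block), and the argument of (a) shows that the inner vertex $v = t_1 = s_2$ is necessarily one of the block-path hinges. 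Splitting the block-path at $v$ then yields block-path decompositions of $\sfG_1$ and $\sfG_2$ witnessing their niceness.

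For (c), I will directly prove that $G_1 \pll G_2$ itself is nonseparable; niceness then follows because adding an edge to a nonseparable graph preserves nonseparability. This is a case analysis on the removed vertex $v$: if $v \in \{s,t\}$, then niceness of each $\sfG_i$ makes $G_i - v$ connected (since $v$ is a non-cut vertex of its endblock) and still containing the other terminal, so gluing at the surviving terminal gives connectivity; if $v$ is internal to one side, say $G_1$, then the block-path structure of $G_1$ ensures that each component of $G_1 - v$ contains at least one of $s, t$, and these components all merge through the connected graph $G_2$, which contains both terminals.

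For (d), the key tool is the observation that $(G_1 \pll G_2) + st$ is naturally isomorphic to the edge-gluing $H_1 \cup_e H_2$ of $H_i = G_i + s_i t_i$ along the newly-added $st$ edges (identified with each other). Under the hypothesis that each $G_i$ has at least one edge, niceness of $\sfG_i$ is equivalent to nonseparability of $H_i$, so the task reduces to showing that nonseparability of $H_1 \cup_e H_2$ forces nonseparability of both factors. Arguing the contrapositive, if $H_1$ has a cut vertex $v \neq s, t$, then in $H_1 - v$ the terminals $s, t$ remain together in one component via the surviving $st$ edge, while any other component of $H_1 - v$ is an ``orphan'' that the attached $H_2$ cannot reach (since $H_2$ only touches $H_1$ at $\{s,t\}$); so $v$ remains a cut vertex of the glued graph. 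The cases $v \in \{s, t\}$ and $H_1$ disconnected are handled by analogous orphan arguments. The main technical obstacle I anticipate is the care required in degenerate cases --- edgeless graphs, two-vertex graphs, and the trivial parallel composition $\sfG \pll \sfKbar_2$ --- but the hypothesis of at least one edge in (d) is precisely what rules these out. The concluding ``in particular'' clause then follows by induction on the depth of the composition, with base case $K_2$ (trivially nice) and inductive step supplied by (b) and (c).
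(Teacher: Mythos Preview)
The paper does not actually prove this lemma; it simply introduces it with ``The following facts are easily verified'' and states the four parts without argument. Your proposal therefore supplies what the paper omits, and the arguments you outline are correct. One small slip: in the final ``in particular'' clause you cite $K_2$ as the base case, but the lemma allows arbitrary nice $2$-terminal graphs as the starting data, so the base case of your induction should be ``any nice $2$-terminal graph'' (which is nice by hypothesis), not specifically $K_2$; the inductive step via (b) and (c) is then exactly as you say.
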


\subsection{Series-parallel graphs}  \label{subsec.serpargraphs}

In the literature one can find two slightly different concepts of
``series-parallel graph'':
one applying to graphs, and the other applying to 2-terminal graphs.
In this paper we shall need to use {\em both}\/ of these concepts.
We therefore begin by reviewing the two definitions
and the theorems relating them.

In Section~\ref{subsec.tutte.serpar.2term}
we defined the parallel and series composition of 2-terminal graphs.
We now define a {\em 2-terminal series-parallel graph}\/
to be a 2-terminal graph that is either $K_2$
(with the two vertices as terminals)
or else the parallel or series composition of
two smaller 2-terminal series-parallel graphs.
Note that a 2-terminal series-parallel graph is always loopless.
Note also that if $(G,s,t)$ is 2-terminal series-parallel,
then it is nice, i.e.\ $G$ is connected and $G+st$ is nonseparable:
this is an immediate consequence of Lemma~\ref{lem_niceness_0}
and the fact that $K_2$ is nice.

For 2-terminal series-parallel graphs
we have the following analogue of Lemma~\ref{lem_niceness_0}:

\begin{lem}
  \label{lem_serpar_0}
Let $\sfG_1 = (G_1,s_1,t_1)$ and $\sfG_2 = (G_2,s_2,t_2)$ 
be 2-terminal graphs.  Then:
\begin{itemize}
   \item[(a)]  The series composition $\sfG_1 \ser \sfG_2$
      is 2-terminal series-parallel
      if and only if both $\sfG_1$ and $\sfG_2$
      are 2-terminal series-parallel.
   \item[(b)]  If $\sfG_1$ and $\sfG_2$ are 2-terminal series-parallel,
      then the parallel composition $\sfG_1 \pll \sfG_2$
      is 2-terminal series-parallel.
   \item[(c)]  Conversely, if $G_1$ and $G_2$ each have at least one edge
      and the parallel composition $\sfG_1 \pll \sfG_2$ is
      2-terminal series-parallel,
      then $\sfG_1$ and $\sfG_2$ are 2-terminal series-parallel.
\end{itemize}
\end{lem}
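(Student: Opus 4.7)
The implications (a)~$\Leftarrow$ and (b) are immediate from the recursive definition of 2-terminal series-parallel graphs. I would establish the nontrivial directions (a)~$\Rightarrow$ and (c) together by simultaneous strong induction on $|E(G)|$. The base case is vacuous: under the hypotheses of (a)~$\Rightarrow$ the graph $\sfG$ has at least three vertices, and under the hypotheses of (c) it has at least two edges, so in either case $\sfG \neq K_2$. The main obstacle will be that a 2-terminal series-parallel graph may admit multiple series or parallel decompositions, so the given decomposition and the one supplied by the recursive definition of ``2-terminal series-parallel'' need not coincide; they must be reconciled via a common refinement (two cut vertices on the same $st$-path in the series case, or a cross-intersection of edge partitions in the parallel case) so that the inductive hypothesis can be applied to strictly smaller 2-terminal subgraphs.

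For (a)~$\Rightarrow$, suppose $\sfG = \sfG_1 \ser \sfG_2$ is 2-terminal series-parallel, hence nice and connected; then both $G_i$ must have at least one edge (else $\sfG$ would be disconnected), and $v := t_1 = s_2$ is a cut vertex of $\sfG$. By the recursive definition, $\sfG = \sfH_1 \star \sfH_2$ for some $\star \in \{\ser, \pll\}$ and strictly smaller 2-terminal series-parallel graphs $\sfH_i$. The parallel case is ruled out by Lemma~\ref{lem_niceness_0}(c), which forces $\sfH_1 \pll \sfH_2$ to be nonseparable, contradicting the existence of $v$. So $\sfG = \sfH_1 \ser \sfH_2$ with internal cut vertex $u = t_{H_1} = s_{H_2}$. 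If $u = v$, the uniqueness of series decomposition at a given $st$-cut vertex yields $\sfG_i = \sfH_i$, and we are done. If $u \neq v$, then after possibly interchanging the roles of $\sfG_1$ and $\sfG_2$ the vertex $u$ lies strictly between $s$ and $v$ on the $st$-block-path of $\sfG$, and we can further cut $\sfH_2$ at $v$ to write $\sfH_2 = (\sfH_2 \cap \sfG_1) \ser \sfG_2$. Applying (a)~$\Rightarrow$ inductively to $\sfH_2$ (which is strictly smaller than $\sfG$) gives that both $\sfH_2 \cap \sfG_1$ and $\sfG_2$ are 2-terminal series-parallel, whence $\sfG_1 = \sfH_1 \ser (\sfH_2 \cap \sfG_1)$ is 2-terminal series-parallel by (a)~$\Leftarrow$.

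For (c), suppose $\sfG = \sfG_1 \pll \sfG_2$ is 2-terminal series-parallel with both $G_i$ having at least one edge. By Lemma~\ref{lem_niceness_0}(d) both $\sfG_i$ are nice and $\sfG$ is nonseparable; by the recursive definition $\sfG = \sfH_1 \star \sfH_2$ for strictly smaller 2-terminal series-parallel $\sfH_i$, and the series case is excluded by nonseparability, so $\sfG = \sfH_1 \pll \sfH_2$. To reconcile the two parallel decompositions, define $\sfH_j \cap \sfG_i$ to be the 2-terminal graph with edge set $E(H_j) \cap E(G_i)$ and terminals $s, t$. Because $V(G_1) \cap V(G_2) = \{s, t\}$, we obtain valid parallel decompositions $\sfH_j = (\sfH_j \cap \sfG_1) \pll (\sfH_j \cap \sfG_2)$. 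Applying the inductive hypothesis of (c) to each $\sfH_j$ (strictly smaller than $\sfG$) shows that each $\sfH_j \cap \sfG_i$ is either edgeless or 2-terminal series-parallel. Since $\sfG_i$ has at least one edge, in the representation $\sfG_i = (\sfH_1 \cap \sfG_i) \pll (\sfH_2 \cap \sfG_i)$ either one piece is edgeless and $\sfG_i$ equals the other (which is 2-terminal series-parallel), or both pieces are 2-terminal series-parallel and then $\sfG_i$ is 2-terminal series-parallel by~(b).
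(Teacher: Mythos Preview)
Your proof is correct and follows the same overall strategy as the paper---both observe that the last operation in the recursive construction of $\sfG$ must be series in case~(a) (since a parallel composition of nice graphs is nonseparable) and parallel in case~(c) (since $\sfG$ is nonseparable by Lemma~\ref{lem_niceness_0}(d))---but the way the two decompositions are reconciled differs.

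The paper first passes to a \emph{canonical} finest decomposition: in~(a) it writes the nice separable graph $\sfG$ uniquely as $\sfH_1 \ser \cdots \ser \sfH_k$ with each $\sfH_i$ nonseparable (its blocks), and in~(c) it writes the nonseparable $\sfG$ uniquely as $\sfH_1 \pll \cdots \pll \sfH_k$ with each $\sfH_i$ parallel-indecomposable (its $st$-bridges). Both the given decomposition $\sfG_1, \sfG_2$ and the recursive one must then be coarsenings of this canonical one, and an induction on $k$ shows all the $\sfH_i$ are 2-terminal series-parallel. Your approach instead inducts directly on $|E(G)|$ and, in the parallel case, forms the four cross-intersections $\sfH_j \cap \sfG_i$, verifying that each $\sfH_j$ and each $\sfG_i$ splits as the parallel composition of its two pieces. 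This is more elementary in that it avoids invoking the $st$-bridge decomposition, at the cost of having to check by hand that the intersections yield valid parallel splittings (which you do, using $V(G_1)\cap V(G_2)=\{s,t\}$). The paper's route is slightly cleaner once the block/$st$-bridge machinery is in hand; yours is more self-contained.

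One small point of phrasing: in~(a), ``interchanging the roles of $\sfG_1$ and $\sfG_2$'' is not literally licit since series composition is not symmetric, but your intent---that WLOG $u$ lies in the $\sfG_1$ portion of the block path, else run the symmetric argument with $u$ in $\sfG_2$---is clear and correct.
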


\begin{proof}
The ``if'' part of (a) is obvious.
For the ``only if'', we observe that if $\sfG = \sfG_1 \ser \sfG_2$
is 2-terminal series-parallel, then it is nice and separable
and hence can be written uniquely as
$\sfH_1 \ser \sfH_2 \ser \cdots \ser \sfH_k$ with $k \ge 2$
and all the $\sfH_i$ nonseparable;
moreover, we must have $\sfG_1 = \sfH_1 \ser \cdots \ser \sfH_\ell$
and $\sfG_2 = \sfH_{\ell+1} \ser \cdots \ser \sfH_k$ for some $\ell$.
We now claim that all the $\sfH_i$ are 2-terminal series-parallel
(so that $\sfG_1$ and $\sfG_2$ are as well),
and we shall prove this by induction on $k$.
If $k=2$, the last operation in the series-parallel construction of $\sfG$
must have been the series connection of $\sfH_1$ with $\sfH_2$,
so $\sfH_1$ and $\sfH_2$ must be 2-terminal series-parallel.
If $k>2$, then the last operation in the series-parallel construction of $\sfG$
must have been the series connection of
$\sfH_1 \ser \cdots \ser \sfH_m$ with
$\sfH_{m+1} \ser \cdots \ser \sfH_k$ for some $m$,
so both of these must be 2-terminal series-parallel;
and since they each have less than $k$ blocks, the inductive hypothesis
implies that all the $\sfH_i$ are 2-terminal series-parallel.

(b) is obvious.
For (c), we observe that $\sfG = \sfG_1 \pll \sfG_2$ is nice,
and hence by Lemma~\ref{lem_niceness_0}(d) it is nonseparable.
Now any nonseparable 2-terminal graph $\sfG$
can be written uniquely (modulo ordering) as
$\sfG = \sfH_1 \pll \sfH_2 \pll \cdots  \pll \sfH_k$
where none of the $\sfH_i$ can be further decomposed as a
nontrivial parallel composition.\footnote{
   We say that a parallel composition is {\em nontrivial}\/
   if each of the graphs occurring in it has at least one edge.
   See footnote~\ref{footnote_trivial_parallel} above.
}
(The summands $\sfH_i$ are the $st$-bridges in $G$.)
An argument essentially identical to the one used in part (a)
shows that if $\sfG$ is 2-terminal series-parallel,
then all the $\sfH_i$ are 2-terminal series-parallel,
and moreover  $\sfG_1$ and $\sfG_2$ are obtained by
parallel composition of some (complementary) nonempty subsets of the $\sfH_i$.
\qed
\end{proof}

\bigskip

Let us now turn to the definition of ``series-parallel graph''
{\em tout court}\/.
Unfortunately, there seems to be no completely standard
definition of ``series-parallel graph'';
a plethora of slightly different definitions can be found in the literature
\cite{MR0175809,Colbourn_87,MR849395,Oxley_92,MR1686154,Royle-Sokal}.
So let us be completely precise about our own usage:
we shall call a loopless graph {\em series-parallel}\/
if it can be obtained from a forest by a finite (possibly empty)
sequence of series and parallel extensions of edges
(i.e.\ replacing an edge by two edges in series or two edges in parallel).
We shall call a general graph (allowing loops) series-parallel
if its underlying loopless graph is series-parallel.
Some authors write ``obtained from a tree'', ``obtained from $K_2$''
or ``obtained from $C_2$'' in place of ``obtained from a forest'';
in our terminology these definitions yield, respectively,
all {\em connected}\/ series-parallel graphs,
all connected series-parallel graphs whose blocks form a path,
or all {\em nonseparable}\/ series-parallel graphs
with the exception of $K_2$.
See \cite[Section 11.2]{MR1686154} for a more extensive bibliography.

%

The precise relationship between the 2-terminal and pure-graph
definitions of ``series-parallel'' is given by the following theorem,
which follows from results of Duffin \cite{MR0175809}
(see also Oxley \cite{MR849395}):

\begin{thm}
   \label{thm_serpar_1}
If $G$ is a loopless {\em nonseparable}\/ graph with at least one edge,
then the following are equivalent:
\renewcommand{\labelenumi}{(\arabic{enumi})}
\begin{enumerate}
\setlength{\itemsep}{0pt}
\item $G$ is series-parallel.
\item $(G,s,t)$ is 2-terminal series-parallel for {\em some} pair of
  vertices $s$, $t$.
\item $(G,s,t)$ is 2-terminal series-parallel for {\em every} pair of
 {\em adjacent}\/ vertices $s$, $t$.
\end{enumerate}
\end{thm}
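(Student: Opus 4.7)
The plan is to prove $(3) \Rightarrow (2) \Rightarrow (1) \Rightarrow (3)$. The implication $(3) \Rightarrow (2)$ is immediate, since $G$ has at least one edge and hence at least one pair of adjacent vertices. For $(2) \Rightarrow (1)$, given a decomposition tree $T$ of $(G,s,t)$ as a 2-terminal series-parallel graph, I interpret $T$ as a series-parallel construction of $G$ starting from a forest: begin with the one-edge graph on $\{s,t\}$ (viewed as a single ``virtual'' edge), then read $T$ from root to leaves, replacing the current virtual edge by two parallel virtual edges at each $P$-node and by two virtual edges in series (joined at a new internal vertex) at each $S$-node; at the leaves the virtual edges become actual edges of $G$. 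Since a single edge is a forest, this expresses $G$ as obtained from a forest by series and parallel extensions, so $G$ is series-parallel.

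The main direction is $(1) \Rightarrow (3)$, which I plan to prove by induction on $|E(G)|$. The base case $|E(G)| = 1$ forces $G = K_2$ and is immediate. For the inductive step I invoke Duffin's characterization: any 2-connected loopless series-parallel graph with at least two edges contains either a pair of parallel edges or a vertex of degree~2. In the parallel-edge case, with parallel edges $f, f'$ between vertices $u, v$, I set $G' = G \setminus f'$ (which remains 2-connected, loopless, and series-parallel) and apply induction: if $\{u,v\} = \{s,t\}$ then $(G,s,t) = (G',s,t) \pll (K_2,s,t)$, while if $\{u,v\} \neq \{s,t\}$ then $s, t$ remain adjacent in $G'$ and I modify the decomposition tree of $(G',s,t)$ by replacing the leaf for $f$ with a $P$-node having two $K_2$ leaves. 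In the degree-2 case with $v \notin \{s,t\}$ (the neighbors $u, w$ being distinct because parallel edges have been handled), I suppress $v$ to obtain $G' = G/v$, apply induction, and modify the decomposition tree by replacing the leaf for the merged edge $uw$ with an $S$-node having two $K_2$ leaves.

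The main obstacle is the last case, where the degree-2 vertex is itself a terminal, say $v = s$. Adjacency of $s, t$ forces one of $s$'s two neighbors to equal $t$; denote the other neighbor by $w$ and set $e_1 = st$, $e_2 = sw$. Here one cannot simply suppress or delete $s$ without destroying a terminal. My plan is to contract $e_2$ instead: the resulting graph $G' = G/e_2$ is loopless (no parallel edge to $e_2$ exists in this case), 2-connected, and series-parallel, with merged vertex $s'$ adjacent to $t$ via the image of $e_1$, so by induction $(G',s',t)$ is 2-terminal series-parallel. I then plan to prove and apply the auxiliary lemma: \emph{if $(H,s,t)$ is 2-terminal series-parallel and contains an edge $e = st$, then $(H \setminus e, s, t)$ is also 2-terminal series-parallel.} The proof observes that in any decomposition tree of $(H,s,t)$, the leaf corresponding to such an edge must lie beneath a path consisting entirely of $P$-nodes from the root (because any intervening $S$-node would change the inherited terminals of its descendants), so this leaf can be removed and the resulting degenerate parent $P$-node collapsed to yield a decomposition tree for $(H \setminus e, s, t)$. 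Applying the lemma to $(G',s',t)$ with $e = e_1$ and relabeling $s' \to w$ yields that $(G-s, w, t)$ is 2-terminal series-parallel, and then
\be
(G,s,t) \;=\; \bigl( (K_2,s,w) \ser (G-s, w, t) \bigr) \pll (K_2,s,t),
\ee
with the two $K_2$'s standing for $e_2$ and $e_1$, completes the argument. The hardest step will be proving the auxiliary lemma cleanly and verifying that the graphs remain in the correct class (nonseparable, loopless, series-parallel) after each contraction and deletion.
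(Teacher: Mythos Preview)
The paper does not actually prove this theorem; it simply states that the result ``follows from results of Duffin \cite{MR0175809} (see also Oxley \cite{MR849395})'' and moves on. So there is no in-paper proof to compare against.

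Your argument is correct and is essentially a self-contained version of the classical inductive proof. Two remarks. First, your auxiliary lemma (if $(H,s,t)$ is 2-terminal series-parallel with an $st$-edge $e$ and at least two edges, then $(H\setminus e,s,t)$ is also 2-terminal series-parallel) is just the special case $H = (K_2,s,t)\pll (H\setminus e,s,t)$ of Lemma~\ref{lem_serpar_0}(c), so you can cite that rather than reprove it via the path-of-$P$-nodes argument (which is also fine). Second, the one step you gloss over is the claim that $G' = G/e_2$ is nonseparable in Case~3: contracting an edge of a 2-connected graph can in general create a cut vertex at the merged vertex. Here it cannot, precisely because $s$ has degree~2: if the merged vertex $s'$ were a cut vertex of $G'$ then $G-s-w$ would be disconnected, but $G-w$ is connected (since $w$ is not a cut vertex of $G$) and $s$ is a leaf of $G-w$ (its only remaining neighbour being $t$), so $G-w-s = G-s-w$ is connected. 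This is easy but worth saying explicitly.
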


One useful consequence of Theorem~\ref{thm_serpar_1} is the following:

\begin{cor}
   \label{cor_serpar_2}
Let $(G,s,t)$ be a 2-terminal graph,
where $G$ is loopless and has at least one edge, and $G+st$ is nonseparable.
Then the following are equivalent:
\renewcommand{\labelenumi}{(\arabic{enumi})}
\begin{enumerate}
\setlength{\itemsep}{0pt}
\item $(G,s,t)$ is 2-terminal series-parallel.
\item $(G+st,s,t)$ is 2-terminal series-parallel.
\item $G+st$ is series-parallel.
\end{enumerate}
\end{cor}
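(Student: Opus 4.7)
The plan is to establish the three equivalences by exploiting the identity
$(G+st, s, t) \,=\, (G, s, t) \pll K_2$,
where $K_2$ denotes the single-edge 2-terminal graph with its two vertices designated as terminals.

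For $(1) \Leftrightarrow (2)$ I argue as follows. The forward implication $(1) \Rightarrow (2)$ is immediate from the recursive definition of 2-terminal series-parallel graphs: $K_2$ is a base case and parallel composition preserves the 2-terminal series-parallel property, so $(G, s, t) \pll K_2$ is 2-terminal series-parallel whenever $(G,s,t)$ is. For $(2) \Rightarrow (1)$, I invoke Lemma~\ref{lem_serpar_0}(c) applied to the factorization $(G+st, s, t) = (G, s, t) \pll K_2$. Both factors have at least one edge ($G$ does by hypothesis of the corollary, and $K_2$ trivially), so the lemma concludes that each factor is 2-terminal series-parallel; in particular $(G,s,t)$ is.

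For $(2) \Leftrightarrow (3)$ I apply Theorem~\ref{thm_serpar_1} to $G+st$. I first verify its preconditions: $G+st$ is loopless (since $G$ is loopless and $s \neq t$), nonseparable (by hypothesis of the corollary), and has at least one edge (inherited from $G$). Moreover $s$ and $t$ are adjacent in $G+st$ by construction. Now the implication $(2) \Rightarrow (1)$ of Theorem~\ref{thm_serpar_1} --- applied with the particular pair $s,t$ --- delivers $(2) \Rightarrow (3)$ of the corollary, while $(1) \Rightarrow (3)$ of the theorem --- applied to the adjacent pair $s,t$ --- delivers $(3) \Rightarrow (2)$.

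The proof is essentially bookkeeping; no step presents a genuine obstacle. The only point that requires care is checking the edge-count hypothesis of Lemma~\ref{lem_serpar_0}(c), which exists precisely to exclude the trivial parallel decomposition $\sfG = \sfG \pll \sfKbar_2$ by an edgeless 2-terminal graph on the terminals alone; our assumption that $G$ has at least one edge is what lets us apply the lemma legitimately.
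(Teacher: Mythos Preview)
Your proof is correct and follows essentially the same approach as the paper: apply Theorem~\ref{thm_serpar_1} to $G+st$ for the equivalence $(2)\Leftrightarrow(3)$, note that $(1)\Rightarrow(2)$ is trivial via $(G+st,s,t)=(G,s,t)\pll K_2$, and invoke Lemma~\ref{lem_serpar_0}(c) for $(2)\Rightarrow(1)$. The paper's proof is terser but logically identical.
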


\begin{proof}
Applying Theorem~\ref{thm_serpar_1} to $G+st$
proves the equivalence of (2) and (3).
Moreover, (1) $\implies$ (2) is trivial,
and (2) $\implies$ (1) is a special case of Lemma~\ref{lem_serpar_0}(c).
%
\qed
\end{proof}

\medskip

The reason for using the 2-terminal notion of series-parallel graph
in this paper
is that, although we are unable to precisely control the maxmaxflow
of a series-parallel graph, we {\em can}\/ control the flow
between its terminals via the following trivial fact:

\begin{lem}\label{lem_terminals}
Let $(G_1,s_1,t_1)$ and $(G_2,s_2,t_2)$ be 2-terminal graphs
(not necessarily series-parallel). Then 
\begin{eqnarray}
\lambda_{G_1 \ser G_2} (s,t) & = &
  \min[\lambda_{G_1}(s_1,t_1),\, \lambda_{G_2}(s_2,t_2)] \\[1mm]
\lambda_{G_1 \pll G_2}(s,t) &=&
  \lambda_{G_1}(s_1,t_1) \,+\, \lambda_{G_2}(s_2,t_2)
\end{eqnarray}
where $s$ and $t$ denote the terminals of
$G_1 \ser G_2$ and $G_1 \pll G_2$, respectively.
[Recall that $\lambda_G(x,y)$ denotes the maximum flow in $G$ from $x$ to $y$,
 as defined in \reff{def.lambdaG}.]
\end{lem}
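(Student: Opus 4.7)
The plan is to verify both equalities using the two equivalent formulations in \reff{def.lambdaG}: maximum flow as maximum number of edge-disjoint paths, and as minimum edge-cut. The key structural observation for parallel composition is that the graphs $G_1$ and $G_2$ share exactly the two vertices $s_1{=}s_2$ and $t_1{=}t_2$ and no edges; for series composition, they share exactly the single vertex $t_1{=}s_2$ and no edges. These observations make both identities essentially immediate.

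For the parallel case, I would fix the paths-formulation. First I would note that any collection of edge-disjoint $st$-paths in $G_1 \pll G_2$ splits into paths lying entirely in $E(G_1)$ and paths lying entirely in $E(G_2)$: once a path enters the ``$G_1$-side'' via an edge incident to $s$, it can only return to $\{s,t\}$ by using edges of $G_1$, since $s$ and $t$ are the only vertices shared between the two subgraphs. Conversely, the disjoint union of edge-disjoint collections of $s_1t_1$-paths in $G_1$ and $s_2t_2$-paths in $G_2$ yields an edge-disjoint collection of $st$-paths in $G_1 \pll G_2$. Therefore $\lambda_{G_1 \pll G_2}(s,t) = \lambda_{G_1}(s_1,t_1) + \lambda_{G_2}(s_2,t_2)$.

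For the series case, I would use both formulations. Every $st$-walk in $G_1 \ser G_2$ must pass through the identified vertex $y = t_1 = s_2$ (since removing $y$ disconnects $s$ from $t$). Thus an edge-disjoint family of $st$-paths in $G_1 \ser G_2$ induces, by restriction to $E(G_1)$ and $E(G_2)$, an edge-disjoint family of $s_1t_1$-paths in $G_1$ and an edge-disjoint family of $s_2t_2$-paths in $G_2$ of the same cardinality; this gives the upper bound $\lambda_{G_1 \ser G_2}(s,t) \le \min[\lambda_{G_1}(s_1,t_1),\lambda_{G_2}(s_2,t_2)]$. For the matching lower bound, take $k = \min[\lambda_{G_1}(s_1,t_1),\lambda_{G_2}(s_2,t_2)]$ edge-disjoint paths in each of $G_1$ and $G_2$, and concatenate them in an arbitrary pairing at $y$; the results are $k$ edge-disjoint $st$-paths in $G_1 \ser G_2$. (Alternatively, via the min-cut formulation: a minimum $s_it_i$-cut in $G_i$ is already an $st$-cut in $G_1 \ser G_2$, giving the $\le$ direction directly.)

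No step should present any real obstacle; the only point requiring mild care is the observation that paths in the parallel composition do not ``cross over'' between $G_1$ and $G_2$, which follows from the fact that the only shared vertices are the terminals themselves. Since the proof is entirely routine, the authors have reasonably labelled this a ``trivial fact''.
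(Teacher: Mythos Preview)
Your proof is correct; the paper does not supply a proof at all, simply labelling the lemma a ``trivial fact'' and stating it without justification. Your write-up spells out exactly the routine edge-disjoint-paths argument one would expect, and the only mildly delicate point you flag --- that in the parallel composition every $st$-path lies entirely in one of $G_1$ or $G_2$ because the two subgraphs share only $s$ and $t$ --- is handled correctly.
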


\subsection{Decomposition trees}  \label{subsec.decomp}

Let $\sfG = (G,s,t)$ be a 2-terminal graph,
where we now assume that $G$ is connected and loopless.
A {\em decomposition tree}\/ for $(G,s,t)$ 
is a rooted binary tree with three types of nodes ---
called {\em $s$-nodes}\/, {\em $p$-nodes}\/ and {\em leaf nodes}\/ ---
in which the children of each $s$-node are ordered,
and each node is a {\em connected}\/ 2-terminal graph
(whose underlying graph is a subgraph of $G$), as follows:
The root node is $\sfG$;
if $\sfH$ is an $s$-node and $\sfH_1$ and $\sfH_2$ are its children (in order),
then $\sfH = \sfH_1 \ser \sfH_2$;
if $\sfH$ is a $p$-node and $\sfH_1$ and $\sfH_2$ are its children
 (in either order),
then $\sfH = \sfH_1 \pll \sfH_2$;
and if $\sfH$ is a leaf node, then it has no children.\footnote{
   The concept of a decomposition tree for a 2-terminal graph
   is very natural and has been used sporadically in the literature,
   albeit with no standard definition.
   Brandst\"adt, Le and Sprinrad \cite[Section~11.2]{MR1686154}
   define decomposition trees essentially as we do,
   but only for series-parallel graphs.
   Bodlaender and van Antwerpen - de Fluiter \cite{Bodlaender_01}
   likewise define decomposition trees for series-parallel graphs,
   with a definition that differs slightly from ours by allowing
   non-binary trees (see Remark 2 below).
   Bern {\em at al.}\/ \cite{Bern_87}
   and Borie {\em at al.}\/ \cite{Borie_92}
   define decomposition trees in the more general setting of
   $k$-terminal graphs for any fixed $k$;
   their definitions specialized to $k=2$ are almost the same as ours.
   (Borie {\em at al.}\/ require the graphs at leaf nodes
    to have no nonterminal vertices --- something we do not wish to do,
    as it would restrict us to series-parallel graphs only ---
    but they immediately add \cite[p.~558]{Borie_92} that
    ``this could be generalized to permit additional base graphs''.)
   See also Spinrad \cite[Section~11.3]{Spinrad_03}
   for a brief description of this latter work.

  \label{footnote_decomposition}
}
If $G$ is edge-weighted, then the graph at each node
is also edge-weighted with the weights inherited from its parent.
The graphs that appear as nodes in this decomposition tree are called
the {\em constituents}\/ of $\sfG$
(with respect to the particular decomposition tree),
and a constituent is {\em proper}\/
if it is not equal to $\sfG$.  

A given 2-terminal graph $\sfG = (G,s,t)$
can have many distinct decomposition trees,
and this for two separate reasons.
Firstly, one is free to stop the decomposition at any stage.
Indeed, in the extreme case the decomposition tree can consist of
the single node $\sfG$ (which is then a leaf node);
we call this the {\em trivial}\/ decomposition tree.
At the other extreme,
we say that a decomposition tree is {\em maximal}\/
if each leaf node corresponds to a {\em prime}\/ 2-terminal graph.
Secondly, if $\sfG$ or one of its constituents is formed by placing
{\em three}\/ or more 2-terminal graphs in series or in parallel,
then these may be paired up in various ways.
(This nonuniqueness arises from our insistence that a decomposition tree
 is a {\em binary}\/ tree.)

\bigskip

{\bf Remarks.}
1.  The order of the children at an $s$-node is important to
reconstructing the graph (since $\sfG_1 \ser \sfG_2 \neq \sfG_2 \ser \sfG_1$)
but is irrelevant to the multivariate Tutte polynomial.


2.  We have insisted here that the decomposition tree be a {\em binary}\/ tree:
this means that we need only consider parallel or series composition
of {\em pairs}\/ of 2-terminal subgraphs, but it also means that the
maximal decomposition tree is nonunique whenever $G$ or one of its constituents
is formed by placing {\em three}\/ or more 2-terminal graphs
in series or in parallel, since these may be paired up in various ways.
Alternatively, we could allow the decomposition tree to be a {\em general}\/
rooted tree:
then the maximal decomposition tree would be unique,
but we would need to consider consider parallel and series composition
of an arbitrary number of 2-terminal subgraphs.
Six of one, half dozen of the other.

3.  Many authors have defined and applied decomposition trees
for 2-terminal {\em series-parallel}\/ graphs
(see footnote~\ref{footnote_decomposition} above);
and most of the present paper is indeed concerned with this special case
(Theorems~\ref{thm_main} and \ref{thm_main_AF}).
But the technique set forth here is more general,
and applies to graphs that are not series-parallel
but are nevertheless built up by using series and parallel compositions
from a fixed starting set of 2-terminal ``base graphs''
(see Section~\ref{sec.Wheatstone}).
A simple example of such a result is Theorem~\ref{thm_main_Wheatstone},
where the set of base graphs is taken to be $K_2$ and the Wheatstone bridge.
It is for this reason that we have developed the theory of decomposition
trees for 2-terminal graphs that are not necessarily series-parallel.
\qed

\bigskip


We have the following basic facts concerning the structure
of decomposition trees:

\begin{lem}
  \label{lem_niceness}
Let $(G,s,t)$ be a 2-terminal graph,
with $G$ connected and loopless, and fix a decomposition tree for it.
Then the following are equivalent:
\begin{itemize}
\setlength{\itemsep}{0pt}
   \item[(1)]  The root node $(G,s,t)$ is nice.
   \item[(2)]  Every node is nice.
   \item[(3)]  Every leaf node is nice.
\end{itemize}
Moreover, when these equivalent conditions hold,
every $p$-node is nonseparable and every $s$-node is separable.
\end{lem}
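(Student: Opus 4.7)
The plan is to establish $(2) \Leftrightarrow (3)$ and $(2) \Leftrightarrow (1)$ separately, using the definition that every node of the decomposition tree is a \emph{connected} 2-terminal graph. Since the two terminals $s,t$ must be distinct, connectedness forces every constituent to have at least one edge; this is exactly the hypothesis needed in order to apply Lemma~\ref{lem_niceness_0}(d) without restriction.

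The implications $(2) \Longrightarrow (1)$ and $(2) \Longrightarrow (3)$ are immediate from the definition. To prove $(3) \Longrightarrow (2)$, I would induct bottom-up on the decomposition tree: the claim is trivial at leaves by hypothesis, and at an internal node $\sfH$ with children $\sfH_1, \sfH_2$ (both nice by induction) niceness of $\sfH$ follows from Lemma~\ref{lem_niceness_0}(b) if $\sfH$ is an $s$-node and from Lemma~\ref{lem_niceness_0}(c) if $\sfH$ is a $p$-node. This handles the ``moreover'' clause as a by-product: any $p$-node is $\sfH_1 \pll \sfH_2$ with both children nice, hence nonseparable by (c); any $s$-node is a series composition and hence separable by (a).

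For $(1) \Longrightarrow (2)$ I would argue top-down along the tree. Assume $\sfH$ is a nice internal node with children $\sfH_1, \sfH_2$. If $\sfH$ is an $s$-node, then $\sfH = \sfH_1 \ser \sfH_2$ and the ``only if'' direction of Lemma~\ref{lem_niceness_0}(b) gives niceness of both children. If $\sfH$ is a $p$-node, then $\sfH = \sfH_1 \pll \sfH_2$; since the constituent graphs are connected with two distinguished vertices, each has at least one edge, so Lemma~\ref{lem_niceness_0}(d) applies and yields niceness of $\sfH_1$ and $\sfH_2$. Iterating down the tree proves~(2).

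The main (minor) obstacle is to notice and correctly justify that the ``each have at least one edge'' hypothesis of Lemma~\ref{lem_niceness_0}(d) is automatic: it does \emph{not} follow from the mere 2-terminal structure but does follow from the standing connectedness requirement built into the definition of a decomposition tree, because a connected graph on the two distinct terminal vertices must contain at least one edge. Once this observation is recorded, the rest is a two-line invocation of Lemma~\ref{lem_niceness_0}.
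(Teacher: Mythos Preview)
Your proposal is correct and is precisely the intended argument: the paper's own proof is the single sentence ``This is an immediate consequence of Lemma~\ref{lem_niceness_0},'' and your write-up simply unpacks that consequence via the obvious bottom-up and top-down inductions. Your observation that the ``at least one edge'' hypothesis of Lemma~\ref{lem_niceness_0}(d) is guaranteed by the connectedness requirement in the definition of a decomposition tree is exactly the point flagged in footnote~\ref{footnote_trivial_parallel}.
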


\begin{proof}
This is an immediate consequence of Lemma~\ref{lem_niceness_0}.
\qed
\end{proof}

An analogous result holds for 2-terminal series-parallel graphs:

\begin{lem}
  \label{lem_serpar}
Let $(G,s,t)$ be a 2-terminal graph,
with $G$ connected and loopless, and fix a decomposition tree for it.
Then the following are equivalent:
\begin{itemize}
\setlength{\itemsep}{0pt}
   \item[(1)]  $(G,s,t)$ is 2-terminal series-parallel.
   \item[(2)]  Every node is 2-terminal series-parallel.
   \item[(3)]  Every leaf node is 2-terminal series-parallel.
\end{itemize}
\end{lem}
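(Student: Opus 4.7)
The plan is to mirror the proof of Lemma~\ref{lem_niceness} step-for-step, simply replacing every appeal to Lemma~\ref{lem_niceness_0} by the corresponding item of Lemma~\ref{lem_serpar_0}. The implications $(2) \Rightarrow (1)$ and $(2) \Rightarrow (3)$ are immediate: the root node is $(G,s,t)$, and leaf nodes are nodes.

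For $(1) \Rightarrow (2)$, I would induct downward on depth in the tree. Let $\sfH$ be a node that is 2-terminal series-parallel and suppose it is not a leaf, with children $\sfH_1$ and $\sfH_2$. If $\sfH$ is an $s$-node, then $\sfH = \sfH_1 \ser \sfH_2$ and Lemma~\ref{lem_serpar_0}(a) gives that both $\sfH_1$ and $\sfH_2$ are 2-terminal series-parallel. If $\sfH$ is a $p$-node, then $\sfH = \sfH_1 \pll \sfH_2$ and Lemma~\ref{lem_serpar_0}(c) gives the same conclusion, provided that the underlying graphs of $\sfH_1$ and $\sfH_2$ each have at least one edge. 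The definition of a decomposition tree requires each node to be a \emph{connected} 2-terminal graph, and since the two terminals are distinct, connectedness forces the existence of at least one edge in each $\sfH_i$. Iterating down the tree completes the induction.

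For $(3) \Rightarrow (2)$, I would induct upward on the height of subtrees. Leaves are 2-terminal series-parallel by hypothesis. At an internal node $\sfH$ with children $\sfH_1, \sfH_2$ already known to be 2-terminal series-parallel, Lemma~\ref{lem_serpar_0}(a) (in the $s$-node case) or Lemma~\ref{lem_serpar_0}(b) (in the $p$-node case) gives that $\sfH$ is also 2-terminal series-parallel.

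The only mild subtlety to verify is the ``each have at least one edge'' proviso needed to invoke Lemma~\ref{lem_serpar_0}(c), and as noted above this is automatic from the standing assumption that constituent graphs in a decomposition tree are connected and have two distinct terminals. Everything else is a rote induction, and I do not anticipate any real obstacle.
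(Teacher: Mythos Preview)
Your proposal is correct and follows exactly the same approach as the paper, which simply states that the result is an immediate consequence of Lemma~\ref{lem_serpar_0}. You have supplied the details the paper omits, including the observation that the ``at least one edge'' hypothesis of Lemma~\ref{lem_serpar_0}(c) is guaranteed by the connectedness requirement on nodes of a decomposition tree.
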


\begin{proof}
This is an immediate consequence of Lemma~\ref{lem_serpar_0}.
\qed
\end{proof}

Among 2-terminal graphs, the series-parallel ones
can be characterized as follows:

\begin{lem}
  \label{lem_SP_decomposition_tree}
Let $(G,s,t)$ be a 2-terminal graph.  Then the following are equivalent:
\renewcommand{\labelenumi}{(\arabic{enumi})}
\begin{enumerate}
\setlength{\itemsep}{0pt}
\item $(G,s,t)$ is 2-terminal series-parallel.
\item $(G,s,t)$ has a decomposition tree in which all leaf nodes
   are single edges.
\item In every maximal decomposition tree for $(G,s,t)$,
   all leaf nodes are single edges.
\end{enumerate}
\end{lem}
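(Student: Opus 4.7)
The plan is to dispatch the implications in the cycle $(1) \Rightarrow (2) \Rightarrow (3) \Rightarrow (2) \Rightarrow (1)$ (the middle pair being trivial or near-trivial), leaving one substantive observation: among 2-terminal series-parallel graphs, the only prime one is $K_2$.

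For $(1) \Rightarrow (2)$, I would proceed by induction on the number of operations in the series-parallel construction of $(G,s,t)$. The base case is $(G,s,t) = K_2$, for which the single-node tree consisting of $K_2$ is a decomposition tree with the single leaf being an edge. For the inductive step, if $(G,s,t) = \sfG_1 \star \sfG_2$ with $\star \in \{\ser, \pll\}$ and each $\sfG_i$ strictly smaller, apply the inductive hypothesis to obtain decomposition trees $T_i$ for $\sfG_i$ whose leaves are single edges, then form a new tree by taking the appropriate root node and attaching $T_1, T_2$ as its subtrees. The implication $(2) \Rightarrow (1)$ is immediate from Lemma~\ref{lem_serpar}: every single edge is $K_2$ and hence 2-terminal series-parallel, so if all leaves of some decomposition tree are single edges, the root node $(G,s,t)$ is 2-terminal series-parallel. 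Likewise $(3) \Rightarrow (2)$ requires only that a maximal decomposition tree exists, which follows by repeatedly decomposing any non-prime node until only prime leaves remain (this process terminates because each non-trivial decomposition produces strictly smaller constituents).

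The main direction is $(2) \Rightarrow (3)$. Assume (2), so by the above $(G,s,t)$ is 2-terminal series-parallel. Let $T$ be any maximal decomposition tree for $(G,s,t)$. By Lemma~\ref{lem_serpar} applied to $T$, every node of $T$, and in particular every leaf of $T$, is 2-terminal series-parallel. By the definition of maximality, every leaf of $T$ is prime. Thus it suffices to establish the following claim: \emph{the only prime 2-terminal series-parallel graph is $K_2$}. But this is essentially tautological: by the recursive definition of 2-terminal series-parallel graphs, any such graph is either $K_2$ or the series/parallel composition of two \emph{strictly smaller} 2-terminal series-parallel graphs. In the latter case the graph is, by definition, not prime, so the only prime possibility is $K_2$, i.e.\ a single edge. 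This completes the proof.

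The potential snag is purely definitional: one must use the convention (from Section~\ref{subsec.tutte.serpar.2term}) that ``prime'' requires the forbidden decompositions to be into \emph{strictly} smaller pieces, so that the trivial decomposition $\sfG = \sfG \pll \sfKbar_2$ does not disqualify $K_2$ from being prime. Once this is kept in mind, the argument is entirely routine, relying on Lemma~\ref{lem_serpar} and the recursive definition of 2-terminal series-parallel graphs.
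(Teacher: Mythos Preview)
Your proof is correct and follows essentially the same route as the paper: both use Lemma~\ref{lem_serpar} to propagate the 2-terminal series-parallel property to all leaves of an arbitrary maximal decomposition tree, and then observe that a prime 2-terminal series-parallel graph must be $K_2$ (the paper phrases this last step as a contradiction, you phrase it as a direct claim, but the content is identical). Your explicit note about the ``strictly smaller'' convention for primality is apt and matches the paper's own caveat.
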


\begin{proof}
(1) $\iff$ (2) follows directly from the definition of
``2-terminal series-parallel''.
Furthermore, (3) $\implies$ (2) is trivial because every 2-terminal graph
does possess a maximal decomposition tree.
Finally, to show (1) $\implies$ (3), we observe from
Lemma~\ref{lem_serpar} that every leaf node is 2-terminal series-parallel;
so if a leaf node is not a single edge, then it must be either a
series or parallel composition of two smaller 2-terminal series-parallel
graphs, contradicting the hypothesis that the decomposition tree is maximal.
\qed
\end{proof}

Let us now note a simple but important fact that will play a key role
in the remainder of this paper:

\begin{lem}
  \label{lem_constituents}
Let $\sfG = (G,s,t)$ be a 2-terminal graph,
and consider a decomposition tree for $\sfG$ in which the root is a $p$-node.
[If $G$ is nonseparable, then {\em every} decomposition tree
 other than the trivial one has this property.]
If $G$ has maxmaxflow $\Lambda$,
then all its {\em proper} constituents $(H,a,b)$
have between-terminals flow $\lambda_H(a,b)$ at most $\Lambda-1$.
\end{lem}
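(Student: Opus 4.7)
The plan is to prove the stronger inequality $\lambda_G(a,b) \ge \lambda_H(a,b) + 1$, which immediately yields the lemma via $\lambda_H(a,b) \le \lambda_G(a,b) - 1 \le \Lambda(G) - 1$. Using the root $p$-node composition $\sfG = \sfG_1 \pll \sfG_2$, the proper constituent $\sfH$ lies entirely in the subtree rooted at one of $\sfG_1$ or $\sfG_2$ (say $\sfG_1$), so $V(H) \subseteq V(G_1)$ and $E(H) \subseteq E(G_1) \subseteq E(G)$. By max-flow/min-cut, it suffices to show that every edge set $C \subseteq E(G)$ separating $a$ from $b$ has $|C| \ge \lambda_H(a,b) + 1$. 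A routine observation is that $C \cap E(H)$ is itself an $(a,b)$-cut in $H$---any $a$-$b$ path in $H \setminus (C \cap E(H))$ would lift to an $a$-$b$ path in $G \setminus C$---so $|C \cap E(H)| \ge \lambda_H(a,b)$. Suppose for contradiction $|C| \le \lambda_H(a,b)$; then $C \subseteq E(H)$.

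I would derive a contradiction by showing that $a$ and $b$ nonetheless remain connected in $G \setminus E(H)$, hence in $G \setminus C$. This is the heart of the proof and exploits the decomposition-tree structure. Climb from $\sfH$ upward; since the root $\sfG$ is a $p$-node, $\sfH$ has a lowest $p$-node ancestor, which I will call ${\sf A}$. Let $\sfH'$ be the child of ${\sf A}$ on the path from $\sfH$ up to ${\sf A}$, with terminals $(x,y)$, and let ${\sf S}$ be its sibling at ${\sf A}$. All strict ancestors of $\sfH$ lying below ${\sf A}$ are $s$-nodes; I would prove by induction along this chain of $s$-nodes that, using only edges in $\bigcup_j E({\sf S}_j) \subseteq E(G) \setminus E(H)$ (where ${\sf S}_j$ are the siblings at these $s$-nodes), there exist two paths joining $\{a,b\}$ bijectively to $\{x,y\}$. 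The inductive step is straightforward: each $s$-composition absorbs one terminal of the current subtree while introducing a new one, and the connected sibling ${\sf S}_j$ supplies a path from the absorbed terminal to the newly introduced terminal, so the ``token'' sitting at the absorbed terminal jumps forward within $E({\sf S}_j)$.

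At the top of this chain, the $p$-node sibling ${\sf S}$ shares both terminals $(x,y)$ with $\sfH'$; by the connectedness of ${\sf S}$ there is an $x$-$y$ path in $E({\sf S}) \subseteq E(G) \setminus E(H)$. Concatenating with the two paths from $\{a,b\}$ to $\{x,y\}$ produced by the induction yields an $a$-$b$ walk lying entirely in $E(G) \setminus E(H)$, contradicting the assumption that $C \subseteq E(H)$ separates $a$ from $b$ in $G$. The main bookkeeping challenge will be handling the four subcases that can arise at each $s$-node step (left-versus-right series composition, combined with which of the two current terminals is the shared one), but once the ``token migration'' picture is in place these are uniformly easy to verify.
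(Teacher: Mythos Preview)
Your proof is correct and follows essentially the same approach as the paper: both arguments climb from $(H,a,b)$ to its lowest $p$-node ancestor and use the connected siblings along the way (plus the sibling at the $p$-node) to construct an $a$--$b$ path in $G$ that avoids all edges of $H$. The only cosmetic difference is that you frame the conclusion via min-cut (every $(a,b)$-cut in $G$ has size $\ge \lambda_H(a,b)+1$), whereas the paper frames it via max-flow (the extra path, together with $\lambda_H(a,b)$ edge-disjoint paths inside $H$, gives $\lambda_H(a,b)+1$ edge-disjoint $a$--$b$ paths in $G$); your ``token migration'' induction is a more carefully stated version of the paper's one-line claim that $F_1\setminus H$ contains $ac$- and $bd$-paths.
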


\begin{proof}
Suppose that there is a proper constituent $(H,a,b)$ such that 
$\lambda_H(a,b) \geq \Lambda$.
Let $(F,c,d)$ be the first ancestor of $(H,a,b)$ that is a $p$-node
(such a node must exist since the root is a $p$-node).
Then one of the children of $(F,c,d)$ is a connected series extension
$(F_1,c,d)$ of $(H,a,b)$ [possibly $(H,a,b)$ itself],
while the other child $(F_2,c,d)$
is connected and has no edges in common with $H$.
Therefore, by concatenating a $cd$-path from $F_2$
with $ac$- and $bd$-paths from $F_1 \setminus H$
(these paths will degenerate to empty paths if $a=c$ or $b=d$, respectively),
we obtain an $ab$-path in $F$ that uses only edges not in $H$.
Therefore, in $F$ (and hence also in $G$)
there are at least $\Lambda+1$ edge-disjoint paths between $a$ and $b$,
which contradicts the hypothesis that $G$ has maxmaxflow $\Lambda$.
\qed
\end{proof}

In less formal terms, the key point of this lemma is that
if a 2-terminal series-parallel graph of maxmaxflow $\Lambda$
is constructed via a sequence of series and parallel compositions,
with the last stage being a parallel composition,
then every intermediate graph has between-terminals flow at most $\Lambda-1$
(as well as of course having maxmaxflow at most $\Lambda$).


\subsection{Computing the multivariate Tutte polynomial using a
   decomposition tree}   \label{subsec.computing}

Let $(G,s,t)$ be a 2-terminal graph,
with $G$ connected and loopless, and fix a decomposition tree for it.
We will now describe a simple algorithm for computing
the partial multivariate Tutte polynomials
$A_{G,s,t}(q,\bv)$ and $B_{G,s,t}(q,\bv)$ 
--- and more generally
the partial multivariate Tutte polynomials
$A_{H,a,b}(q,\bv)$ and $B_{H,a,b}(q,\bv)$ 
for each node $(H,a,b)$ in the decomposition tree ---
given the partial multivariate Tutte polynomials
of all the {\em leaf}\/ nodes.
In particular, we will be able to compute the multivariate Tutte polynomial
\be
   Z_G(q,\bv)  \;=\;  q^2 A_{G,s,t}(q,\bv) \,+\, q B_{G,s,t}(q,\bv)
   \;.
 \label{eq.ZG.AB}
\ee

Before stating the algorithm, however, let us remark briefly on the
different ways that it can be interpreted.
Since $A_{G,s,t}(q,\bv)$, $B_{G,s,t}(q,\bv)$ and $Z_G(q,\bv)$
are polynomials with {\em integer}\/ coefficients
--- i.e.\ they belong to the polynomial ring $\Z[q,\bv]$ ---
they induce well-defined polynomial {\em functions}\/
on every commutative ring $R$,
i.e.\ $A_{G,s,t} \colon\, R \times R^E \to R$
and likewise for the other two.
Therefore, if $R$ is an arbitrary commutative ring
and $q$ and $\{v_e\}$ are given specified values in $R$,
then it makes sense to compute the value (which again lies in $R$)
of the polynomial functions $A_{G,s,t}(q,\bv)$, $B_{G,s,t}(q,\bv)$
and $Z_G(q,\bv)$.
This is what our algorithm will do,
using only addition and multiplication in the ring $R$;
it thus works, without any modification,
for an {\em arbitrary}\/ choice of the commutative ring $R$.
The two most interesting choices for our purposes are:
\begin{itemize}
   \item $R = \Z[q,\bv]$, with $q$ and $\{v_e\}$ taken to be indeterminates.
      This allows us to compute {\em symbolically}\/ the various
      multivariate Tutte polynomials.
   \item $R = \C$ (or $\R$ or $\Q$ or $\Z$),
       with $q$ and $\{v_e\}$ given specified numerical values.
      This allows us to compute the {\em numerical values}\/ of the various
      multivariate Tutte polynomials.
\end{itemize}

Let us now state the algorithm, which is in fact a trivial application
of Proposition~\ref{prop.serpar.AB}:

\bigskip

{\bf Algorithm 1.}
Fix a commutative ring $R$,
and fix values $q \in R$ and $\bv = \{v_e\} \in R^E$.
We assume that the values of $A_{H,a,b}(q,\bv)$ and $B_{H,a,b}(q,\bv)$ 
are known for every {\em leaf}\/ node $(H,a,b)$.
Then we proceed inductively up the tree,
computing $A_{H,a,b}(q,\bv)$ and $B_{H,a,b}(q,\bv)$ 
using Proposition~\ref{prop.serpar.AB}:
\begin{itemize}
 \item If $(H,a,b)$ is a $p$-node whose children $(H_1,s_1,t_1)$ and
   $(H_2,s_2,t_2)$ have already been computed, we set
\begin{subeqnarray}
A_{H,a,b}    & = &   A_{H_1,s_1,t_1} A_{H_2,s_2,t_2}   \\[2mm]
B_{H,a,b}    & = &   A_{H_1,s_1,t_1} B_{H_2,s_2,t_2} \,+\,
                     A_{H_2,s_2,t_2} B_{H_1,s_1,t_1} \,+\,
                     B_{H_1,s_1,t_1} B_{H_2,s_2,t_2}
\end{subeqnarray}
 \item If $(H,a,b)$ is an $s$-node whose children $(H_1,s_1,t_1)$ and
   $(H_2,s_2,t_2)$ have already been computed, we set
\begin{subeqnarray}
A_{H,a,b}  & = &   A_{H_1,s_1,t_1} B_{H_2,s_2,t_2}  \,+\,
                   A_{H_2,s_2,t_2} B_{H_1,s_1,t_1}  \,+\,
                   q A_{H_1,s_1,t_1} A_{H_2,s_2,t_2} \\[2mm]
B_{H,a,b}  & = &   B_{H_1,s_1,t_1} B_{H_2,s_2,t_2}
\end{subeqnarray}
\end{itemize}
\qed

The validity of this algorithm
is an immediate consequence of Proposition~\ref{prop.serpar.AB}.

\bigskip

If the ring $R$ is in fact a {\em field}\/ $F$,
then this algorithm can be usefully rephrased in terms of the
``effective weights''
$v_{\rm eff}(H,a,b)  = B_{H,a,b}(q,\bv) / A_{H,a,b}(q,\bv)$,
{\em provided that we are careful to avoid division by zero}\/.
The two most interesting choices of the field $F$ are:
\begin{itemize}
   \item $F = \Q(q,\bv)$, the field of rational functions
      with rational coefficients in the indeterminates $q$ and $\{v_e\}$.
      This will allow us to compute {\em symbolically}\/ the various
      multivariate Tutte polynomials.
   \item $F = \C$ (or $\R$ or $\Q$).
      This will allow us to compute the {\em numerical values}\/
      of the various multivariate Tutte polynomials,
      when $q$ and $\{v_e\}$ are given specified numerical values.
\end{itemize}
In this version, the algorithm works as follows
[for simplicity we concentrate on computing $Z_G(q,\bv)$
 and $v_{\rm eff}(H,a,b)$]:

\bigskip

{\bf Algorithm 2.}
Fix a field $F$
and fix values $q \in F$ and $\bv = \{v_e\} \in F^E$.
We assume that the values of $A_{H,a,b}(q,\bv)$ and $B_{H,a,b}(q,\bv)$
are known for every {\em leaf}\/ node $(H,a,b)$,
{\em with all the $A_{H,a,b}(q,\bv)$ nonzero}\/.
We can therefore define
$v_{\rm eff}(H,a,b)  = B_{H,a,b}(q,\bv) / A_{H,a,b}(q,\bv)$
for each leaf node.
We now proceed inductively up the tree:
\begin{itemize}
 \item If $(H,a,b)$ is a $p$-node whose children $(H_1,s_1,t_1)$ and
   $(H_2,s_2,t_2)$ have already been labelled with values
   $v_{\rm eff}(H_1,s_1,t_1)$ and $v_{\rm eff}(H_2,s_2,t_2)$,
   we then label $(H,a,b)$ with
   $$v_{\rm eff}(H,a,b) \;=\;
    v_{\rm eff}(H_1,s_1,t_1) \,\pll\, v_{\rm eff}(H_2,s_2,t_2)  \;.$$
 \item If $(H,a,b)$ is an $s$-node whose children $(H_1,s_1,t_1)$ and
   $(H_2,s_2,t_2)$ have already been labelled with values
   $v_{\rm eff}(H_1,s_1,t_1)$ and $v_{\rm eff}(H_2,s_2,t_2)$,
   we then label $(H,a,b)$ with
   $$v_{\rm eff}(H,a,b) \;=\;
    v_{\rm eff}(H_1,s_1,t_1) \,\serq\, v_{\rm eff}(H_2,s_2,t_2)$$
   {\em provided that
     $q + v_{\rm eff}(H_1,s_1,t_1) + v_{\rm eff}(H_2,s_2,t_2) \neq 0$}\/;
   otherwise we give $v_{\rm eff}(H,a,b)$ the value ``undefined''
   and terminate the algorithm.
   In the former case,
   we also mark the node $(H,a,b)$ as carrying a prefactor
   $q + v_{\rm eff}(H_1,s_1,t_1) + v_{\rm eff}(H_2,s_2,t_2)$.
\end{itemize}
If the algorithm succeeds in labeling the entire decomposition tree
(i.e.\ does not encounter any value ``undefined''),
we then set $Z_G(q,\bv)$ equal to $q[q + v_{\rm eff}(G,s,t)]$
times the product of the prefactors associated to all the $s$-nodes
times the product of the $A_{H,a,b}(q,\bv)$ from all the leaf nodes.
\qed

Since this algorithm for computing $v_{\rm eff}(H,a,b)$ and $Z_G(q,\bv)$
is simply a rephrasing of Algorithm 1
combined with \reff{eq.ZG.AB}, its validity follows immediately.

Of course, Algorithm 2 is not really an algorithm
(i.e.\ a process that is always guaranteed to give an answer)
because it could fail by encountering an ``undefined'' value at some $s$-node.
But we can say the following:

First of all, Algorithm 2 is guaranteed to succeed when it is carried out
symbolically, i.e.\ over the field $\Q(q,\bv)$ of rational functions
in the indeterminates $q$ and $\{v_e\}$.
More precisely:

\begin{proposition}
   \label{prop.algorithm}
Let $(G,s,t)$ be 2-terminal graph,
with $G$ connected and loopless, and fix a decomposition tree for it.
Then, for each node $(H,a,b)$ in the decomposition tree,
the quantities $v_{\rm eff}(H,a,b)$, considered as elements of
the field $\Q(q,\bv)$ of rational functions
in the indeterminates $q$ and $\{v_e\}_{e \in E(H)}$,
have the following properties:
\begin{itemize}
   \item[(a)] $v_{\rm eff}(H,a,b)$ is a rational function
       of $q$ and $\{v_e\}_{e \in E(H)}$
       that depends nontrivially on $\{v_e\}_{e \in E(H)}$.
   \item[(b)] At an $s$-node $(H,a,b)$ with children $(H_1,s_1,t_1)$ and
      $(H_2,s_2,t_2)$, one can never have
      $q + v_{\rm eff}(H_1,s_1,t_1) + v_{\rm eff}(H_2,s_2,t_2) = 0$
      in $\Q(q,\bv)$.
\end{itemize}
Therefore, Algorithm 2 never encounters an ``undefined'' value
when it is carried out over the field $\Q(q,\bv)$.
\end{proposition}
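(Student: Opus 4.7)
The plan is to prove both parts of the proposition in a fairly direct manner, using Lemma~\ref{lemma_veff_nontrivial} as the essential input. No substantial induction is needed, because part (a) follows immediately from Lemma~\ref{lemma_veff_nontrivial}(a) applied node-by-node, and part (b) reduces to part (a) applied to the two children of the $s$-node in question.

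First I would dispatch part (a). By the definition of a decomposition tree in Section~\ref{subsec.decomp}, every node $(H,a,b)$ is a \emph{connected} 2-terminal graph with $a \neq b$, so $H$ contains an $ab$-path. Therefore Lemma~\ref{lemma_veff_nontrivial}(a) applies directly: $v_{\rm eff}(H,a,b) = B_{H,a,b}(q,\bv)/A_{H,a,b}(q,\bv)$ is a well-defined element of $\Q(q,\bv)$ --- the denominator is not the zero polynomial, because it contains the monomial $q^{|V(H)|-2}$ coming from $E' = \emptyset$ --- and depends nontrivially on $\{v_e\}_{e \in E(H)}$.

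Next I would prove part (b). Fix an $s$-node $(H,a,b)$ with children $(H_1,s_1,t_1)$ and $(H_2,s_2,t_2)$. The crucial point is that the underlying edge sets $E(H_1)$ and $E(H_2)$ are disjoint. By (a) applied to each child, $v_{\rm eff}(H_i,s_i,t_i) \in \Q(q,\{v_e\}_{e\in E(H_i)})$ but does \emph{not} lie in the subfield $\Q(q)$, for $i = 1,2$. Suppose, toward a contradiction, that $v_{\rm eff}(H_1,s_1,t_1) + v_{\rm eff}(H_2,s_2,t_2) = -q$ in $\Q(q,\bv)$. Then $v_{\rm eff}(H_1,s_1,t_1) = -q - v_{\rm eff}(H_2,s_2,t_2)$ would lie in $\Q(q,\{v_e\}_{e \in E(H_1)}) \cap \Q(q,\{v_e\}_{e \in E(H_2)})$, which --- since $E(H_1) \cap E(H_2) = \emptyset$ --- equals $\Q(q)$, contradicting (a). Hence $q + v_{\rm eff}(H_1,s_1,t_1) + v_{\rm eff}(H_2,s_2,t_2) \neq 0$ in $\Q(q,\bv)$. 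The final assertion, that Algorithm~2 never encounters an ``undefined'' value when carried out over $\Q(q,\bv)$, then follows immediately: the only place where ``undefined'' can arise is at an $s$-node whose denominator vanishes, and (b) rules this out.

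I do not anticipate any significant obstacle. The only subtlety --- which must be stated but is routine --- is the elementary algebraic fact that $\Q(q,\{v_e\}_{e \in S_1}) \cap \Q(q,\{v_e\}_{e \in S_2}) = \Q(q)$ inside $\Q(q,\bv)$ when $S_1 \cap S_2 = \emptyset$, which is a standard consequence of the algebraic independence of the $v_e$ over $\Q(q)$. Once this is invoked, the whole argument is a direct application of Lemma~\ref{lemma_veff_nontrivial} together with the disjointness of the edge sets of the children in the decomposition tree.
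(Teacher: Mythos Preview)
Your proof is correct and follows essentially the same approach as the paper's own proof: part (a) is Lemma~\ref{lemma_veff_nontrivial}(a) applied to each (connected) node, and part (b) follows from (a) together with the disjointness of $E(H_1)$ and $E(H_2)$. You have spelled out the intersection-of-subfields argument $\Q(q,\{v_e\}_{e\in E(H_1)}) \cap \Q(q,\{v_e\}_{e\in E(H_2)}) = \Q(q)$ more explicitly than the paper, which simply notes that the two $v_{\rm eff}$'s depend nontrivially on disjoint sets of indeterminates; but this is the same idea.
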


\begin{proof}
Statement (a) is simply Lemma~\ref{lemma_veff_nontrivial}(a)
since by hypothesis $H$ is connected.
Statement (b) holds because, by (a),
$v_{\rm eff}(H_1,s_1,t_1)$ and $v_{\rm eff}(H_2,s_2,t_2)$
depend nontrivially on disjoint sets of indeterminates.
\qed
\end{proof}

\medskip

On the other hand,
Algorithm 2 {\em can fail}\/ when it is carried out over $\C$
(i.e.\ with numerical values of $q$ and $\{ v_e \}$).
Suppose, for instance, that $(G,s,t)$ [or some constituent thereof]
consists of an edge $e$ in series with
the parallel combination of edges $f$ and $g$.
Then the multivariate Tutte polynomial for this graph is unambiguously
\be
   Z_G(q,\bv)  \;=\;  q(q+v_e)(q+v_f+v_g + v_f v_g)  \;.
\ee
But if we choose $v_e = -q$ (where $q$ is any complex number),
$v_f = -1/2$ and $v_g = 1$,
then Algorithm 2 first computes $v_f \pll v_g = 0$
and then tries to compute $v_e \serq (v_f \pll v_g) = -q \serq 0$,
yielding an ``undefined'' result of $0/0$.

It is nevertheless worth stressing once again that whenever Algorithm 2,
carried out over $\C$ (or any other field), does give an answer,
that answer is guaranteed to be correct.

In the remainder of this paper, when we use Algorithm 2
over $\C$, we will do so in the context of additional hypotheses
that guarantee that no intermediate answer is ever ``undefined''.

\bigskip
\bigskip

{\bf Some remarks concerning computational complexity.}
1.  There exists a linear-time algorithm for taking a 2-terminal graph
$(G,s,t)$ and finding a maximal decomposition tree for it
(see \cite{Valdes:1982fk}).
Then Lemma~\ref{lem_SP_decomposition_tree} tells us
in particular that $(G,s,t)$ is 2-terminal series-parallel
if and only if all the leaves of this maximal decomposition tree are $K_2$'s.

2.  Given a maximal decomposition tree for
a 2-terminal series-parallel graph $(G,s,t)$,
Algorithm 2 provides a linear-time algorithm
for computing $Z_G(q,\bv)$ as well as $v_{\rm eff}(H,a,b)$
for every constituent $(H,a,b)$,
provided that we work in a computational model where
each field operation (in $\C$ or in $\Q(q,\bv)$ as the case may be)
is assumed to take a time of order 1,
and provided we take into account the possibility of failure
when we work over $\C$.

\bigskip

\subsection{Leaf-joined trees}   \label{subsec.leaf-joined}

Given a positive integer $r \geq 2$,
we can form a graph $G_n^r$ by taking a complete $r$-ary rooted tree of 
height $n \ge 1$ and then identifying all the leaves into a single vertex.
As an example, Figure~\ref{leafjoinedtree}
shows the graphs $G_3^2$ and $G_3^3$.

\begin{figure}[t]
\begin{center}
\includegraphics{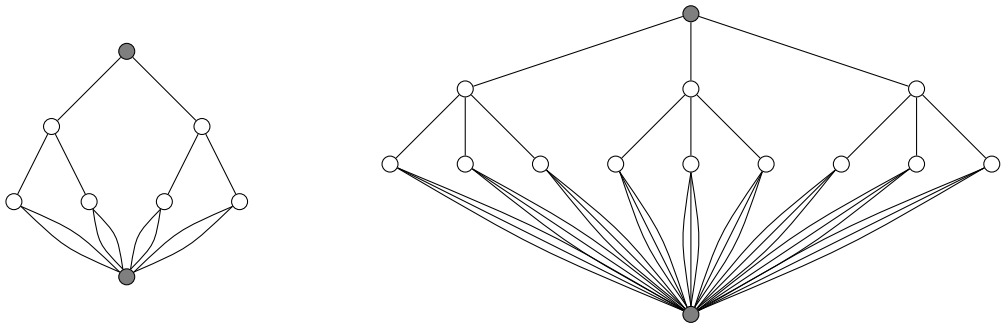}
\end{center}
\caption{$G_3^2$ is a complete binary tree of height three with all leaves identified, and 
$G^3_3$ is a complete ternary tree of height three with all leaves identified.}
\label{leafjoinedtree}
\end{figure}

We consider $G_n^r$ as a 2-terminal graph in which
the terminals are the root and the identified-leaves vertex.
It is easy to see that $G_n^r$ is in fact 2-terminal series-parallel,
as it can be defined recursively as follows:
\begin{subeqnarray}
G_1^r  & = &  K_2^{(r)} \\[1mm]
G_{n+1}^r & = & (K_2 \ser G_n^r)^{\pll r}
\end{subeqnarray}
where $K_2^{(r)}$ is the graph with two vertices
connected by $r$ parallel edges,
and $G^{\pll r}$ denotes the parallel composition of $r$ copies of $G$.
It then follows that $G_n^r$ has $(r^n+r-2)/(r-1)$ vertices,
that the flow between its terminals is $r$ (for $n \ge 1$),
and that its maxmaxflow is $r+1$ (for $n \ge 2$). 

In Appendix~\ref{app_tree} we shall prove the following:

\begin{thm}
  \label{thm.leaf-joined}
For fixed $r \ge 2$, every point of the circle $|q-1| = r$
is a limit point of chromatic roots
for the family $\{G_n^r\}_{n \ge 1}$
of leaf-joined trees of branching factor $r$.
[More precisely, for every $q_0$ satisfying $|q_0-1| = r$
 and every $\epsilon > 0$, there exists $n_0 = n_0(q_0,\epsilon)$
 such that for all $n \ge n_0$ the graph $G_n^r$ has a chromatic root $q$
 lying in the disc $|q-q_0| < \epsilon$.]
\end{thm}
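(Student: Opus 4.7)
The plan is to translate the problem into the iteration of a one-parameter family of rational maps, with parameter $\lambda = q-1$, and then apply a Koenigs-linearization / winding-number argument on the indifferent circle $|\lambda| = r$. Working in the $y$-variable $y_e = 1 + v_e$ (so the chromatic point is $y_e = 0$ and parallel composition is multiplication), the recursion $G_{n+1}^r = (K_2 \ser G_n^r)^{\pll r}$ becomes the scalar iteration $y_{n+1} = \phi_\lambda(y_n)$ with
\[
   \phi_\lambda(y) \;=\; \biggl( \frac{\lambda}{\lambda - 1 + y} \biggr)^{\!r} , \qquad y_1 = 0,
\]
interpreted on the Riemann sphere as in Appendix~\ref{app_riemann}. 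Tracking the series-node prefactors via Algorithm~2 gives a factorisation $P_{G_n^r}(q) = q\,(\lambda + y_n) \prod_{k<n} (\lambda - 1 + y_k)^{\alpha_k}$ for suitable multiplicities $\alpha_k$, so it suffices to prove that for every $\lambda_0$ with $|\lambda_0| = r$ and every $\epsilon > 0$, the equation $\phi_\lambda^{n-1}(0) = -\lambda$ has a solution in $|\lambda - \lambda_0| < \epsilon$ for all sufficiently large $n$.

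The decisive observation is that $y = 1$ is a fixed point of $\phi_\lambda$ for every $\lambda$, with multiplier $\phi_\lambda'(1) = -r/\lambda$: attracting for $|\lambda| > r$, repelling for $|\lambda| < r$, and indifferent precisely on $|\lambda| = r$. Thus the circle $|\lambda| = r$ is the bifurcation locus for this fixed point, which is exactly where limit chromatic roots must live. On a small closed region $K$ contained in $\{|\lambda| \ge r\}$ and touching $\lambda_0$, the classical Koenigs theorem furnishes a holomorphic conjugacy $\kappa_\lambda$, defined on a fixed disc around $y = 1$ and depending holomorphically on $\lambda$, satisfying $\kappa_\lambda \circ \phi_\lambda = (-r/\lambda) \kappa_\lambda$. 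Since $\phi_\lambda^k(0) \to 1$ for $|\lambda| > r$, the orbit of $0$ enters the linearisation disc after a locally uniform number $n_0$ of steps, and thereafter $\kappa_\lambda(y_n) = (-r/\lambda)^{n-n_0}\,\kappa_\lambda(y_{n_0})$.

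Substituting into $y_n = -\lambda$, the chromatic-root condition becomes
\[
   \bigl(-r/\lambda\bigr)^{n-n_0} \;=\; \kappa_\lambda(-\lambda)/\kappa_\lambda(y_{n_0}(\lambda)) \;=:\; R(\lambda),
\]
with $R$ holomorphic and nowhere zero on $K$ (after shrinking $K$ so that $-\lambda \neq 1$). Taking logarithms and letting the integer branch parameter $j$ vary, for each large $n$ the left-hand side parameterises a family of roughly $n$ level curves crossing $K$ transversally to $|\lambda| = r$, and a standard winding-number / Rouch\'e count yields at least one solution $\lambda_n \in K$ in any preassigned neighborhood of $\lambda_0$. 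The main obstacle is the uniform holomorphic control of the Koenigs linearisation as $\lambda$ approaches the indifferent circle from the attracting side: one must verify that $\kappa_\lambda$ extends continuously up to $|\lambda| = r$ on a common disc around $y = 1$, and that the first-entry time $n_0(\lambda)$ and the normalisation $\kappa_\lambda(y_{n_0}(\lambda))$ stay bounded above and below. A secondary nuisance is to check that the roots supplied by the $(\lambda + y_n)$ factor are not cancelled by the auxiliary factors $(\lambda - 1 + y_k)^{\alpha_k}$; this follows from a degree count, since the former contribute roughly $r^{n-1}$ zeros while the latter contribute far fewer, and a Hurwitz argument localises the surviving new zeros near the circle $|\lambda| = r$.
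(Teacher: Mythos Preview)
Your reduction to the iteration $y_{n+1} = \phi_\lambda(y_n)$ and the identification of the fixed point at $y=1$ with multiplier $-r/\lambda$ are correct and match the paper exactly; the paper's Lemma~\ref{lemma.iteration} gives the clean statement that $P_{G_n}(q)=0$ iff $y_n = 1-q$ (for $q \notin \{0,1\}$), so your worry about cancellation with the prefactors $(\lambda-1+y_k)^{\alpha_k}$ is unnecessary.

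The core argument, however, has a genuine gap. You work on a one-sided region $K \subset \{|\lambda| \ge r\}$, where the fixed point at $y=1$ is attracting, and propose to control the orbit via the Koenigs coordinate $\kappa_\lambda$ extended up to the indifferent boundary. Two things fail here. First, on $\{|\lambda| > r\}$ the orbit satisfies $y_n \to 1$, so for large $n$ the equation $y_n = -\lambda$ has \emph{no} solution there (since $|-\lambda - 1| \ge r-1 \ge 1$); the chromatic roots you seek in fact lie on the repelling side $|\lambda| < r$, outside your region $K$. Second, and more fundamentally, the uniform extension of $\kappa_\lambda$ to $|\lambda|=r$ is exactly the small-divisor problem: the Koenigs coefficients involve denominators $\mu^k - \mu$ with $\mu = -r/\lambda$, and these are not bounded away from zero as $|\mu| \to 1$. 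At a generic boundary point the linearization domain shrinks to a point (Cremer case) or the linearization fails outright (parabolic case), so the quantities $\kappa_\lambda(-\lambda)$ and $\kappa_\lambda(y_{n_0})$ that enter your function $R(\lambda)$ are not defined. You flag this as ``the main obstacle'' but do not overcome it; it cannot be overcome in this form.

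The paper takes a different route that sidesteps linearization entirely. It invokes the Ma\~n\'e--Sad--Sullivan / Lyubich framework for holomorphic families of rational maps: one shows (Corollary~\ref{cor.irregular}) that every point of $|q-1|=r$ is an \emph{irregular} point for the family $q \mapsto R_q^n(\infty)$, because on any neighbourhood the attracting fixed point at $y=1$ would have to persist (Proposition~\ref{prop.attractingcase}(c)) but cannot, since it becomes repelling for $|q-1|<r$. Then a black-box result (Proposition~\ref{prop.nonexceptional}) asserts that for any holomorphic target $f(q)$ that is not persistently exceptional --- here $f(q)=1-q$, and Lemma~\ref{lemma_B.5} shows there are no persistently exceptional functions at all --- the solution sets of $R_q^n(\infty)=f(q)$ accumulate on the irregular locus. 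This machinery, based ultimately on Montel's theorem rather than linearization, is what makes the boundary case tractable.
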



\section{An abstract theorem on excluding roots}  \label{sec.abstract}

The multivariate Tutte polynomial of the graph $G=K_2$
having a single non-loop edge of weight $v_e$
is $Z_{K_2}(q, \bv) = q(q+v_e)$,
which has roots at $q=0$ and $q=-v_e$.
Given a 2-terminal series-parallel graph $G$
with arbitrary complex edge weights $\{v_e\}$ and a fixed complex number $q$,
we can apply series and parallel reductions
as in Section~\ref{subsec.computing} until $G$
has been reduced to a single edge with some ``effective weight''
$v_{\rm eff} \in \C \cup \{\hbox{undefined}\}$.
If $v_{\rm eff} \neq \hbox{undefined}$,
then we can be sure that none of the prefactors of the form $q+v_{e_1}+v_{e_2}$
generated during the series reductions were 0,
and we can therefore conclude that
$Z_G(q,\bv) = 0$ if and only if $q=0$ or $v_{\rm eff} = -q$.

This observation then gives us a strategy for determining root-free regions
for the multivariate Tutte polynomials of families of series-parallel graphs.
For a fixed $q \neq 0$ in the conjectured root-free region,
we bound the regions of the (finite) complex $v$-plane
where $v_{\rm eff}$ can lie for any graph in the family,
and we show that these regions do not contain the point $v_{\rm eff} = -q$
that would correspond to a zero of $Z_G(q,\bv)$.
If we can do this, then we have shown that $Z_G(q,\bv) \neq 0$.
The precise result is as follows:

\begin{thm}
  \label{thm_abstract}
Let $q \neq 0$ be a fixed complex number
and let $\Lambda \ge 2$ be a fixed integer.
Let $S_1 \subseteq S_2 \subseteq \cdots \subseteq S_{\Lambda-1}$
be sets in the (finite) complex $v$-plane such that
\begin{itemize}
\item[(1)] $S_k \tvserq{V} S_\ell \subseteq S_{\min(k,\ell)}$
   for all $k,\ell$
\item[(2)] $S_k \tvpll{V} S_\ell\subseteq S_{k+\ell}$
   for $k + \ell \leq \Lambda-1$
\end{itemize}
Now consider any 2-terminal series-parallel graph $(G,s,t)$
and any maximal decomposition tree for $(G,s,t)$
in which all the proper constituents
have between-terminals flow at most $\Lambda-1$,
and equip $G$ with edge weights $v_e \in S_1$.
Then, for every node $(H,a,b)$ of the decomposition tree
that has between-terminals flow $\lambda_H(a,b) \le \Lambda-1$,
we have $v_{\rm eff}(H,a,b) \in S_{\lambda_H(a,b)}$.

Now assume further that, in addition to (1) and (2),
the following hypotheses hold:
\begin{itemize}
\item[(3)] $-q \notin S_{\Lambda-1}$
\item[(4)] $-q \notin S_k \tvpll{V} S_{\ell}$ for $k+\ell = \Lambda$
\end{itemize}
Then, for any loopless series-parallel graph $G$
with maxmaxflow at most $\Lambda$,
we have $Z_G(q,\bv) \not= 0$ whenever $v_e \in S_1$ for all edges.
(In particular, if $-1 \in S_1$, then $q$ is not a chromatic root of $G$.)
\end{thm}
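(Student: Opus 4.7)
The plan is to prove the two assertions in sequence, with the first providing the inductive engine for the second.

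For the first assertion, I would induct on the height of the subtree rooted at a node $(H,a,b)$ of the decomposition tree, proving: if $\lambda_H(a,b)\le\Lambda-1$, then $v_{\rm eff}(H,a,b)\in S_{\lambda_H(a,b)}$. The base case is a leaf of the maximal decomposition tree: by Lemma~\ref{lem_SP_decomposition_tree} this is a single edge $K_2$ of flow $1$, and $v_{\rm eff}=v_e\in S_1$. For an internal $p$-node with children of flows $k_1,k_2$, Lemma~\ref{lem_terminals} gives $\lambda_H(a,b)=k_1+k_2\le\Lambda-1$, so both $k_i\le\Lambda-1$; the inductive hypothesis places $v_{\rm eff}(H_i)\in S_{k_i}$, and property~(2) yields $v_{\rm eff}(H)\in S_{k_1}\tvpll{V}S_{k_2}\subseteq S_{k_1+k_2}$. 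For an $s$-node, $\lambda_H(a,b)=\min(k_1,k_2)$; here both children are themselves proper constituents, so the standing hypothesis forces $k_1,k_2\le\Lambda-1$, and property~(1) gives $v_{\rm eff}(H)\in S_{k_1}\tvserq{V}S_{k_2}\subseteq S_{\min(k_1,k_2)}$. A point worth flagging: because $S_{\min(k,\ell)}\subseteq\C$, the containment in~(1) implicitly forbids the $\serq$ from producing the value ``undefined''.

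For the second assertion, I would first reduce to the case that $G$ is loopless, nonseparable, and has at least one edge. Since $q\ne 0$ by hypothesis, the factorizations \reff{eq.components}--\reff{eq.blocks} show that $Z_G\ne 0$ iff every block of $G$ has nonzero partition function; each block inherits maxmaxflow at most $\Lambda$ and is either $K_1$ (trivial, $Z=q$) or a nonseparable loopless graph with at least one edge. For such a block, pick adjacent vertices $s,t$; Theorem~\ref{thm_serpar_1} ensures $(G,s,t)$ is 2-terminal series-parallel. Fix a maximal decomposition tree; by Lemma~\ref{lem_SP_decomposition_tree} its leaves are single edges, and since $G$ is nonseparable the root is either that single edge itself (dispatched by $Z_{K_2}=q(q+v)$ using $v\in S_1\subseteq S_{\Lambda-1}$ and~(3)) or a $p$-node (Lemma~\ref{lem_niceness}).

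In the $p$-node case, Lemma~\ref{lem_constituents} guarantees that every proper constituent has between-terminals flow at most $\Lambda-1$, which is exactly the hypothesis required for the first half. Applying it to the root's two children $(G_i,s_i,t_i)$ with flows $k_1,k_2$, I would obtain $v_{\rm eff}(G_i)\in S_{k_i}$ and hence
\be
  v_{\rm eff}(G,s,t) \;=\; v_{\rm eff}(G_1)\tvpll{V}v_{\rm eff}(G_2)\;\in\; S_{k_1}\tvpll{V}S_{k_2}.
\ee
If $k_1+k_2\le\Lambda-1$, property~(2) places this in $S_{\Lambda-1}$ and~(3) excludes $-q$; if $k_1+k_2=\Lambda$, property~(4) excludes $-q$ directly. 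Algorithm~2 then expresses $Z_G$ as $q\,(q+v_{\rm eff}(G,s,t))$ times the leaf $A$-values (all equal to $1$ for $K_2$ leaves) times the product of the $s$-node prefactors $q+v_{\rm eff}(H_1)+v_{\rm eff}(H_2)$; each such prefactor is nonzero, because otherwise $v_{\rm eff}(H_1)\serq v_{\rm eff}(H_2)$ would be ``undefined'', contradicting its membership in $S_{\min(k_1,k_2)}\subseteq\C$ from the first half. Therefore $Z_G\ne 0$. The main obstacle I anticipate is the careful separation between the root of the decomposition tree, whose flow may equal the full $\Lambda$, and the proper constituents, whose flow is at most $\Lambda-1$: the former is handled by property~(4), the latter by (1)--(3) via the first half. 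A secondary subtlety is ensuring Algorithm~2 never produces an ``undefined'' intermediate, which is the role implicitly played by property~(1).
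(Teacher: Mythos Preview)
Your proposal is correct and follows essentially the same approach as the paper's proof: induction up the decomposition tree using (1) and (2) for the first half, then reduction to nonseparable blocks, Lemma~\ref{lem_constituents} to control proper constituents, and Algorithm~2 together with (3)/(4) for the second half. Your treatment is in fact slightly more careful than the paper's in that you explicitly separate out the single-edge case $G=K_2$ and explicitly split on whether $k_1+k_2\le\Lambda-1$ or $k_1+k_2=\Lambda$ at the root, whereas the paper handles these points more implicitly.
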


{\bf Remarks.}
1.  It is implicit in condition (1) that
the operation in question is always well-defined
(i.e.\ does not take the value ``undefined''),
or in other words that $q+v_1+v_2 \neq 0$
whenever $v_1 \in S_k$ and $v_2 \in S_\ell$.

2.  When the root of the decomposition tree is a $p$-node
(which occurs in particular whenever $G$ is nonseparable and not $K_2$)
and $G$ has maxmaxflow $\Lambda$,
then Lemma~\ref{lem_constituents} guarantees that every {\em proper}\/
constituent has between-terminals flow at most $\Lambda-1$.
The root node $(G,s,t)$, by contrast, {\em might}\/ have
between-terminals flow as large as $\Lambda$.
\qed

\medskip

\begin{proof}
Let $(G,s,t)$, its maximal decomposition tree
and its edge weights be as specified.
We want to prove that $v_{\rm eff}(H,a,b) \in S_{\lambda_H(a,b)}$
for all nodes $(H,a,b)$ that satisfy $\lambda_H(a,b) \le \Lambda-1$.
We shall prove this claim by induction upwards from the leaves
of the decomposition tree.
By Lemma~\ref{lem_SP_decomposition_tree},
a leaf of the decomposition tree is an edge $e$ of $G$,
hence has between-terminals flow equal to 1,
and $v_{\rm eff} = v_e \in S_1$ by hypothesis.
So let $(H,a,b)$ be a non-leaf node of the decomposition tree
and suppose that the children of $(H,a,b)$,
call them $(H_1,a_1,b_1)$ and $(H_2,a_2,b_2)$,
have between-terminals flow $k$ and $\ell$, respectively.
Since $(H_1,a_1,b_1)$ and $(H_2,a_2,b_2)$ are proper constituents,
we have by hypothesis $k,\ell \le \Lambda-1$;
so by the inductive hypothesis, we have
$v_{\rm eff}(H_1,a_1,b_1) \in S_k$ and $v_{\rm eff}(H_2,a_2,b_2) \in S_\ell$.
Using Lemma~\ref{lem_terminals},
it is clear that conditions (1) and (2) ensure that
$v_{\rm eff}(H,a,b) \in S_{\lambda_H(a,b)}$ holds
whenever $\lambda_H(a,b) \le \Lambda-1$
(which holds for all proper constituents
 and might or might not hold for the root node).
This proves the first half of the theorem.

As $Z_G(q, \bv)$ is multiplicative over blocks,
and the maxmaxflow of a separable graph is the maximum of the maxmaxflows
of its blocks,
it suffices to prove the second half of the theorem when $G$ is a
loopless {\em nonseparable}\/ series-parallel graph
of maxmaxflow at most $\Lambda$.
Since the result holds trivially when $G = K_1$,
we can assume that $G$ has at least one edge.
Therefore, by Theorem~\ref{thm_serpar_1},
$G$ has a pair of vertices $s$, $t$ such that
$(G,s,t)$ is a 2-terminal series-parallel graph
and hence described by a maximal decomposition tree
whose leaf nodes are single edges.
Furthermore, by Lemma~\ref{lem_constituents},
all of the proper constituents of $(G,s,t)$
have between-terminals flow at most $\Lambda-1$.
Therefore, if $(H,a,b)$ is a proper constituent of $(G,s,t)$
with between-terminals flow $\lambda_H(a,b) = \lambda$,
we can apply the first half of the theorem
to conclude that $v_{\rm eff}(H,a,b) \in S_\lambda$.

By condition (3) [and the nesting $S_i \subseteq S_{\Lambda-1}$],
we have $v_{\rm eff}(H,a,b) \neq -q$
whenever $(H,a,b)$ is a proper constituent of $(G,s,t)$.
On the other hand, the final step (at the root of the decomposition tree)
constructs $(G,s,t)$ as the parallel composition of two proper constituents
whose between-terminal flows sum to $\lambda_G(s,t) \le \Lambda$,
so conditions (4) and (2)/(3) together ensure that
$v_{\rm eff}(G,s,t) \neq -q$.
Therefore, by Algorithm 2 of Section~\ref{subsec.computing},
$Z_G(q,\bv)$ is equal to a nonzero prefactor
--- namely, the product over $s$-nodes of
 $q + v_{\rm eff}(G_1,s_1,t_1) + v_{\rm eff}(G_2,s_2,t_2)$,
 a quantity that is nonvanishing by virtue of Remark~1
 preceding this proof ---
multiplied by $q[q+v_{\rm eff}(G,s,t)]$,
and is therefore nonzero as claimed.
\qed
\end{proof}

Of course, to {\em apply}\/ this theorem it is necessary to actually
{\em identify}\/ suitable sets
$S_1 \subseteq S_2 \subseteq \ldots \subseteq S_{\Lambda-1}$.
In practice one usually starts from a specified set $\scrv \subseteq \C$
of ``allowed edge weights'' ---
for instance, $\scrv = \{-1\}$ for the chromatic polynomial ---
and one attempts to find sets
$S_1 \subseteq S_2 \subseteq \ldots \subseteq S_{\Lambda-1}$
satisfying $S_1 \supseteq \scrv$
along with the hypotheses (1)--(4) of Theorem~\ref{thm_abstract}.
For any particular combination of $q$, $\Lambda$ and $\scrv$,
there is always a collection of {\em minimal regions}\/
$S_1 \subseteq S_2 \subseteq \ldots \subseteq S_{\Lambda-1}$
where $S_1 \supseteq \scrv$
and conditions (1) and (2) are satisfied.
If one knows this collection of minimal regions,
then conditions (3) and (4) become a ``final check''
certifying that $q$ is not a root.

In practice, though, it is almost always impossible
to describe the minimal regions even for specific values of $q$ and $\Lambda$,
let alone symbolically
(but see Section~\ref{sec_discs} for some computer-generated approximations).
Therefore it is necessary to bound the optimal regions
inside larger regions with shapes that are more amenable to analysis.
But it is also important to fit the bounding regions as tightly as possible
to the optimal regions, as conditions (1) and (2)
cause any ``unnecessary points'' included in the approximation to a region
to have a cascading effect on the approximations for the other regions,
thereby incorporating still more possibly unnecessary points, and so on.

There are, in fact, two slightly different reasons why
including unnecessary points in the regions $S_i$ can lead to poor bounds.
Firstly, if we have chosen $S_2,S_3,\ldots$ to be much larger
than they need to be, {\em for the given set $S_1$}\/,
then the bounds one obtains from Theorem~\ref{thm_abstract}
may (not surprisingly) be much weaker than the truth.
Secondly, it is important to observe that even if we are ultimately
interested in proving $Z_G(q,\bv) \neq 0$
for weights $v_e$ lying in a specified set $\scrv$,
we will get from Theorem~\ref{thm_abstract},
whether we like it or not, the same result for all $v_e \in S_1$.
Of course, if $S_1$ is exactly the minimal region containing the
given $\scrv$ and satisfying conditions (1) and (2),
then nothing is lost, as any bound valid for all series-parallel graphs
of maxmaxflow $\Lambda$ with weights in $\scrv$ will also be valid for
weights in $S_1$ (since any $v$ lying in the minimal region $S_1$
is in fact the $v_{\rm eff}$ for a suitable 2-terminal series-parallel graph
of maxmaxflow $\Lambda$ and between-terminals flow 1,
with edge weights in $\scrv$).
But if the chosen $S_1$ is significantly larger than the minimal region,
then even the best-possible bound for weights in $S_1$ may be much weaker
than the corresponding bound for weights in $\scrv$.
In particular, if $S_1$ extends much outside the
``complex antiferromagnetic regime'' $|1 + v_e | \le 1$
--- where ``much outside'' means, roughly,
 more than a distance of order $1/|q|$ ---
then one {\em expects}\/ the $q$-plane roots of $Z_G(q,\bv)$
to grow exponentially in $\Lambda$ rather than linearly
(see \cite{Jackson-Procacci-Sokal} for further discussion,
 and see also footnote~\ref{footnote_expgrowth} below).

The simplest types of region to manipulate analytically are discs,
especially discs centered at the origin, and so it is natural to try to bound
the optimal regions inside suitable discs.
If one insists on using discs centered at the origin,
then it furthermore matters whether one uses the $v$-variables,
the $y$-variables or the $t$-variables.
If one makes a poor choice ---
e.g.\ the optimal regions are either far from being discs,
 or far from being centered at the origin in the chosen variables ---
then one will obtain poor bounds,
e.g.\ bounds that grow exponentially rather than linearly in $\Lambda$.


It turns out that the optimal regions are not too far from being discs
centered at the origin {\em if we use the $t$-variables}\/,
but are quite far from being discs centered at the origin
if we use the $v$- or $y$-variables.
We shall therefore use the $t$-variables in the remainder of this paper.
Let us recall that the important points $v=-1$, $v = \infty$ and $v= -q$
correspond to $t=1/(1-q)$, $t=1$ and $t =\infty$, respectively.
We can therefore re-express Theorem~\ref{thm_abstract}
in the language of transmissivities $\{t_e\}$.
For simplicity we suppress the statements about $v_{\rm eff}$
(or $t_{\rm eff}$) and concentrate on the conclusion that
$Z_G(q, \bv) \neq 0$.


\begin{thm}
  \label{regions}
Let $q \not= 0$ be a fixed complex number
and let $\Lambda \ge 2$ be a fixed integer.
Let $S_1 \subseteq S_2 \subseteq \cdots \subseteq S_{\Lambda-1}$
be sets in the (finite) complex $t$-plane such that
\begin{itemize}
\item[(1)] $S_k \tvser{T} S_\ell \subseteq S_{\min(k,\ell)}$
   for all $k,\ell$
\item[(2)] $S_k \tvpllq{T} S_\ell\subseteq S_{k+\ell}$
   for $k + \ell \leq \Lambda-1$
\item[(3${}'$)] $1 \notin S_{\Lambda-1}$
\item[(4)] $S_k \tvpllq{T} S_{\ell} \subseteq \C$ for $k+\ell = \Lambda$
   (i.e.\ does not ever take the value ``undefined'')
\end{itemize}
Then, for any series-parallel graph $G$ with maxmaxflow at most $\Lambda$,
we have $Z_G(q,\bv) \not= 0$ whenever $v_e/(q+v_e) \in S_1$ for all edges.
\end{thm}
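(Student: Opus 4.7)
The plan is to reduce Theorem~\ref{regions} to Theorem~\ref{thm_abstract} by the change of variables $\phi\colon t \mapsto v = qt/(1-t)$. Since $q \neq 0$, the map $\phi$ is a M\"obius transformation of the Riemann sphere, restricting to a bijection $\C \setminus \{1\} \to \C \setminus \{-q\}$. For each $i = 1, \ldots, \Lambda-1$, define $\tilde S_i := \phi(S_i)$. Condition (3${}'$) together with the nesting $S_1 \subseteq \cdots \subseteq S_{\Lambda-1}$ guarantees that $1 \notin S_i$ for every $i$, so each $\tilde S_i$ is a well-defined subset of the finite complex $v$-plane, and the chain of inclusions $\tilde S_1 \subseteq \cdots \subseteq \tilde S_{\Lambda-1}$ is inherited from that of the $S_i$.

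The heart of the proof is the verification that the $\tilde S_i$ satisfy conditions (1)--(4) of Theorem~\ref{thm_abstract}. The essential inputs are the pair of identities
\begin{equation}
\phi(t_1 t_2) \;=\; \phi(t_1) \serq^V \phi(t_2), \qquad \phi(t_1 \pllq^T t_2) \;=\; \phi(t_1) \pll^V \phi(t_2),
\end{equation}
valid whenever the respective right-hand sides are defined, together with the complementary facts that $\serq^V(v_1,v_2)$ is undefined precisely when $t_1 t_2 = 1$ (since a direct computation gives $q + v_1 + v_2 = q(1 - t_1 t_2)/[(1-t_1)(1-t_2)]$) and that $\pll^V(v_1, v_2) = -q$ precisely when $\pllq^T(t_1,t_2)$ is undefined (both conditions reducing to $(q-1) t_1 t_2 = -1$). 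Given these, condition (1) for the $\tilde S_i$ follows from condition (1) for the $S_i$ together with (3${}'$) ruling out $t_1 t_2 = 1$; condition (2) translates directly; condition (3), namely $-q \notin \tilde S_{\Lambda-1}$, is automatic since $\phi^{-1}(-q) = \infty$ and $\tilde S_{\Lambda-1}$ lies in the finite $v$-plane; and condition (4) for the $\tilde S_i$ is equivalent to condition (4) for the $S_i$ via the second correspondence above.

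Once the hypotheses of Theorem~\ref{thm_abstract} have been verified for the $\tilde S_i$, we apply it to any loopless series-parallel graph $G$ with maxmaxflow at most $\Lambda$ and edge weights $v_e \in \tilde S_1$, concluding $Z_G(q, \bv) \neq 0$. Since $v_e/(q+v_e) \in S_1$ is equivalent to $v_e = \phi(v_e/(q+v_e)) \in \tilde S_1$ (using that $\phi^{-1}(v) = v/(q+v)$ is defined on all of $\C \setminus \{-q\}$), this is precisely the desired conclusion. The main obstacle, and essentially the only technical content, is the careful bookkeeping of when the operations $\serq^V$, $\pll^V$, $\ser^T$, $\pllq^T$ take the ``undefined'' value and how those bad cases correspond under $\phi$; once this dictionary is tabulated, the proof is largely mechanical.
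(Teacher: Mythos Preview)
Your proposal is correct and takes essentially the same approach as the paper: both derive Theorem~\ref{regions} from Theorem~\ref{thm_abstract} by pushing the sets $S_i$ through the M\"obius transformation $t \mapsto v = qt/(1-t)$ and matching up the hypotheses. The paper's proof is terser, merely asserting that (3${}'$) here corresponds to the finite-$v$-plane hypothesis there, that the finite-$t$-plane hypothesis here corresponds to condition (3) there, and that (1), (2), (4) translate directly; you have spelled out the dictionary of ``undefined'' cases (in particular the identities $q+v_1+v_2 = q(1-t_1t_2)/[(1-t_1)(1-t_2)]$ and $v_1 \pll^V v_2 = -q \Leftrightarrow 1+(q-1)t_1t_2 = 0$) that make this translation rigorous.
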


\noindent
In particular, to handle chromatic polynomials it suffices to arrange that
$1/(1-q) \in S_1$.

\bigskip

{\bf Remark.}
Condition (3) states merely that the set $S_{\Lambda-1}$
avoids the point $t=1$, but in practice we will always have
$S_{\Lambda-1} \subseteq \{|t| < 1\}$.
Indeed, if $S_{\Lambda-1}$ contains any point with $|t|=1$ (resp.\ $|t|>1$),
then by condition (1) its closure $\overline{S}_{\Lambda-1}$
must contain the point $t=1$ (resp.\ $t=\infty$);
and while this is not explicitly forbidden,
it is hard to see how one could satisfy all the hypotheses
(1)--(4) in such a case.

\medskip

\proofof{Theorem~\ref{regions}}
This is almost a direct translation of Theorem~\ref{thm_abstract}
into transmissivities.
Indeed, conditions (1) and (2) here are direct translations
of conditions (1) and (2) of Theorem~\ref{thm_abstract}.
Condition (3${}'$) here is equivalent to the hypothesis
in Theorem~\ref{thm_abstract} that the sets lie
in the {\em finite}\/ $v$-plane,
while condition (3) of Theorem~\ref{thm_abstract}
is equivalent to the hypothesis here that the sets lie
in the {\em finite}\/ $t$-plane.
Finally, condition (4) here is a direct translation
of condition (4) of Theorem~\ref{thm_abstract}.
\qed

Since the regions $S_i$ are assumed increasing,
the condition (1) is most stringent for $\ell = \Lambda-1$,
and it reduces to
\begin{itemize}
   \item[(1${}'$)] \quad
   $S_k \tvser{T} S_{\Lambda-1} \subseteq S_k$
                     \quad\hbox{for all $k$.}
\end{itemize}
Furthermore, there is a simple but very useful {\em sufficient}\/
condition for condition (1)/(1${}'$) to hold:

\begin{lemma}
   \label{lemma4.3}
If there exists $r > 0$ such that
\be
   D(r^2) \subseteq S_1 \subseteq S_2 \subseteq \cdots \subseteq
               S_{\Lambda-1} \subseteq D(r)
\ee
where $D(r) = \{t \in \C \colon\, |t| \le r \}$,
then condition (1) of Theorem~\ref{regions} holds.
\end{lemma}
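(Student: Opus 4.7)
The plan is to exploit the fact that in the $t$-variables, series composition is trivial: by the formula $t_e \ser^T t_f = t_e t_f$ recorded earlier, series reduction is just multiplication of complex numbers. This reduces the containment condition (1) to a simple modulus bound.

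First I would fix arbitrary indices $k,\ell \in \{1,\ldots,\Lambda-1\}$ and arbitrary points $t_1 \in S_k$ and $t_2 \in S_\ell$. Since by hypothesis $S_k \subseteq S_{\Lambda-1} \subseteq D(r)$ and likewise for $S_\ell$, we have $|t_1| \le r$ and $|t_2| \le r$. Consequently $|t_1 t_2| \le r^2$, which means $t_1 \ser^T t_2 = t_1 t_2 \in D(r^2)$.

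Next, using the other half of the hypothesis, $D(r^2) \subseteq S_1$, we conclude $t_1 t_2 \in S_1$. Finally, since $1 \le \min(k,\ell)$ and the sets are nested, $S_1 \subseteq S_{\min(k,\ell)}$, giving $t_1 \ser^T t_2 \in S_{\min(k,\ell)}$. As $t_1,t_2$ were arbitrary, this is precisely condition (1) of Theorem~\ref{regions}. Note that the ``undefined'' value cannot occur here since $t_1 t_2$ is always a well-defined complex number.

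There is no real obstacle in this argument; the whole point of the lemma is that by choosing the $t$-variables (in which series becomes multiplication) and by asking only for containment in discs centered at the origin, the verification of the potentially delicate closure condition (1) collapses to the trivial observation $r \cdot r = r^2$. The reason for stating the lemma separately is practical: it isolates a convenient sufficient condition that will let subsequent sections construct the nested family $S_1 \subseteq \cdots \subseteq S_{\Lambda-1}$ as an increasing chain of discs sandwiched between $D(r^2)$ and $D(r)$, so that one only needs to check the parallel-composition conditions (2) and (4) and the avoidance condition (3${}'$).
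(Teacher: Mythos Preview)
Your proof is correct and follows essentially the same approach as the paper, which condenses the argument to the single line $S_k \ser^T S_\ell \subseteq D(r) \ser^T D(r) = D(r^2) \subseteq S_{\min(k,\ell)}$. Your additional remark that the ``undefined'' value cannot arise here is correct and worth noting.
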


\proof
$S_k \tvser{T} S_\ell  \subseteq  D(r)\tvser{T} D(r)  =  D(r^2)
 \subseteq  S_{\min(k,\ell)}$.
\qed

\section{Discs in the $t$-plane}  \label{sec_discs}

In this section we shall prove the following slight strengthening
of Theorem~\ref{thm_main}:

\begin{thm}
   \label{thm_main_sec5}
Fix an integer $\Lambda \ge 2$,
and let $G$ be a loopless series-parallel graph of maxmaxflow at most $\Lambda$.
Let $\rho^\star_\Lambda$ be the unique solution of
\be
   (1+\rho)^\Lambda  \;=\;  2 (1+\rho^2)^{\Lambda-1}
 \label{eq.rhostarlambda}
\ee
in the interval $(0,1)$ when $\Lambda \ge 3$,
and let $\rho^\star_2 = 1$.
Then the multivariate Tutte polynomial $Z_G(q, \bv)$
is nonvanishing whenever $|q-1| \ge 1/\rho^\star_\Lambda$
(with $\ge$ replaced by $>$ when $\Lambda=2$)
and the edge weights $\bv = \{v_e\}_{e \in E}$
satisfy
\be
   v_e \,=\, -1 \quad\hbox{or}\quad
   \left| {v_e \over q+v_e} \right| \:\le\: \rho \, {X - 1 \over 1 - \rho X}
 \label{eq.thm_main_sec5}
\ee
(again with strict inequality when $\Lambda=2$),
where
\be
   \rho \:=\: {1 \over |q-1|}
   \quad\hbox{and}\quad
   X \,=\, \biggl( {2 \over 1+\rho} \biggr)^{1/(\Lambda-1)}
   \;.
 \label{def.rho_and_X}
\ee

Furthermore we have
$\rho^\star_\Lambda > (\log 2)/(\Lambda-\smfrac{3}{2}\log 2)$,
so that in particular
all the roots (real or complex) of the chromatic polynomial $P_G(q)$
lie in the disc $|q-1| < (\Lambda-\smfrac{3}{2}\log 2) / \log 2$.
\end{thm}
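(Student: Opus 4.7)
My plan is to apply Theorem~\ref{regions} in the $t$-plane using nested regions
$$S_k \;=\; D(r_k) \cup \{-1/(q-1)\}$$
where $D(r) = \{|t| \le r\}$, $\rho := 1/|q-1|$, $X := (2/(1+\rho))^{1/(\Lambda-1)}$, and
$$r_k \;=\; \rho \, {X^k - 1 \over 1 - \rho X^k}, \qquad k=1,\ldots,\Lambda-1.$$
These radii are engineered so that $r_{\Lambda-1} = \rho$, while $r_1$ coincides with the edge-weight bound in \reff{eq.thm_main_sec5}; the extra point $-1/(q-1)$ must be included because $v_e = -1$ corresponds to $t_e = -1/(q-1)$, which for $\Lambda \ge 3$ lies outside $D(r_1)$. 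A short computation verifies $0 < r_1 < r_2 < \cdots < r_{\Lambda-1} = \rho < 1$ whenever $\rho < 1$.

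I would first dispose of the easy hypotheses. Since series reduction in the $t$-variables is just multiplication, condition (1) reduces to checking that $t \cdot t' \in S_{\min(k,\ell)}$ for $t \in S_k$, $t' \in S_\ell$; the non-trivial case is $\{-1/(q-1)\} \ser^T D(r_{\Lambda-1}) = \{|t| \le \rho^2\}$, which must lie in $S_1 = D(r_1) \cup \{-1/(q-1)\}$ and hence forces $\rho^2 \le r_1$. Substituting the formula for $r_1$, this inequality rearranges precisely to $(1+\rho)^\Lambda \le 2(1+\rho^2)^{\Lambda-1}$, and this is exactly how the defining equation \reff{eq.rhostarlambda} for $\rho^\star_\Lambda$ arises: condition (1) is tight at $\rho = \rho^\star_\Lambda$. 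Condition (3$'$) is immediate from $\rho < 1$, and (4) reduces to an analogous bound $r_k r_\ell < \rho$ for $k+\ell = \Lambda$.

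The main technical obstacle is condition (2): showing
$$D(r_k) \pll^T_q D(r_\ell) \;\subseteq\; D(r_{k+\ell}) \cup \{-1/(q-1)\} \qquad (k+\ell \le \Lambda-1).$$
A key simplification is that $(-1/(q-1)) \pll^T_q t \equiv -1/(q-1)$ for every $t$ (a $v_e = -1$ edge in parallel with anything still yields effective weight $-1$), so I only need to bound parallel on the disc parts. Passing to the $y$-variables (where parallel is multiplication), the $t$-disc $|t|\le r$ is the Möbius image of a $y$-disc, and the recursion defining $r_k$ is tuned so that a direct algebraic identity
$$(A-Bs_k)(A-Bs_\ell) - (1-\rho)(A - Bs_k s_\ell) \;=\; AB(1-s_k)(1-s_\ell) \;>\; 0$$
(with $A=1+\rho$, $B=2\rho$, $s_k = X^k$) yields $y_k y_\ell < y_{k+\ell}$ in the real-positive extremal configuration. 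The complex case then reduces to showing that this real-positive configuration is the extremum of $|t_1 \pll^T_q t_2|$ over $|t_1|\le r_k$, $|t_2|\le r_\ell$, uniformly in the argument of $q$ on $|q-1|=1/\rho$; this involves writing $t_1 \pll^T_q t_2$ as a Möbius function of $t_2$ for fixed $t_1$, tracking its image disc, and maximizing over $|t_1| \le r_k$. I view this Möbius-geometric bookkeeping as the hardest part of the proof, since $q$ is genuinely complex and the estimates must remain sharp.

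Once condition (2) is in hand, Theorem~\ref{regions} yields $Z_G(q,\bv) \ne 0$ for $|q-1| \ge 1/\rho^\star_\Lambda$. The final claim that $\rho^\star_\Lambda > (\log 2)/(\Lambda - \smfrac{3}{2}\log 2)$ is an elementary analytic exercise: take logarithms in \reff{eq.rhostarlambda}, apply $\log(1+\rho) \le \rho - \rho^2/2 + \rho^3/3$ and $\log(1+\rho^2) \ge \rho^2 - \rho^4/2$ to derive
$$\log 2 \;\le\; \Lambda \rho^\star_\Lambda \,-\, (\smfrac{3\Lambda}{2}-1)(\rho^\star_\Lambda)^2 \,+\, O((\rho^\star_\Lambda)^3),$$
bootstrap from the trivial estimate $\rho^\star_\Lambda \ge (\log 2)/\Lambda$ (obtained from $\log(1+\rho^2) \ge 0$) to refine, and rearrange to obtain the claimed bound.
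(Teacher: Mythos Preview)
Your setup is essentially the paper's: same point-plus-disc regions $S_k = \{1/(1-q)\} \cup D(r_k)$, same radii $r_k = \rho(X^k-1)/(1-\rho X^k)$, same identification of the constraint $\rho^2 \le r_1$ as the origin of \reff{eq.rhostarlambda}. The difference is in how you verify condition~(2), and there the paper's route is far simpler than what you propose.

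You plan to pass to $y$-variables, track the M\"obius images of $t$-discs, and argue that a real-positive configuration is extremal uniformly in $\arg(q-1)$. That last step is genuinely delicate (and you flag it as the hard part). The paper bypasses it entirely with a crude triangle-inequality bound:
\[
  |t_e \pll_q^T t_f|
  \;\le\;
  \frac{|t_e|+|t_f|+(|q-1|+1)|t_e||t_f|}{1-|q-1|\,|t_e||t_f|}
  \;=:\;
  F(|t_e|,|t_f|)
  \;.
\]
This bound depends only on $|q-1|$, so the phase of $q$ disappears immediately. The key observation is that $F(x,y) = (x+y+Axy)/(1+Bxy)$ is \emph{associative} (Lemma~\ref{lem:fabc}), so the max in $r_s = \max_{k+\ell=s} F(r_k,r_\ell)$ collapses and one gets the one-step recursion $r_{s+1} = F(r_1,r_s)$. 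This is a M\"obius map in $r_s$, which is diagonalized explicitly to yield your formula $r_k = \rho(X^k-1)/(1-\rho X^k)$. Thus condition~(2) holds automatically for these radii, with no M\"obius geometry needed.

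For the final inequality $\rho^\star_\Lambda > (\log 2)/(\Lambda - \tfrac{3}{2}\log 2)$, your Taylor-expansion-plus-bootstrap sketch is in the right spirit but the $O((\rho^\star_\Lambda)^3)$ term needs honest control to make the argument rigorous for small $\Lambda$. The paper instead proves (Lemma~\ref{lemma_rkbound_log2}) that the function $g(\rho) = \rho\log(2/(1+\rho)) - (\log 2)\log((1+\rho)/(1+\rho^2)) - (\tfrac{3}{2}\log 2 -1)\rho\log((1+\rho)/(1+\rho^2))$ is strictly positive on $(0,1)$ by a direct second-derivative/concavity analysis; this is cleaner and avoids error-term bookkeeping.
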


{\bf Remark.}
We shall see in Lemma~\ref{lemma_sharp_rkbound_NEW}
that under the hypothesis $|q-1| \ge 1/\rho^\star_\Lambda$
(i.e.\ $\rho \le \rho^\star_\Lambda$)
we have
\be
   \rho \, {X - 1 \over 1 - \rho X}
   \;\ge\;
   \rho^2
   \;,
\ee
so that the conclusion of Theorem~\ref{thm_main_sec5} holds
under the more stringent but simpler condition
\be
   v_e \,=\, -1 \quad\hbox{or}\quad
   \left| {v_e \over q+v_e} \right| \:\le\: {1 \over |q-1|^2}
   \;.
\ee
\qed

\bigskip

We shall prove Theorem~\ref{thm_main_sec5} by exhibiting regions
$S_1 \subseteq S_2 \subseteq \cdots \subseteq S_{\Lambda-1}$
of the complex $t$-plane
that satisfy the conditions of Theorem~\ref{regions}
when $|q-1| \ge 1/\rho^\star_\Lambda$
and for which the set~$S_1$ corresponds precisely to \reff{eq.thm_main_sec5}.
Since in this section we shall always be working in the $t$-plane,
we shall henceforth drop the superscripts ${}^T$ from the operators
$\tvpllq{T}$ and $\tvser{T}$. 
Let us also recall that, in the $t$-plane,
series connection $\ser$ is simply multiplication.

Before beginning this proof, it is instructive to engage in some
informal motivation of our constructions.


If we want to handle the chromatic polynomial using Theorem~\ref{regions},
then we must certainly have $1/(1-q) \in S_1$.
The set of {\em minimal}\/ regions $S_i$ that contain the point $1/(1-q)$
and satisfy the first two conditions of Theorem~\ref{regions}
can be approximated by computer, because these conditions can be viewed
as rules for constructing each $S_i$ from certain others.
By imposing a fine grid on the disc $|t|<1$
and ``rounding'' each complex number to the closest grid point,
we can restrict our attention to a finite number of points.
We start by marking $t_0 = 1/(1-q)$ as belonging to $S_1$
(and hence to each $S_i$);
we then iteratively construct approximations to the regions
$S_1, S_2, \ldots, S_{\Lambda-1}$
by using conditions (1) and (2) of Theorem~\ref{regions}
until the approximations are closed under further application
of the rules.\footnote{
   For instance, for $\Lambda=3$ the rules are simply
   $S_1 \subseteq S_2$, $S_1 \pll S_1 \subseteq S_2$,
   $S_1 S_2 \subseteq S_1$ and $S_2 S_2 \subseteq S_2$.
}
If the resulting region $S_{\Lambda-1}$ is contained in
the open unit disc $\{|t| < 1\}$,
then Theorem~\ref{regions} implies that $q$ is not a chromatic root
for any graph of maxmaxflow $\Lambda$.

Repeating these experiments for a range of different values of $q$
and moderate values of $\Lambda$ suggests that
although the minimal regions are generally complicated shapes,
they are often loosely ``disc-like''
and can be bounded reasonably well by a disc in the $t$-plane
centered at the origin.
Some examples with $\Lambda=3$ are shown in Figure~\ref{fig_S1S2},
and a more extensive set of plots is included with the preprint version
of this paper at \url{arXiv.org}.\footnote{
   See the ancillary files
   \url{S1S2_2.2.pdf}, \url{S1S2_2.4.pdf} and \url{S1S2_3.0.pdf}.
   Each of these files shows $S_1$ and $S_2$ in the complex $t$-plane
   for $\Lambda=3$ and a set of values of $q$ defined by $q-1 = R e^{i\theta}$,
   where $R$ takes the specified value (2.2, 2.4 or 3.0)
   and $\theta = k\pi/180$ for $k=0,5,10,\ldots,180$.
   These plots use the conventions explained in
   the caption of Figure~\ref{fig_S1S2}.
}

\begin{figure}[p]
\vspace*{-1.4cm}
\centering
\begin{tabular}{cc}
\includegraphics[width=165pt]{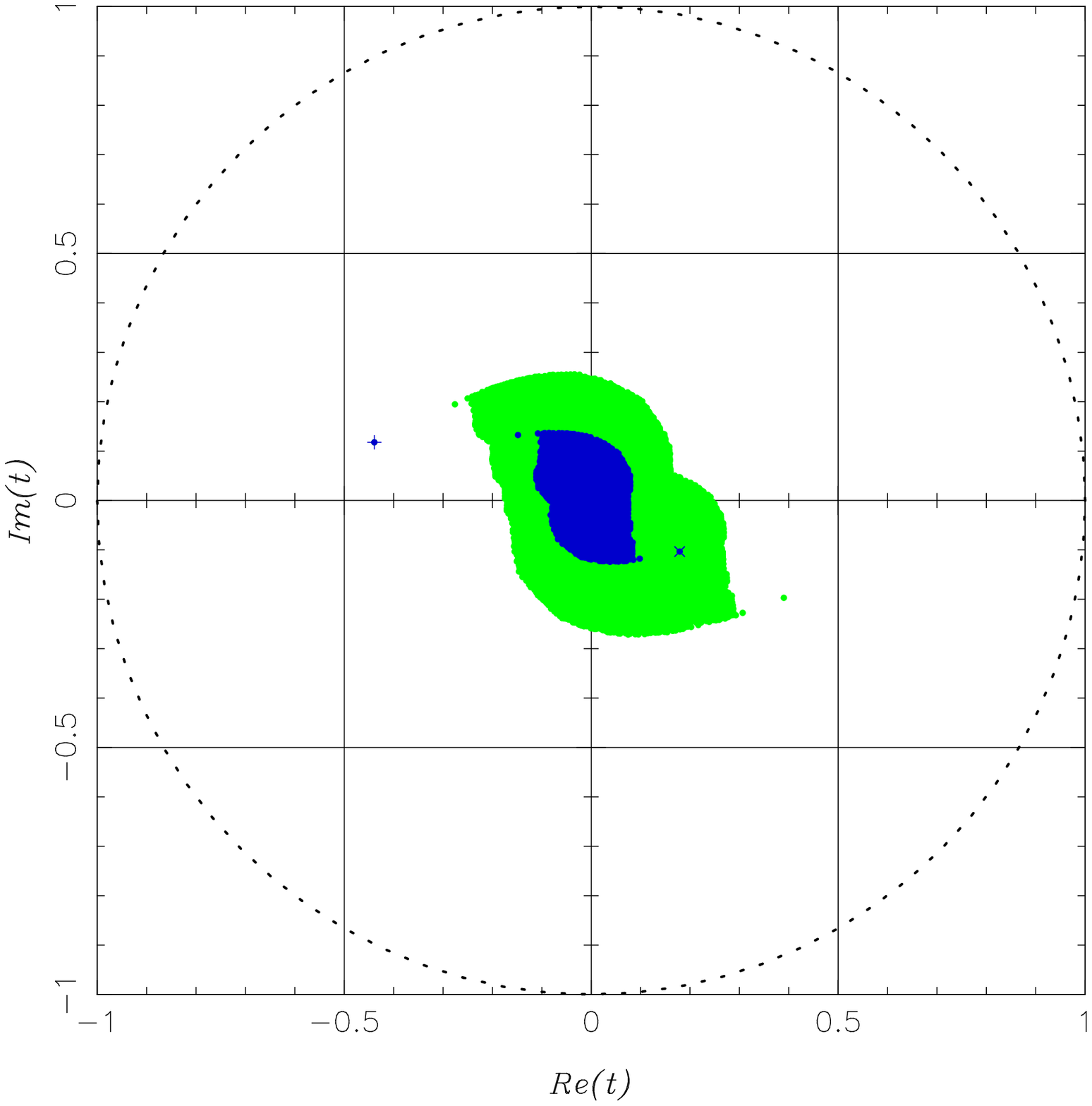} &  \qquad\qquad
\includegraphics[width=165pt]{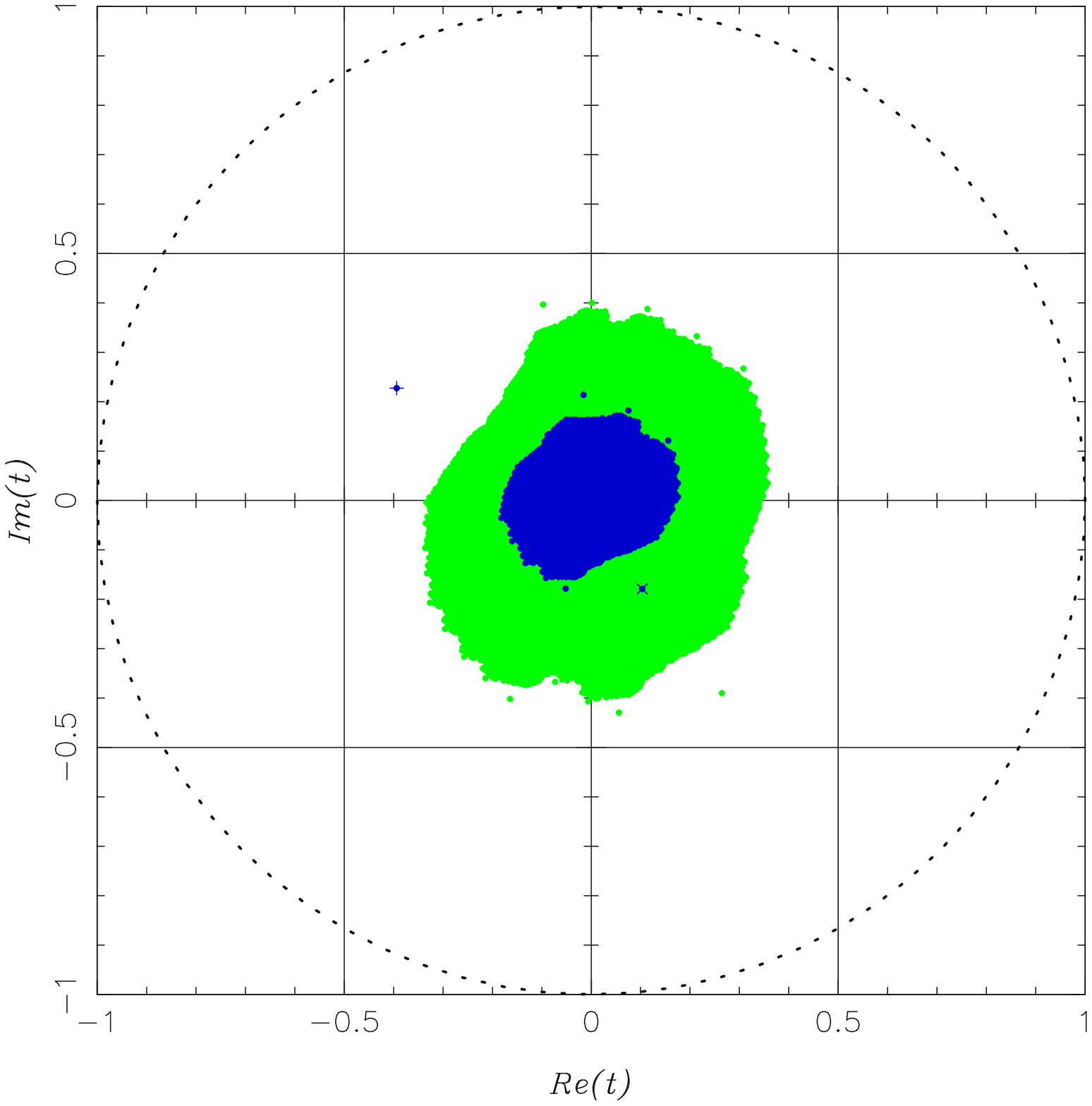} \\[1mm]
\footnotesize (a) $q = 3.125 + 0.569 i \approx 1 + 2.2 e^{\pi i/12}$  &
   \qquad\qquad
\footnotesize (b) $q = 2.905 + 1.100 i \approx 1 + 2.2 e^{\pi i/6}$   \\[8mm]
\includegraphics[width=165pt]{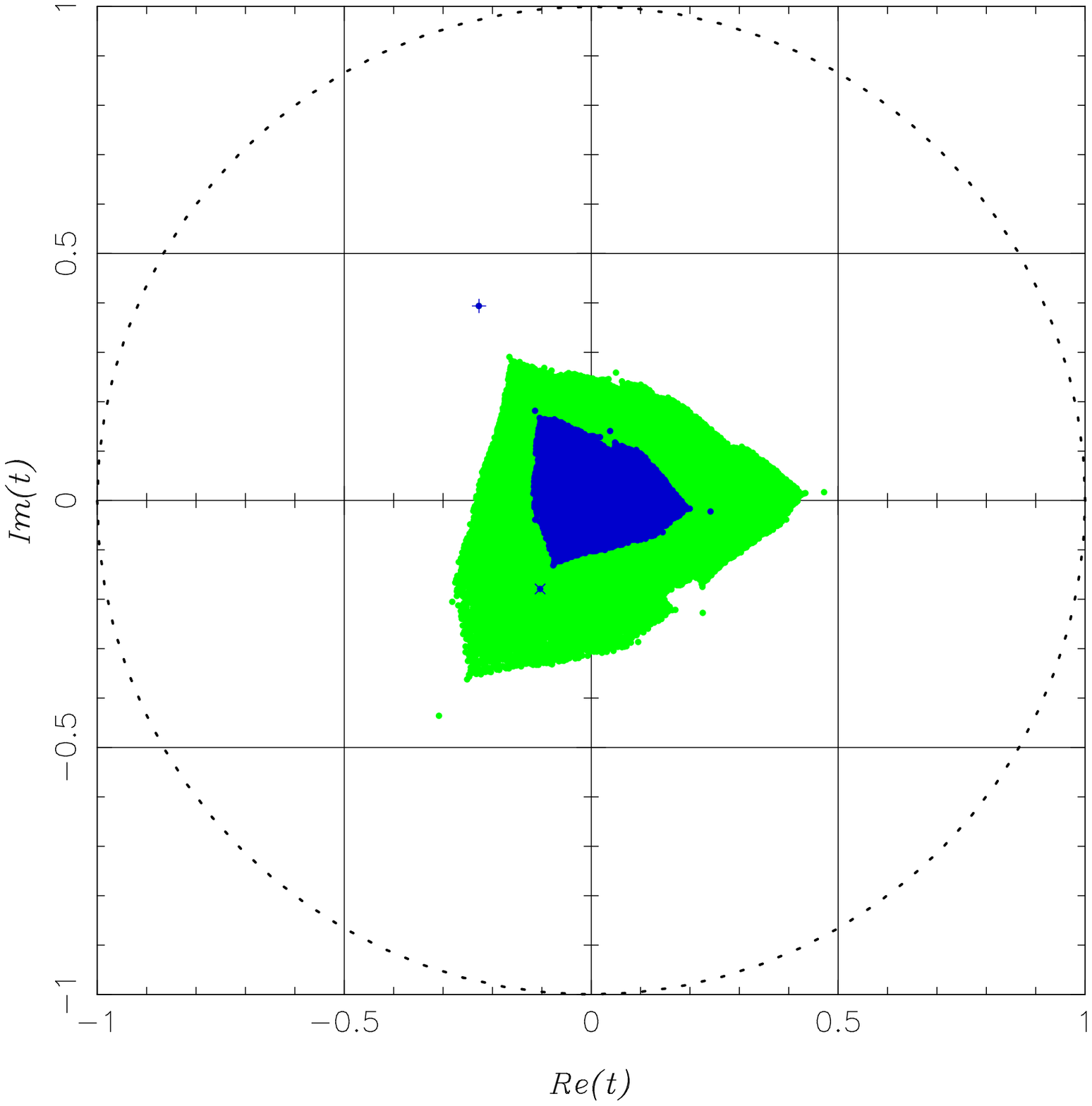} &  \qquad\qquad
\includegraphics[width=165pt]{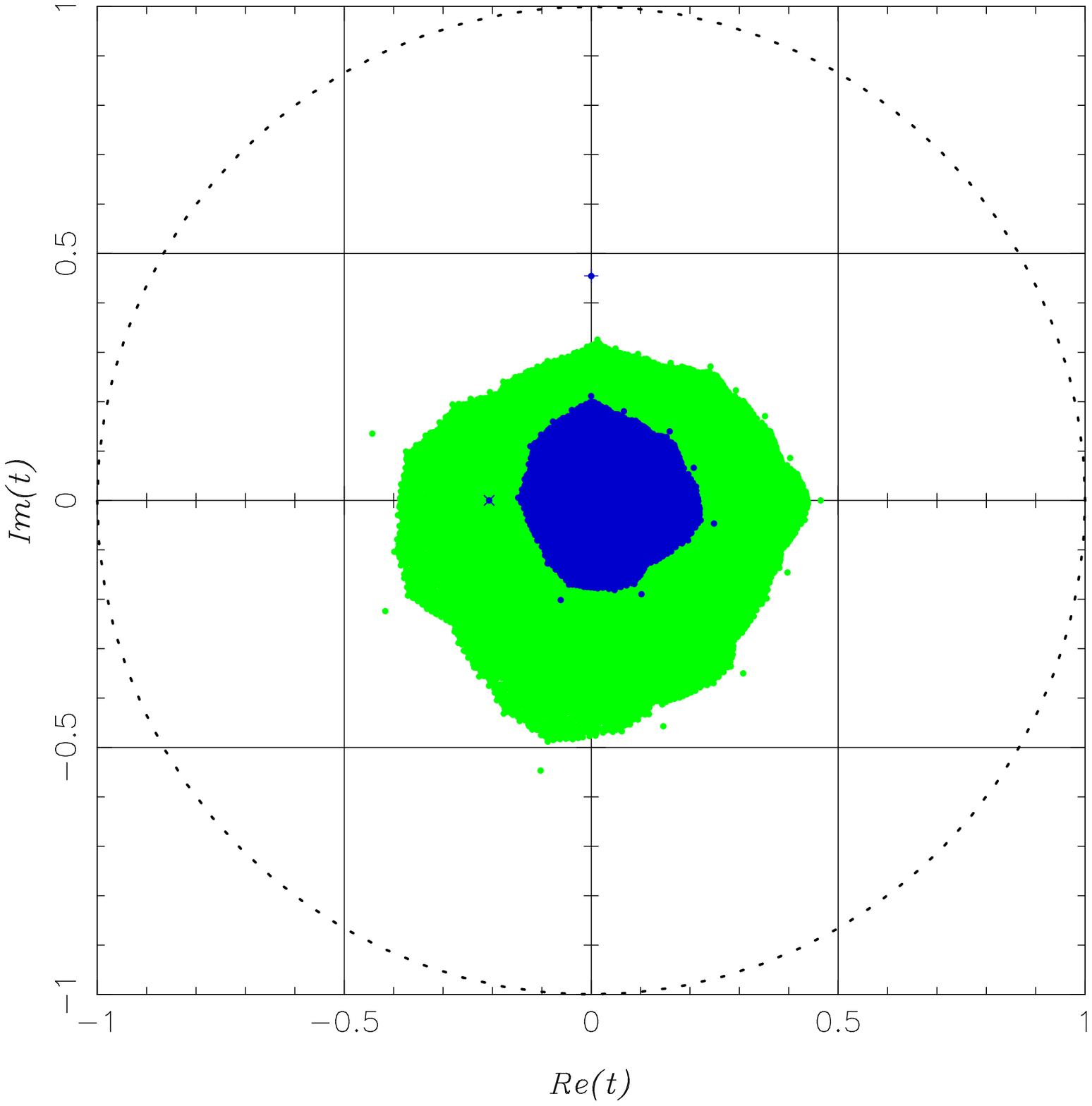} \\[1mm]
\footnotesize (c) $q = 2.100 + 1.905 i \approx 1 + 2.2 e^{\pi i/3}$  &
   \qquad\qquad
\footnotesize (d) $q = 1.000 + 2.200 i \approx 1 + 2.2 e^{\pi i/2}$  \\[8mm]
\includegraphics[width=165pt]{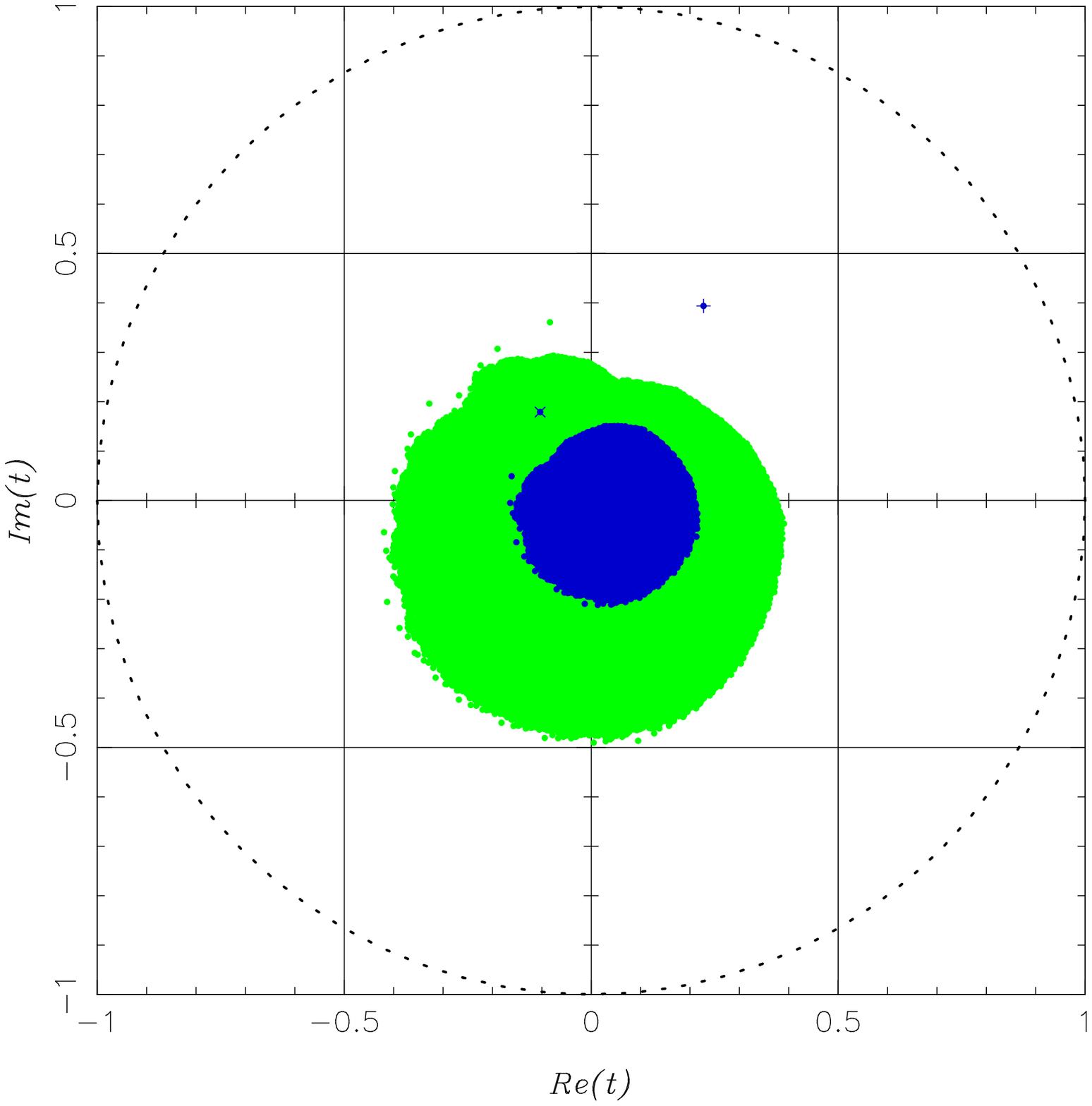} &  \qquad\qquad
\includegraphics[width=165pt]{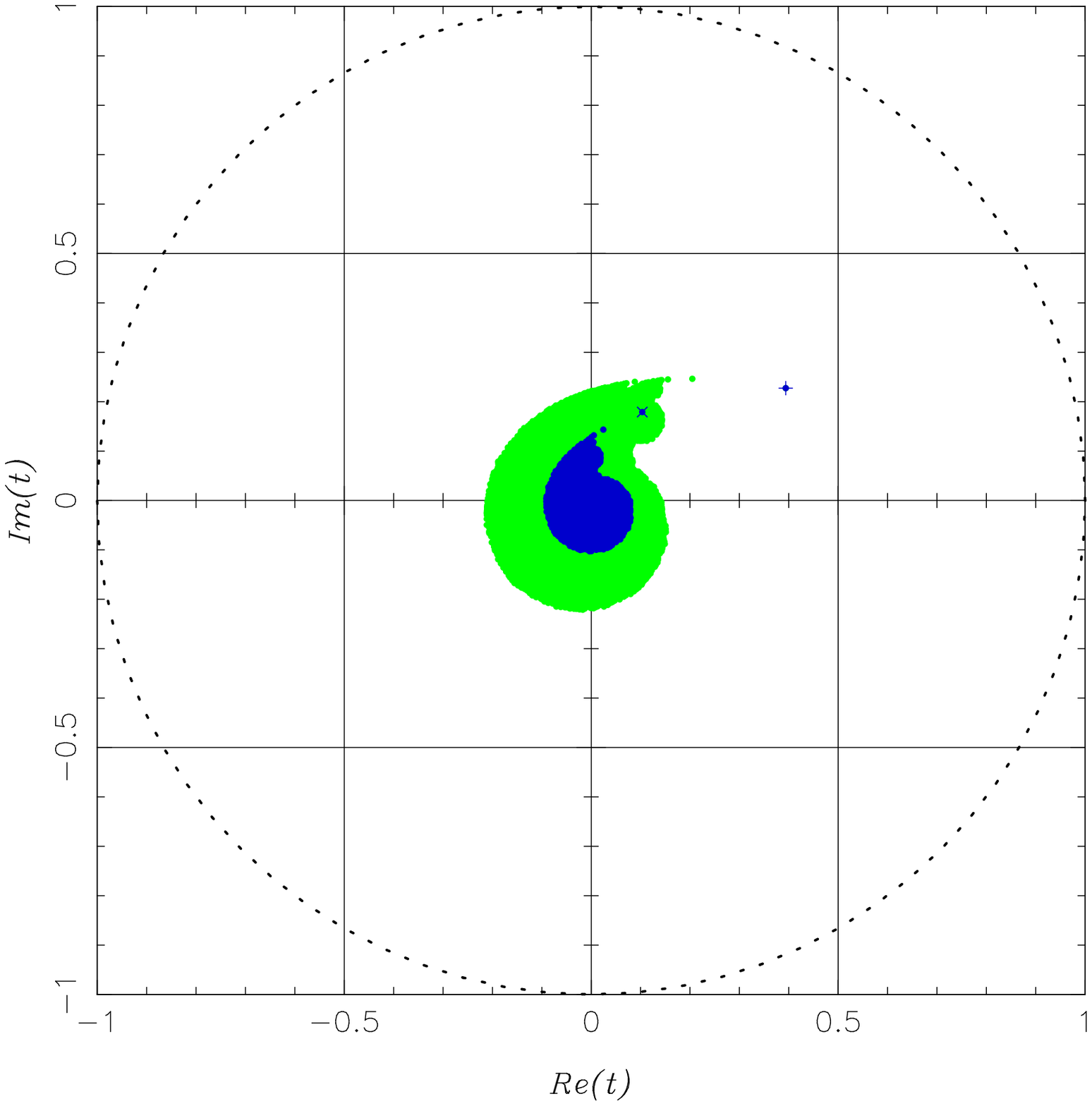} \\[1mm]
\footnotesize (e) $q =-0.100 + 1.905 i \approx 1 + 2.2 e^{2\pi i/3}$ &
   \qquad\qquad
\footnotesize (f) $q = -0.905 + 1.100 \approx 1 + 2.2 e^{5\pi i/6}$
\end{tabular}
\vspace*{4mm}
\caption{
  Computer-generated approximations to $S_1$ (dark blue) and
  $S_2$ (light green) in the complex $t$-plane,
  for $\Lambda=3$ and selected values of $q$.
  Note that we always have $S_1 \subseteq S_2$
  and $S_1 = \{t_0\} \cup t_0 S_2$ where $t_0 = 1/(1-q)$.
  The points $t_0$ and $t_0 \ser t_0 = t_0^2$,
  which both belong to $S_1$,
  are shown as dark blue $+$ and $\times$, respectively.
  The circle $|t|=1$ is shown for reference in dashed black.
}
  \label{fig_S1S2}
\end{figure}

In fact we need to be a bit more careful,
because every region $S_i$ must contain the point $t_0 = 1/(1-q)$,
but taking the smallest region $S_1$ to be a disc of radius
$\rho = |t_0| = |1/(1-q)|$ {\em cannot}\/ give very good bounds.
Indeed, with this choice of $S_1$ there exist graphs $G$
of maxmaxflow $\Lambda$ having roots $Z_G(q,v) = 0$
with $v \in S_1$ and $q$ growing {\em exponentially}\/ in $\Lambda$
(more precisely like $2^\Lambda$).\footnote{
   Just take $G = K_2^{(k)}$ (i.e.\ $k$ edges in parallel),
   which has maxmaxflow $k$.
   Consider $q < 0$, and write $q=-Q$ for simplicity.
   Then $\rho = 1/(1+Q)$, and the point $t = -\rho = -1/(1+Q) \in S_1$
   corresponds to $v = Q/(Q+2)$.
   Then $Z_{K_2^{(k)}}(q,v) = q + (1+v)^k - 1$
   vanishes when $[(2Q+2)/(Q+2)]^k = Q+1$,
   which occurs for large $k$ at $Q = 2^k - k - 1 + O(k^2/2^k)$.

   What is going on here is that $v = Q/(Q+2)$ is
   {\em strongly ferromagnetic}\/:
   for $Q \gg 1$ we have $v \approx 1$, hence $y = 1+v \approx 2$;
   so putting $k$ such edges in parallel
   leads to a weight that grows like $2^k$.
   Similar behavior will occur whenever $S_1$ contains any point having
   $|1+v|$ uniformly larger than 1.
   Indeed, we expect large roots in the $q$-plane whenever
   $S_1$ contains any point having $|1+v| - 1 \gg 1/|q|$.
 \label{footnote_expgrowth}
}

However,
a slight modification works:
namely, we take each region $S_i$ to be a ``point + disc''
\begin{equation}
S_i \;=\;  \{1/(1-q)\} \cup D(r_i)
  \label{regionsequation}
\end{equation}
where $D(r_i)$ is a closed disc of radius $r_i$ centered at the origin.
This choice results in a situation that is both amenable to analysis
and also yields good bounds when the radii $r_i$ are suitably chosen,
as we will prove in this section.

The disc $D(r_1)$ must have radius at least $\rho^2$
because it must contain the point $t_0 \ser t_0 = 1/(1-q)^2$.
So choose some $r_1 \ge \rho^2$;
this choice of $r_1$ sets a lower bound on the possible values for $r_2$
because $S_1 \pllq S_1 \subseteq S_2$.
Continuing in this fashion, $r_1$ and $r_2$ 
determine the minimum allowable value for $r_3$;
then $r_1$, $r_2$ and $r_3$ determine the minimum allowable value for $r_4$;
and so on.
Ultimately this process determines a minimum allowable value
for $r_{\Lambda-1}$;
and if $r_{\Lambda-1} \leq \rho$, then the set of radii
$r_1,r_2,\ldots,r_{\Lambda-1}$
yields a set of regions $S_i$ defined by \eqref{regionsequation}
that satisfies the conditions of Theorem~\ref{regions}.
We formalize this observation in the following proposition:

\begin{prop}
  \label{prop_radii}
Let $\Lambda \ge 2$ be a fixed integer;
then let $q$ be a fixed complex number satisfying $|q-1|>1$,
and set $t_0 = 1/(1-q)$ and $\rho = |t_0|$.
If the real numbers $r_1, r_2, \ldots, r_{\Lambda-1}$ satisfy
\begin{equation}
\rho^2 \le r_1 \leq r_2 \leq \cdots \leq r_{\Lambda-1} \leq \rho
\end{equation}
and
\begin{equation}
\label{radiuscondition}
r_{s} \;\geq\;
   \max \{|t_e \pllq t_f|  \colon\;
          t_e \in D(r_k),\, t_f \in D(r_\ell),\, k+\ell = s\}
\end{equation}
for $2 \leq s \leq \Lambda-1$,
then the set of regions $S_1$, $S_2$, $\ldots$, $S_{\Lambda-1}$ defined by
\begin{equation}
S_i \;=\; \{1/(1-q)\} \cup D(r_i)
  \label{eq.prop_radii.S_i}
\end{equation}
satisfies the conditions of Theorem~\ref{regions}.
\end{prop}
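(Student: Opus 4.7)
The plan is to verify the four conditions of Theorem~\ref{regions} directly from the ``point plus disc'' structure $S_i = \{t_0\} \cup D(r_i)$, where $t_0 = 1/(1-q)$ has modulus $\rho = |t_0|$. Before starting the case analysis I would record two observations that get used everywhere. First, since in the $t$-variables series composition is ordinary multiplication, I may replace $\ser$ throughout by the scalar product. Second, and crucially, $t_0$ corresponds in $v$-variables to $v = -1$, and the identity $(1+(-1))(1+v)-1 = -1$ shows that $t_0 \pllq t = t_0$ for every $t \in \C$. This absorbing property of $t_0$ under parallel composition is precisely what makes the ``point plus disc'' ansatz stable under the two composition rules.

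I would handle condition (1), $S_k \ser S_\ell \subseteq S_{\min(k,\ell)}$, by splitting $S_k \ser S_\ell$ into three cases: $t_0 \cdot t_0$, $t_0 \cdot t'$ with $t' \in D(r_\ell)$, and $t \cdot t'$ with $t \in D(r_k)$ and $t' \in D(r_\ell)$. The hypothesis chain $\rho^2 \le r_1 \le \cdots \le r_{\Lambda-1} \le \rho$ gives respectively $\rho^2 \le r_{\min(k,\ell)}$, $\rho\, r_\ell \le \rho^2 \le r_{\min(k,\ell)}$, and $r_k r_\ell \le \rho \cdot \min(r_k,r_\ell) = \rho\, r_{\min(k,\ell)} \le r_{\min(k,\ell)}$, so every product lands in $D(r_{\min(k,\ell)}) \subseteq S_{\min(k,\ell)}$. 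Condition (3${}'$) is immediate, since $|t_0| = \rho < 1$ and $r_{\Lambda-1} \le \rho < 1$ together imply $1 \notin S_{\Lambda-1}$. For condition (2), the absorbing identity collapses every case involving $t_0$ to the output $t_0 \in S_{k+\ell}$, leaving only $t \pllq t'$ with $t \in D(r_k)$, $t' \in D(r_\ell)$, which is exactly what hypothesis \eqref{radiuscondition} bounds by $r_{k+\ell}$, placing the result in $D(r_{k+\ell}) \subseteq S_{k+\ell}$.

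For condition (4) the only real issue is well-definedness of $\pllq$, which in the $t$-variables fails precisely when $1 + (q-1)\, t_e t_f = 0$, i.e.\ when $t_e t_f = t_0$. The cases in which one argument equals $t_0$ return $t_0$ and are defined, while for $t \in D(r_k)$ and $t' \in D(r_\ell)$ with $k+\ell = \Lambda$ one has $|tt'| \le r_k r_\ell \le \rho^2 < \rho = |t_0|$, so $tt' \ne t_0$. I do not foresee any genuine obstacle: the proof is essentially bookkeeping once the absorbing identity and the monotonicity chain are in hand. The one point to watch is that the strict inequality $\rho < 1$ (guaranteed by the standing hypothesis $|q-1| > 1$) is what rules out $tt' = t_0$ in condition (4); without it a product in $D(r_k) \times D(r_\ell)$ could conceivably land on $t_0$ and render $\pllq$ undefined, which is also the reason the statement of the proposition assumes $|q-1| > 1$ rather than merely $|q-1| \ge 1$.
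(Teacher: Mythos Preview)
Your proof is correct and follows essentially the same approach as the paper. The paper packages your condition-(1) case analysis into its Lemma~\ref{lemma4.3} (using simply $S_k \ser S_\ell \subseteq D(\rho) \ser D(\rho) = D(\rho^2) \subseteq S_{\min(k,\ell)}$), and for condition~(4) it avoids the separate treatment of the $t_0$ cases by noting uniformly that $S_i \subseteq D(\rho)$ forces $|t_e t_f| \le \rho^2 < \rho = |t_0|$; but these are cosmetic streamlinings of exactly your argument.
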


\begin{proof}
We need to show that the four conditions of Theorem~\ref{regions} hold.
Condition (1) holds by Lemma~\ref{lemma4.3} with $r=\rho$.
To check condition (2), we observe that
\begin{equation}
   S_k \pllq S_{\ell}
   \;=\;
   \left( \{t_0\} \cup D(r_k) \right) \pllq
   \left( \{t_0\} \cup D(r_{\ell}) \right)
   \;=\; \{t_0\} \cup \left( D(r_k) \pllq D(r_\ell) \right)
\end{equation}
because $t_0 \pllq t = t_0$ for every $t$.
Therefore condition \eqref{radiuscondition} on the radii
is exactly what is needed to ensure that
$S_k \pllq S_\ell \subseteq S_{k+\ell}$.
Condition (3) holds because $S_{\Lambda-1} \subseteq D(\rho)$ and $\rho < 1$.
Finally, condition (4) fails only if
there are $t_e \in S_k$ and $t_f \in S_\ell$
(with $k+\ell = \Lambda$, though we do not even need to use this constraint)
such that $t_e t_f = 1/(1-q)$,
but this is impossible because $|t_e t_f| \leq \rho^2 < \rho = 1/|1-q|$.
\qed
\end{proof}

\medskip

To apply this theorem, we need to be able to bound the modulus of
\begin{equation}
t_e \pll_q t_f  \;=\;  \frac{t_e + t_f + (q-2) t_e t_f}{1+(q-1) t_e t_f}
  \label{parallel_t2}
\end{equation}
when $t_e \in D(r_k)$ and $t_f \in D(r_\ell)$.
Since the maximum modulus of $t_e \pllq t_f$ occurs
when $t_e$ and $t_f$ are on the boundaries of their respective discs,
let us define for $x,y \in [0,\rho)$ the function
\begin{equation}
  \label{fxydefinition}
f_q(x,y) \;:=\;
   \max \{|t_e \pllq t_f| \,\colon\; |t_e| = x, \, |t_f| = y\}  \;.
\end{equation}
If we bound \eqref{parallel_t2} in the most naive way
by replacing the numerator by an upper bound
and the denominator by a lower bound,
and we furthermore use $|q-2| \le |q-1| + 1$
to express the $q$-dependence in terms of the single number $|q-1|$,
then we get
\begin{equation}
  \label{radiusbound}
f_q(x,y) \;\leq\; F_q(x,y) \;:=\;
   \frac{x + y + (|q-1| + 1) xy} {1 - |q-1|xy}
   \;=\;
   \frac{x + y + (\rho^{-1} + 1) xy} {1 - \rho^{-1} xy}
   \;.
\end{equation}
The condition $x,y \in [0,\rho)$ ensures that the denominator of $F_q(x,y)$
is strictly positive.
Therefore, given the chosen value of $r_1$,
we can define a sequence of radii $r_2,r_3,\ldots$
satisfying \eqref{radiuscondition} using the iteration
\begin{equation}
r_s  \;=\; \max \{ F_q(r_k,r_\ell) \colon\; k+\ell = s \}
   \label{equationmax2}
\end{equation}
(stopping the iteration whenever a result $r_s$ becomes $\ge\rho$).
It is immediate that $r_1 \le r_2 \le \ldots \;$.
If the iteration remains well-defined up to $s=\Lambda-1$
and satisfies $r_{\Lambda-1} \le \rho$,
then the radii satisfy the hypotheses of Proposition~\ref{prop_radii}.
(Henceforth let us write $F$ in place of $F_q$ to lighten the notation.)

At first sight, this seems rather unappealing for analysis
because the $\max$ in \eqref{equationmax2} appears difficult to handle.
However, this difficulty is illusory because it turns out that
$F$ is actually an {\em associative}\/ function:

\begin{lem}
\label{lem:fabc}
Let $G$ be a function of the form
\begin{equation}
G(x,y) \;=\; \frac{x + y + A x y}{1 + B x y}
\end{equation}
where $A$, $B$ are arbitrary constants. Then
\begin{equation}
G(x,G(y,z)) \;=\; G(y,G(x,z)) \;=\; G(z,G(x,y))  \;.
\end{equation}
\end{lem}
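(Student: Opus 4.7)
The plan is to prove this by a direct computation: compute $G(x, G(y,z))$ explicitly, observe that the resulting expression is symmetric in $x$, $y$, $z$, and conclude that the three quantities in the lemma statement are all equal to this common symmetric expression.

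More concretely, I would first substitute $G(y,z) = (y+z+Ayz)/(1+Byz)$ into the outer $G$, then clear the resulting compound fraction by multiplying numerator and denominator by $1+Byz$. The numerator becomes
\[
 x(1+Byz) \,+\, (y+z+Ayz) \,+\, Ax(y+z+Ayz)
\]
and the denominator becomes
\[
 (1+Byz) \,+\, Bx(y+z+Ayz) .
\]
Expanding and collecting terms, these simplify respectively to
\[
 x+y+z \,+\, A(xy+xz+yz) \,+\, (B+A^2)\,xyz
\]
and
\[
 1 \,+\, B(xy+xz+yz) \,+\, AB\,xyz .
\]
Both the numerator and denominator are manifestly symmetric under any permutation of $x$, $y$, $z$.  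Since $G(y, G(x,z))$ and $G(z, G(x,y))$ are obtained from $G(x, G(y,z))$ by relabelling variables, they must equal this same symmetric rational function, which proves the lemma.

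There is really no obstacle here: the lemma is a purely algebraic identity, and the only mildly surprising point is that the coefficient of $xyz$ in the numerator comes out to $B+A^2$ (one contribution from expanding $x \cdot Byz$, another from $Ax \cdot Ayz$), while the coefficient of $xyz$ in the denominator is the clean $AB$.  As a sanity check, note that both the pure series operation ($A=B=0$, giving ordinary addition) and the pure parallel operation in the $y$-variables (which after the appropriate change of variable fits this form) are indeed associative, so associativity of the combined operator is the natural common generalization.
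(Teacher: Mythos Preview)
Your proof is correct and takes essentially the same approach as the paper: a direct expansion of $G(x,G(y,z))$ into a rational function whose numerator $(x+y+z) + A(xy+xz+yz) + (A^2+B)xyz$ and denominator $1 + B(xy+xz+yz) + ABxyz$ are visibly symmetric in $x,y,z$.
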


\begin{proof}
Direct calculation shows that
\begin{equation}
G(x,G(y,z)) \;=\;
 \frac{(x+y+z) + A (xy + yz + xz) + (A^2 + B) xyz}{1 + B(xy+xz+yz) + ABxyz}
   \;,
\end{equation}
which is clearly symmetric under all permutations of $\{x,y,z\}$.
\qed
\end{proof}

\begin{cor}
  \label{cor.fabc}
If $F$ is given by \eqref{radiusbound}
and $r_2, \ldots, r_{\Lambda-1}$
by \eqref{equationmax2}, then 
\begin{equation}
F(r_k, r_\ell) \;=\;  F(r_1, r_{k+\ell-1})
\end{equation}
for all pairs $k,\ell$ of positive integers such that $k+\ell \leq \Lambda$.
\end{cor}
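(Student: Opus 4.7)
The plan is to argue by induction on $s = k+\ell$ that for every decomposition $k+\ell = s$ with $k,\ell \ge 1$ one has $F(r_k, r_\ell) = F(r_1, r_{s-1})$, so that the ``$\max$'' in \eqref{equationmax2} is in fact independent of how we split $s$. The whole proof hinges on the fact that $F$ from \eqref{radiusbound} has exactly the shape required by Lemma \ref{lem:fabc}, namely
\begin{equation*}
F(x,y) \;=\; \frac{x+y+Axy}{1+Bxy}
\qquad\text{with } A = \rho^{-1}+1,\ B = -\rho^{-1},
\end{equation*}
so $F(x,F(y,z))$ is symmetric in its three arguments.

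For the base case $s=2$, the only decomposition is $k=\ell=1$ and there is nothing to prove. Assume the statement holds for all $s' < s$, with $s \ge 3$. Applying the inductive hypothesis at level $s-1$ to the recursion \eqref{equationmax2}, every term in the max defining $r_{s-1}$ coincides with $F(r_1, r_{s-2})$, so $r_{s-1} = F(r_1,r_{s-2})$. Likewise, for $k$ with $2\le k < s$, the inductive hypothesis at level $k$ gives $r_k = F(r_1, r_{k-1})$. Now fix any decomposition $k+\ell = s$; if $k=1$ the identity is immediate, so assume $k\ge 2$. Then
\begin{equation*}
F(r_k, r_\ell) \;=\; F\bigl(F(r_1,r_{k-1}),\, r_\ell\bigr) \;=\; F\bigl(r_1,\, F(r_{k-1},r_\ell)\bigr),
\end{equation*}
where the second equality is the associativity from Lemma \ref{lem:fabc}. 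Since $(k-1)+\ell = s-1$ and $k-1 \ge 1$, the inductive hypothesis at level $s-1$ gives $F(r_{k-1}, r_\ell) = r_{s-1}$, whence $F(r_k,r_\ell) = F(r_1, r_{s-1})$, completing the induction.

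There is essentially no obstacle here beyond correctly interleaving two uses of the inductive hypothesis (one at level $k$ to rewrite $r_k$ as a nested $F$, and one at level $s-1$ to recognise the inner $F(r_{k-1},r_\ell)$ as $r_{s-1}$). The only subtle point is to verify that the inductive applications are legitimate, i.e.\ that $k-1 \ge 1$ (which follows from the reduction to the case $k\ge 2$) and that $k+\ell \le \Lambda$ keeps all indices in the range where $r_\cdot$ has been defined by \eqref{equationmax2}. Once those bookkeeping items are in place, the associativity lemma does all the work.
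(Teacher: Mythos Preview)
Your proof is correct and follows essentially the same approach as the paper: induction on $s=k+\ell$, rewriting $r_k$ as $F(r_1,r_{k-1})$ via the inductive hypothesis, applying the associativity of Lemma~\ref{lem:fabc}, and then recognizing $F(r_{k-1},r_\ell)$ as $r_{s-1}$. Your treatment is in fact slightly more careful than the paper's, since you explicitly separate out the trivial case $k=1$ (the paper's chain tacitly assumes $k\ge 2$ so that $r_{k-1}$ is defined).
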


\begin{proof}
We prove this by induction on $s=k+\ell$.
The result clearly holds for $s=2$.
So suppose that the result is true for all $k'+\ell' < k+\ell$. Then 
\begin{equation}
F(r_k, r_\ell) = F( F(r_1, r_{k-1}), r_{\ell}) = F(r_1,F(r_{k-1},r_\ell)) = F(r_1, r_{k+\ell-1})
\end{equation}
and the result holds.
\qed
\end{proof}

The key point of this lemma (which was used implicitly in the proof)
is that all the terms in \eqref{equationmax2} are actually the same,
and so we can arbitrarily choose any one of them to define $r_s$.
So let us take $r_{s+1} = F(r_1, r_s)$, i.e.
\begin{equation}
  r_{s+1}  \;=\;  \frac{[1 + (\rho^{-1}+1)r_1] r_s + r_1}{1-\rho^{-1} r_1 r_s}
  \;.
 \label{rstransform}
\end{equation}
Since the map $r_s \mapsto r_{s+1}$ is a M\"obius transformation,
we can obtain an explicit expression for $r_k$:

\begin{lem}
  \label{lemma.fabc.bis}
For fixed real numbers $r_1$ and $\rho \neq 1$,
define a sequence $r_1, r_2, \ldots \in \R \cup \{\infty\}$ by
\begin{equation}
  r_{s+1}  \;=\;  \frac{[1 + (\rho^{-1}+1)r_1] r_s + r_1}{1-\rho^{-1} r_1 r_s}
  \;.
   \label{rk_explicit_0}
\end{equation}
Then 
\begin{equation}
r_k  \;=\; 
\rho \, \frac{ (1+r_1/\rho)^k - (1+r_1)^k } {(1+r_1)^k - \rho(1+r_1/\rho)^k} \;.
   \label{rk_explicit}
\end{equation}
\end{lem}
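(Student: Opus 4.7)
The recursion defining $r_{s+1}$ in terms of $r_s$ is a M\"obius transformation in the variable $r_s$, with coefficients depending on the fixed parameters $r_1$ and $\rho$. The standard approach to such iterations is to find the two fixed points of the map, conjugate by a M\"obius transformation that sends them to $0$ and $\infty$, which reduces the iteration to multiplication by a constant; one then inverts the conjugation to obtain a closed form. That is the plan here.

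First I would find the fixed points of the map
\[
   M(r) \;=\; \frac{[1+(\rho^{-1}+1)r_1]\,r + r_1}{1-\rho^{-1}r_1 r} \;.
\]
Setting $r=M(r)$ and clearing denominators gives, after dividing by $r_1\rho^{-1}$, the quadratic $r^2+(1+\rho)r+\rho=0$, which factors as $(r+1)(r+\rho)=0$. So the fixed points are $-1$ and $-\rho$, and the hypothesis $\rho\neq 1$ guarantees they are distinct. I would then introduce the M\"obius transformation $\psi(r)=(r+1)/(r+\rho)$, which sends $-1\mapsto 0$ and $-\rho\mapsto\infty$. Since conjugating a M\"obius map by another M\"obius map preserves the set of fixed points, $\psi\circ M\circ\psi^{-1}$ is a M\"obius map fixing both $0$ and $\infty$, hence is multiplication by some constant $\lambda$.

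Next I would determine $\lambda$ by evaluating $\psi(M(r))/\psi(r)$ at any convenient point (for instance $r=r_1$, using the already-known value $r_2=M(r_1)$). A short computation shows that $r_2+1=(1+r_1)^2/(1-\rho^{-1}r_1^2)$ and $r_2+\rho=\rho(1+r_1/\rho)^2/(1-\rho^{-1}r_1^2)$, whence
\[
   \psi(r_2) \;=\; \frac{(1+r_1)^2}{\rho(1+r_1/\rho)^2}
   \qquad\text{and}\qquad
   \lambda \;=\; \frac{\psi(r_2)}{\psi(r_1)} \;=\; \frac{1+r_1}{1+r_1/\rho} \;.
\]
Iterating, $\psi(r_k)=\lambda^{k-1}\psi(r_1)$; and since $\psi(r_1)=(1+r_1)/(\rho(1+r_1/\rho))=\lambda/\rho$, this simplifies to $\psi(r_k)=\lambda^k/\rho$.

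Finally I would invert $\psi$: the equation $\psi(r)=u$ rearranges to $r=(u\rho-1)/(1-u)$. Substituting $u=\lambda^k/\rho$ and multiplying numerator and denominator by $\rho(1+r_1/\rho)^k$ gives
\[
   r_k \;=\; \frac{\rho\bigl[(1+r_1)^k - (1+r_1/\rho)^k\bigr]}{\rho(1+r_1/\rho)^k-(1+r_1)^k}
   \;=\; \rho\,\frac{(1+r_1/\rho)^k-(1+r_1)^k}{(1+r_1)^k-\rho(1+r_1/\rho)^k},
\]
which is the claimed formula. There is no real obstacle in the argument; the only mildly delicate step is the bookkeeping to identify $\lambda$ correctly and to reduce $\psi(r_1)$ to $\lambda/\rho$, but both are routine. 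One could alternatively verify the formula directly by induction, substituting the proposed expression into \reff{rk_explicit_0}, but the M\"obius linearization is more illuminating and makes the shape of the answer transparent.
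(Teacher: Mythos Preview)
Your proof is correct. The paper's main proof uses the equivalent but slightly different device of representing the M\"obius map by a $2\times 2$ matrix $M$, diagonalizing it (eigenvalues $1+r_1/\rho$ and $1+r_1$), and reading off $r_k = (M^k)_{12}/(M^k)_{22}$ by applying $M^k$ to the initial value $r_0=0$. Your fixed-point/conjugation argument is the dynamical-systems version of the same computation: the fixed points $-1,-\rho$ correspond to the eigenvectors of $M$, and your multiplier $\lambda=(1+r_1)/(1+r_1/\rho)$ is the ratio of the eigenvalues. In fact the paper's Remark~2 following the lemma sketches exactly your approach, observing that under the change of variable $R=f(r)=(1+\rho^{-1}r)/(1+r)$ (which is $1/(\rho\,\psi(r))$ in your notation) the recursion becomes $R_k=R_1^k$. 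So the two arguments are interchangeable; yours has the modest advantage of not needing the matrix formalism, while the paper's version makes the linear structure more explicit.
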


\begin{proof}
The map $r_s \mapsto r_{s+1}$ is a (real) M\"obius transformation of the form
\begin{equation}
x \;\mapsto\; \frac{a x + b}{cx + d}
\end{equation}
whose coefficients can be displayed in a suitable matrix
\begin{equation}
M \,=\, \left( \begin{array}{cc} a&b\\ c&d \end{array} \right) \,=\, 
\left( \begin{array}{cc} 1 + (\rho^{-1}+1)r_1 & r_1 \\
                        -\rho^{-1} r_1 & 1 \end{array} \right) \,.
\end{equation}
By standard results on M\"obius transformations,
the matrix $M^k$ represents the $k$th iterate of this transformation.
Now, the matrix $M$ has eigenvalues $1+r_1/\rho$ and $1+r_1$,
and it can be diagonalized by $M = QDQ^{-1}$ where
\begin{eqnarray}
D & = & \left(\begin{array}{cc}1+r_1/\rho & 0 \\0 & 1+r_1 \end{array}\right)
   \\[2mm]
Q & = & \frac{1}{1-\rho}  \left(\begin{array}{cc} 1 & -\rho \\ -1 & 1\end{array}
  \right)  \\[2mm]
Q^{-1} & = & \left(\begin{array}{cc}1 & \rho \\1 & 1\end{array}\right)
 \end{eqnarray}
It follows immediately that $M^k = QD^kQ^{-1}$ and so
\begin{equation}
M^k  \;=\;  \frac{1}{1-\rho} 
\left(\begin{array}{cc}
(1+r_1/\rho)^k - \rho(1+r_1)^k &
\rho [(1+r_1/\rho)^k - (1+r_1)^k] \\[2mm]
(1+r_1)^k-(1+r_1/\rho)^k &
(1+r_1)^k - \rho (1+r_1/\rho)^k  \\
\end{array}
\right).
\end{equation}
Treating this as a M\"obius transformation and applying it to $r_0 = 0$,
we get $r_k = (M^k)_{12} / (M^k)_{22}$ and thus
\begin{equation}
r_k  \;=\; 
\rho \, \frac{ (1+r_1/\rho)^k - (1+r_1)^k } {(1+r_1)^k - \rho(1+r_1/\rho)^k} \;.
\end{equation}
This also reproduces the correct value at $k=1$.
\qed
\end{proof}

{\bf Remarks.}
1. The formula \reff{rk_explicit}, once we have it,
can of course be proven by an easy induction on $k$.
But we thought it preferable to give a more conceptual proof that
shows where \reff{rk_explicit} comes from. Note also that we can
rewrite \reff{rk_explicit} as
\be
r_k \:=\: \rho \, \frac{X^k - 1}{1 - \rho X^k}
\quad \text{where} \quad
X \:=\: \frac{1+r_1/\rho} {1+r_1}
   \;;
\label{rk_explicitX}
\ee
this will be useful later.

2. The reasoning in Lemma~\ref{lem:fabc}, Corollary~\ref{cor.fabc}
and Lemma~\ref{lemma.fabc.bis} can be made even more explicit
by observing that the associative function $G(x,y) = (x+y+Axy)/(1+Bxy)$
is actually conjugate to $\widehat{G}(X,Y) = XY$:
it suffices to make the M\"obius change of variables
$X = f(x) := (1+\alpha x)/(1+\beta x)$ with
\begin{subeqnarray}
   \alpha  & = &  {A \pm \sqrt{A^2+4B}  \over 2}  \\[3mm]
   \beta   & = &  {A \mp \sqrt{A^2+4B}  \over 2}
\end{subeqnarray}
and we then have
\be
   f\Big( G\big( f^{-1}(X), \, f^{-1}(Y) \big) \Big)
   \;=\;
   XY
   \;.
\ee
In our application we have $A = 1 + \rho^{-1}$ and $B = -\rho^{-1}$,
hence $\alpha = \rho^{-1}$ and $\beta = 1$ (or the reverse).
Therefore, defining $R_k = f(r_k) := (1+ \rho^{-1} r_k)/(1+r_k)$,
we have simply $R_k = R_1^k$, which is equivalent to \reff{rk_explicit}.
Further information on associative rational functions
in two variables can be found in \cite{Brawley_01}.
\qed

\medskip

The final step in proving Theorem~\ref{thm_main_sec5}
is to show that, for suitable $q$,
we can choose $r_1 \ge \rho^2$ and have $r_k \leq \rho$
for $1 \le k \le \Lambda-1$.
Whenever this is the case, the radii $r_1,r_2,\ldots,r_{\Lambda-1}$
defined by \reff{rk_explicit_0}/\reff{rk_explicit}
will satisfy the conditions of Proposition~\ref{prop_radii},
and hence the set of nested ``point + disc'' regions $S_i$
will satisfy the conditions of Theorem~\ref{regions},
thereby certifying that $Z_G(q, \bv) \neq 0$ whenever
$G$ is a series-parallel graph of maxmaxflow at most $\Lambda$
and $v_e/(q+v_e) \in S_1$ for all edges $e$.

The simplest choice is to take $r_1 = \rho^2$ exactly;
then from \reff{rk_explicit} we have
\be
   r_k  \;=\;
   \rho \, \frac{X^k - 1}{1 - \rho X^k}
\quad \text{where} \quad
X \:=\: \frac{1+\rho}{1+\rho^2}
   \;.
 \label{eq.choice1}
\ee
When this choice works
(i.e.\ satisfies $r_k \le \rho$ for $1 \le k \le \Lambda-1$),
it yields the {\em minimal}\/ regions $S_i$
of the form \reff{eq.prop_radii.S_i}
that satisfy the conditions of Proposition~\ref{prop_radii}.
However, a slightly better choice is to take $r_{\Lambda-1} = \rho$ exactly;
simple algebra using \reff{rk_explicit} then shows that
\be
   r_k \;=\;  \rho \, \frac{X^k - 1}{1 - \rho X^k}
   \quad\hbox{where}\quad
   X \,=\, \biggl( {2 \over 1+\rho} \biggr)^{1/(\Lambda-1)}
   \;.
 \label{eq.choice2}
\ee
When this choice works
(i.e.\ satisfies $\rho^2 \le r_k \le \rho$ for $1 \le k \le \Lambda-1$),
it yields the {\em maximal}\/ regions $S_i$
of the form \reff{eq.prop_radii.S_i}
that satisfy the conditions of Proposition~\ref{prop_radii},
and hence the largest allowed set $S_1$ of edge weights.\footnote{
   Of course, for people who care only about the chromatic polynomial,
   these two choices are equally good.
   They differ only in the allowed set of edge weights $v_e \neq -1$.
}
The following lemma shows that these two choices work in precisely
the same set of circumstances, namely when $\rho \le \rho^\star_\Lambda$
where $\rho^\star_\Lambda$ is defined by
\reff{eq.rhostarlambda}/\reff{eq.lemma_sharp_rkbound_NEW}.
In the borderline case $\rho = \rho^\star_\Lambda$
both choices yield the same sequence,
which satisfies {\em both}\/ $r_1 = \rho^2$ {\em and}\/ $r_{\Lambda-1} = \rho$.
But when $\rho < \rho^\star_\Lambda$ we get different sequences,
and we prefer to use the second choice because it yields a larger region $S_1$.

\begin{lem}
   \label{lemma_sharp_rkbound_NEW}
For $\rho \in (0,1)$ and integer $\Lambda \ge 2$, the following are equivalent:
\begin{itemize}
   \item[(a)] There exist real numbers $r_1,\ldots,r_{\Lambda-1}$
       satisfying \reff{rk_explicit_0} and
       $\rho^2 \le r_1 \le \ldots \le r_{\Lambda-1} \le \rho$.
   \item[(b)] The sequence defined by \reff{eq.choice1}
       satisfies $\rho^2 \le r_k \le \rho$ for $1 \le k \le \Lambda-1$.
   \item[(c)] The sequence defined by \reff{eq.choice2}
       satisfies $\rho^2 \le r_k \le \rho$ for $1 \le k \le \Lambda-1$.
   \item[(d)] $(1+\rho)^\Lambda  \:\le\:  2 (1+\rho^2)^{\Lambda-1}$.
   \item[(e)] $\rho \le \rho^\star_\Lambda$,
where $\rho^\star_\Lambda$ is the unique solution of
\be
   (1+\rho)^\Lambda  \;=\;  2 (1+\rho^2)^{\Lambda-1}
 \label{eq.lemma_sharp_rkbound_NEW}
\ee
in the interval $(0,1)$  when $\Lambda \ge 3$, and $\rho^\star_2 = 1$.
\end{itemize}
\end{lem}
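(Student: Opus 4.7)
The plan is to exploit the explicit formula \reff{rk_explicit}, or equivalently its conjugated form \reff{rk_explicitX} with $X = (1+r_1/\rho)/(1+r_1)$ and $r_k = \rho(X^k-1)/(1-\rho X^k)$, so that the entire sequence is captured by the single parameter $X$. Since $\rho \in (0,1)$, the map $r_1 \mapsto X$ is strictly increasing from $X=1$ (at $r_1=0$) to $X=1/\rho$ (at $r_1=\infty$), and the M\"obius map $X \mapsto \rho(X-1)/(1-\rho X)$ is strictly increasing on $[1,1/\rho)$. Hence $r_k$ is strictly increasing in $r_1$ for every fixed $k$, and non-decreasing in $k$ whenever $r_1 \ge 0$ (strictly increasing when $r_1>0$).

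These monotonicities do most of the work. The implication (b) $\Longrightarrow$ (a) is trivial, and (a) $\Longrightarrow$ (b) follows because the minimal choice $r_1=\rho^2$ produces a termwise-smaller sequence than any other admissible choice, so if any valid sequence exists then the minimal one is also valid; the nesting $\rho^2 \le r_1 \le \cdots \le r_{\Lambda-1}$ collapses, by monotonicity in $k$, to the single constraint $r_{\Lambda-1} \le \rho$. With $r_1=\rho^2$ we have $X=(1+\rho)/(1+\rho^2)$, and clearing denominators in $\rho(X^{\Lambda-1}-1)/(1-\rho X^{\Lambda-1}) \le \rho$ gives $(1+\rho)X^{\Lambda-1} \le 2$, i.e.\ $(1+\rho)^\Lambda \le 2(1+\rho^2)^{\Lambda-1}$, which is (d). The equivalence (c) $\Longleftrightarrow$ (d) is symmetric: the choice $X=(2/(1+\rho))^{1/(\Lambda-1)}$ is forced by $r_{\Lambda-1}=\rho$, and the requirement $r_1 \ge \rho^2$ rearranges to the very same inequality~(d), while $r_k \le r_{\Lambda-1} = \rho$ for all intermediate $k$ is automatic from the monotonicity in $k$.

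It remains to establish (d) $\Longleftrightarrow$ (e). Setting $\phi(\rho) := (1+\rho)^\Lambda/(1+\rho^2)^{\Lambda-1}$, (d) says $\phi(\rho) \le 2$, and we have $\phi(0)=1$, $\phi(1)=2$. For $\Lambda=2$ a direct computation gives
\be
  \phi(\rho) - 2 \;=\; \frac{(1+\rho)^2 - 2(1+\rho^2)}{1+\rho^2} \;=\; -\,\frac{(1-\rho)^2}{1+\rho^2} \;\le\; 0 \;,
\ee
with equality only at $\rho=1$, yielding $\rho^\star_2=1$. For $\Lambda \ge 3$, differentiating $\log\phi$ and setting it to zero reduces to the quadratic $(\Lambda-2)\rho^2 + 2(\Lambda-1)\rho - \Lambda = 0$, which has a unique positive root $\rho_* \in (0,1)$. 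Checking that $(\log\phi)'(0)=\Lambda>0$ and $(\log\phi)'(1)=1-\Lambda/2<0$ shows that $\phi$ is strictly increasing on $[0,\rho_*]$ and strictly decreasing on $[\rho_*,1]$; in particular, $\phi$ is unimodal with peak value strictly exceeding $\phi(1)=2$. Consequently $\phi(\rho)=2$ has exactly one solution $\rho^\star_\Lambda$ in $(0,\rho_*)$, and $\phi(\rho) \le 2$ precisely on $[0,\rho^\star_\Lambda]$, giving (e).

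The main obstacle is purely bookkeeping: ensuring that the two a priori different threshold inequalities arising from the extremal choices (b) and (c) really are the same inequality (d), and that no hidden constraint at intermediate indices $k$ intervenes. Both points are resolved by the single observation that the sequence $(r_k)$ is monotone in $k$ (so the only binding constraints are at $k=1$ and $k=\Lambda-1$) together with the monotonicity of $X$ as a function of $r_1$ (so the two extremal choices share a common boundary regime $r_1=\rho^2=r_{\Lambda-1}=\rho$ exactly at $\rho=\rho^\star_\Lambda$).
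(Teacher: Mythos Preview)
Your proof is correct and follows essentially the same route as the paper. The equivalence of (a)--(d) is handled in both cases by the monotonicity of $r_k$ in the parameter $X$ (equivalently in $r_1$) and in the index $k$, reducing everything to the compatibility of the two endpoint constraints $r_1\ge\rho^2$ and $r_{\Lambda-1}\le\rho$. The only minor difference is in (d)$\iff$(e): the paper shows directly that $\log\phi$ is strictly concave on $[0,1]$ via its second derivative, whereas you locate the unique critical point by observing that $(\log\phi)'=0$ reduces to a quadratic with a single root in $(0,1)$; both arguments yield the same unimodality and hence the same conclusion.
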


Let us remark that the equation \reff{eq.lemma_sharp_rkbound_NEW}
has $\rho=1$ as a root, so that after division by $\rho-1$
it reduces to a polynomial equation of degree $2\Lambda-3$.

\proofof{Lemma~\ref{lemma_sharp_rkbound_NEW}}
Fix $\rho \in (0,1)$ and $r_1 > 0$ and define a sequence
$r_1,r_2,\ldots,r_{\Lambda-1}$ by \reff{rk_explicit_0}/\reff{rk_explicit};
or equivalently, fix $\rho \in (0,1)$ and $X > 1$ and define
$r_1,r_2,\ldots,r_{\Lambda-1}$ by \reff{rk_explicitX}.
It is then easy to see that we have
$r_1 \le r_2 \le \ldots \le r_{\Lambda-1} < \infty$
if and only if $X < \rho^{-1/(\Lambda-1)}$
[so that the denominator in the expression \reff{rk_explicitX}
 for $r_k$ is positive for all $k \le \Lambda-1$];
and each $r_k$ is an increasing function of $X$ (for fixed $\rho$)
in the region $1 < X < \rho^{-1/(\Lambda-1)}$.
If we furthermore want to have $r_1 \ge \rho^2$ and $r_{\Lambda-1} \le \rho$,
then we must have
\be
   \frac{1+\rho}{1+\rho^2}
   \;\le\;
   X
   \;\le\;
   \biggl( {2 \over 1+\rho} \biggr)^{1/(\Lambda-1)}
 \label{eq.proof.lemma_sharp_rkbound_NEW}
\ee
(note that $[2/(1+\rho)]^{1/(\Lambda-1)} < \rho^{-1/(\Lambda-1)}$);
and by the just-observed monotonicity in $X$,
this condition is necessary and sufficient.
This proves the equivalence of (a), (b) and (c).
Moreover, there exists such an $X$ if and only if
\be
   \frac{1+\rho}{1+\rho^2}
   \;\le\;
   \biggl( {2 \over 1+\rho} \biggr)^{1/(\Lambda-1)}
   \;,
 \label{eq.proof.lemma_sharp_rkbound_NEW.2}
\ee
which is equivalent to (d).
So (a)--(d) are all equivalent.

Finally we shall prove the equivalence of (d) and (e).
We do this in slightly greater generality than is claimed,
namely for all {\em real}\/ $\Lambda \ge 2$.
Consider the function
\be
   f_\Lambda(\rho)  \;=\;  \Lambda \log(1+\rho) \,-\, (\Lambda-1) \log(1+\rho^2)
   \;.
\ee
Clearly (d) holds if and only if $f_\Lambda(\rho) \le \log 2$.
Now the first two derivatives of $f_\Lambda(\rho)$ are
\begin{subeqnarray}
   f'_\Lambda(\rho)
   & = &
   {\Lambda \over 1+\rho} \,-\, {2(\Lambda-1) \rho \over 1+\rho^2}  \\[3mm]
   f''_\Lambda(\rho)
   & = &
   - \, {4(1+\rho+\rho^2-\rho^3) \,+\,
            (\Lambda-2) (3+4\rho+2\rho^2 - 4\rho^3 - \rho^4)
         \over (1+\rho)^2 (1+\rho^2)^2
        }
\end{subeqnarray}
For $0 \le \rho \le 1$ we manifestly have
$1+\rho+\rho^2-\rho^3 \ge 1+\rho \ge 1$ and
$3+4\rho+2\rho^2 - 4\rho^3 - \rho^4 \ge 3 + \rho^2 \ge 3$,
so that $f_\Lambda$ is strictly concave on $[0,1]$ whenever $\Lambda \ge 2$.
We have $f_\Lambda(0) = 0$, $f'_\Lambda(0) = \Lambda > 0$,
$f_\Lambda(1) = \log 2$ and $f'_\Lambda(1) = -(\Lambda-2)/2$.
Therefore, for $\Lambda > 2$, there is a unique $\rho^\star_\Lambda \in (0,1)$
satisfying $f_\Lambda(\rho^\star_\Lambda) = \log 2$;
and for $\rho \in [0,1)$ we have $f_\Lambda(\rho) \le \log 2$
if and only if $\rho \le \rho^\star_\Lambda$.
This proves the equivalence of (d) and (e) for all real $\Lambda > 2$.
When $\Lambda = 2$, (d) holds for all $\rho \in [0,1]$,
so (d) is again equivalent to (e) with $\rho^\star_2 = 1$.
\qed

We have now completed the proof of the main part of
Theorem~\ref{thm_main_sec5}.
All that remains is to prove the final statement that
$\rho^\star_\Lambda > (\log 2)/(\Lambda- \smfrac{3}{2} \log 2)$
for all integers $\Lambda \ge 2$,
or equivalently (in view of Lemma~\ref{lemma_sharp_rkbound_NEW}d,e) that
$(1+\rho)^\Lambda < 2(1+\rho^2)^{\Lambda-1}$
when $\rho = (\log 2)/(\Lambda-\smfrac{3}{2} \log 2)$.
We shall actually prove this for all {\em real}\/
$\Lambda > \smfrac{5}{2} \log 2 \approx 1.732868$ (this ensures that $\rho < 1$).
Taking the logarithm of $2(1+\rho^2)^{\Lambda-1} / (1+\rho)^\Lambda$, substituting for $\Lambda$ in terms of $\rho$,  and parametrizing by $\rho \in (0,1)$, 
we see that this is equivalent to the following claim:

\begin{lem}
   \label{lemma_rkbound_log2}
The function
\begin{subeqnarray}
   g(\rho)
   & = &
   \rho \left[ \log 2 \:+\:
               \biggl( {\log 2 \over \rho} \,+\, \smfrac{3}{2} \log 2 \,-\, 1
               \biggr) \, \log(1+\rho^2)
           \:-\: \biggl( {\log 2 \over \rho} + \smfrac{3}{2} \log 2 \biggr)
               \, \log(1+\rho)
        \right]
      \nonumber \\ \\
   & = &
   \rho \log \left( \frac{2}{1+\rho} \right) 
      \,-\, (\log 2) \log \left( \frac{1+\rho}{1+\rho^2} \right)
      \,-\, (\smfrac{3}{2} \log 2 \,-\, 1) \, \rho \,
          \log \left( {1+\rho \over 1+\rho^2} \right)
\end{subeqnarray}
is strictly positive for $0 < \rho < 1$.
\end{lem}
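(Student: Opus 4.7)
The plan is to combine direct evaluation at the endpoints, Taylor expansion for local positivity, and a global zero-counting argument in the interior.

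First, I verify that $g(0) = g(1) = 0$. At $\rho = 0$, each of the three summands of $g$ contains a vanishing factor of either $\rho$ or $\log\bigl((1+\rho)/(1+\rho^2)\bigr)$. For the value at $\rho = 1$, I find it convenient to use the identity $\log(2/(1+\rho^2)) = \log(2/(1+\rho)) + \log\bigl((1+\rho)/(1+\rho^2)\bigr)$ to rewrite
\[
g(\rho) \;=\; \rho \log\frac{2}{1+\rho} \,-\, (\log 2 + \alpha\rho)\,\log\frac{1+\rho}{1+\rho^2},
\qquad \alpha \,:=\, \tfrac{3}{2}\log 2 - 1,
\]
after which every summand manifestly vanishes at $\rho = 1$.

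Second, I Taylor-expand at both endpoints to establish local positivity. Using the standard series for $\log(1+\rho)$ and $\log(1+\rho^2)$, one checks that the $O(\rho)$ and $O(\rho^2)$ contributions to $g$ cancel identically, leaving
\[
g(\rho) \;=\; \frac{23\log 2 - 12}{12}\,\rho^3 \,+\, O(\rho^4),
\]
whose leading coefficient $\approx 0.328$ is strictly positive. Setting $\rho = 1-\epsilon$ and expanding likewise yields
\[
g(1-\epsilon) \;=\; \bigl(1 - \tfrac{5}{4}\log 2\bigr)\,\epsilon \,+\, O(\epsilon^2),
\]
whose leading coefficient $\approx 0.134$ is strictly positive. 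Hence $g > 0$ on some one-sided neighborhoods of both endpoints; in particular $g$ has a zero of order exactly $3$ at $\rho = 0$ and a simple zero at $\rho = 1$.

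The heart of the proof --- and the expected main obstacle --- is extending positivity to the entire open interval $(0,1)$. My plan is an argument by contradiction using zero counting. Suppose $g$ vanished at some interior point $\rho_0 \in (0,1)$; then the multiplicities forced at the endpoints, together with Rolle's theorem applied around $\rho_0$ and the local positivity of $g$ near each endpoint, would yield at least \emph{four} zeros of $g'$ in $[0,1)$ counted with multiplicity (a double zero at $\rho = 0$ forced by the triple zero of $g$ there, plus at least two further zeros in $(0,1)$), and hence at least three zeros of $g''$ in $[0,1)$. Differentiating twice eliminates all logarithms from $g$, so $g''$ is a rational function; after clearing denominators by $(1+\rho)^2(1+\rho^2)^2$ one obtains an explicit polynomial $P(\rho)$ of degree at most $6$. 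The goal is then to show, via Descartes' rule of signs applied to $P(\rho)$ and to $P(1-\rho)$ (or by direct inspection of the coefficient signs), that $P$ has at most two zeros in $(0,1)$, contradicting the required count of three.

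The hard part will be the bookkeeping in this last step: computing $P$ in closed form and verifying that its sign structure is sufficiently constrained. Should the coarse Descartes bound fail to give a count of at most two, a Sturm sequence argument or a rigorous numerical certificate on a sufficiently fine mesh --- combined with an explicit Lipschitz bound on $g$ --- constitutes an acceptable fallback, since the problem is one-dimensional and $g$ is an elementary function.
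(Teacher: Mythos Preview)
Your plan and the paper's proof share the same engine: compute $g''$, observe that all logarithms disappear, and exploit the resulting rational structure. The paper finds
\[
g''(\rho) \;=\; \frac{\rho\, h(\rho)}{(1+\rho)^2(1+\rho^2)^2}
\]
with $h$ a degree-$4$ polynomial whose nonconstant coefficients are all negative; hence $h$ is strictly decreasing on $[0,\infty)$, with $h(0)>0$ and $h(1)<0$, so $h$ has a unique root $\rho^*\in(0,1)$. This gives directly that $g$ is convex on $[0,\rho^*]$ and concave on $[\rho^*,1]$, and positivity then follows from $g(0)=g'(0)=0$ and $g(1)=0$ without any contradiction argument or Descartes bookkeeping.

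Your Rolle/zero-counting route is a valid alternative packaging of the same idea, but as written it has an off-by-one that prevents it from closing. You count four zeros of $g'$ in $[0,1)$ and hence three of $g''$; you then aim to show the numerator $P$ has at most two zeros in $(0,1)$. But $P(0)=0$ (since $g$ has a triple zero at $0$), so ``$\le 2$ in $(0,1)$'' only gives ``$\le 3$ in $[0,1)$'', which does not contradict ``$\ge 3$''. The fix is to actually use the local positivity you invoked: if $g$ has an interior zero and is positive near both endpoints, then either that zero has multiplicity $\ge 2$ or there is a second interior zero, and in either case $g'$ has at least \emph{five} zeros in $[0,1)$, hence $g''$ at least four --- now ``$\le 2$ in $(0,1)$'' (i.e.\ $\le 3$ in $[0,1)$) does give a contradiction. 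Alternatively, once you compute $P=\rho\,h(\rho)$ you will see $h$ has exactly one root in $(0,1)$, which is stronger than you need; at that point you may as well switch to the paper's convex--concave argument, which is shorter and avoids the counting entirely.
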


\begin{proof}
The second derivative of $g$ is given by
$g''(\rho) = h(\rho) \times \rho / [ (1+\rho)^2 (1+\rho^2)^2 ]$ where
\be
   h(\rho) \;=\;
   - (2 - \smfrac{3}{2} \log 2) \rho^4  \,-\, (4 - 2\log 2) \rho^3
     \,-\, (8 - 5\log 2) \rho^2  \,-\, (12 - 14\log 2)\rho
     \,+\, (\smfrac{23}{2} \log 2 - 6)
   \;.
\ee
All the coefficients of $h(\rho)$ are strictly negative
except for the last (constant) term,
so we have $h'(\rho) < 0$ for all $\rho \geq 0$.
Since $h(0) = \smfrac{23}{2} \log 2 - 6 > 0$ and $h(1) = 34\log 2 - 32 < 0$
and $h$ is strictly decreasing for $\rho \ge 0$,
it follows that $h(\rho)$ has exactly one positive real root $\rho^*$
and that it lies between 0 and 1
(by computer $\rho^* \approx 0.417876$).
Therefore $g$ is strictly convex on $[0,\rho^*]$
and strictly concave on $[\rho^*,\infty)$.
Since $g(0) = g'(0) = 0$, we have $g(\rho) > 0$ for $\rho \in (0,\rho^*]$.
Moreover, since $g(\rho^*) > 0$ and $g(1) = 0$
and $g$ is strictly concave on $[\rho^*,1]$,
we have $g(\rho) > 0$ for $\rho \in [\rho^*,1)$.
Hence $g(\rho) > 0$ for all $\rho \in (0,1)$, as claimed.
\qed
\end{proof}

\medskip

{\bf Remark.}
%
A straightforward calculation shows that the
large-$\Lambda$ asymptotic behavior of $\rho^\star_\Lambda$ is given by
\be
   \rho^\star_\Lambda
   \;=\;
   (\log 2)
   \Biggl[ {1 \over \Lambda-1} \,+\, {3 \log 2 - 2 \over 2 (\Lambda-1)^2}
              \,+\, {25 \log^2 2 - 24\log 2 + 6 \over 6 (\Lambda-1)^3}
              \,+\, \ldots
   \Biggr]
\ee
and hence
\be
   {1 \over \rho^\star_\Lambda}
   \;=\;
   {\Lambda-1 \over \log 2} \,-\, {3 \log 2 - 2 \over 2 \log 2}
              \,-\, {23\log 2 - 12 \over 12 (\Lambda-1)}
              \,+\, \ldots
   \;\;.
\ee
So the inequality
$\rho^\star_\Lambda > (\log 2)/(\Lambda - \smfrac{3}{2} \log 2)$
captures the first two terms of the large-$\Lambda$ asymptotic behavior.
\qed

\medskip

We have now completed the proof of Theorem~\ref{thm_main_sec5}.

\section{The case $\Lambda=3$}  \label{sec.Lambda=3}

Theorem~\ref{thm_main} is a strong result because it provides
a linear bound for the chromatic roots of series-parallel graphs
in terms of the maxmaxflow $\Lambda$, thereby achieving our main objective.
Furthermore, the constant $1/\log 2$ cannot be reduced below $1$
(see Appendix~\ref{app_tree}) and so it is reasonably close to optimal.
However, the result applies uniformly for all $\Lambda$,
its proof involves a number of steps where expressions
are replaced by fairly naive upper bounds,
and it only involves the {\em magnitude}\/ of $q-1$;
so for all these reasons,
Theorem~\ref{thm_main} does not give a very precise picture
of the root-free region for any particular value of $\Lambda$.

In this section we consider how to get sharper results
for the simplest nontrivial case, namely for $\Lambda = 3$. 
In this case, the bound given by Theorem~\ref{thm_main} is that 
chromatic roots for series-parallel graphs of maxmaxflow 3 are
contained in the disk
\be
|q-1| \;\le\; 2/(\log 2) \;\approx\; 2.8853900818 \;.
 \label{eq6.1}
\ee
An immediate improvement can be obtained from Theorem~\ref{thm_main_sec5}
by using the exact value of $\rho_3^\star$,
which gives the slightly better bound
\be
|q-1| \;\le\; 1/\rho_3^\star \;\approx\; 2.6589670819 \;.
 \label{eq6.2}
\ee


Both of these regions ultimately relied on the quantity $F_q$
given by \eqref{radiusbound} as an upper bound for the true value $f_q$.
We can do better by computing a numerical approximation
to the {\em actual}\/ value $f_q(\rho^2, \rho^2)$,
and then imposing the condition $f_q(\rho^2, \rho^2) \le \rho$
that arises out of Proposition~\ref{prop_radii} with $\Lambda=3$.
Since $f_q(\rho^2, \rho^2)$ depends on $q$ and not just on $|q-1|$,
this procedure will lead to a region with no simple analytic description.
As $t_e \pllq t_f$ is given by a ratio of symmetric multiaffine polynomials
in $t_e$ and $t_f$ [cf.\ \reff{parallel_t2}]
and $D(\rho^2)$ is a circular region,
the Grace--Walsh--Szeg\H{o} coincidence theorem
\cite[Theorem~3.4.1b]{Rahman_02}
implies that
\begin{equation}
\max_{t_e, t_f \in D(\rho^2)} |t_e \pllq t_f| 
  \;=\; \max_{t \in D(\rho^2)} |t \pllq t|
   \;,
\end{equation}
and so we can compute an approximation to $f_q(\rho^2, \rho^2)$
by letting $t$ range over the (discretized) boundary of $D(\rho^2)$
and taking the maximum value of $|t \pllq t|$ thus obtained.
Then for each fixed angle $\theta$ we can set
$1/(1-q) = \rho e^{i \theta}$ and use the bisection method
to determine the maximum possible value of $\rho$.
Figure~\ref{figlambda3} shows how this bound
(shown as a green solid curve)
compares with the circular regions \reff{eq6.1} and \reff{eq6.2}.
This bound is the optimal bound obtainable from Theorem~\ref{regions}
under the assumption that $S_1$ is chosen to be a ``point+disk'' region
$S_1 = \{1/(1-q)\} \cup D(\rho^2)$.

\begin{figure}[t]
\begin{center}
\includegraphics{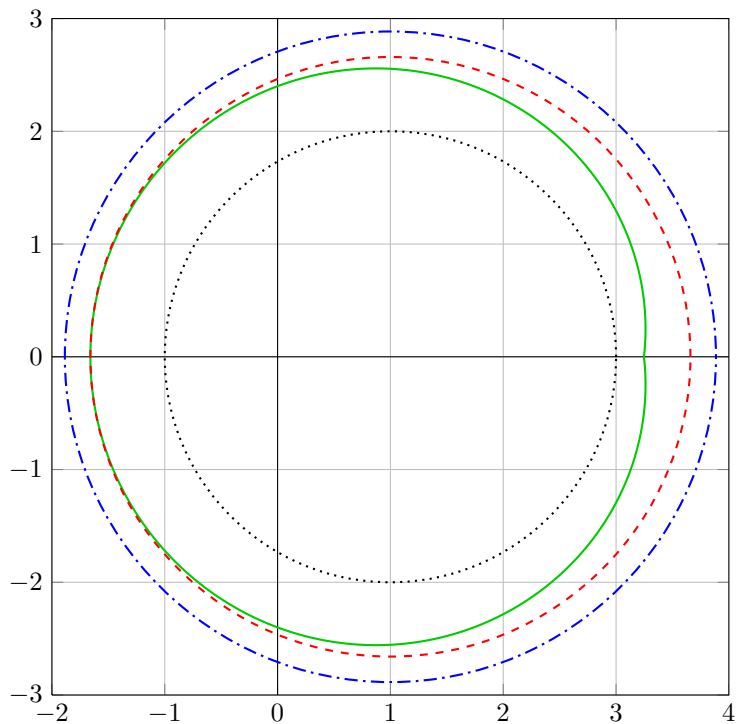}
\end{center}
\caption{
   Different bounds on the chromatic roots for $\Lambda=3$:
   the bound based on $f_q$ (green solid curve),
   the bound $|q-1| = 2.6589670819$ based on $F_q$ (red dashed circle),
   and the bound $|q-1|=2/\log 2 \approx 2.8853900818$
   from Theorem~\ref{thm_main} (blue dot-dashed outer circle).
   The inner circle $|q-1|=2$ is also shown for reference (dotted gray).
 }
\label{figlambda3}
\end{figure}


At this point it is natural to inquire:  What is the best possible result?
Otherwise put:
Can we describe exactly the closure of the set of {\em all}\/ chromatic roots
of {\em all}\/ series-parallel graphs of maxmaxflow $\Lambda=3$,
or at least the outer boundary of this set?
In \cite{Royle_BCC2009} one of the authors gave a
computer approximation to this boundary,
but he suspects that this approximation may become poor near the real axis,
in part because this boundary is likely to be fractal-like rather than smooth.

As previously mentioned, we will show in Appendix~\ref{app_tree} that,
for each fixed $r \ge 2$, every point of the circle $|q-1| = r$
is a limit point (as $n\to\infty$) of chromatic roots
of the family $\{G_n^r\}_{n \ge 1}$
of leaf-joined trees of branching factor $r$,
which have maxmaxflow $\Lambda=r+1$.
Moreover, numerical calculations suggest (though we have no proof)
that the chromatic roots of leaf-joined trees
always lie {\em inside}\/ the circle $|q-1| = r$
(see Conjecture~\ref{conj.leaf-joined}).
This led us to conjecture that the exact answer to our question is:
the chromatic roots of series-parallel graphs of maxmaxflow $\Lambda$
always lie inside the disc $|q-1| < \Lambda-1$, and this bound is sharp.

That would be neat, but it is {\em false}\/!
In fact, a counterexample can be found by a simple modification
of a leaf-joined tree.
Let us first recall \cite[Example~2.2]{MR2187739}
the multivariate Tutte polynomial of a cycle $C$:
\begin{equation}
   Z_{C}(q, \bv)   \;=\;
       \prod_{e \in E(C)}  (q+v_e)   \,+\, (q-1) \prod_{e \in E(C)}  v_e
   \;.
 \label{eq.ZCn}
\end{equation}
In particular, if we consider a cycle of $N+1$ edges
where $N$ edges carry weight $v$ and the last edge carries weight $-1$,
we have
\be
   Z_C(q,\bv)  \;=\;  (q-1) [(q+v)^N - v^N]
   \;,
\ee
which vanishes whenever $t = v/(q+v)$ is an $N$th root of unity.
It follows that if we consider {\em any}\/ 2-terminal graph $\sfG = (G,s,t)$
and form the graph $H$ consisting of $N$ copies of $\sfG$
together with one $K_2$ connected in a cycle,
then $H$ has a chromatic root whenever
the ``effective transmissivity'' $t_{\rm eff}(G,s,t)$
is an $N$th root of unity.

It is easy to compute the effective transmissivity
for leaf-joined trees, symbolically as a function of $q$,
by using the recursion \reff{def_Rq_chromatic}/\reff{def_init_Rq_chromatic}
along with $t = (y-1)/(q+y-1)$.
We can then plot the curve in the complex $q$-plane
where $|t_{\rm eff}(G_n^r,s,t)| = 1$.
For $r=2$, we find that this curve stays within the disc $|q-1|<2$
when $n \le 4$, but that it strays slightly outside this disc when $n=5$.
(On the circle $q-1=2e^{i\theta}$, $|t_{\rm eff}(G_5^2,s,t)|$ reaches a
maximum value $\approx 1.08448$ at $\theta \approx \pm 0.679954 \pi$,
 corresponding to $q \approx -0.071413 \pm 1.68881 i$.)
If we now consider the graph $H$ consisting of $N=3$ copies of $G_5^2$
together with one $K_2$ connected in a cycle ---
note that $H$ has maxmaxflow 3 and has 94 vertices ---
we see that $H$ has a chromatic root whenever
$t_{\rm eff}(G_5^2,s,t)$ is a cube root of unity.
Solving $t_{\rm eff}(G_5^2,s,t) = e^{\pm 2\pi i/3}$ for $q$,
we find 31 roots, of which one
($q \approx -0.144883 \mp 1.651418 i$)
has $|q-1| \approx  2.009462 > 2$.

\section{The real antiferromagnetic regime}  \label{sec.antiferro}


The chromatic polynomial corresponds to the special case
of the multivariate Tutte polynomial in which
all the edge weights $v_e$ take the value $-1$.
However, it is often the case that results valid for this limiting case
also hold throughout the ``real antiferromagnetic regime''
where edge weights $v_e \in [-1,0]$ are chosen independently for each edge.
Expressed in transmissivities, we get $t_e \in \scrc_q$,
where $\scrc_q$ is the curve defined parametrically by
\begin{equation}\label{cqdef}
\scrc_q \;=\;
  \Bigl\{ \frac{v}{q+v} \colon\; v \in [-1,0] \Bigr\}  \;.
\end{equation}
In the complex $t$-plane, $\scrc_q$ traces out a circular arc that runs
from the origin (when $v = 0$) to the point $1/(1-q)$ [when $v=-1$].
In this section we will show how we can handle this case by a minor
modification of the argument given in Section~\ref{sec_discs},
thereby proving Theorem~\ref{thm_main_AF}.
In fact, we shall prove the following slight strengthening of
Theorem~\ref{thm_main_AF}, which is identical to Theorem~\ref{thm_main_sec5}
except that $v_e = -1$ is replaced by $-1 \le v_e \le 0$:

\begin{thm}
   \label{thm_main_AF_sec7}
Fix an integer $\Lambda \ge 2$,
and let $G$ be a loopless series-parallel graph of maxmaxflow at most $\Lambda$.
Let $\rho^\star_\Lambda$ be the unique solution of
\be
   (1+\rho)^\Lambda  \;=\;  2 (1+\rho^2)^{\Lambda-1}
\ee
in the interval $(0,1)$ when $\Lambda \ge 3$,
and let $\rho^\star_2 = 1$.
Then the multivariate Tutte polynomial $Z_G(q, \bv)$
is nonvanishing whenever $|q-1| \ge 1/\rho^\star_\Lambda$
(with $\ge$ replaced by $>$ when $\Lambda=2$)
and the edge weights $\bv = \{v_e\}_{e \in E}$
satisfy
\be
   -1 \,\le\, v_e \,\le\, 0   \quad\hbox{or}\quad
   \left| {v_e \over q+v_e} \right| \:\le\: \rho \, {X - 1 \over 1 - \rho X}
 \label{eq.thm_main_AF_sec7}
\ee
(with strict inequality in the second expression when $\Lambda=2$),
where
\be
   \rho \:=\: {1 \over |q-1|}
   \quad\hbox{and}\quad
   X \:=\: \biggl( {2 \over 1+\rho} \biggr)^{1/(\Lambda-1)}
   \;.
\ee
\end{thm}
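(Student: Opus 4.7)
The plan is to apply Theorem~\ref{regions} with the enlarged regions
$$S_i \;=\; \scrc_q \,\cup\, D(r_i),$$
where the radii $r_1 \le \cdots \le r_{\Lambda-1}$ are chosen exactly as in Theorem~\ref{thm_main_sec5} via the ``maximal'' choice~\reff{eq.choice2} (so that $r_{\Lambda-1} = \rho$). With this definition $S_1$ coincides precisely with the allowed edge-weight set of~\reff{eq.thm_main_AF_sec7}, and the theorem reduces to verifying conditions (1)--(4) of Theorem~\ref{regions}.

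First I would establish the inclusion $\scrc_q \subseteq D(\rho)$, valid whenever $|q-1| \ge 1$: writing $v = -s$ with $s \in [0,1]$, the inequality $|v/(q+v)|^2 \le \rho^2$ reduces after clearing denominators to $|q|^2 + s(|q-1|^2 - 1) \ge 0$, which holds termwise under the hypothesis. Consequently $S_{\Lambda-1} \subseteq D(\rho)$, so condition~(1) follows from Lemma~\ref{lemma4.3}; conditions (3$'$) and~(4) carry over from the proof of Proposition~\ref{prop_radii} using only $|t_e t_f| \le \rho^2$. The substantive work is condition~(2): its ``disc $\pllq$ disc'' sub-case is unchanged from Section~\ref{sec_discs}, and its ``arc $\pllq$ arc'' sub-case is immediate in $v$-variables, since $(1+v_1)(1+v_2) - 1 \in [-1,0]$ whenever $v_1,v_2 \in [-1,0]$, so $\scrc_q \pllq \scrc_q \subseteq \scrc_q$.

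The main obstacle is the mixed ``arc $\pllq$ disc'' sub-case, i.e.\ $\scrc_q \pllq D(r_\ell) \subseteq \scrc_q \cup D(r_{\ell+1})$. The key tool is the factorization $v \pll v' = (1+v)(1+v') - 1$, which in $y$-variables ($y = 1+v$) becomes simply $y \pll^Y y' = y y'$; for $t \in \scrc_q$ one has $y \in [0,1]$, so parallel with a $\scrc_q$-element acts on the $y$-side as multiplication by a real factor in $[0,1]$. Setting $\phi(s) = (s-1)/(q-1+s)$ [so that $\scrc_q = \phi([0,1])$] and $R_\ell = \phi^{-1}(D(r_\ell))$ (a $y$-plane disc containing~$1$), the mixed image becomes $\phi([0,1] \cdot R_\ell)$, and the crux is to show this cone-image sits inside $\scrc_q \cup D(r_{\ell+1})$. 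A purely modulus-based bound fails, since $|t|$ on $\scrc_q$ can reach $\rho > r_{\ell+1}$ when $\ell+1 < \Lambda-1$; one must instead track the envelope of the Möbius images $t' \mapsto t \pllq t'$ as $t$ sweeps $\scrc_q$, which is a thin tubular neighborhood of the arc that collapses to $\{t_0\}$ at $t = t_0$ and to $D(r_\ell)$ at $t = 0$. The hard part will be verifying the containment of this envelope; I expect one must combine the associativity identity of Lemma~\ref{lem:fabc} with the saturation condition $r_{\Lambda-1} = \rho$ from~\reff{eq.choice2} to reduce the envelope analysis to the same iteration~\reff{rstransform} that defines the radii.
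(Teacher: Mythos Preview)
Your proposed regions $S_i = \scrc_q \cup D(r_i)$ do not satisfy condition~(2) of Theorem~\ref{regions} once $\Lambda \ge 4$. The failure is exactly in the mixed ``arc~$\pllq$~disc'' sub-case that you flag as the main obstacle, and it is not a matter of finding a cleverer estimate: the inclusion $\scrc_q \pllq D(r_\ell) \subseteq \scrc_q \cup D(r_{\ell+1})$ is simply false whenever $r_{\ell+1} < \rho$. In the $y$-variables this would read $[0,1]\cdot D^Y_\ell \subseteq [0,1] \cup D^Y_{\ell+1}$, where $D^Y_\ell$ and $D^Y_{\ell+1}$ are the $y$-images of $D(r_\ell)$ and $D(r_{\ell+1})$; these are discs containing $y=1$ but \emph{not} containing $y=0$ (since $y=0$ corresponds to $t=t_0$, and $|t_0|=\rho>r_{\ell+1}$). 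Pick any non-real $y'\in D^Y_\ell$ and let $y\in(0,1)$ tend to $0$: the product $yy'$ stays non-real (so it is not in $[0,1]$) while its modulus tends to $0$, so eventually $yy'\notin D^Y_{\ell+1}$ either. The associativity of Lemma~\ref{lem:fabc} cannot rescue this, since that lemma governs only the iteration of disc radii and says nothing about the shape of the cone $[0,1]\cdot D^Y_\ell$ near the origin.

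The paper's remedy is to enlarge the intermediate regions so that they absorb this cone: one takes
\[
   S_k \;=\; \bigl(\scrc_q \pllq D(r_{k-1})\bigr) \,\cup\, D(r_k)
\]
with $r_0=0$ (so $S_1$ is still your $\scrc_q\cup D(r_1)$, and the allowed edge-weight set is unchanged). With these ``fattened stalk $+$ disc'' regions, condition~(2) follows mechanically from a general combination lemma (Lemma~\ref{lemma_CD}) applied to the two families $C_i\equiv\scrc_q$ and $D_i=D(r_i)$. The only new analytic input is that $\scrc_q \pllq D(\rho) \subseteq D(\rho)$ --- i.e.\ in $y$-variables, multiplying a disc through the origin by a scalar in $[0,1]$ maps it into itself (Lemma~\ref{lemma_Drho_Cq}) --- which yields $S_{\Lambda-1}\subseteq D(\rho)$ and hence condition~(1) via Lemma~\ref{lemma4.3}, while conditions~(3$'$) and~(4) follow as in Proposition~\ref{prop_radii}.
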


The first step in the proof of Theorem~\ref{thm_main_AF_sec7}
is the following simple lemma,
which shows how to combine a pair of families
$C_1 \subseteq C_2 \subseteq \cdots \subseteq C_{\Lambda-1}$
and $D_1 \subseteq D_2 \subseteq \cdots \subseteq D_{\Lambda-1}$,
each of which satisfies the ``parallel condition'' (2) of Theorem~\ref{regions},
into a single family that also satisfies the ``parallel condition'':

\begin{lemma}
   \label{lemma_CD}
Let $C_1 \subseteq C_2 \subseteq \cdots \subseteq C_{\Lambda-1}$
and $D_1 \subseteq D_2 \subseteq \cdots \subseteq D_{\Lambda-1}$
be subsets of the complex $t$-plane satisfying
\begin{subeqnarray}
   C_k \pllq C_\ell \,\subseteq\, C_{k+\ell}
      \quad\hbox{\rm whenever } k+\ell \le \Lambda-1    \\[1mm]
   D_k \pllq D_\ell \,\subseteq\, D_{k+\ell}
      \quad\hbox{\rm whenever } k+\ell \le \Lambda-1
\end{subeqnarray}
Now define the sets
$S_1 \subseteq S_2 \subseteq \cdots \subseteq S_{\Lambda-1}$ by
\be
   S_k  \;=\;  \bigcup\limits_{i=0}^k \, (C_i \pllq D_{k-i})
 \label{eq.lemma_CD.defSk}
\ee
with $C_0 = D_0 = \{0\}$.
Then
\be
   S_k \pllq S_\ell \,\subseteq\, S_{k+\ell}
      \quad\hbox{\rm whenever } k+\ell \le \Lambda-1  \;.
\ee
\end{lemma}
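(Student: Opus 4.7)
The whole plan rests on one algebraic observation: the parallel-composition operator $\pllq$ is both associative and commutative. This is easiest to see by switching to the $y$-variables of \reff{def_ye}, in which parallel composition is plain multiplication ($y_e \pll^Y y_f = y_e y_f$); since $y$ and $t$ are related by a M\"obius change of variables, the operator $\pllq$ in the $t$-plane inherits associativity and commutativity from multiplication. (On the Riemann sphere, as in Appendix~\ref{app_riemann}, this identity is unconditional; in the finite plane it holds wherever all intermediate expressions are defined, which will be automatic below since the hypothesized inclusions will place every intermediate value in some $C_{a}$ or $D_{b}$, hence in $\C$.)

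With this in hand the argument is pure bookkeeping. Given arbitrary $s \in S_k$ and $t \in S_\ell$, the definition \reff{eq.lemma_CD.defSk} supplies indices $0 \le i \le k$ and $0 \le j \le \ell$ together with elements $c_1 \in C_i$, $d_1 \in D_{k-i}$, $c_2 \in C_j$, $d_2 \in D_{\ell-j}$ such that $s = c_1 \pllq d_1$ and $t = c_2 \pllq d_2$. Applying commutativity and associativity,
\be
   s \pllq t \;=\; (c_1 \pllq c_2) \pllq (d_1 \pllq d_2) \;.
\ee
Because $i+j \le k+\ell \le \Lambda-1$, the $C$-hypothesis yields $c_1 \pllq c_2 \in C_{i+j}$; the boundary cases $i=0$ or $j=0$ are handled by the convention $C_0 = \{0\}$ together with the identity $0 \pllq x = x$, which is immediate from \reff{eq_tpar}. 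The same reasoning on the $D$-side, using $(k-i)+(\ell-j) = (k+\ell)-(i+j) \le \Lambda-1$, gives $d_1 \pllq d_2 \in D_{(k+\ell)-(i+j)}$. Setting $i' := i+j \in \{0,1,\ldots,k+\ell\}$, we then have
\be
   s \pllq t \;\in\; C_{i'} \pllq D_{(k+\ell)-i'} \;\subseteq\; S_{k+\ell} \;,
\ee
which is the desired inclusion.

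The only genuinely substantive ingredient is the associativity/commutativity of $\pllq$; once that is invoked, the proof is a three-line exercise in re-indexing. I foresee no real obstacle: the only potential annoyance is the treatment of the ``undefined'' value, but this is harmless here because every $\pllq$ operation appearing in the chain either comes from a hypothesis inclusion into a subset of $\C$, or is handled by the trivial identity $0 \pllq x = x$, or is the final operation $s \pllq t$ itself, which by the above lies in $C_{i'} \pllq D_{(k+\ell)-i'}$ and hence in the finite complex plane.
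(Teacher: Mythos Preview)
Your proof is correct and follows essentially the same route as the paper's: both arguments use the associativity and commutativity of $\pllq$ to regroup $(C_i \pllq D_{k-i}) \pllq (C_j \pllq D_{\ell-j})$ as $(C_i \pllq C_j) \pllq (D_{k-i} \pllq D_{\ell-j})$, then apply the $C$- and $D$-hypotheses separately. The paper writes this at the level of sets in a single chain of inclusions, while you unpack it element-wise and add explicit commentary on the boundary case $0 \pllq x = x$ and on well-definedness, but the substance is identical.
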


\begin{proof}
If $k+l \le \Lambda-1$, we have
\begin{eqnarray}
   S_k \pllq S_\ell
   & = & 
   \bigcup\limits_{i=0}^k \, \bigcup\limits_{j=0}^\ell \,
         (C_i \pllq D_{k-i}) \pllq (C_j \pllq D_{\ell-j})   \nonumber \\[1mm]
   & = & 
   \bigcup\limits_{i=0}^k \, \bigcup\limits_{j=0}^\ell \,
         (C_i \pllq C_j) \pllq (D_{k-i} \pllq D_{\ell-j})   \nonumber \\[1mm]
   & \subseteq & 
   \bigcup\limits_{i=0}^k \, \bigcup\limits_{j=0}^\ell \,
         (C_{i+j} \pllq D_{k+\ell-i-j})   \nonumber \\[1mm]
   & = & 
   \bigcup\limits_{i=0}^{k+\ell} \,
         (C_{i} \pllq D_{k+\ell-i})   \nonumber \\[1mm]
   & = &
   S_{k+\ell}  \;.
\end{eqnarray}
\qed
\end{proof}

In Section~\ref{sec_discs} we treated the chromatic-polynomial case
by taking $C_1 = \ldots = C_{\Lambda-1} = \{t_0\}$ where $t_0 = 1/(1-q)$,
and $D_i = D(r_i)$.
The fact that $t_0 \pllq t = t_0$ for all $t$
--- which is very special to chromatic polynomials ---
then ensures that all the terms $1 \le i \le k$ in \reff{eq.lemma_CD.defSk}
equal $\{t_0\}$, while the term $i=0$ equals $D(r_k)$.
So we indeed have $S_k = \{t_0\} \cup D(r_k)$
as stated in Proposition~\ref{prop_radii},
and the proof of the ``parallel condition'' (2)
given as part of the proof of Proposition~\ref{prop_radii}
is a special case of Lemma~\ref{lemma_CD}.

To treat the real antiferromagnetic regime,
we will take $C_1 = \ldots = C_{\Lambda-1} = \scrc_q$ and $D_i = D(r_i)$
with $r_1 \le r_2 \le \ldots \le r_{\Lambda-1}$.
The invariance of the real antiferromagnetic regime under parallel connection
(which is most easily seen in the $v$-plane or $y$-plane)
then guarantees that $C_k \pllq C_\ell \subseteq C_{k+\ell}$.
It follows that
\be
   S_k  \;=\;  (\scrc_q \pllq D(r_{k-1})) \,\cup\, D(r_k)
 \label{def.Sk.AF}
\ee
where we have set $r_0 = 0$ and hence $D(r_0) = \{0\}$.
The sets $S_k$ are no longer ``point + disc'', but rather ``stalk + disc'':
for $S_1$ the ``stalk'' is precisely the curve $\scrc_q$,
while for higher $S_k$ the ``stalk'' gets increasingly ``fattened out''
by parallel connection with $D(r_{k-1})$.
Figure~\ref{stalkdisc} illustrates this situation
for $\Lambda=3$ and $q=-2+3i$:
the ``stalk'' for $S_2$ is the cone-shaped region
$\scrc_q \pllq D(r_1)$ that runs from $D(r_1)$ to the point $t_0 = 1/(1-q)$.

\begin{figure}[t]
\begin{center}
\includegraphics{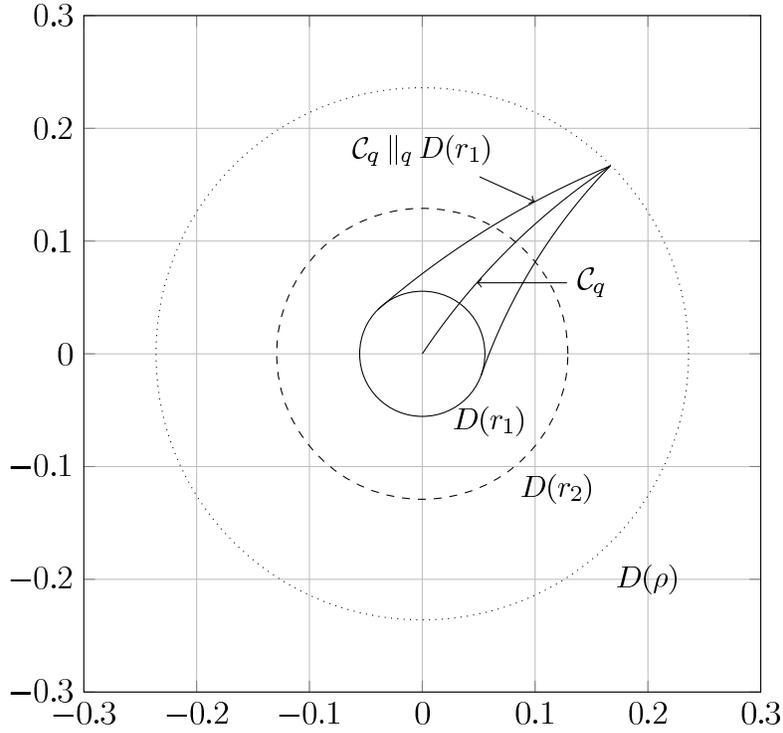}
\end{center}
\caption{The boundaries of the discs and stalks when $q=-2+3i$.}
\label{stalkdisc}
\end{figure}

We can choose the radii
$\rho^2 = r_1 \leq r_2 \leq \ldots \leq r_{\Lambda-1} \leq \rho$
exactly as in Section~\ref{sec_discs},
and this guarantees that $D_k \pllq D_\ell \subseteq D_{k+\ell}$.

To complete the proof of Theorem~\ref{thm_main_AF_sec7},
it therefore suffices to verify the ``series condition'' (1)
of Theorem~\ref{regions}.
We shall do this, once again, by using Lemma~\ref{lemma4.3} with $r=\rho$.
Since $\rho^2 = r_1 \leq r_2 \leq \cdots \leq r_{\Lambda-1} \leq \rho$,
it suffices to verify that
\be
   \scrc_q \pllq D(r_{\Lambda-2})  \;\subseteq\;  D(\rho)
   \;.
 \label{eq.Cq_Drlam-2}
\ee
We shall prove the stronger statement that
\be
   \scrc_q \pllq D(\rho)  \;\subseteq\;  D(\rho)
   \;.
 \label{eq.Cq_Drho}
\ee
Indeed, we shall prove also a strong converse to this statement,
although we shall not make use of this converse.

\begin{lemma}
   \label{lemma_Drho_Cq}
Let $q$ be a fixed complex number such that $|q-1|>1$,
let $\rho = 1/|1-q|$, and let $\scrc_q$ be defined by \eqref{cqdef}.
Then a complex number $t$ satisfies $\{t\} \pllq D(\rho) \subseteq D(\rho)$
if and only if $t \in \scrc_q$.
[Otherwise put, we have $\scrc_q \pllq D(\rho) \subseteq D(\rho)$,
and $\scrc_q$ is the largest set with this property.]
\end{lemma}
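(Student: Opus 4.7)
The plan is to make the change of variables $y = 1+v$, in which parallel connection becomes simple multiplication and the antiferromagnetic curve $\scrc_q$ becomes the real segment $[0,1]$ (since $v \in [-1,0]$ corresponds to $y \in [0,1]$). If $\widetilde{D}$ denotes the image of $D(\rho)$ under the M\"obius transformation $y = (1+(q-1)t)/(1-t)$, then the statement of the lemma translates to: a complex number $y_t$ satisfies $y_t \widetilde{D} \subseteq \widetilde{D}$ if and only if $y_t \in [0,1]$.

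First I would verify that $\widetilde{D}$ is a closed disc passing through the origin, with $y = 1$ in its interior. The pole of the transformation lies at $t = 1$, which lies outside $D(\rho)$ since $\rho = 1/|q-1| < 1$; thus M\"obius maps carry $D(\rho)$ to another closed disc. The boundary point $t = 1/(1-q)$ has modulus exactly $\rho$ and maps to $y = 0$, so $\widetilde{D}$ passes through the origin; and the center $t=0$ maps to $y=1$, so this latter point lies in the interior. Hence $\widetilde{D} = \{y : |y - c| \le |c|\}$ for some complex number $c$.

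Next, I would invoke the standard fact that a closed disc with center $a_2$ and radius $r_2$ is contained in a closed disc with center $a_1$ and radius $r_1$ if and only if $|a_1 - a_2| + r_2 \le r_1$. Applied to $\widetilde{D}$ (center $c$, radius $|c|$) and $y_t \widetilde{D}$ (center $y_t c$, radius $|y_t||c|$), the containment $y_t \widetilde{D} \subseteq \widetilde{D}$ becomes
\[
   |c - y_t c| \,+\, |y_t||c| \;\le\; |c|, \qquad \text{i.e.,} \qquad |y_t - 1| \,+\, |y_t| \;\le\; 1.
\]
By the triangle inequality, the left-hand side is always at least $|1 - 0| = 1$, with equality precisely when $y_t$ lies on the real segment $[0,1]$. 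Hence the condition holds iff $y_t \in [0,1]$, which in the original variables is exactly $t \in \scrc_q$; this settles both directions of the lemma simultaneously.

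The only place requiring care is the verification that $\widetilde{D}$ is a genuine bounded disc with $0$ on its boundary, rather than a half-plane or the exterior of a disc; this reduces to the two observations already noted, namely that the pole $t=1$ lies outside $D(\rho)$ and that $t = 1/(1-q)$ lies on $|t|=\rho$. Everything else is a routine computation, and the same argument in fact shows that $\scrc_q$ is the \emph{maximal} set with this closure property, giving the ``converse'' direction for free.
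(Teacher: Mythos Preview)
Your proof is correct and follows essentially the same approach as the paper: pass to the $y$-variable so that parallel connection becomes multiplication and $\scrc_q$ becomes $[0,1]$, observe that $D(\rho)$ maps to a bounded closed disc through the origin, and then characterize those $y$ with $y\widetilde{D}\subseteq\widetilde{D}$. The paper simply asserts this last step as ``easy to see''; you have spelled it out via the disc-containment criterion and the triangle-inequality equality case, which is a clean way to do it.
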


\begin{proof}
It is easiest to change variables once again
and consider the situation in the complex $y$-plane,
where parallel connection is simply multiplication
$y_e \tvpll{Y} y_f = y_e y_f$
and the curve $\scrc_q$ corresponds to the segment $[0,1]$.
The relationship between $y$ and $t$ is given by the M\"obius transformation
\begin{equation}
y  \;=\; \frac{ (q-1) t + 1 } {-t + 1}  \;.
\end{equation}
Since $D(\rho)$ is a closed disc in the complex $t$-plane having
the point $t_0 = 1/(1-q)$ on its boundary,
the image of $D(\rho)$ in the complex $y$-plane is a closed disc $D^Y(\rho)$
having the origin $y_0 = 0$ on its boundary.\footnote{
   The center of the disc $D^Y(\rho)$ is the point
   $c = [(q-1) \rho^2 + 1]/(1-\rho^2)$
   --- and the radius is of course $|c|$ ---
   but we do not actually need this explicit formula for $c$.
}
(Here we have used $|q-1|>1$.
 If $|q-1|=1$, then $D^Y(\rho)$ is a closed half-plane
 having the origin on its boundary;
 while if $|q-1|<1$, then $D^Y(\rho)$ is the closed exterior of a disc
 having the origin on its boundary.)
Now, for any closed disc $D$ having the origin on its boundary,
it is easy to see that a complex number $y$ satisfies
$yD \subseteq D$ if and only if $y \in [0,1]$.
Back in the $t$-plane, this says that $t \in \scrc_q$.
\qed
\end{proof}

Combining these results, we have:

\proofof{Theorem~\ref{thm_main_AF_sec7}}
We choose the radii
$\rho^2 = r_1 \leq r_2 \leq \ldots \leq r_{\Lambda-1} \leq \rho$
exactly as in Section~\ref{sec_discs},
which guarantees that $D(r_k) \pllq D(r_\ell) \subseteq D(r_{k+\ell})$.
Then Lemmas~\ref{lemma_CD}, \ref{lemma_Drho_Cq} and \ref{lemma4.3}
guarantee that the sets $S_k$ defined by \reff{def.Sk.AF}
satisfy the conditions (1) and (2) of Theorem~\ref{regions}.
Conditions (3) and (4) of Theorem~\ref{regions}
are verified exactly as in the proof of Proposition~\ref{prop_radii}.
\qed

The fact that Lemma~\ref{lemma_Drho_Cq} gives a
{\em necessary and sufficient}\/ condition
suggests that there is something natural about the
real antiferromagnetic regime $v_e \in [-1,0]$.
On the other hand, our strategy of proof does not really require us
to prove the strong statement \reff{eq.Cq_Drho};
it would suffice to prove the slightly weaker statement \reff{eq.Cq_Drlam-2}
[or perhaps even weaker bounds],
and this might allow a somewhat larger set of weights $v_e$.
In particular, the proof of Theorem~\ref{sokalbound}
given in \cite{MR1827809} works naturally for the
``complex antiferromagnetic regime'' $|1+v_e| \le 1$
(see \cite{Jackson-Procacci-Sokal} for the changes when one goes beyond this),
so it is reasonable to ask whether the bounds in terms of maxmaxflow
can be extended to this case, possibly with a worse constant
(but still growing only linearly in $\Lambda$).
We do not yet know the answer.
As a warm-up, it might be helpful to study the intersection
of the complex antiferromagnetic regime with the real axis,
namely the ``extended real antiferromagnetic regime'' $v_e \in [-2,0]$.

\section{Generalization to non-series-parallel graphs}  \label{sec.Wheatstone}

In this section we show how our constructions can be generalized
to handle graphs that are not series-parallel
but are nevertheless built up by using series and parallel compositions
from a fixed starting set of 2-terminal ``base graphs''.
We begin by stating an abstract theorem on excluding roots,
which generalizes Theorems~\ref{thm_abstract} and \ref{regions}
to the non-series-parallel case (Section~\ref{subsec.genabstract}).
Then we apply this result to prove Theorem~\ref{thm_main_Wheatstone}
(Section~\ref{subsec.Wheatstone}).

\subsection{Generalized abstract theorem on excluding roots}
   \label{subsec.genabstract}

Here is Theorem~\ref{thm_abstract}
generalized to the non-series-parallel case:

\begin{thm}
  \label{thm_genabstract}
Let $q \neq 0$ be a fixed complex number
and let $\Lambda \ge 2$ be a fixed integer.
Let $S_1 \subseteq S_2 \subseteq \cdots \subseteq S_{\Lambda-1}$
be sets in the (finite) complex $v$-plane such that
\begin{itemize}
\item[(1)] $S_k \tvserq{V} S_\ell \subseteq S_{\min(k,\ell)}$
   for all $k,\ell$
\item[(2)] $S_k \tvpll{V} S_\ell\subseteq S_{k+\ell}$
   for $k + \ell \leq \Lambda-1$
\end{itemize}
Now consider any (loopless connected) 2-terminal graph $(G,s,t)$
and any nontrivial decomposition tree for $(G,s,t)$
in which all the proper constituents
have between-terminals flow at most $\Lambda-1$.
Suppose that we equip $G$ with edge weights $\{v_e\}$
such that for every {\em leaf} node $(H,a,b)$ of the decomposition tree,
we have $v_{\rm eff}(H,a,b) \in S_{\lambda_H(a,b)}$.
Then, for {\em every} node $(H,a,b)$ of the decomposition tree
that has between-terminals flow $\lambda_H(a,b) \le \Lambda-1$,
we have $v_{\rm eff}(H,a,b) \in S_{\lambda_H(a,b)}$.

Now assume further that, in addition to (1) and (2),
the following hypotheses hold:
\begin{itemize}
\item[(3)] $-q \notin S_{\Lambda-1}$
\item[(4)] $-q \notin S_k \tvpll{V} S_{\ell}$ for $k+\ell = \Lambda$
\end{itemize}
Then, for any $(G,s,t)$ and $\{v_e\}$ as above,
such that $G$ has maxmaxflow at most $\Lambda$,
we have $Z_G(q,\bv) \not= 0$.
\end{thm}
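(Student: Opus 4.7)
The plan is to mimic the proof of Theorem~\ref{thm_abstract} essentially verbatim, with the single modification that the base case of the induction now allows leaves of the decomposition tree to be arbitrary 2-terminal graphs rather than just copies of $K_2$, provided their effective weights already lie in the appropriate sets $S_{\lambda_H(a,b)}$.

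For the first assertion, I would proceed by induction upwards from the leaves of the decomposition tree. At a leaf $(H,a,b)$, the conclusion $v_{\rm eff}(H,a,b) \in S_{\lambda_H(a,b)}$ holds directly by hypothesis. At an internal node $(H,a,b)$ with children $(H_1,a_1,b_1)$ and $(H_2,a_2,b_2)$, both children are proper constituents, so their between-terminals flows $k$ and $\ell$ satisfy $k,\ell \le \Lambda-1$ by hypothesis; the inductive hypothesis then gives $v_{\rm eff}(H_1,a_1,b_1) \in S_k$ and $v_{\rm eff}(H_2,a_2,b_2) \in S_\ell$. Combining Proposition~\ref{prop.serpar.AB} with Lemma~\ref{lem_terminals}: an $s$-node yields $v_{\rm eff}(H,a,b) = v_{\rm eff}(H_1,a_1,b_1) \serq v_{\rm eff}(H_2,a_2,b_2) \in S_k \tvserq{V} S_\ell \subseteq S_{\min(k,\ell)} = S_{\lambda_H(a,b)}$ by condition~(1); a $p$-node with $\lambda_H(a,b) = k+\ell \le \Lambda-1$ yields $v_{\rm eff}(H,a,b) = v_{\rm eff}(H_1,a_1,b_1) \pll v_{\rm eff}(H_2,a_2,b_2) \in S_k \tvpll{V} S_\ell \subseteq S_{k+\ell} = S_{\lambda_H(a,b)}$ by condition~(2).

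For the second assertion, I would run Algorithm~2 of Section~\ref{subsec.computing} over $\C$. The induction above, together with the implicit requirement in condition~(1) that every series output lies in $\C$, ensures that no $\serq$ step ever produces the value ``undefined'', so every $s$-node prefactor $q + v_{\rm eff}(H_1) + v_{\rm eff}(H_2)$ is nonzero; likewise the leaf factors $A_{H_i,a_i,b_i}(q,\bv)$ are nonzero because the leaf effective weights are all finite complex numbers. It therefore remains only to verify that $v_{\rm eff}(G,s,t) \neq -q$. If $\lambda_G(s,t) \le \Lambda-1$, the first assertion gives $v_{\rm eff}(G,s,t) \in S_{\Lambda-1}$, and condition~(3) suffices. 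If instead $\lambda_G(s,t) = \Lambda$, then Lemma~\ref{lem_terminals} forbids the root from being an $s$-node (series composition of children of flow at most $\Lambda-1$ would force $\lambda_G(s,t) \le \Lambda-1$), so the root must be a $p$-node whose children have flows $k,\ell$ summing to $\Lambda$, and condition~(4) then rules out $v_{\rm eff}(G,s,t) = v_{\rm eff}(H_1) \pll v_{\rm eff}(H_2) \in S_k \tvpll{V} S_\ell$ being equal to $-q$. The only mildly delicate point is this case analysis at the root; the rest is a direct transcription of the argument for Theorem~\ref{thm_abstract}.
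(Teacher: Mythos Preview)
Your proposal is correct and follows exactly the approach the paper intends: the paper itself does not write out a proof but simply states that it ``is a minor modification of that of Theorem~\ref{thm_abstract} and is left to the reader,'' and you have supplied precisely that modification (arbitrary leaf graphs in place of $K_2$, with the base case handled by the new hypothesis on leaf effective weights). Your case analysis at the root---noting that $\lambda_G(s,t)=\Lambda$ forces a $p$-node since an $s$-node would give $\lambda_G(s,t)=\min(k,\ell)\le\Lambda-1$---is the only detail beyond a straight transcription, and it is handled correctly.
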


The proof of Theorem~\ref{thm_genabstract}
is a minor modification of that of Theorem~\ref{thm_abstract}
and is left to the reader.
There is also an obvious translation of Theorem~\ref{thm_genabstract}
to the $t$-plane along the lines of Theorem~\ref{regions},
of which the statement and proof are again left to the reader.

\subsection{Wheatstone bridge}  \label{subsec.Wheatstone}

The {\em Wheatstone bridge}\/ is the 2-terminal graph $\sfW = (W,s,t)$
obtained from $W = K_4- e$ by taking the two vertices of
degree $2$ to be the terminals $s$ and $t$.
Note that although $K_4 -e$ is a series-parallel graph,
$\sfW$ is not a {\em 2-terminal}\/ series-parallel graph
(by Corollary~\ref{cor_serpar_2},
because $W+st = K_4$ is not a series-parallel graph).

Now define the class $\scrw$ of 2-terminal graphs
to be the smallest class that contains both
$K_2$ (with the two vertices as terminals) and $\sfW$
and is closed under series and parallel composition.
Figure~\ref{graphsinc} shows some graphs in $\scrw$:
the first is a 2-terminal series-parallel graph,
while the second has used $\sfW=(W,s,t)$ in place of
the ``diamond''  $\sfD = (K_2 \ser K_2) \pll (K_2 \ser K_2)$.

\begin{figure}[t]
\begin{center}
\beginpgfgraphicnamed{wheatstonepic}
\begin{tikzpicture}
\tikzstyle{vertex}=[circle,draw=black,inner sep = 0.75mm]
\tikzstyle{terminal}=[circle,draw=black,fill=gray,inner sep = 0.75mm]

\node[terminal] (v0) at (0,0) {};
\node[vertex] (v1) at (1,-1) {};
\node[vertex] (v2) at (1,1) {};

\node[vertex] (v3) at (2,-1.5) {};
\node[vertex] (v4) at (2,-0.5) {};
\node[vertex] (v5) at (2,0.5) {};
\node[vertex] (v6) at (2,1.5) {};

\node[terminal] (v7) at (3,0) {};
\draw (v0)--(v1);
\draw (v0)--(v2);
\draw (v1)--(v3);
\draw (v1)--(v4);
\draw (v2)--(v5);
\draw (v2)--(v6);

\draw (v3)--(v7);
\draw (v4)--(v7);
\draw (v5)--(v7);
\draw (v6)--(v7);

\pgftransformxshift{5cm}

\node[terminal] (v0) at (0,0) {};
\node[vertex] (v1) at (1,-1) {};
\node[vertex] (v2) at (1,1) {};

\node[vertex] (v3) at (2,-1.5) {};
\node[vertex] (v4) at (2,-0.5) {};
\node[vertex] (v5) at (2,0.5) {};
\node[vertex] (v6) at (2,1.5) {};

\node[terminal] (v7) at (3,0) {};
\draw (v0)--(v1);
\draw (v0)--(v2);
\draw (v1)--(v3);
\draw (v1)--(v4);
\draw (v2)--(v5);
\draw (v2)--(v6);

\draw (v3)--(v7);
\draw (v4)--(v7);
\draw (v5)--(v7);
\draw (v6)--(v7);

\draw (v3)--(v4);
\draw (v5)--(v6);

\end{tikzpicture}
\endpgfgraphicnamed
\end{center}
\caption{
   The graphs $(K_2 \ser \sfD) \pll (K_2 \ser \sfD)$
   and $(K_2 \ser \sfW) \pll (K_2 \ser \sfW)$.
}
\label{graphsinc}
\end{figure}


For the Wheatstone bridge, simple calculations
[e.g.\ using \reff{eq2.G}/\reff{eq2.Gst}]
show that if $v_f = -1$ for every edge,
then the partial Tutte polynomials \reff{def.AGst}/\reff{def.BGst}
are given by
\begin{subeqnarray}
   A_{W,s,t}  & = &  (q-2)(q-3)  \\[1mm]
   B_{W,s,t}  & = &  2(q-2)
\end{subeqnarray}
and hence
\begin{equation}
v_{\rm eff}(W,s,t)  \;=\;  \frac{2}{q-3}  \;.
\end{equation}
Expressed in terms of transmissivities, this yields
\begin{equation}
  \label{wheat-teff}
t_{\rm eff}(W,s,t)  \;\equiv\; {v_{{\rm eff}} \over q + v_{{\rm eff}}}
   \;=\; \frac{2}{(q-1)(q-2)}  \;.
\end{equation}
The maximum flow between the terminals of the Wheatstone bridge is equal to $2$.
Therefore, as far as the chromatic roots of graphs in $\scrw$ are concerned,
the Wheatstone bridge just behaves as a sort of ``super-edge''
with capacity (in the flow-carrying sense) equal to 2
and effective transmissivity given by \eqref{wheat-teff}:
that is the upshot of Theorem~\ref{thm_genabstract}.

Now suppose that $S_1 \subseteq S_2 \subseteq \cdots \subseteq S_{\Lambda-1}$
is a set of regions in the complex $t$-plane
certifying (via Theorem~\ref{regions}) that a particular value of $q$
is not the chromatic root of any series-parallel graph
of maxmaxflow at most $\Lambda$.
Then, by Theorem~\ref{thm_genabstract},
the {\em same}\/ set of regions will suffice for graphs in $\scrw$
of maxmaxflow at most $\Lambda$, provided only that $2/[(q-1)(q-2)] \in S_2$.

So, under the same hypothesis $|q-1| \ge 1/\rho^\star_\Lambda$
as in Theorem~\ref{thm_main_sec5},
let us again choose ``point+disk'' regions \reff{regionsequation}
with radii $r_k$ given by \reff{eq.choice2}.
We then have
\be
   r_2 \;=\;  \rho \, {X^2 - 1 \over 1 - \rho X^2}
   \qquad\hbox{where}\quad
   \rho \,=\, {1 \over |q-1|} \;\;\hbox{and}\;\;
   X \,=\, \biggl( {2 \over 1+\rho} \biggr)^{1/(\Lambda-1)}
   \;.
 \label{eq.maxs2}
\ee
Therefore, these regions suffice to show that $q$ is not a chromatic root
of any graph in $\scrw$ whenever we have
\be
\frac{2}{|q-2|} \;\le\; {X^2 - 1 \over 1 - \rho X^2}
\label{eq.bounds2}
\ee
in addition to the hypothesis $|q-1| \ge 1/\rho^\star_\Lambda$.
We have therefore proven:

\begin{thm}
    \label{thm_main_Wheatstone_sec8}
Fix an integer $\Lambda \ge 3$,
and let $\sfG = (G,s,t)$ be a 2-terminal graph in the class $\scrw$
such that $G$ has maxmaxflow at most $\Lambda$.
Then the chromatic polynomial $P_G(q)$ is nonvanishing
whenever
\be
   |q-1| \:\ge\: 1/\rho^\star_\Lambda
   \quad\hbox{and}\quad
   |q-2| \:\ge\: {2 (1 - \rho X^2) \over X^2 - 1}
   \;,
\ee
where $\rho_\Lambda^\star$ is defined by \reff{eq.rhostarlambda}
and $\rho$ and $X$ are defined by \reff{eq.maxs2}.
\end{thm}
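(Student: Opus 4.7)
The plan is to invoke the generalized abstract theorem (Theorem~\ref{thm_genabstract}, in its $t$-plane translation described in Section~\ref{subsec.genabstract}) using exactly the same family of ``point+disc'' regions $S_k = \{1/(1-q)\} \cup D(r_k)$ employed in the proof of Theorem~\ref{thm_main_sec5}, with the radii $r_1 \le \cdots \le r_{\Lambda-1}$ given by \reff{eq.choice2}. Since the first hypothesis $|q-1| \ge 1/\rho^\star_\Lambda$ is identical to the one in Theorem~\ref{thm_main_sec5}, Lemma~\ref{lemma_sharp_rkbound_NEW} immediately yields $\rho^2 \le r_1 \le \cdots \le r_{\Lambda-1} \le \rho$, so conditions (1)--(4) of Theorem~\ref{regions} hold exactly as in the proof of Proposition~\ref{prop_radii}.

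Next, I would fix a decomposition tree for $\sfG = (G,s,t)$ whose leaf nodes are all copies of $K_2$ or of the Wheatstone bridge $\sfW$; such a decomposition tree exists by the very definition of the class $\scrw$. We may assume $G$ is nonseparable (the chromatic polynomial factors over blocks by \reff{eq.blocks}, and the maxmaxflow of a separable graph is the maximum of the block maxmaxflows), so the root of any nontrivial decomposition tree is necessarily a $p$-node, and Lemma~\ref{lem_constituents} guarantees that every proper constituent has between-terminals flow at most $\Lambda-1$. The base cases $G = K_2$ and $G = W$ can be disposed of directly, since their chromatic polynomials $q(q-1)$ and $q(q-1)(q-2)^2$ have their roots excluded by the two hypotheses.

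The key verification is the extra input hypothesis required by Theorem~\ref{thm_genabstract}: for every leaf node $(H,a,b)$ one must have $t_{\rm eff}(H,a,b) \in S_{\lambda_H(a,b)}$. For a $K_2$ leaf with $v_e = -1$, the transmissivity is $1/(1-q) \in S_1$ by construction, and $\lambda_H = 1$, so the condition holds automatically. For a Wheatstone bridge leaf, $\lambda_H = 2$, and \reff{wheat-teff} gives $t_{\rm eff}(\sfW) = 2/[(q-1)(q-2)]$, so
\be
   |t_{\rm eff}(\sfW)| \;=\; \frac{2\rho}{|q-2|} \;.
\ee
Recalling from \reff{eq.maxs2} that $r_2 = \rho(X^2-1)/(1-\rho X^2)$ (the denominator being positive since $X^{\Lambda-1} = 2/(1+\rho)$ and $\rho < 1$ together imply $X^2 < 1/\rho$), the condition $|t_{\rm eff}(\sfW)| \le r_2$ becomes precisely
\be
   |q-2| \;\ge\; \frac{2(1-\rho X^2)}{X^2-1} \;,
\ee
which is the second hypothesis of the theorem. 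Thus $t_{\rm eff}(\sfW) \in D(r_2) \subseteq S_2$, as required.

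With all hypotheses of Theorem~\ref{thm_genabstract} satisfied, the conclusion $Z_G(q,\bv) \ne 0$ when $v_e \equiv -1$ follows, which is exactly the statement $P_G(q) \ne 0$. There is no significant obstacle here: the argument essentially repackages the proof of Theorem~\ref{thm_main_sec5}, the only genuinely new ingredient being the computation of $t_{\rm eff}(\sfW)$ and the resulting bound on $|q-2|$ needed to force this transmissivity into $D(r_2)$.
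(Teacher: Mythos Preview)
Your proposal is correct and follows essentially the same approach as the paper: choose the same ``point+disc'' regions \reff{regionsequation} with radii \reff{eq.choice2}, verify via Lemma~\ref{lemma_sharp_rkbound_NEW} that conditions (1)--(4) hold under the hypothesis $|q-1|\ge 1/\rho^\star_\Lambda$, and then observe that the only extra ingredient needed for Theorem~\ref{thm_genabstract} is $t_{\rm eff}(\sfW)=2/[(q-1)(q-2)]\in D(r_2)$, which is precisely the second hypothesis on $|q-2|$. You are somewhat more explicit than the paper about the decomposition-tree setup and the reduction to nonseparable $G$, but the substance is identical.
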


When $\Lambda=3$, the condition \reff{eq.bounds2}
becomes particularly simple as it reduces to 
\be
   |q-2| \;\ge\; 2  \;.
\ee
The disk $|q-2| < 2$ extends only slightly beyond
the disk $|q-1| < 1/\rho_3^\star$,
with the greatest protrusion $4-(1+1/\rho_3^\star) \approx 0.34103\ldots$
occurring on the positive real axis.
Thus, the region guaranteed to contain the chromatic roots of the graphs
in $\scrw$ of maxmaxflow 3,
given by the union of the discs $|q-1| < 1/\rho_3^\star$ and $|q-2| < 2$
(the left-hand picture of Figure~\ref{fig_wheatstone}),
is only slightly larger than the region $|q-1| < 1/\rho_3^\star$
guaranteed to contain the chromatic roots of series-parallel graphs
of maxmaxflow 3.

For $\Lambda>3$, the right-hand side of \eqref{eq.bounds2}
is not independent of $\rho$,
and so the corresponding region is not quite a circular disk
(as it depends on the phase of $q$), but rather a slightly squashed disk.
Nevertheless, the corresponding region always extends slightly past
the region $|q-1| < 1/\rho_\Lambda^\star$,
with maximum protrusion again on the positive real axis
(see the right-hand picture of Figure~\ref{fig_wheatstone} for $\Lambda=4$).

In order to obtain a simple {\em sufficient}\/ condition
depending only on $|q-1|$, we can use the trivial bound
$|q-2| \ge |q-1| - 1 = \rho^{-1} - 1$.
After some simple algebra
we find that a {\em sufficient}\/ condition on $\rho$
for \eqref{eq.bounds2} to be satisfied is that
\be
\left( \frac{2}{1+\rho}\right) ^{2/{(\Lambda-1)}}
  \;\ge\;  \frac{1+\rho}{1-\rho+2\rho^2}
   \;,
   \label{rholambdaforwheat}
\ee
or equivalently that
\be
   (1+\rho)^{\Lambda+1}  \;\le\;  4 (1-\rho+2\rho^2)^{\Lambda-1}
   \;.
\ee
This condition is handled by
the following analogue of Lemma~\ref{lemma_sharp_rkbound_NEW}:

\begin{lem}
   \label{lemma_rhodoublestar}
For $\rho \in (0,1)$ and real $\Lambda > 2$, the following are equivalent:
\begin{itemize}
   \item[(a)] $(1+\rho)^{\Lambda+1}  \:\le\:  4 (1-\rho+2\rho^2)^{\Lambda-1}$.
   \item[(b)] $\rho \le \rho^{\star\star}_\Lambda$,
where $\rho^{\star\star}_\Lambda$ is the unique solution of
\be
   (1+\rho)^{\Lambda+1}  \;=\;  4 (1-\rho+2\rho^2)^{\Lambda-1}
 \label{eq.rhodoublestarlambda}
\ee
in the interval $(0,1)$.
\end{itemize}
\end{lem}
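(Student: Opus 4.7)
The plan is to mirror the strategy used to prove the (d)$\Leftrightarrow$(e) equivalence in Lemma~\ref{lemma_sharp_rkbound_NEW}. Introduce the auxiliary function
\be
   f_\Lambda(\rho) \;=\; (\Lambda+1) \log(1+\rho) \,-\, (\Lambda-1) \log(1-\rho+2\rho^2)
\ee
on $[0,1]$, so that statement (a) is exactly $f_\Lambda(\rho) \le \log 4$. Immediate boundary evaluations give $f_\Lambda(0) = 0$ and $f_\Lambda(1) = (\Lambda+1)\log 2 - (\Lambda-1)\log 2 = 2\log 2 = \log 4$; the latter identity is precisely why $\rho=1$ is always a root of \reff{eq.rhodoublestarlambda} but is not a candidate for $\rho^{\star\star}_\Lambda$. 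Once I establish that $f_\Lambda$ is strictly increasing on some interval $[0,\rho_{\max}]$ and strictly decreasing on $[\rho_{\max},1]$, the conclusion will be automatic: since $f_\Lambda$ begins strictly below $\log 4$ and ends exactly at $\log 4$, strict monotonicity on the decreasing piece forces $f_\Lambda(\rho_{\max}) > \log 4$, so there must exist a unique $\rho^{\star\star}_\Lambda \in (0,\rho_{\max})$ with $f_\Lambda(\rho^{\star\star}_\Lambda) = \log 4$, and for $\rho \in (0,1)$ we will have $f_\Lambda(\rho) \le \log 4$ if and only if $\rho \le \rho^{\star\star}_\Lambda$.

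The technical core is therefore to verify that $f'_\Lambda$ has a unique zero in $(0,1)$. Computing
\be
   f'_\Lambda(\rho) \;=\; \frac{\Lambda+1}{1+\rho} \,-\, \frac{(\Lambda-1)(4\rho-1)}{1-\rho+2\rho^2}
\ee
and clearing denominators, I find that $f'_\Lambda(\rho)=0$ is equivalent to the quadratic equation
\be
   p(\rho) \;:=\; (\Lambda-3)\rho^2 \,+\, (2\Lambda-1)\rho \,-\, \Lambda \;=\; 0 \;.
\ee
At the endpoints, $p(0) = -\Lambda < 0$ and $p(1) = 2\Lambda-4 > 0$ (using $\Lambda > 2$), so $p$ has at least one root in $(0,1)$ in every case. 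Uniqueness I would argue by cases on the sign of $\Lambda-3$: for $\Lambda > 3$ the product of the two roots is $-\Lambda/(\Lambda-3) < 0$, so $p$ has a single positive root, and by the sign change it lies in $(0,1)$; for $\Lambda = 3$ the equation is linear, with the unique root $\rho = 3/5$; for $2 < \Lambda < 3$ the leading coefficient is negative but Vieta's formulas give both roots positive (their product is $\Lambda/(3-\Lambda) > 0$), and since $p(1) > 0$ with a downward-opening parabola, $\rho=1$ must lie strictly between the two roots, so exactly one of them falls in $(0,1)$.

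Combined with the endpoint values $f'_\Lambda(0) = 2\Lambda > 0$ and $f'_\Lambda(1) = -(\Lambda-2) < 0$, the unique zero $\rho_{\max}$ of $f'_\Lambda$ in $(0,1)$ separates the region where $f'_\Lambda > 0$ from the region where $f'_\Lambda < 0$, giving the required up-then-down monotonicity pattern for $f_\Lambda$ and completing the proof. The only step that is not routine bookkeeping is the uniqueness analysis of the quadratic $p$, and within that the case $2<\Lambda<3$ is the main (minor) obstacle, since the easy "product of roots negative" argument fails there and one has to use the fact that $p(1) > 0$ to pin down the location of the larger root outside $(0,1)$.
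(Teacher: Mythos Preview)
Your proof is correct and follows the same overall scaffold as the paper: define
$f_\Lambda(\rho) = (\Lambda+1)\log(1+\rho) - (\Lambda-1)\log(1-\rho+2\rho^2)$,
note $f_\Lambda(0)=0$, $f_\Lambda(1)=\log 4$, $f'_\Lambda(0)=2\Lambda>0$, $f'_\Lambda(1)=-(\Lambda-2)<0$, and then argue unimodality on $(0,1)$ to pin down the unique crossing of $\log 4$.

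Where you and the paper diverge is in how the unimodality is obtained. The paper computes the second derivative and shows directly that $f_\Lambda$ is strictly \emph{concave} on $[0,1]$ (the numerator of $f''_\Lambda$ is a quartic in $\rho$ with visibly nonnegative terms once one writes it as a sum over $4$ and $\Lambda-2$), which immediately gives the unique crossing. You instead clear denominators in $f'_\Lambda(\rho)=0$ to obtain the quadratic $p(\rho)=(\Lambda-3)\rho^2+(2\Lambda-1)\rho-\Lambda$ and locate its roots via a short case analysis on the sign of $\Lambda-3$. Your route is a bit more elementary --- no second derivative, no quartic to dominate --- at the cost of the three-way case split; the paper's concavity argument is slicker and yields a stronger conclusion (strict concavity rather than mere unimodality), which also makes the endpoint behavior at $\rho=1$ transparent. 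Both are perfectly fine here.
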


\noindent
Deferring temporarily the proof of this Lemma,
let us observe that
from \reff{eq.rhostarlambda} it follows easily that
\be
\left( \frac{2}{1+\rho_\Lambda^\star} \right)^{1/{(\Lambda-1)}} 
   \;=\;
\frac{1+\rho_\Lambda^\star}{1+{\rho_\Lambda^\star}^2}
   \;<\;
\frac{1+\rho_\Lambda^\star}{1-\rho_\Lambda^\star + 2{\rho_\Lambda^\star}^2}
  \;,
\ee
so that the condition \reff{rholambdaforwheat}
is {\em false}\/ when $\rho = \rho_\Lambda^\star$,
or in other words we have $\rho_\Lambda^{\star\star} < \rho_\Lambda^\star$
whenever $\Lambda > 2$ (see Table~\ref{table_rhostarlambda}).
We have therefore proven
(subject to the proof of Lemma~\ref{lemma_rhodoublestar}):

\begin{cor}
    \label{cor_main_Wheatstone_sec8}
Fix an integer $\Lambda \ge 3$,
and let $\sfG = (G,s,t)$ be a 2-terminal graph in the class $\scrw$
such that $G$ has maxmaxflow at most $\Lambda$.
Then the chromatic polynomial $P_G(q)$ is nonvanishing
whenever
\be
   |q-1| \;\ge\; 1/\rho^{\star\star}_\Lambda
   \;,
\ee
where $\rho_\Lambda^{\star\star}$ is the unique solution of
\be
   (1+\rho)^{\Lambda+1}  \;=\;  4 (1-\rho+2\rho^2)^{\Lambda-1}
\ee
in the interval $(0,1)$.
\end{cor}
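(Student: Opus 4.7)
The plan is to deduce the corollary from Theorem~\ref{thm_main_Wheatstone_sec8} and Lemma~\ref{lemma_rhodoublestar}, with the real content being the lemma itself. The reduction of the corollary to the lemma is essentially laid out in the paragraph preceding the corollary: given $|q-1| \ge 1/\rho^{\star\star}_\Lambda$, the inequality $\rho^{\star\star}_\Lambda < \rho^\star_\Lambda$ yields the first hypothesis $|q-1| \ge 1/\rho^\star_\Lambda$ of Theorem~\ref{thm_main_Wheatstone_sec8}, while the triangle inequality $|q-2| \ge |q-1|-1 = (1-\rho)/\rho$ together with the algebra leading to \eqref{rholambdaforwheat} reduces the second hypothesis $|q-2| \ge 2(1-\rho X^2)/(X^2-1)$ to condition (a) of Lemma~\ref{lemma_rhodoublestar}. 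Applying the lemma then finishes the corollary.

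For Lemma~\ref{lemma_rhodoublestar} itself, my approach is to study
\begin{equation}
G(\rho) \;=\; \frac{(1+\rho)^{\Lambda+1}}{(1-\rho+2\rho^2)^{\Lambda-1}} \,,
\end{equation}
so that condition (a) reads $G(\rho) \le 4$. Directly $G(0) = 1$ and $G(1) = 4$. Logarithmic differentiation gives, after a direct collapse of the numerator,
\begin{equation}
\frac{G'(\rho)}{G(\rho)} \;=\; \frac{2\bigl[\Lambda \,-\, (2\Lambda-1)\rho \,+\, (3-\Lambda)\rho^2\bigr]}{(1+\rho)(1-\rho+2\rho^2)} \,,
\end{equation}
so on $(0,1)$ the sign of $G'$ coincides with the sign of the quadratic $Q(\rho) = (3-\Lambda)\rho^2 - (2\Lambda-1)\rho + \Lambda$. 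Since $Q(0) = \Lambda > 0$ and $Q(1) = 4-2\Lambda < 0$ for $\Lambda > 2$, and $Q$ has degree at most two, $Q$ has exactly one root $\rho_1 \in (0,1)$, with $Q > 0$ on $[0, \rho_1)$ and $Q < 0$ on $(\rho_1, 1]$.

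Therefore $G$ is strictly increasing on $[0, \rho_1]$ and strictly decreasing on $[\rho_1, 1]$, with its unique maximum at $\rho_1$. Because $G$ is decreasing at $\rho = 1$ with $G(1) = 4$, the value $G(\rho_1)$ strictly exceeds $4$; combined with $G(0) = 1 < 4$, unimodality forces $G$ to take the value $4$ at exactly one interior point, which we call $\rho^{\star\star}_\Lambda \in (0, \rho_1)$, and gives $G(\rho) < 4$ on $[0, \rho^{\star\star}_\Lambda)$ and $G(\rho) > 4$ on $(\rho^{\star\star}_\Lambda, 1)$. This is precisely the equivalence of (a) and (b).

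The main obstacle is the arithmetic bookkeeping in the logarithmic derivative: one must verify that the numerator collapses to the clean quadratic $Q$, after which the remainder is a soft monotonicity-plus-endpoint-values argument requiring no delicate estimates. A minor complication is that the leading coefficient of $Q$ has different signs in the ranges $2 < \Lambda < 3$, $\Lambda = 3$ (where $Q$ degenerates to linear), and $\Lambda > 3$; but the endpoint conditions $Q(0) > 0 > Q(1)$ make the unique-root-in-$(0,1)$ conclusion uniform across all three regimes, so no separate case analysis is really needed.
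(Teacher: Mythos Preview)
Your reduction of the corollary to Lemma~\ref{lemma_rhodoublestar} (via Theorem~\ref{thm_main_Wheatstone_sec8}, the triangle inequality $|q-2|\ge |q-1|-1$, and the observation $\rho^{\star\star}_\Lambda<\rho^\star_\Lambda$) is exactly the paper's argument.

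For the lemma itself your proof is correct but organized differently from the paper's. The paper works with $f_\Lambda(\rho)=\log G(\rho)$, computes $f_\Lambda''$, and shows it is strictly negative on $[0,1]$; concavity together with the endpoint data $f_\Lambda(0)=0$, $f_\Lambda(1)=\log 4$, $f_\Lambda'(1)=-(\Lambda-2)<0$ then pins down the unique interior crossing. You instead stop at the first derivative, factoring the numerator of $G'/G$ as the quadratic $Q(\rho)=(3-\Lambda)\rho^2-(2\Lambda-1)\rho+\Lambda$ and reading off unimodality from the sign change $Q(0)>0>Q(1)$. Your route is a bit more direct (no second derivative, and the quadratic collapses cleanly), and it handles the degenerate case $\Lambda=3$ uniformly as you note; the paper's concavity argument is the more standard template and gives a slightly stronger structural statement ($f_\Lambda'$ strictly decreasing rather than merely single-sign-change). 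Either way the conclusion is the same.
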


Since the bound $|q-2| \ge |q-1| - 1$ holds as equality
when $q$ is real and $q > 2$, we see that the condition \reff{rholambdaforwheat}
is also {\em necessary}\/ for \eqref{eq.bounds2} when $q$ is real and positive;
or in other words,
the circle $|q-1| = 1/\rho_\Lambda^{\star\star}$
coincides with the boundary of the region \reff{eq.bounds2}
on the positive real axis, but lies outside of it elsewhere. Figure~\ref{fig_wheatstone} shows how
these regions compare for $\Lambda=3, 4$.

\begin{figure}[t]
\begin{center}
\includegraphics[width=0.9\textwidth]{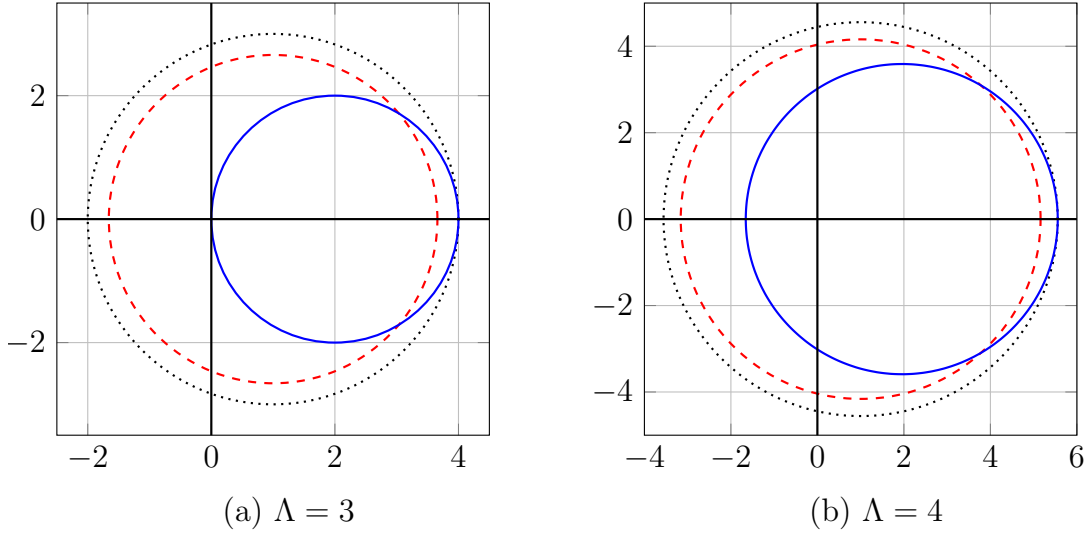}
\end{center}
\caption{
   Circles $|q-1| = 1/\rho_\Lambda^*$ (red dashed)
   and $|q-1| = 1/\rho_\Lambda^{**}$ (black dotted)
   and the boundary of the region \eqref{eq.bounds2} [blue solid curve],
   for (a) $\Lambda=3$ and (b) $\Lambda=4$.
}
\label{fig_wheatstone}
\end{figure}

\begin{table}[t]
\begin{center}
\begin{tabular}{|r||l|r||l|r|}
\hline
   \multicolumn{1}{|c||}{$\Lambda$} &
   \multicolumn{1}{|c|}{$\rho^\star_\Lambda$} &
   \multicolumn{1}{|c||}{$\rho^{\star\star}_\Lambda$} &
      \multicolumn{1}{|c|}{$1/\rho^\star_\Lambda$} &
   \multicolumn{1}{|c|}{$1/\rho^{\star\star}_\Lambda$} \\
\hline
2  &    1          &   1\phantom{.000000}        &1 & 1\phantom{.000000} \\
3  &    0.376086 &    0.333333    & 2.658967   & 3\phantom{.000000} \\
4  &    0.240380 &   0.219471 &4.160076    & 4.556417 \\
5  &    0.177591 &   0.165204   &5.630929    & 6.053134\\
6  &    0.141038   &  0.132841   &7.090297   & 7.527812\\
7  &     0.117041  &  0.111213   &8.544040  & 8.991750   \\
8  &     0.100054  &  0.095697 &9.994599    & 10.449611  \\
9   &    0.087388   &  0.084008  &11.443181 &11.903688  \\
10   &   0.077577  & 0.074877&12.890449     &13.355246  \\
\hline
\end{tabular}
\end{center}
\vspace*{-3mm}
\caption{
Values of 
   $\rho^\star_\Lambda$, $\rho^{\star\star}_\Lambda$,
   $1/\rho^\star_\Lambda$, $1/\rho^{\star\star}_\Lambda$
   for  $2 \le \Lambda \le 10$.
}
\label{table_rhostarlambda}
\end{table}

\proofof{Lemma~\ref{lemma_rhodoublestar}}
Consider the function
\be
   f_\Lambda(\rho)  \;=\;
   (\Lambda+1) \log(1+\rho) \,-\, (\Lambda-1) \log(1-\rho+2\rho^2)
   \;.
\ee
Its first two derivatives are
\begin{subeqnarray}
   f'_\Lambda(\rho)
   & = &
   {\Lambda+1 \over 1+\rho} \,-\,
       {(\Lambda-1)(4\rho-1) \over 1-\rho+2\rho^2}  \\[3mm]
   f''_\Lambda(\rho)
   & = &
   - \, {(6+4\rho+18\rho^2-24\rho^3+4\rho^4) \,+\,
            (\Lambda-2) (4+8\rho+8\rho^2-16\rho^3-4\rho^4)
         \over (1+\rho)^2 (1-\rho+2\rho^2)^2
        }
     \nonumber \\
\end{subeqnarray}
For $0 \le \rho \le 1$ we manifestly have
$6+4\rho+18\rho^2-24\rho^3+4\rho^4 \ge 4\rho-4\rho^3 \ge 0$
and
$4+8\rho+8\rho^2-16\rho^3-4\rho^4 \ge 4 - 4\rho^4 \ge 0$,
with strict inequality when $0 < \rho < 1$;
hence $f_\Lambda$ is strictly concave on $[0,1]$ whenever $\Lambda \ge 2$.
We have $f_\Lambda(0) = 0$, $f'_\Lambda(0) = 2\Lambda > 0$,
$f_\Lambda(1) = \log 4$ and $f'_\Lambda(1) = -(\Lambda-2)$.
Therefore, for $\Lambda > 2$,
there is a unique $\rho^{\star\star}_\Lambda \in (0,1)$
satisfying $f_\Lambda(\rho^{\star\star}_\Lambda) = \log 4$;
and for $\rho \in [0,1)$ we have $f_\Lambda(\rho) \le \log 4$
if and only if $\rho \le \rho^{\star\star}_\Lambda$.
This proves the equivalence of (a) and (b) for all real $\Lambda > 2$.
\qed

Finally, let us deduce Theorem~\ref{thm_main_Wheatstone}
as an immediate consequence of Corollary~\ref{cor_main_Wheatstone_sec8},
by proving that 
$\rho^{\star\star}_\Lambda > (\log 2)/(\Lambda- \log 2)$
for all integers $\Lambda \ge 3$,
or equivalently (in view of Lemma~\ref{lemma_rhodoublestar}) that
$(1+\rho)^{\Lambda+1} < 4(1-\rho+2\rho^2)^{\Lambda-1}$
when $\rho = (\log 2)/(\Lambda-\log 2)$.
We shall actually prove this for all {\em real}\/
$\Lambda > 2 \log 2 \approx 1.386294$.
Taking logarithms and parametrizing by $\rho \in (0,1)$,
we see that this is equivalent to the following claim:

\begin{lem}
   \label{lemma_rkbound_log2_bis}
The function
\begin{subeqnarray}
   g(\rho)
   & = &
   \rho \left[ 2 \log 2 \:+\:
               \biggl( {\log 2 \over \rho} \,+\, \log 2 \,-\, 1
               \biggr) \, \log(1-\rho+2 \rho^2)
           \:-\: \biggl( {\log 2 \over \rho} +  \log 2 + 1\biggr)
               \, \log(1+\rho)
        \right]
      \nonumber \\ \\
   & = &
  2 \rho \log \biggl( {2 \over 1+\rho} \biggr) \,-\, (\log 2) \log \biggl( {1+\rho \over 1-\rho+2\rho^2} \biggr) - (\log 2 - 1) \rho \log \biggl( {1+\rho \over 1-\rho+2\rho^2} \biggr)
\end{subeqnarray}
is strictly positive for $0 < \rho < 1$.
\end{lem}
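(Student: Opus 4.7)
The plan is to follow exactly the strategy of Lemma~\ref{lemma_rkbound_log2}. First I would verify the boundary data directly from the second form of $g$: at $\rho=0$ the first summand vanishes because of its factor $\rho$, while the second and third vanish because $\log\bigl((1+\rho)/(1-\rho+2\rho^{2})\bigr)\big|_{\rho=0}=\log 1=0$; at $\rho=1$ all three summands vanish because $\log(2/(1+\rho))\big|_{\rho=1}=0$ and $\log\bigl((1+\rho)/(1-\rho+2\rho^{2})\bigr)\big|_{\rho=1}=\log(2/2)=0$. Differentiating termwise, the contribution $2\log 2$ from $2\rho\log\bigl(2/(1+\rho)\bigr)$ at $\rho=0$ is exactly cancelled by the derivative of $-(\log 2)\log\bigl((1+\rho)/(1-\rho+2\rho^{2})\bigr)$, which at $\rho=0$ equals $-(\log 2)\bigl[1/(1+\rho)-(-1+4\rho)/(1-\rho+2\rho^{2})\bigr]_{\rho=0}=-2\log 2$; the remaining contributions at $\rho=0$ carry an explicit factor of $\rho$ or of $\log 1$ and so vanish. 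Hence $g(0)=g(1)=g'(0)=0$.

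Next I would compute $g''(\rho)$ by combining everything over the common denominator $(1+\rho)^{2}(1-\rho+2\rho^{2})^{2}$. In analogy with the previous lemma, I expect the logarithmic terms to drop out after the second derivative and the result to be of the form
\[
g''(\rho)\;=\;\frac{\rho\,h(\rho)}{(1+\rho)^{2}(1-\rho+2\rho^{2})^{2}}
\]
for some explicit polynomial $h$ of degree at most $4$. The denominator is strictly positive for all real $\rho$, since $1-\rho+2\rho^{2}=2(\rho-\tfrac14)^{2}+\tfrac78>0$. Therefore on $(0,\infty)$ the sign of $g''$ coincides with the sign of $h$.

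The main obstacle will be the algebraic analysis of $h(\rho)$. Paralleling Lemma~\ref{lemma_rkbound_log2}, I would aim to show that $h$ is strictly decreasing on $[0,\infty)$ by exhibiting $h'(\rho)$ as a polynomial whose coefficients are all $\le 0$ (with at least one strictly negative), and then verify the numerical inequalities $h(0)>0$ and $h(1)<0$. Together these give a unique $\rho^{\star}\in(0,1)$ with $h(\rho^{\star})=0$, so $g$ is strictly convex on $[0,\rho^{\star}]$ and strictly concave on $[\rho^{\star},1]$. The calculation of the exact coefficients of $h$ (and the certification of their signs, once the numerical values of $\log 2$ enter) is routine but tedious, exactly as in the analogous step of Lemma~\ref{lemma_rkbound_log2}.

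To conclude, strict convexity on $[0,\rho^{\star}]$ combined with $g(0)=g'(0)=0$ yields $g(\rho)>0$ for $\rho\in(0,\rho^{\star}]$; in particular $g(\rho^{\star})>0$. Then strict concavity on $[\rho^{\star},1]$ combined with $g(\rho^{\star})>0$ and $g(1)=0$ forces $g(\rho)>0$ throughout $[\rho^{\star},1)$. Joining the two intervals gives $g(\rho)>0$ for all $\rho\in(0,1)$, which completes the proof of Lemma~\ref{lemma_rkbound_log2_bis} and hence of Theorem~\ref{thm_main_Wheatstone}.
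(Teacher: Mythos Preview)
Your proposal is correct and follows essentially the same route as the paper: verify $g(0)=g'(0)=g(1)=0$, write $g''(\rho)=\rho\,h(\rho)/\bigl[(1+\rho)(1-\rho+2\rho^2)\bigr]^2$ (the paper writes the denominator in the expanded form $(1+\rho^2+2\rho^3)^2$), observe that $h$ has a single sign change on $(0,1)$ because all its nonconstant coefficients are negative while $h(0)>0>h(1)$, and then conclude via the convex/concave splitting. The paper supplies the explicit polynomial $h(\rho)=-(12-4\log 2)\rho^4-12\rho^3-(2+8\log 2)\rho^2-(24-16\log 2)\rho+(20\log 2-6)$, which fills in the one step you left as ``routine but tedious.''
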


\begin{proof}
The second derivative of $g$ is given by
$g''(\rho) = h(\rho) \times \rho / (1+\rho^2 +2\rho^3)^2$ where
\be
   h(\rho) \;=\;
   - (12 - 4 \log 2) \rho^4  \,-\, 12 \rho^3
     \,-\, (2 + 8 \log 2) \rho^2  \,-\, (24 - 16 \log 2)\rho
     \,+\, (20 \log 2 - 6)
   \;.
\ee
All the coefficients of $h(\rho)$ are strictly negative
except for the last (constant) term,
so we have $h'(\rho) < 0$ for all $\rho \geq 0$.
Since $h(0) = 20 \log 2 - 6 > 0$ and $h(1) = 32\log 2 - 56 < 0$
and $h$ is strictly decreasing for $\rho \ge 0$,
it follows that $h(\rho)$ has exactly one positive real root $\rho^*$
and that it lies between 0 and 1
(by computer $\rho^* \approx 0.417655$).
Therefore $g$ is strictly convex on $[0,\rho^*]$
and strictly concave on $[\rho^*,\infty)$.
Since $g(0) = g'(0) = 0$, we have $g(\rho) > 0$ for $\rho \in (0,\rho^*]$.
Moreover, since $g(\rho^*) > 0$ and $g(1) = 0$
and $g$ is strictly concave on $[\rho^*,1]$,
we have $g(\rho) > 0$ for $\rho \in [\rho^*,1)$.
Hence $g(\rho) > 0$ for all $\rho \in (0,1)$, as claimed.
\qed
\end{proof}

\medskip

{\bf Remark.}
A straightforward calculation shows that the
large-$\Lambda$ asymptotic behavior of $\rho^{\star\star}_\Lambda$ is given by
\be
   \rho^{\star\star}_\Lambda
   \;=\;
   (\log 2)
   \Biggl[ {1 \over \Lambda-1} \,+\, {\log 2 - 1 \over (\Lambda-1)^2}
              \,+\, {16 \log^2 2 - 15\log 2 + 6 \over 6 (\Lambda-1)^3}
              \,+\, \ldots
   \Biggr]
\ee
and hence
\be
   {1 \over \rho^{\star\star}_\Lambda}
   \;=\;
   {\Lambda-1 \over \log 2} \,-\, {\log 2 - 1 \over  \log 2}
              \,-\, {10 \log 2 - 3 \over 6 (\Lambda-1)}
              \,+\, \ldots
   \;\;.
\ee
So the inequality
$\rho^{\star\star}_\Lambda > (\log 2)/(\Lambda - \log 2)$
captures the first two terms of the large-$\Lambda$ asymptotic behavior.
\qed

%





%

%

%

\appendix
\section{Parallel and series connection on the Riemann sphere}
    \label{app_riemann}

In this appendix we shall define parallel and series connection
for edge weights $\{v_e\}$ (or $\{y_e\}$ or $\{t_e\}$)
lying in the Riemann sphere $\Cbar = \C \cup \{\infty\}$;
in our opinion this is the most natural way to view these maps.
We shall always treat $\Cbar$ as a one-dimensional complex manifold
equipped with its usual holomorphic structure.

We begin with some general remarks concerning rational functions
of several complex variables.

At an {\em algebraic}\/ level, there is no difficulty
in defining the field $\K(z_1,\ldots,z_n)$ of rational functions
in an arbitrary number of indeterminates $z_1,\ldots,z_n$
over an arbitrary field $\K$.
The elements of this field are simply equivalence classes
of ratios $P/Q$ where $P,Q \in \K[z_1,\ldots,z_n]$
are polynomials with $Q \not\equiv 0$,
and the field operations are defined in the obvious way.

However, when $\K=\C$ and we wish to consider rational functions
from an {\em analytic}\/ point of view, there is a fundamental difference
between the cases $n=1$ and $n>1$.
This is a consequence of the following theorem
\cite[Theorem 1.3.2]{Rudin_69}:
Let $P,Q$ be relatively prime polynomials in $n$ complex variables,
and let $z_0 \in \C^n$ be such that $P(z_0) = Q(z_0) = 0$.
(Of course, this can happen only when $n > 1$.)
Then in every neighborhood $U \ni z_0$,
the function $P/Q$ takes unambiguously all possible values in $\Cbar$
(in addition to also taking the undefined value $0/0$).
Therefore, if the zero sets $\scrz(P)$ and $\scrz(Q)$
have a nonempty intersection,
the rational function $P/Q$ cannot even be defined as a {\em continuous}\/
function (much less a holomorphic one) in a neighborhood of the
intersection point.
On the other hand, if we cut out the ``bad points'' $\scrz(P) \cap \scrz(Q)$,
then $P/Q$ is a well-defined holomorphic function from
$\C^n \setminus (\scrz(P) \cap \scrz(Q))$ into $\Cbar$.
And with a little more work we can also allow $\infty$
as a possible value for $z_1,\ldots,z_n$
(expanding the ``bad set'' as needed).

In our case we want to make sense of the parallel and series maps
(in $n=2$ variables) defined in \reff{eq_vpar}--\reff{eq_yser}.
Note first that the maps \reff{def_te}/\reff{def_ye}
between the $v$-, $t$- and $y$-variables are
invertible M\"obius transformations,
hence biholomorphic maps of the Riemann sphere $\Cbar$ onto itself,
whenever $q \neq 0,\infty$ (as we shall assume henceforth).
We can therefore define the parallel connection $\pll$ as follows:
Map first into the $y$ variables;  use the definition
$y_e \pll^Y y_f = y_e y_f$;  then map back.
Now, the operation of multiplication, $(y_1,y_2) \mapsto y_1 y_2$,
is unambiguously defined for $y_1$ and $y_2$ in the Riemann sphere,
{\em with two exceptions}\/:  $0 \cdot \infty$ and $\infty \cdot 0$
are ill-defined.  Deleting these two ``bad points'',
we see that multiplication is a well-defined holomorphic map from
$\Cbar^2 \setminus \{(0,\infty),\, (\infty,0)\}$ into $\Cbar$.
It follows that
\begin{itemize}
   \item the parallel connection $\pll^Y$ is well-defined for
$(y_e,y_f) \in \Cbar^2 \setminus \{(0,\infty),\, (\infty,0)\}$,
   \item the parallel connection $\pll^V$ is well-defined for
$(v_e,v_f) \in \Cbar^2 \setminus \{(-1,\infty),\, (\infty,-1)\}$, and
   \item the parallel connection $\tvpllq{T}$ is well-defined for
$(t_e,t_f) \in \Cbar^2 \setminus \{(1/(1-q),1),\, (1,1/(1-q))\}$.
\end{itemize}
Only at the two ``bad points'' do we declare that the
parallel connection takes the value ``undefined''.

Likewise, we can define the series connection $\ser$ as follows:
Map first into the $t$ variables;  use the definition
$t_e \ser^Y t_f = t_e t_f$;  then map back.
It follows that
\begin{itemize}
   \item the series connection $\ser^T$ is well-defined for
$(t_e,t_f) \in \Cbar^2 \setminus \{(0,\infty),\, (\infty,0)\}$,
   \item the series connection $\tvserq{V}$ is well-defined for
$(v_e,v_f) \in \Cbar^2 \setminus \{(0,-q),\, (-q,0)\}$, and
   \item the series connection $\tvserq{Y}$ is well-defined for
$(y_e,y_f) \in \Cbar^2 \setminus \{(1,1-q),\, (1-q,1)\}$.
\end{itemize}
At the two ``bad points'' we assign once again the value ``undefined''.

Finally, we declare that if one or both of the inputs to a
$\pll^\sharp$ or $\ser^\sharp$ operation ($\sharp = V$, $T$ or $Y$)
is ``undefined'', then the output is also ``undefined''.
The operations $\pll^\sharp$ and $\ser^\sharp$
then become well-defined maps $\Ctilde \times \Ctilde \to \Ctilde$,
where $\Ctilde = \Cbar \cup \{\hbox{undefined}\}
               = \C \cup \{\infty,\hbox{undefined}\}$,
and moreover these operations for different $\sharp$
intertwine correctly with the M\"obius transformations
that map one set of variables ($v$, $t$ or $y$) to another.

\section{Chromatic roots of leaf-joined trees}   \label{app_tree}

For integers $r \ge 2$ and $n \ge 1$,
the {\em leaf-joined tree of branching factor $r$ and depth $n$}\/
is defined to be the graph $G_n^r$
obtained by taking a complete $r$-ary rooted tree of height $n$
and then identifying all the leaves into a single vertex.
Our goal in this appendix is to study, for fixed $r$,
the accumulation points as $n \to\infty$
of the chromatic roots of the family $\{G_n^r\}_{n \ge 1}$.
In particular, we shall prove the following:

\begin{thm}[= Theorem~\ref{thm.leaf-joined}]
  \label{thm.leaf-joined.APP}
For fixed $r \ge 2$, every point of the circle $|q-1| = r$
is a limit point of chromatic roots
for the family $\{G_n^r\}_{n \ge 1}$
of leaf-joined trees of branching factor $r$.
[More precisely, for every $q_0$ satisfying $|q_0-1| = r$
 and every $\epsilon > 0$, there exists $n_0 = n_0(q_0,\epsilon)$
 such that for all $n \ge n_0$ the graph $G_n^r$ has a chromatic root $q$
 lying in the disc $|q-q_0| < \epsilon$.]
\end{thm}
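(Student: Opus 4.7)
The plan is to reduce the question to the iteration of a $1$-parameter holomorphic family of rational maps and then invoke methods from complex dynamics. First, exploiting the recursive decomposition $G_1^r = K_2^{(r)}$ and $G_{n+1}^r = (K_2 \ser G_n^r)^{\pll r}$ together with Proposition~\ref{prop.serpar.AB}, I would compute the effective weight of $G_n^r$ when every edge carries $v_e = -1$. Working in the $y$-variables (so that parallel composition is multiplication and every edge starts with $y_e = 0$), this yields the recursion $y_1 = 0$, $y_{n+1} = ((q-1)/(y_n+q-2))^r$. Setting $\zeta_n := y_n + q-2$, the recursion becomes $\zeta_1 = q-2$ and $\zeta_{n+1} = \phi_q(\zeta_n)$, where
\be
  \phi_q(\zeta)  \;:=\;  (q-2) \,+\, \frac{(q-1)^r}{\zeta^r}
\ee
is a rational map of degree $r$ on $\Chat$. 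Tracking the partial multivariate Tutte polynomials $A_n, B_n \in \Z[q]$ along the recursion gives the factorization $P_{G_n^r}(q) = q \cdot A_n(q) \cdot (\zeta_n(q) + 1)$, so that every $q_\star \in \C$ with $\zeta_n(q_\star) = -1$ is a chromatic root of $G_n^r$.

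The crucial dynamical observation is that $\zeta = q-1$ is a fixed point of $\phi_q$ for every $q$ --- indeed, $\phi_q(q-1) = (q-2) + (q-1)^r/(q-1)^r = q-1$ --- with multiplier $\phi_q'(q-1) = -r/(q-1)$. Hence this fixed point is attracting when $|q-1| > r$, neutral when $|q-1| = r$, and repelling when $|q-1| < r$. For $|q-1| > r$ I would use the classical Fatou theorem that every immediate basin of an attracting cycle contains a critical point: since the critical points of $\phi_q$ are $0$ and $\infty$, with $\phi_q(0) = \infty$ and $\phi_q(\infty) = q - 2 = \zeta_1$, both forward critical orbits merge into the sequence $(\zeta_n)$, so $\zeta_n(q) \to q - 1$ locally uniformly on $\{|q-1| > r\}$.

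Next, fix $q_0$ with $|q_0 - 1| = r$ and let $U$ be any neighborhood of $q_0$; $U$ necessarily meets both $\{|q-1| > r\}$ and $\{|q-1| < r\}$. I would show that the family $\scrf := \{\zeta_n\}_{n \ge 1}$ of meromorphic functions on $U$ is not normal at $q_0$. Indeed, if $\scrf$ were normal, any subsequential limit $g$ would agree with $q-1$ on the nonempty open set $U \cap \{|q-1| > r\}$ (by the previous paragraph), and hence $g \equiv q - 1$ on $U$ by the identity theorem; uniqueness of subsequential limits then forces $\zeta_n \to q - 1$ locally uniformly on $U$. But for any $q_1 \in U$ with $|q_1 - 1| < r$, the fixed point $q_1 - 1$ of $\phi_{q_1}$ is repelling, so once $\zeta_n(q_1)$ enters a sufficiently small neighborhood of $q_1 - 1$, the standard expansion estimate $|\zeta_{n+1}(q_1) - (q_1 - 1)| \ge (|\phi_{q_1}'(q_1-1)| - o(1)) \, |\zeta_n(q_1) - (q_1 - 1)|$ precludes $\zeta_n(q_1) \to q_1 - 1$ unless $\zeta_N(q_1) = q_1 - 1$ exactly for some finite $N$. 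But the set of such $q_1$ lies in the countable union $\bigcup_N \{q \in U : \zeta_N(q) = q-1\}$ of finite algebraic sets, contradicting uncountability of $U \cap \{|q-1| < r\}$.

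With non-normality of $\scrf$ at every point of the circle $|q-1| = r$ in hand, the conclusion would follow from Montel's theorem: by its strong form (equivalently, Zalcman's rescaling lemma), $\scrf$ cannot omit three distinct values on any neighborhood of $q_0$. The step I expect to demand the most care is verifying that $-1$ is not one of the (at most two) values exceptional for $\scrf$, for only then do the equations $\zeta_n(q) = -1$ actually produce chromatic roots accumulating at $q_0$. The cleanest way to handle this is via parametric equidistribution: invoking DeMarco's theorem for $1$-parameter holomorphic families of rational maps with a marked critical value, the normalized counting measures $r^{-(n-1)} \sum_{q: \zeta_n(q) = -1} \delta_q$ converge weakly to the bifurcation current of $\{\phi_q\}$ attached to the marked point $q-2$, whose support contains the neutrality circle $|q-1|=r$. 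Accumulation of chromatic roots at every point of this circle then follows.
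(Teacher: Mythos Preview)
Your setup --- the recursion $y_{n+1} = R_q(y_n)$, the fixed point at $y=1$ (your $\zeta = q-1$) with multiplier $-r/(q-1)$, the critical-orbit analysis, and the convergence of the critical orbit to this fixed point for $|q-1|>r$ --- is exactly what the paper does in Lemmas~\ref{lemma_B.1}--\ref{lemma_B.2} and Corollary~\ref{cor_B.3}. Your non-normality argument is a pleasant direct substitute for the paper's appeal to the black-box Proposition~\ref{prop.attractingcase}(c): analytic continuation of the limit $q-1$ across the circle followed by the repelling-fixed-point contradiction. One small gap there: your countability step needs each level set $\{q:\zeta_N(q)=q-1\}$ to be finite, i.e.\ $\zeta_N\not\equiv q-1$; the paper handles this (Lemma~\ref{lem_persistent}) by noting that for $|q-1|>r$ the orbit converges to the attracting fixed point \emph{without landing on it}, since the multiplier is nonzero.

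Where you genuinely diverge from the paper is the last step. Montel alone, as you suspect, gives only that $-1$ is hit by \emph{some} $\zeta_n$ near $q_0$ (up to two exceptional targets), not that it is hit for \emph{all} large $n$. The paper closes this gap not via equidistribution but via the much lighter Lyubich result (Proposition~\ref{prop.nonexceptional}): once the target $f(q)=1-q$ is shown not to be persistently exceptional, the solution sets accumulate on the irregular locus with a $\liminf$. Non-exceptionality is then a one-line check (Lemma~\ref{lemma_B.5}): by Proposition~\ref{prop_B.4} any exceptional point must be a superattracting fixed or period-$2$ point built from the two critical points, and the critical orbit $0\mapsto\infty\mapsto q-2$ closes up only when $q=2$. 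Your equidistribution route (properly attributed to Dujardin--Favre rather than DeMarco) also works and in fact yields more --- a quantitative distribution of the chromatic roots rather than mere accumulation --- but it imports potential-theoretic machinery and still requires you to verify essentially the same non-exceptionality/non-passivity hypotheses. Either completion is fine; the paper's is more economical.
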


To lighten the notation,
we shall henceforth fix $r \ge 2$ and write $G_n$ in place of $G_n^r$.
We shall regard $G_n$ as a 2-terminal graph in which
the terminals are the root and the identified-leaves vertex.
It is easy to see that $G_n$ is in fact 2-terminal series-parallel,
as it can be defined recursively as follows:
\begin{subeqnarray}
G_1  & = &  K_2^{(r)} \\[1mm]
G_{n+1} & = & (K_2 \ser G_n)^{\pll r}
  \slabel{eq.app_tree.def_Gn.b}
  \label{eq.app_tree.def_Gn}
\end{subeqnarray}
where $K_2^{(r)}$ is the graph with two vertices
connected by $r$ parallel edges,
and $G^{\pll r}$ denotes the parallel connection of $r$ copies of $G$.

To every edge of $G_n$ we assign the {\em same}\/ weight $v_\sharp$
and we study the (bivariate) Tutte polynomial $Z_{G_n}(q,v_\sharp)$.
In particular, by taking $v_\sharp = -1$ we can study the chromatic polynomial
$P_{G_n}(q)$.
We shall prove Theorem~\ref{thm.leaf-joined.APP}
by tracking the evolution of the ``effective coupling'' $v_{\rm eff}(G_n)$
according to \reff{eq.app_tree.def_Gn.b}.
In doing this we shall adopt the approach set forth in
Appendix~\ref{app_riemann}, where $v_{\rm eff}$ is considered to lie
in the Riemann sphere $\Cbar = \C \cup \{\infty\}$.
Please note that the parallel and series connections arising in
\reff{eq.app_tree.def_Gn.b}
are always well-defined in the sense of Appendix~\ref{app_riemann}:
the series connections avoid the ``bad points''
$(v_1,v_2) = (0,-q)$ and $(-q,0)$ because
we always have $v_1 = v_\sharp$ (corresponding to the graph $K_2$)
{\em and we shall assume that $q \notin \{0,-v_\sharp\}$}\/;
while the parallel connections avoid the ``bad points''
$(v_1,v_2) = (-1,\infty)$ and $(\infty,-1)$ because
we perform repeated parallel connections
of the {\em same}\/ graph $K_2 \ser G_n$
and hence at every stage $v_1$ and $v_2$
are either both finite or both infinite.

The basic idea is now that $Z_{G_n}(q,v_\sharp)$ equals
$q [q + v_{\rm eff}(G_n)]$ times some prefactors;
so if we temporarily put aside the problem of the prefactors,
we can conclude that
$Z_{G_n}(q,v_\sharp) = 0$ if and only if $v_{\rm eff}(G_n) = -q$
(provided that $q \neq 0$, as we shall assume henceforth).
To be more precise (and to handle the problem of the prefactors),
let us use Proposition~\ref{prop.serpar.AB} to compute the polynomials
$A_n(q,v_\sharp) \equiv A_{G_n,s,t}$ and $B_n(q,v_\sharp) \equiv B_{G_n,s,t}$
that were defined in \reff{def.AGst}/\reff{def.BGst};
from these we can obtain
$Z_{G_n}(q,v_\sharp) = q^2 A_n(q,v_\sharp) + q B_n(q,v_\sharp)$.
Defining the intermediate graphs $G'_n = K_2 \ser G_n$
and the corresponding quantities
$A'_n \equiv A_{G'_n,s,t}$ and $B'_n \equiv B_{G'_n,s,t}$,
we have from Proposition~\ref{prop.serpar.AB}(b)
\begin{subeqnarray}
   A'_n & = & (q+v_\sharp) A_n + B_n  \\[1mm]
   B'_n & = & v_\sharp B_n
\end{subeqnarray}
and thence from Proposition~\ref{prop.serpar.AB}(a)
\begin{subeqnarray}
   A_{n+1} & = & (A'_n)^r    \nonumber \\
           & = & [(q+v_\sharp) A_n + B_n]^r        \\[3mm]
   B_{n+1} & = & (A'_n + B'_n)^r \,-\, (A'_n)^r   \nonumber \\
           & = & [(q+v_\sharp) A_n + (1+v_\sharp) B_n]^r \,-\,
                       [(q+v_\sharp) A_n + B_n]^r
 \label{eq.recursion.AnBn}
\end{subeqnarray}
The initial condition for this recursion is
\be
   A_1 \:=\: 1, \qquad  B_1 \:=\: (1+v_\sharp)^r \,-\, 1
 \label{eq.recursion.AnBn.initial}
\ee
or equivalently
\be
   A_0 \:=\: 0, \qquad  B_0 \:=\: 1
   \;.
\ee
Note now that the polynomials $A_n$ and $B_n$ have no common zeros in
$\C^2 \setminus \{(q,v_\sharp) \colon\: q+v_\sharp = 0 \}$:
this follows by induction from \reff{eq.recursion.AnBn},
since $A_{n+1} = B_{n+1} = 0$ and $q+v_\sharp \neq 0$ imply $A_n = B_n = 0$.
[This observation is just a rephrasing of the previously-observed fact that
the series and parallel connections are here well-defined
whenever $q \notin \{0,-v_\sharp\}$.]
Therefore, provided that $q \notin \{0,-v_\sharp\}$,
we have $Z_{G_n}(q,v_\sharp) = 0$
if and only if $A_n(q,v_\sharp) \neq 0$
and $B_n(q,v_\sharp) / A_n(q,v_\sharp) = -q$.

But $B_n/A_n$ is precisely what we have called $v_{\rm eff}(G_n)$
[cf. \reff{def.veff}].
Writing $v_n = v_{\rm eff}(G_n)$ to lighten the notation,
we obtain from \reff{eq.recursion.AnBn}
[or equivalently from \reff{eq.par.veff}/\reff{eq.ser.veff}]
the recursion
\be
   v_{n+1}
   \;=\;
   (v_\sharp \,\tvserq{V} v_n)^{\pll^V r}
   \;=\;
   \left( 1 \,+\, {v_\sharp v_n  \over  q + v_\sharp + v_n} \right) ^{\! r}
   \:-\: 1
\ee
with initial condition
\be
   v_1  \;=\;  (1 + v_\sharp)^r \,-\, 1
\ee
or
\be
   v_0  \;=\;  \infty
   \;.
\ee
And provided that $q \notin \{0,-v_\sharp\}$,
we have proven that $Z_{G_n}(q,v_\sharp) = 0$ if and only if $v_n = -q$.


Since the final operation in \reff{eq.app_tree.def_Gn.b}
is parallel composition,
it is actually more convenient to use the $y$-variables,
i.e.\ $y_\sharp = 1 + v_\sharp$ and
$y_n = y_{\rm eff}(G_n) = 1 + v_{\rm eff}(G_n)$.
We then have
\begin{subeqnarray}
   y_0  & = &  \infty  \\[1mm]
   y_1  & = &  y_\sharp^r  \\
   y_{n+1}  & = &  (y_\sharp \,\ser_q^Y y_n)^{\pll^Y r}  \;=\;
      \left( {q-1 + y_\sharp y_n  \over  q-2 + y_\sharp + y_n} \right) ^{\! r}
\end{subeqnarray}
or in other words $y_{n+1} = R_q(y_n)$ where
\be
   R_q(y)  \;=\;
   \left( {q-1 + y_\sharp y  \over  q-2 + y_\sharp + y} \right) ^{\! r}
   \;.
 \label{def.Rqy}
\ee
We have $Z_{G_n}(q, v_\sharp) = 0$ if and only if $y_n = 1-q$.
We summarize the foregoing discussion as follows:

\begin{lemma}
   \label{lemma.iteration}
For $v_\sharp \in \C$, $q \in \C \setminus \{0,-v_\sharp\}$
and $n \ge 1$, we have
$$
   Z_{G_n}(q, v_\sharp) \,=\, 0
     \quad\text{if and only if} \quad
   R_q^n(\infty) \,=\, 1-q
   \;,
$$
where $R_q$ is the map \reff{def.Rqy} with $y_\sharp = 1 + v_\sharp$.
\end{lemma}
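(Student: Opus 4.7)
The plan is to track the ``effective weight'' $v_{\rm eff}(G_n)$ --- or equivalently $y_{\rm eff}(G_n) = 1 + v_{\rm eff}(G_n)$ --- through the recursive construction $G_{n+1} = (K_2 \ser G_n)^{\pll r}$ of \reff{eq.app_tree.def_Gn}, and to translate the vanishing of $Z_{G_n}(q,v_\sharp)$ into an equation of the form $y_{\rm eff}(G_n) = 1-q$. The core identity $Z_{G_n} = q^2 A_n + q B_n$, with $A_n \equiv A_{G_n,s,t}$ and $B_n \equiv B_{G_n,s,t}$ as in \reff{def.AGst}--\reff{def.BGst}, tells us that (given $q \neq 0$) the zero set of $Z_{G_n}$ coincides with the locus $B_n = -q A_n$, which is equivalent to $v_{\rm eff}(G_n) = -q$ provided we can guarantee that $A_n$ does not vanish simultaneously with $B_n$.

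First I would use Proposition~\ref{prop.serpar.AB} (the series rule applied to $K_2 \ser G_n$, then the parallel rule iterated $r$ times) to derive an explicit polynomial recursion for the pair $(A_n, B_n)$, with initial data $A_1 = 1$, $B_1 = (1+v_\sharp)^r - 1$. Next I would show by induction on $n$ that $A_n$ and $B_n$ have no common zero away from the line $q + v_\sharp = 0$: if $A_{n+1} = B_{n+1} = 0$ and $q + v_\sharp \neq 0$, then the $r$th powers appearing in the recursion force $(q+v_\sharp)A_n + B_n = 0$ and $(q+v_\sharp)A_n + (1+v_\sharp)B_n = 0$; subtracting yields $v_\sharp B_n = 0$, and from here one peels off $B_n = 0$ and then $A_n = 0$, contradicting the inductive hypothesis. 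The degenerate case $v_\sharp = 0$ is handled separately and trivially, since then $Z_{G_n} = q^{|V(G_n)|}$ never vanishes on $q \neq 0$. Conceptually, this induction is exactly the assertion that every series and parallel connection occurring in the construction of $G_n$ is well-defined on the Riemann sphere in the sense of Appendix~\ref{app_riemann}, under the hypothesis $q \notin \{0, -v_\sharp\}$.

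Having ruled out common zeros, I would then pass to the $y$-variable $y_n = y_{\rm eff}(G_n) = 1 + v_{\rm eff}(G_n) \in \Cbar$, for which the condition $v_{\rm eff}(G_n) = -q$ becomes $y_n = 1-q$. Applying \reff{eq.par.veff} and \reff{eq.ser.veff}, and translating to $y$-variables via $y = 1+v$ --- so that parallel composition becomes multiplication and series composition becomes the M\"obius formula \reff{eq_yser} --- gives $y_{n+1} = R_q(y_n)$ directly from \reff{eq.app_tree.def_Gn.b}, with initial value $y_0 = \infty$ (corresponding to the trivial graph, for which $v_{\rm eff} = \infty$; note that $R_q(\infty) = y_\sharp^r$ correctly reproduces $y_1$). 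By induction $y_n = R_q^n(\infty)$, and the lemma follows. The only genuinely delicate point is the common-zero argument in the middle step; everything else is bookkeeping against the series/parallel composition rules established in Section~\ref{sec2}.
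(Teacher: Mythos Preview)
Your proposal is correct and follows essentially the same route as the paper: derive the recursion for $(A_n,B_n)$ from Proposition~\ref{prop.serpar.AB}, prove by induction that they have no common zero off the line $q+v_\sharp=0$, and then pass to $y$-variables to identify $y_n$ with $R_q^n(\infty)$. Your explicit treatment of the degenerate case $v_\sharp=0$ is in fact slightly more careful than the paper's, which leaves that edge case implicit.
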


We are thus interested in the iteration of the rational function $R_q$,
considered as a map from the Riemann sphere
$\Cbar = \C \cup \{\infty\}$ to itself,
and so we are naturally led to the theory of holomorphic dynamics
\cite{Beardon_91,Carleson_93,Steinmetz_93,Milnor_06}.

In what follows we shall restrict attention to the chromatic-polynomial case
$y_\sharp = 0$, which turns out to behave in a simpler way than
$y_\sharp \neq 0$.  We must therefore study the iteration of the map
\be
   R_q(y)  \;=\;
   \left( {q-1 \over  q-2 + y} \right) ^{\! r}
 \label{def_Rq_chromatic}
\ee
with initial condition
\begin{subeqnarray}
   y_0  & = &  \infty  \\[1mm]
   y_1  & = &  0
 \label{def_init_Rq_chromatic}
\end{subeqnarray}
Provided that $q \neq 1$
(which is simply our assumption $q \neq -v_\sharp$
specialized to the chromatic-polynomial case $v_\sharp = -1$),
the map $R_q$ is a rational function of degree $r$.

We now proceed to analyze the properties of the map $R_q$
defined by \reff{def_Rq_chromatic}.
Elementary calculus proves the following two lemmas:

\begin{lemma}
   \label{lemma_B.1}
Fix $y_\sharp = 0$ and $q \neq 1$.
Then the critical points of the map $R_q$ are $2-q$ and $\infty$,
each of multiplicity $r-1$.
Moreover, there is only one critical orbit, because one critical point
maps onto the other:
\be
   2-q \;\mapsto\; \infty \;\mapsto\; 0 \;\mapsto\;
    \left( {q-1 \over  q-2} \right) ^{\! r}  \;\mapsto\;  \ldots
   \;.
\ee
\end{lemma}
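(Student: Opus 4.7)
The plan is a direct computation on the Riemann sphere. First I would differentiate $R_q$ on the finite part of the plane, obtaining
\[
R_q'(y) \;=\; -\,r\,\frac{(q-1)^r}{(q-2+y)^{r+1}}\,,
\]
which, since $q\neq 1$, never vanishes on $\C\setminus\{2-q\}$. So no interior point of $\C\setminus\{2-q\}$ is critical.

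Next I would analyze the two remaining points $y=2-q$ and $y=\infty$, which require local coordinates on $\Cbar$ since $R_q$ sends one to $\infty$ and is evaluated at $\infty$ in the other. At $y=2-q$, writing $w=y-(2-q)$ gives $R_q(y)=(q-1)^r w^{-r}$; using $1/R_q$ as the local coordinate at the image $\infty$, the map becomes $w\mapsto w^r/(q-1)^r$, which has local degree $r$ at $w=0$, so $2-q$ is a critical point of multiplicity $r-1$. At $y=\infty$, setting $u=1/y$ and noting that the image $R_q(\infty)=0$ can be used directly as a local coordinate there, one computes
\[
R_q(1/u) \;=\; \left(\frac{(q-1)u}{1+(q-2)u}\right)^{\! r}
\;=\; (q-1)^r u^r\bigl(1+O(u)\bigr),
\]
which again has local degree $r$ at $u=0$, so $\infty$ is critical of multiplicity $r-1$.

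I would then confirm completeness by Riemann--Hurwitz: a rational map of degree $r$ on $\Cbar$ has total critical multiplicity $2r-2$, and the two points already contribute $(r-1)+(r-1)=2r-2$, so there are no other critical points. Finally, the claim about the critical orbit is just a matter of tracing:
\[
2-q \;\longmapsto\; \infty \;\longmapsto\; 0 \;\longmapsto\; \left(\frac{q-1}{q-2}\right)^{\! r} \;\longmapsto\; \cdots,
\]
so the forward orbit of the critical point $2-q$ meets that of $\infty$ after one step, and the two critical orbits coincide from $\infty$ onward.

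There is no real obstacle here; the only subtlety is being careful to compute the local degrees at $2-q$ and $\infty$ using appropriate local coordinates on $\Cbar$ rather than trying to evaluate $R_q'$ naively at those points. The hypothesis $q\neq 1$ (equivalently $v_\sharp\neq -q$) is used precisely to ensure $(q-1)^r\neq 0$, so that $R_q$ is genuinely of degree $r$ and both of the computed local degrees equal $r$.
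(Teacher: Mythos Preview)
Your proof is correct and follows exactly the approach the paper intends: the paper dispatches this lemma with the single sentence ``Elementary calculus proves the following two lemmas,'' and your computation of $R_q'$, the local-coordinate analysis at $2-q$ and $\infty$, the Riemann--Hurwitz check, and the orbit trace are precisely that elementary calculus spelled out in full.
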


The fact that there is only one critical orbit
is what makes the case $y_\sharp = 0$ simpler than $y_\sharp \neq 0$.

\begin{lemma}
   \label{lemma_B.2}
Fix $y_\sharp = 0$ and $q \neq 1$.
Then the map $R_q$ has a fixed point at $y=1$,
with multiplier $\lambda = -r/(q-1)$.
In particular, this fixed point is attractive (but not superattractive)
if $|q-1| > r$, marginal if $|q-1| = r$, and repulsive if $|q-1| < r$.
\end{lemma}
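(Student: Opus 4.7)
The plan is to verify Lemma~\ref{lemma_B.2} by a direct calculation: checking that $y=1$ is a fixed point, computing the multiplier via the chain rule, and then reading off the classification from the standard trichotomy $|\lambda| \lessgtr 1$.

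First I would substitute $y=1$ into the definition \reff{def_Rq_chromatic}, obtaining
\be
   R_q(1) \;=\;  \left( \frac{q-1}{q-2+1} \right)^{\! r}
          \;=\;  \left( \frac{q-1}{q-1} \right)^{\! r}
          \;=\;  1,
\ee
where the hypothesis $q \neq 1$ is used to ensure that the ratio is well-defined (and not the indeterminate form $0/0$). Hence $y=1$ is indeed a fixed point of $R_q$.

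Next I would compute $R_q'(y)$. Writing $R_q(y) = u(y)^r$ with $u(y) = (q-1)/(q-2+y)$, the chain rule gives
\be
   R_q'(y)  \;=\;  r\, u(y)^{r-1}\, u'(y)
            \;=\;  r \left( \frac{q-1}{q-2+y} \right)^{\! r-1}
                   \cdot \left( -\frac{q-1}{(q-2+y)^2} \right)
            \;=\;  -\,\frac{r(q-1)^r}{(q-2+y)^{r+1}}.
\ee
Evaluating at $y=1$ yields
\be
   \lambda \;=\; R_q'(1) \;=\; -\,\frac{r(q-1)^r}{(q-1)^{r+1}} \;=\; -\,\frac{r}{q-1},
\ee
which is the claimed multiplier.

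Finally, the classification of the fixed point as attractive, marginal, or repulsive follows from the standard dichotomy in holomorphic dynamics according to whether $|\lambda|$ is less than, equal to, or greater than $1$; since $|\lambda| = r/|q-1|$, these cases correspond precisely to $|q-1| > r$, $|q-1| = r$, and $|q-1| < r$. Moreover, $\lambda = 0$ is impossible (it would require $r=0$), so the fixed point is never superattractive. There is no real obstacle here — the entire lemma is a straightforward verification, included as a setup for the more substantive dynamical arguments (concerning the critical orbit from Lemma~\ref{lemma_B.1} and the Julia set) that will prove Theorem~\ref{thm.leaf-joined.APP}.
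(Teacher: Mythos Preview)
Your proof is correct and is exactly the elementary calculus verification the paper has in mind; the paper itself gives no explicit proof, stating only that ``Elementary calculus proves the following two lemmas.''
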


{}From Lemmas~\ref{lemma_B.1} and \ref{lemma_B.2}
we can infer the following:

\begin{corollary}
   \label{cor_B.3}
Fix $y_\sharp = 0$ and $|q-1| > r$.
Then:
\begin{itemize}
   \item[(a)]  The initial condition $y_0=\infty$ is attracted to the
      attractive fixed point at $y=1$, but without falling onto it:
      that is, $\lim\limits_{n \to\infty} R_q^n(\infty) = 1$
      but $R_q^n(\infty) \neq 1$ for all $n \ge 0$.
   \item[(b)]  The map $R_q$ has no attractive or parabolic cycles
      other than the attractive fixed point at $y=1$.
\end{itemize}
\end{corollary}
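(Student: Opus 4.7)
The plan is to derive (a) and (b) from classical results in holomorphic dynamics applied to the rational map $R_q$. The key structural input, from Lemma~\ref{lemma_B.1}, is that the two critical points $2-q$ and $\infty$ lie on the \emph{same} forward orbit $2-q \mapsto \infty \mapsto 0 \mapsto \cdots$, so for purposes of tracking critical orbits there is effectively only one.

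For part~(b), I would argue by contradiction. Suppose $R_q$ had an additional attractive (respectively parabolic) cycle $C$ with immediate basin $\Omega'$ disjoint from the immediate basin $\Omega$ of $y=1$. By Fatou's theorem (respectively its parabolic analogue, which demands a critical point in every petal), $\Omega'$ must contain at least one of the critical points $2-q$ or $\infty$. Since these two points lie on a single forward orbit and $\Omega'$ is forward-invariant, whichever one is in $\Omega'$ forces $\infty \in \Omega'$ and hence $R_q^n(\infty) \to C$; but part~(a) says $R_q^n(\infty) \to 1$, so $C=\{1\}$, contradicting $C \neq \{1\}$.

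For the convergence statement in part~(a), Lemma~\ref{lemma_B.2} together with $|q-1|>r$ makes $y=1$ attractive, so Fatou's theorem guarantees that its immediate basin $\Omega$ contains a critical point; by forward-invariance of $\Omega$ and the identity $R_q(2-q)=\infty$, this forces $\infty \in \Omega$, so $R_q^n(\infty) \in \Omega$ for all $n$ and converges to $1$.

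The subtler claim is $R_q^n(\infty)\neq 1$ for all $n \geq 0$. The cases $n=0,1$ are immediate since $R_q^0(\infty)=\infty$ and $R_q^1(\infty)=0$. For $n=2$ we handle it explicitly: $R_q(0)=((q-1)/(q-2))^r=1$ requires $(q-1)/(q-2)$ to be an $r$-th root of unity $\omega \neq 1$, whence $|q-1|=1/|1-\omega|\leq 1/(2\sin(\pi/r))$, and Jordan's inequality $\sin x \geq (2/\pi)x$ on $(0,\pi/2)$ gives $1/(2\sin(\pi/r)) \leq r/4 < r$, contradicting the hypothesis. For $n \geq 3$ the equation $R_q^n(\infty)=1$ defines an algebraic condition on $q$ that is not obviously confined to $|q-1| \leq r$; this is the main obstacle. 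My favored strategy is to use Koenigs' linearization $\phi$ defined near $y=1$ by $\phi \circ R_q = \lambda \phi$ with $\lambda=-r/(q-1)$, extended holomorphically to all of $\Omega$: the condition $R_q^n(\infty)=1$ then becomes $\phi(\infty)=0$, and a Riemann-Hurwitz analysis of $R_q$ restricted to $\Omega$ (whose degree is determined by the critical points contained in $\Omega$) should let us track the zero set of $\phi$ and exclude $\phi(\infty)=0$.
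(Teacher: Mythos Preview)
Your arguments for (b) and for the convergence statement in (a) are correct and match the paper's reasoning. The gap is the ``without falling onto it'' clause for $n\ge 3$: reducing $R_q^n(\infty)=1$ to $\phi(\infty)=0$ under the extended Koenigs map is fine, but a Riemann--Hurwitz count of $R_q|_\Omega$ gives you no mechanism for excluding $\phi(\infty)=0$ --- the zero set of $\phi$ is the full backward orbit of $1$ inside $\Omega$, and nothing in your sketch separates $\infty$ from that set. As written, the $n\ge 3$ case is a hope rather than an argument.

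The paper closes this gap by invoking the sharper form of Fatou's theorem (Beardon, Theorems~9.3.1--9.3.2): the critical point produced in the immediate basin of an attractive (non-superattractive) fixed point has an \emph{infinite} forward orbit converging to the fixed point without ever landing on it. Concretely: build the standard nested linearizing neighborhoods $U_0\subset U_1\subset\cdots$ with $U_k$ the component of $R_q^{-1}(U_{k-1})$ containing $1$, and let $n$ be minimal with $U_n$ containing a critical point. For $k\le n-1$ the restriction $R_q\colon U_k\to U_{k-1}$ is a conformal isomorphism, so the only point of $U_{n-1}$ whose forward orbit ever reaches $1$ is $1$ itself. Since $R_q(2-q)=\infty$, the critical point $2-q$ cannot enter before $\infty$ does (else $\infty\in U_{n-1}$, contradicting minimality); hence the first critical point is $\infty\in U_n\setminus U_{n-1}$, and since $R_q(\infty)=0\in U_{n-1}\setminus\{1\}$, its entire forward orbit stays off $1$. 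This single structural observation handles all $n$ at once and makes your explicit checks for $n=0,1,2$ unnecessary.
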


\begin{proof}
It is known \cite[Theorems~9.3.1 and 9.3.2]{Beardon_91}
that every attractive or parabolic cycle
contains a critical point within its immediate basin of attraction,
and that this critical point has an infinite forward orbit
that lies entirely within the immediate basin of attraction
and converges to the cycle without falling onto it.\footnote{
   The {\em immediate basin of attraction}\/
   of an attractive cycle $\{z_1,\ldots,z_p\}$
   is the union of the Fatou components $F_1,\ldots,F_p$
   containing the points $z_1,\ldots,z_p$ \cite[p.~104]{Beardon_91}.
   It is easy to show \cite[Theorem~6.3.1]{Beardon_91}
   that the iterates $R^{np}$ converge to $z_i$ uniformly on compact subsets
   of $F_i$;
   moreover, it follows from the linearization theorem
   for attractive (but not superattractive) cycles
   \cite[Theorem~6.3.3]{Beardon_91}
   that every point in $\bigcup\limits_{i=1}^p (F_i \setminus \{z_i\})$
   has an infinite forward orbit that lies entirely within
   $\bigcup\limits_{i=1}^p (F_i \setminus \{z_i\})$.

   Similarly, the immediate basin of attraction
   of a parabolic (= rationally indifferent) cycle
   is the union of the Fatou components that contain a petal
   at some point of the cycle \cite[p.~194]{Beardon_91}.
   Once again, $R$ maps the immediate basin of attraction into itself
   \cite[p.~124]{Beardon_91};
   the iterates $R^{np}$ converge to the cycle,
   uniformly on compact subsets of the Fatou component
   \cite[Theorems~6.5.8 and 6.5.10]{Beardon_91};
   and the iterates cannot fall onto the cycle,
   because the iterates belong to the Fatou set
   while the parabolic cycle belongs to the Julia set
   \cite[Theorem~6.5.1]{Beardon_91}.
}
Since in the present case
there are only two critical points ($2-q$ and $\infty$)
and only one critical orbit ($2-q \mapsto \infty \mapsto \ldots$),
it follows that $\infty$ and its iterates
(that is, the entire critical orbit except perhaps $2-q$)
lie within the immediate basin of attraction
of the attractive fixed point at $y=1$
and converge to it without falling onto it.
It also follows that there cannot exist
any other attractive or parabolic cycles.
\qed
\end{proof}

Part (a) of Corollary~\ref{cor_B.3} will play a central role in our argument.
Part (b) is very interesting to know, but we will not need to use it.

Let us now recall \cite[Definition~4.1.1 and Theorem~4.1.4]{Beardon_91}
that a point $z$ is called {\em exceptional}\/ for
a rational map $R$ if its backward orbit is finite;
we denote by $E(R)$ the set of all the exceptional points.
The following characterization is well known:

\begin{proposition}
{$\!\!$ \bf \protect\cite[Theorems~4.1.2 and 4.1.4]{Beardon_91} \ }
   \label{prop_B.4}
A rational map $R$ of degree $d \ge 2$ has at most two exceptional points.
Exactly one of the following possibilities holds:
\begin{itemize}
   \item[(a)]   $E(R) = \emptyset$.
   \item[(b)]   $E(R) = \{z\}$, and $z$ is a superattractive fixed point
       satisfying $R^{-1}(\{z\}) = \{z\}$.
       In this case $R$ is conjugate to a polynomial of degree $d$.
   \item[(c)]   $E(R) = \{z_1,z_2\}$, and $z_1,z_2$ are
       superattractive fixed points
       satisfying $R^{-1}(\{z_i\}) = \{z_i\}$ $[i=1,2]$.
       In this case $R$ is conjugate to the map $z \mapsto z^d$.
   \item[(d)]   $E(R) = \{z_1,z_2\}$, and $\{z_1,z_2\}$ form
       a superattractive cycle of period 2
       satisfying $R^{-1}(\{z_1\}) = \{z_2\}$ and $R^{-1}(\{z_2\}) = \{z_1\}$.
       In this case $R$ is conjugate to the map $z \mapsto z^{-d}$.
\end{itemize}
In particular, an exceptional point is always a critical point
and a critical value.
\end{proposition}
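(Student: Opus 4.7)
The plan is to derive both parts of the proposition from the single structural fact that $E(R)$ is completely invariant and finite, and then to invoke a Riemann--Hurwitz count to bound its size.

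First I would verify that $E(R)$ is completely invariant. If $z \in E(R)$, then the set $O^-(z) := \bigcup_{n \ge 0} R^{-n}(z)$ is finite by definition. For backward invariance, any preimage $w \in R^{-1}(z)$ has $O^-(w) \subseteq O^-(z)$, hence $w \in E(R)$. For forward invariance, $O^-(R(z)) \subseteq \{R(z)\} \cup O^-(z)$, which is still finite. Thus $R^{-1}(E(R)) = E(R)$ and $R(E(R)) \subseteq E(R)$.

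Next I would bound $k := |E(R)|$. Write $E = E(R)$. The nested sets $E \supseteq R(E) \supseteq R^2(E) \supseteq \cdots$ stabilize at some $E' \subseteq E$ with $R(E') = E'$; a short counting argument using that each point of $E'$ has exactly $d$ preimages (with multiplicity) lying in $R^{-1}(E') \subseteq E$ shows in fact $E' = E$, so $R|_E$ is surjective. Then $\sum_{z \in E}\deg_z(R) = dk$, contributing total ramification $\sum_{z \in E}(\deg_z(R)-1) = k(d-1)$ to Riemann--Hurwitz, which gives $k(d-1) \le 2d-2$ and hence $k \le 2$ (using $d \ge 2$).

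The third step is case analysis on $k$. If $k=0$ we have (a). If $k=1$, say $E=\{z\}$, backward invariance forces $R^{-1}(z)=\{z\}$, so $z$ is a fixed point with $\deg_z(R)=d$, i.e.\ a superattractive fixed critical point; Möbius-conjugating $z$ to $\infty$ makes $R$ a polynomial of degree $d$, yielding (b). If $k=2$, say $E=\{z_1,z_2\}$, then either $R^{-1}(z_i)=\{z_i\}$ for each $i$ (each a superattractive fixed point of local degree $d$), or $R^{-1}(z_1)=\{z_2\}$ and $R^{-1}(z_2)=\{z_1\}$ (a superattractive $2$-cycle). Conjugating $\{z_1,z_2\}$ to $\{0,\infty\}$ in the first situation gives a map of the form $cz^d$ that can be rescaled to $z \mapsto z^d$ (case (c)); in the second situation it gives $cz^{-d}$, rescaling to $z \mapsto z^{-d}$ (case (d)). The closing remark that exceptional points are always critical points and critical values is then immediate, since in every case above each $z_i$ has a unique preimage of full local degree $d \ge 2$.

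The only genuinely delicate step is showing $R|_E$ is surjective (equivalently, that $E$ has no ``transient'' part), since naively complete invariance under $R^{-1}$ does not preclude $R(E) \subsetneq E$; the counting $\sum_{z \in R(E)}\deg_z(R)=d\,|R(E)|$ combined with $R^{-1}(R(E)) \subseteq E$ and $|R^{-1}(R(E))| \le |E|=k$ is what forces $|R(E)|=k$ once the Riemann--Hurwitz bound is in hand. After this, the Möbius normalizations in each case are routine and can be done by inspection.
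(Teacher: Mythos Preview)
The paper does not give its own proof of this proposition; it simply quotes the result from Beardon's textbook and uses it as a black box.  Your outline is the standard argument (and essentially Beardon's), so on that level there is nothing to compare.

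There is, however, a genuine gap in your forward-invariance step.  The asserted inclusion $O^-(R(z)) \subseteq \{R(z)\} \cup O^-(z)$ is not valid as written: if $z' \neq z$ satisfies $R(z') = R(z)$, then $z' \in O^-(R(z))$, yet nothing you have said so far forces $z' \in O^-(z)$.  (The inclusion \emph{is} true once you know $\deg_z R = d$, so that $R^{-1}(R(z)) = \{z\}$; but that is exactly what your later counting argument establishes, so as ordered the reasoning is circular.)  Your closing paragraph seems to sense this difficulty, but the patch you sketch there still presupposes $R(E) \subseteq E$.

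The clean fix is to run the counting argument on the finite set $S = O^-(z)$ \emph{before} ever discussing forward invariance.  Backward invariance $R^{-1}(S) \subseteq S$ is immediate from the definition; then $\sum_{w \in R^{-1}(S)} \deg_w R = d\,|S|$ together with $|R^{-1}(S)| \le |S|$ and $\deg_w R \le d$ forces $R^{-1}(S) = S$ and $\deg_w R = d$ for every $w \in S$.  Riemann--Hurwitz now gives $|S| \le 2$, forward invariance of $S$ (and hence of $E(R)$) follows for free from $R^{-1}(S) = S$ and surjectivity of $R$, and your case analysis goes through unchanged.
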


Using this characterization of exceptional points,
we can determine the exceptional set for our maps $R_q$
as an immediate consequence of Lemma~\ref{lemma_B.1}:

\begin{lemma}
   \label{lemma_B.5}
Fix $y_\sharp = 0$ and $q \neq 1$.  Then
\be
   E(R_q)  \;=\;  \begin{cases}
                      \{0,\infty\}  & \text{\rm if $q=2$}  \\[1mm]
                      \emptyset     & \text{\rm if $q \neq 2$}
                  \end{cases}
\ee
\end{lemma}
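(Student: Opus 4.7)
The plan is to combine Proposition~\ref{prop_B.4} with the critical-orbit data from Lemma~\ref{lemma_B.1}, using the fact (stated at the end of Proposition~\ref{prop_B.4}) that every exceptional point must be both a critical point and a critical value. From Lemma~\ref{lemma_B.1} the critical points of $R_q$ are $\{2-q,\infty\}$, and direct computation on the Riemann sphere gives $R_q(2-q)=\infty$ (well-defined since $q\neq 1$ makes the numerator nonzero) and $R_q(\infty)=0$, so the set of critical values is $\{0,\infty\}$. Hence the only candidates for exceptional points must lie in the intersection $\{2-q,\infty\}\cap\{0,\infty\}$.

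When $q=2$ the map simplifies to $R_2(y)=(1/y)^r$, and the intersection above is $\{0,\infty\}$. I would then verify directly that $R_2^{-1}(\{\infty\})=\{0\}$ and $R_2^{-1}(\{0\})=\{\infty\}$, so $\{0,\infty\}$ forms a superattractive $2$-cycle of the form described in case~(d) of Proposition~\ref{prop_B.4}, yielding $E(R_2)=\{0,\infty\}$.

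When $q\neq 2$ the intersection reduces to $\{\infty\}$, so there is at most one exceptional point, which immediately rules out cases~(c) and~(d) of Proposition~\ref{prop_B.4}. To exclude case~(b), I would observe that $R_q(\infty)=0\neq\infty$ (as $0$ and $\infty$ are distinct on the Riemann sphere), so $\infty$ is not even a fixed point of $R_q$, let alone a superattractive fixed point with $R_q^{-1}(\{\infty\})=\{\infty\}$. Therefore case~(a) of Proposition~\ref{prop_B.4} applies and $E(R_q)=\emptyset$.

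The only subtlety is keeping track of preimages and multiplicities on the Riemann sphere: one needs to recognize that $\infty$ is a zero of $R_q$ of multiplicity $r$ (so $R_q^{-1}(\{0\})=\{\infty\}$ with full multiplicity) and that $2-q$ is a pole of multiplicity $r$ (so $R_q^{-1}(\{\infty\})=\{2-q\}$ with full multiplicity), which is precisely the information encoded in Lemma~\ref{lemma_B.1} saying that these critical points have multiplicity $r-1$. Once this bookkeeping is in place the argument is essentially immediate, and I anticipate no serious obstacle beyond a careful case check.
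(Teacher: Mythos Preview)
Your proposal is correct and follows exactly the route the paper intends: the paper states the lemma as an immediate consequence of Lemma~\ref{lemma_B.1} via the characterization in Proposition~\ref{prop_B.4}, and your argument spells out precisely this deduction by intersecting the critical points with the critical values and then running through the cases of Proposition~\ref{prop_B.4}.
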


In our case we are dealing, not with a single rational map $R$,
but with a {\em family}\/ of rational maps
$\{ R_q \}_{q \in \C \setminus \{1\}}$
that depend analytically (= holomorphically) on the complex parameter $q$.
So let us recall some of the general theory \cite{lyubichregular,avilasokal}
concerning the iteration of holomorphic families of rational maps.

The basic setup is as follows:
We are given a family $\{ R_\lambda \}_{\lambda \in \Lambda}$
of rational maps (of degree $d \ge 2$)
parametrized holomorphically by $\lambda \in \Lambda$,
where $\Lambda$ is a connected finite-dimensional complex manifold.
We are also given a holomorphically varying initial point
$Z_\lambda \in \Cbar$.
Our goal is to understand the joint dynamics of the pair
$(R_\lambda,Z_\lambda)$,
i.e.\ the behavior of the family of maps
$\lambda \mapsto R_\lambda^n(Z_\lambda)$ ($n \ge 0$).
We say that a point $\lambda_0 \in \Lambda$
is a {\em regular point}\/ for the pair $(R_\lambda,Z_\lambda)$
if the family $\{R_\lambda^n(Z_\lambda)\}$
is normal in some neighborhood of $\lambda_0$,
and an {\em irregular point}\/ otherwise.\footnote{
   We recall that if $U$ is a connected open subset of $\Lambda$,
   a family ${\cal F}$ of functions from $U$ to $\Cbar$
   is called {\em normal}\/ if every sequence of functions from ${\cal F}$
   admits a subsequence that either converges uniformly on compacts
   or else escapes to infinity uniformly on compacts.
}
We denote by $\Reg$ (resp. $\Irr$) the set of
regular (resp.\ irregular) points;
these sets are open and closed, respectively.
A {\em domain of regularity}\/ for the pair $(R_\lambda,Z_\lambda)$ is a
connected open subset of $\Reg$.

One rather trivial way for the family $\{R_\lambda^n(Z_\lambda)\}$ to be
normal is for it to reduce to a finite set of maps.
This case corresponds to $Z_\lambda$ being {\em persistently preperiodic}\/,
i.e.\ there exist $m>n \geq 0$ such that
$R_\lambda^m(Z_\lambda)=R_\lambda^n(Z_\lambda)$ for
all $\lambda \in \Lambda$.
The papers \cite{lyubichregular,avilasokal}
studied the regular and irregular sets
for the pair $(R_\lambda,Z_\lambda)$
under the assumption that $Z_\lambda$ is {\em not}\/
persistently preperiodic.
One of the simpler results from these papers ---
which will be our main tool in what follows --- is the following:

\begin{proposition}
{$\!\!$ \bf \protect\cite[Proposition~3.1]{lyubichregular}
            \protect\cite[Proposition~4.1]{avilasokal} \ }
   \label{prop.attractingcase}
Let $\lambda_0 \in \Lambda$ and $z_0 \in \Cbar$ be such that
$\lim_{n \to \infty} R^{np}_{\lambda_0}(z_0) = \zeta_{\lambda_0}$,
where $\zeta_{\lambda_0}$ is an attractive periodic point of period $p$
for $R_{\lambda_0}$.
Then:
\begin{itemize}
   \item[(a)] There exist open sets $V \ni \lambda_0$ and $W \ni z_0$
       and a holomorphic function $\zeta_\lambda$ defined for $\lambda \in V$
       and taking the given value at $\lambda_0$,
       such that $\zeta_\lambda$ is an attractive periodic point of period $p$
       for $R_\lambda$ for all $\lambda \in V$,
       and $\lim_{n \to \infty} R^{np}_{\lambda}(z) = \zeta_{\lambda}$
       uniformly on compact subsets of $V \times W$.
   \item[(b)] 
       $\lambda_0$ is a regular point for every pair
       $(R_\lambda,Z_\lambda)$ satisfying $Z_{\lambda_0} = z_0$.
   \item[(c)]  Suppose that $Z_\lambda$ is a holomorphic path satisfying
       $Z_{\lambda_0} = z_0$, and that $U \ni \lambda_0$ is a
       domain of regularity for $(R_\lambda,Z_\lambda)$.
       Then $\lim_{n \to \infty} R^{np}_\lambda(Z_\lambda) \equiv \zeta_\lambda$
       exists for all $\lambda \in U$ [uniformly on compact subsets of $U$]
       and satisfies $R_\lambda^p(\zeta_\lambda) = \zeta_\lambda$.
       Moreover, if $Z_\lambda$ is not persistently preperiodic,
       then $\zeta_\lambda$ remains attractive
       [i.e.\ satisfies $|(DR_\lambda^p)(\zeta_\lambda)| < 1$]
       and of period $p$, for all $\lambda \in U$.
\end{itemize}
\end{proposition}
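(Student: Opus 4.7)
The plan is to prove the three parts in order, building each on the previous. For part (a), I would apply the implicit function theorem to $R_\lambda^p(z) - z$ at $(\lambda_0, \zeta_{\lambda_0})$. The attractivity hypothesis gives $|\mu_0| < 1$ for the multiplier $\mu_0 = (R_{\lambda_0}^p)'(\zeta_{\lambda_0})$, so in particular $\mu_0 \neq 1$, and one obtains a unique holomorphic solution $\zeta_\lambda$ on an open neighborhood of $\lambda_0$. Continuity of the multiplier $\mu(\lambda) = (R_\lambda^p)'(\zeta_\lambda)$ shows that $\zeta_\lambda$ remains attractive throughout a possibly smaller neighborhood $V$. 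For the uniform convergence: since $z_0$ lies in the basin of $\zeta_{\lambda_0}$, some finite iterate $R_{\lambda_0}^{Np}(z_0)$ lies in a Koenigs linearizing disc around $\zeta_{\lambda_0}$. By joint continuity of $(\lambda,z) \mapsto R_\lambda^{Np}(z)$, after shrinking $V$ and choosing a small neighborhood $W \ni z_0$ one arranges that $R_\lambda^{Np}(V \times W)$ lies inside a linearizing neighborhood of $\zeta_\lambda$ that varies holomorphically with $\lambda$. A uniform contraction estimate in these linearizing coordinates then yields $R_\lambda^{np}(z) \to \zeta_\lambda$ uniformly on $V \times W$.

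Part (b) follows quickly from (a). For any holomorphic path $Z_\lambda$ with $Z_{\lambda_0} = z_0$, continuity forces $Z_\lambda \in W$ for $\lambda$ near $\lambda_0$, so $R_\lambda^{np+k}(Z_\lambda) = R_\lambda^k(R_\lambda^{np}(Z_\lambda)) \to R_\lambda^k(\zeta_\lambda)$ uniformly for each $k \in \{0,1,\ldots,p-1\}$. The family $\{R_\lambda^n(Z_\lambda)\}$ thus decomposes into $p$ uniformly convergent subsequences on a neighborhood of $\lambda_0$ and is therefore normal there.

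For the first half of part (c), fix a domain of regularity $U$ and a residue class $k$ mod $p$. By normality, every subsequence of $\{R_\lambda^{np+k}(Z_\lambda)\}$ admits a further subsequence converging uniformly on compacta of $U$ to a meromorphic function $\phi : U \to \Cbar$. On $V \cap U$ every such limit equals $R_\lambda^k(\zeta_\lambda)$ by (a), so the identity principle on the connected set $U$ forces all subsequential limits to coincide, whence the full sequence converges on $U$. Calling the limit $\phi_k$, the identities $R_\lambda(\phi_k) = \phi_{k+1}$ and $\phi_p = \phi_0$ yield a holomorphic extension $\zeta_\lambda := \phi_0(\lambda)$ on $U$ satisfying $R_\lambda^p(\zeta_\lambda) = \zeta_\lambda$.

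It remains to show that $\zeta_\lambda$ stays attractive and of minimal period $p$ throughout $U$, under the hypothesis that $Z_\lambda$ is not persistently preperiodic. The multiplier $\mu(\lambda) = (R_\lambda^p)'(\zeta_\lambda)$ is holomorphic on $U$ and satisfies $|\mu| < 1$ on $V \cap U$. If $\mu$ is constant on $U$, the bound $|\mu|<1$ persists trivially. Otherwise, suppose some $\lambda^* \in U$ has $|\mu(\lambda^*)| \ge 1$; by the open mapping theorem there is an open subset $U^* \subseteq U$ on which $|\mu| > 1$, so $\zeta_\lambda$ is a hyperbolic repelling fixed point of $R_\lambda^p$ on $U^*$. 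But $R_\lambda^{np}(Z_\lambda) \to \zeta_\lambda$ on $U^*$, and the Koenigs linearization near a hyperbolic repelling fixed point shows that no orbit can approach such a point without eventually landing on it; hence at each $\lambda \in U^*$ there exists $N(\lambda)$ with $R_\lambda^{N(\lambda) p}(Z_\lambda) = \zeta_\lambda$. A normal-families/compactness argument gives a single $N$ working on a smaller open set, and the identity principle extends $R_\lambda^{Np}(Z_\lambda) \equiv \zeta_\lambda$ to all of $\Lambda$, contradicting non-preperiodicity. A parallel argument, using lower semicontinuity of the minimal period on the set of holomorphically tracked attractive cycles, rules out a period drop. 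The main obstacle is the indifferent boundary case $|\mu(\lambda^*)|=1$, where the open mapping theorem does give an adjacent repelling open set provided $\mu$ is nonconstant, but one must handle carefully the possibility that the approach to $\zeta_\lambda$ is along a Leau--Fatou parabolic petal at isolated $\lambda^*$; this is resolved by noting that the repelling open subset $U^*$ nearby already suffices to force persistent preperiodicity via the identity-principle argument above.
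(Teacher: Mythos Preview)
The paper does not give its own proof of this proposition: it is quoted verbatim from the cited references \cite[Proposition~3.1]{lyubichregular} and \cite[Proposition~4.1]{avilasokal} and used as a black box in the proof of Corollary~\ref{cor.irregular}. So there is no in-paper proof to compare against.

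That said, your sketch follows the standard route and is essentially correct. A couple of points where you could tighten the argument:

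\begin{itemize}
\item In part~(c), to pass from ``for each $\lambda\in U^*$ there is $N(\lambda)$ with $R_\lambda^{N(\lambda)p}(Z_\lambda)=\zeta_\lambda$'' to a single $N$ on an open set, the cleanest justification is not ``normal families/compactness'' but Baire category: each set $A_N=\{\lambda\in U^*: R_\lambda^{Np}(Z_\lambda)=\zeta_\lambda\}$ is an analytic subvariety of the connected open set $U^*$, hence either all of $U^*$ or nowhere dense; since $\bigcup_N A_N=U^*$, some $A_N$ equals $U^*$. (Alternatively, use uniform convergence on a compact $K\subset U^*$ together with a uniformly-sized repelling linearizing disc to force $R_\lambda^{N_0 p}(Z_\lambda)=\zeta_\lambda$ for a single $N_0$ and all $\lambda\in K$.)
\item For ``period does not drop'', once you have $|\mu(\lambda)|<1$ on all of $U$ the argument is short and does not need a separate ``parallel'' dynamical analysis: if the minimal period at some $\lambda^*$ were $p'\mid p$ with $p'<p$, then writing $\nu=(R_{\lambda^*}^{p'})'(\zeta_{\lambda^*})$ one has $\nu^{p/p'}=\mu(\lambda^*)$, so $|\nu|<1$ and in particular $\nu\neq 1$. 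The implicit function theorem then gives a local holomorphic branch $\eta_\lambda$ of fixed points of $R_\lambda^{p'}$ with $\eta_{\lambda^*}=\zeta_{\lambda^*}$; but $\eta_\lambda$ is also a fixed point of $R_\lambda^p$, and since $\mu(\lambda^*)\neq 1$ the fixed point of $R_\lambda^p$ near $\zeta_{\lambda^*}$ is unique, forcing $\eta_\lambda\equiv\zeta_\lambda$ near $\lambda^*$. Then $R_\lambda^{p'}(\zeta_\lambda)=\zeta_\lambda$ on an open set, hence on all of $U$ by the identity principle, contradicting minimal period $p$ near $\lambda_0$.
\end{itemize}

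With these clarifications your outline constitutes a valid proof along the lines of the cited references.
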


Let us now apply Proposition~\ref{prop.attractingcase} to our family
$R_\lambda = R_q$ with initial condition $Z_\lambda = \infty$.
We first need the following lemma:

\begin{lemma}
   \label{lem_persistent}
For the family $R_\lambda = R_q$, the initial condition $Z_\lambda = \infty$
is not persistently preperiodic.
\end{lemma}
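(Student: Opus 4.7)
The plan is to derive a contradiction from the assumption of persistent preperiodicity by exploiting the convergence of the critical orbit established in Corollary~\ref{cor_B.3}(a).

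First, I would suppose for contradiction that there exist integers $m > n \geq 0$ such that $R_q^m(\infty) = R_q^n(\infty)$ holds identically in $q$ (for $q$ in the parameter space $\C \setminus \{1\}$). This relation forces the forward orbit of $\infty$ under $R_q$ to be eventually periodic for \emph{every} admissible~$q$, with periodic part contained in the finite set $\{R_q^n(\infty), R_q^{n+1}(\infty), \ldots, R_q^{m-1}(\infty)\}$.

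Next, I would specialize to any single value $q_0$ satisfying $|q_0-1| > r$ (such values obviously exist). On the one hand, Corollary~\ref{cor_B.3}(a) gives $\lim_{k \to \infty} R_{q_0}^k(\infty) = 1$ while $R_{q_0}^k(\infty) \neq 1$ for all $k \geq 0$. On the other hand, by the previous paragraph the tail $(R_{q_0}^k(\infty))_{k \geq n}$ takes only finitely many distinct values; since this tail also converges to $1$, the limit must be attained, contradicting the fact that $R_{q_0}^k(\infty) \neq 1$ for all $k$.

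There is no real obstacle here: once one has Corollary~\ref{cor_B.3}(a) in hand, the argument is a two-line specialization to a single parameter value. The substantive work has already been carried out in Lemmas~\ref{lemma_B.1}--\ref{lemma_B.2} and Corollary~\ref{cor_B.3}, where the unicity of the critical orbit (special to the chromatic case $y_\sharp = 0$) was used to conclude that the orbit of $\infty$ lies in the immediate basin of the attractive but non-superattractive fixed point $y = 1$ and therefore converges to it without ever landing on it.
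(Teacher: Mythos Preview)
Your proof is correct and follows essentially the same approach as the paper: both specialize to a parameter $q_0$ with $|q_0-1|>r$ and invoke Corollary~\ref{cor_B.3}(a), noting that an orbit converging to $1$ without ever hitting $1$ cannot be eventually periodic. The paper simply states this in one sentence, whereas you have spelled out the contradiction in slightly more detail.
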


\begin{proof}
By Corollary~\ref{cor_B.3}(a) we know that for $|q-1|>r$
the initial condition $Z_\lambda = \infty$ is attracted to the
attractive fixed point at $y=1$ but without falling onto it.
Therefore $Z_\lambda = \infty$ is not persistently preperiodic.
\qed
\end{proof}

With this lemma in hand, we can apply Proposition~\ref{prop.attractingcase}
to conclude the following:

\begin{corollary}
   \label{cor.irregular}
Fix $y_\sharp = 0$.  Then every point of the circle $|q-1| = r$
is an irregular point for the family $\{R_q\}$ with initial condition $\infty$.
\end{corollary}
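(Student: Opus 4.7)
The plan is to argue by contradiction: if some $q_0$ with $|q_0-1|=r$ were regular, then Proposition~\ref{prop.attractingcase}(c) would force the fixed point $y=1$ to be attractive at $q_0$, contradicting the fact (Lemma~\ref{lemma_B.2}) that it is only marginal there.

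First, suppose $q_0$ is a regular point, and let $U \ni q_0$ be a connected domain of regularity for the pair $(R_q,\infty)$. Since $U$ is open and $q_0$ lies on the circle $|q-1|=r$, the set $U \cap \{|q-1|>r\}$ is non-empty. Pick any $\lambda_0$ in this intersection. By Lemma~\ref{lemma_B.2}, $y=1$ is an attractive fixed point of $R_{\lambda_0}$, and by Corollary~\ref{cor_B.3}(a) we have $R_{\lambda_0}^n(\infty) \to 1$.

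Next, I invoke Proposition~\ref{prop.attractingcase}(c) with $p=1$, $z_0 = \infty$, $\zeta_{\lambda_0} = 1$, and marked path $Z_\lambda \equiv \infty$; by Lemma~\ref{lem_persistent} this path is not persistently preperiodic. The proposition then supplies a holomorphic function $\zeta_q$ on all of $U$ such that $R_q^n(\infty) \to \zeta_q$ uniformly on compact subsets of $U$, with $R_q(\zeta_q) = \zeta_q$ and (crucially) $|R_q'(\zeta_q)| < 1$ for every $q \in U$.

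It remains to identify $\zeta_q$ and conclude. By Corollary~\ref{cor_B.3}(a), $\zeta_q = 1$ throughout the open set $U \cap \{|q-1|>r\}$; since $\zeta_q$ is holomorphic on the connected set $U$, the identity principle gives $\zeta_q \equiv 1$ on $U$, and in particular $\zeta_{q_0} = 1$. But Lemma~\ref{lemma_B.2} gives $|R_{q_0}'(1)| = r/|q_0-1| = 1$, so the fixed point $y=1$ is merely marginal at $q_0$, violating the strict inequality $|R_q'(\zeta_q)| < 1$ guaranteed by Proposition~\ref{prop.attractingcase}(c). This contradiction shows that no such regular $q_0$ can lie on the circle $|q-1|=r$.

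The whole argument hinges on Proposition~\ref{prop.attractingcase}(c), which does essentially all the work: it is the mechanism that upgrades pointwise information about the critical orbit on one side of the circle into a global holomorphic obstruction throughout any putative domain of regularity. Given this machinery, no step is delicate — the identity theorem applies because $\zeta_q$ is a uniform-on-compacts limit of holomorphic functions on the connected set $U$, the multiplier computation is immediate, and the non-persistent-preperiodicity hypothesis has already been verified in Lemma~\ref{lem_persistent} using Corollary~\ref{cor_B.3}(a).
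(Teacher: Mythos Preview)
Your proof is correct and follows essentially the same route as the paper: both assume a point on the circle is regular, pick a point of $U$ in $\{|q-1|>r\}$, and invoke Proposition~\ref{prop.attractingcase}(c) together with Lemma~\ref{lem_persistent} to force the fixed point at $y=1$ to remain attractive throughout $U$. The only cosmetic difference is that you locate the contradiction at $q_0$ itself (multiplier equal to $1$), whereas the paper locates it at a nearby point with $|q-1|<r$ (multiplier strictly greater than $1$); your explicit appeal to the identity principle to pin down $\zeta_q\equiv 1$ is a nice touch that the paper leaves implicit.
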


\begin{proof}
Consider any $q_1$ satisfying $|q_1-1| = r$.
Suppose that $q_1$ is a regular point,
and let $U \ni q_1$ be a domain of regularity.
Then $U$ contains a point $q_0$ satisfying $|q_0 - 1| > r$,
and Corollary~\ref{cor_B.3}(a) guarantees that
$\lim_{n \to\infty} R_{q_0}^n(\infty) = 1$.
But then Proposition~\ref{prop.attractingcase}(c)
and Lemma~\ref{lem_persistent} imply that
the fixed point at $1$ remains attractive whenever $q \in U$,
which contradicts the fact (Lemma~\ref{lemma_B.2})
that it is repulsive whenever $|q-1| < r$.
It follows that $q_1$ must be an irregular point.
\qed
\end{proof}

Next we use a result guaranteeing that the joint dynamics is ``wild''
in the neighborhood of every irregular point.
First, a definition:
If $U$ is a connected open subset of $\Lambda$,
we call a function $f \colon\, U \to \Cbar$
{\em persistently exceptional}\/ in case $f(\lambda)$
is an exceptional point for $R_\lambda$ for all $\lambda \in U$.
We then have:

\begin{prop}
{$\!\!$ \bf \protect\cite[Proposition~3.5]{lyubichregular}
            \protect\cite[Proposition~3.9]{avilasokal} \ }
 \label{prop.nonexceptional}
Let $U$ be a connected open subset of $\Lambda$
having a nonempty intersection with the irregular set~$\Irr$,
and let $f \colon\, U \to \Cbar$ be a holomorphic function
that is not persistently exceptional.
Then the analytic varieties
\begin{equation}
   {\cal S}^f_n  \;=\;
   \{ \lambda \in U \colon\, R_\lambda^n(Z_\lambda) = f(\lambda) \}
\end{equation}
accumulate everywhere on $\Irr \cap U$
[that is, $\liminf\limits_{n\to\infty} {\cal S}^f_n \supseteq \Irr \cap U$].
\end{prop}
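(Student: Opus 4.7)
The plan is a contradiction argument using Montel's normality theorem with moving targets, combined with the lifting of backward orbits of $f$ under the holomorphic family $\{R_\lambda\}$. Suppose $\lambda_0 \in \Irr \cap U$ fails to lie in $\liminf_n {\cal S}^f_n$; then there exist a neighborhood $V \subset U$ of $\lambda_0$ and an infinite set $I \subset \NN$ such that $g_n(\lambda) := R_\lambda^n(Z_\lambda)$ satisfies $g_n(\lambda) \ne f(\lambda)$ for every $\lambda \in V$ and every $n \in I$. Thus $f$ is a first holomorphic target avoided by the subfamily $\{g_n\}_{n \in I}$ on $V$. The goal is to produce two further holomorphic functions $h_1, h_2 \colon V' \to \Cbar$ on a shrunken neighborhood $V' \ni \lambda_0$, with graphs pairwise disjoint and disjoint from the graph of $f$, together with an infinite $I' \subset I$ along which $\{g_n\}$ also avoids $h_1$ and $h_2$. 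Montel applied to the moving-target subfamily $\{g_n\}_{n \in I'}$ on $V'$ then yields normality of this subfamily, which after a bootstrap (described below) contradicts $\lambda_0 \in \Irr$.

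The construction of $h_1, h_2$ exploits the hypothesis that $f$ is not persistently exceptional. By Proposition~\ref{prop_B.4}, $E(R_\lambda)$ has at most two elements and is locally determined by holomorphic data of $R_\lambda$; hence the locus $\{\lambda \in U : f(\lambda) \in E(R_\lambda)\}$ is a proper analytic subset of $U$. Away from this locus and from the further proper analytic set where $f(\lambda)$ meets a critical value of $R_\lambda$, the value $f(\lambda)$ has $d := \deg R_\lambda \ge 2$ distinct preimages under $R_\lambda$; the implicit function theorem lifts these to distinct holomorphic sections on a sufficiently small neighborhood. After possibly relocating the basepoint inside $V$ to such a generic parameter and iterating backward once more to separate graphs from $f$, one obtains holomorphic $h_1, h_2$ with $R_\lambda(h_i(\lambda)) = f(\lambda)$ and pairwise disjoint graphs. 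The mechanism linking the three avoidances is the tautology
\[
   g_n(\lambda) = h_i(\lambda)
   \;\Longrightarrow\;
   g_{n+1}(\lambda) \,=\, R_\lambda(h_i(\lambda)) \,=\, f(\lambda)
   ,
\]
so whenever both $n$ and $n+1$ lie in $I$, the function $g_n$ automatically avoids $h_1$ and $h_2$ on $V'$. If $I$ contains arbitrarily close pairs, setting $I' = \{n : n, n+1 \in I\}$ suffices; otherwise one passes to deeper backward preimages $h_i^{(k)}$ with $R_\lambda^k(h_i^{(k)}) = f$ and takes $I' = \{n : n + k \in I\}$, which is an infinite translate of $I$.

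The main obstacle I expect is the bootstrap from normality of $\{g_n\}_{n \in I'}$ to normality of the full family $\{g_n\}_{n \ge 0}$ on a further shrinkage of $V'$, since Montel directly controls only the subfamily along $I'$ whereas $\lambda_0 \in \Irr$ requires non-normality of the whole family. The standard remedy uses the cocycle relation $g_{n+j} = R_\lambda^j \circ g_n$ together with holomorphic dependence of $R_\lambda^j$ on $\lambda$: given an arbitrary sequence $\{g_{n_k}\}$ witnessing non-normality at $\lambda_0$, one chooses for each $k$ an integer $j_k \ge 0$ with $n_k + j_k \in I'$ and $j_k$ controlled by the gap structure of $I$; the sequence $g_{n_k + j_k} = R_\lambda^{j_k} \circ g_{n_k}$ then lies in the normal subfamily and admits a convergent subsequence on compacts of $V'$, which one transfers back to a convergent subsequence of $\{g_{n_k}\}$ by selecting continuous local branches of the multivalued inverse correspondence $R_\lambda^{-j_k}$ adapted to the chosen limit. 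Making this transfer rigorous — in particular arranging $j_k$ to remain bounded and the selected branches to cohere — is the technically delicate core of the argument, and will likely require further refinement of the choices of $I$, $I'$, and of the iterated preimages $h_i^{(k)}$ interwoven with each other.
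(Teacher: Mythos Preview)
The paper does not prove this proposition at all: it is quoted verbatim from Lyubich \cite{lyubichregular} and Avila--Sokal \cite{avilasokal}, and the paper simply invokes it as a black box in the proof of Theorem~\ref{thm.leaf-joined.APP}. So there is no ``paper's own proof'' to compare against.

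As for your outline: the overall strategy --- assume the $\liminf$ fails, obtain an infinite index set $I$ on which $g_n$ avoids $f$, manufacture additional holomorphic targets from backward preimages of $f$, and apply Montel with moving targets --- is indeed the standard route. You also correctly isolate the real crux: Montel only yields normality of the subfamily $\{g_n\}_{n\in I'}$, whereas contradicting $\lambda_0\in\Irr$ requires normality of the \emph{full} family $\{g_n\}_{n\ge 0}$.

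However, your proposed bootstrap does not close this gap. Your mechanism $g_n=h_i^{(k)}\Rightarrow g_{n+k}=f$ forces $I'=\{n:n+k\in I\}$ (or $\{n:n\in I,\ n+k\in I\}$ if you want three targets including $f$ itself). For the forward--backward transfer you then need the $j_k$ to stay bounded, i.e.\ you need $I'$ (hence $I$) to have bounded gaps. But nothing in the setup prevents $I$ from being extremely sparse --- take for instance $I=\{2^j:j\ge 0\}$, where for every fixed $k$ the set $\{n:n,n+k\in I\}$ is finite, and the gaps in $I-k$ are unbounded. In that regime your $j_k$ are unbounded and the ``selection of coherent local branches of $R_\lambda^{-j_k}$'' is not a well-defined operation: inverse branches of high iterates of a degree-$d$ map are $d^{j_k}$-valued and cannot be chosen continuously in any controlled way. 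Your own closing sentence (``will likely require further refinement'') is an accurate self-assessment: as written, the argument establishes at most the $\limsup$ statement (i.e.\ \emph{some} $n\to\infty$ with ${\cal S}^f_n$ meeting every neighbourhood of $\lambda_0$), not the $\liminf$ statement the proposition actually asserts and the paper actually uses.
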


We are now ready to prove Theorem~\ref{thm.leaf-joined.APP}:

\proofof{Theorem~\ref{thm.leaf-joined.APP}}
By Lemma~\ref{lemma.iteration} with $v_\sharp = -1$,
we have $P_{G_n}(q) = 0$ if and only if $R_q^n(\infty) = 1-q$.
We therefore apply Proposition~\ref{prop.nonexceptional}
to the ``target function'' $f(q) = 1-q$
with the initial condition $Z_q = \infty$.
By Lemma~\ref{lemma_B.5} there are {\em no}\/ persistently exceptional
functions for our family;  in particular, $f(q) = 1-q$
is not persistently exceptional.
Combining Corollary~\ref{cor.irregular}
and Proposition~\ref{prop.nonexceptional},
we complete the proof of Theorem~\ref{thm.leaf-joined.APP}.
\qed

\bigskip

Let us conclude by making a few further remarks concerning the map $R_q$
and the chromatic roots of the graphs $G_n$.
Note first that the map $R_q(y) = [(q-1)/(q-2+y)]^r$ is conjugate,
under the M\"obius transformation $z = 1 + y/(q-2)$, to the map
\be
   \widetilde{R}_w(z)  \;=\;  1 \,+\, {w \over z^r}
 \label{def.Rtildew}
\ee
where
\be
   w  \;=\;  {(q-1)^r \over (q-2)^{r+1}}
   \;.
 \label{def.w_q}
\ee
The family of maps $\widetilde{R}_w$,
parametrized by $w \in \C \setminus \{0\}$,
has been studied by several authors
\cite{Milnor_quadratic,Milnor_bicritical,Bamon_99,Bobenrieth_tesis,%
Bobenrieth_1,Bobenrieth_2,Bobenrieth_3};
it is the unique (modulo conjugation) one-parameter family of
degree-$r$ rational maps with two $(r-1)$-fold critical points, 
one of which maps onto the other
(this follows from \cite[Lemma~1.1]{Milnor_bicritical}).
Curiously, the recursion \reff{def.Rtildew}
arises also in the study of the hard-core lattice-gas partition function
(= independence polynomial) for a rooted tree of branching factor $r$
\cite[Example~3.6]{Scott-Sokal_lovasz}.

An easy calculation shows that the map $\widetilde{R}_w$
possesses a fixed point of multiplier $\lambda$ if and only if
\be
   w  \;=\; - \, {\lambda \, r^r \over (\lambda+r)^{r+1}}
   \;,
 \label{eq.Rw.fixed.w}
\ee
and in this case the fixed point lies at
\be
   z  \;=\;  {r \over \lambda+r}
   \;.
 \label{eq.Rw.fixed.z}
\ee
Combining \reff{eq.Rw.fixed.w} with \reff{def.w_q}
and solving for $q$, we find $r+1$ solutions:
one of them, $q = 1-r/\lambda$,
corresponds in the map $R_q$ to the fixed point at $y=1$
of multiplier $\lambda = -r/(q-1)$;
but the others are new.  For instance, for $r=2$ we have
\be
   q  \;=\; 
   {8 - 6\lambda - \lambda^2 \pm (2+\lambda) \sqrt{\lambda(8+\lambda)}
    \over
    8
   }
 \label{eq.Rq.fixed.q}
\ee
with fixed points at
\be
   y
   \;=\;
   {\lambda \, (2-q) \over 2+\lambda}
   \;=\;
   {\lambda \,
    [4+\lambda \mp \sqrt{\lambda(8+\lambda)}]
    \over
    8
   }
 \;.
 \label{eq.Rq.fixed.y}
\ee
For $r \ge 3$ the formulae become much more complicated.

Similarly one can search for periodic orbits of higher period $p$
with a given multiplier $\lambda$.
For at least one case the formulae are simple:
for $r=2$ the map $\widetilde{R}_w$ has an orbit of period $p=2$
with multiplier $\lambda$ if and only if
\be
   w  \;=\;  {4 \over \lambda}
   \;,
 \label{eq.Rw.period2.w}
\ee
and in this case the orbit lies at
\be
   z  \;=\;  {2 \pm 2 \sqrt{1-\lambda} \over \lambda}
   \;.
 \label{eq.Rw.period2.z}
\ee
[Here the case $\lambda=1$, $w=4$, $z=2$
 is actually a fixed point of multiplier $-1$:
 cf.\ \reff{eq.Rw.fixed.w}/\reff{eq.Rw.fixed.z} with $\lambda=-1$.]
The corresponding values of $q$ and $y$ can then be obtained,
but the formulae are messy.

\begin{figure}[t]
\begin{center}
\includegraphics[width=0.6\textwidth]{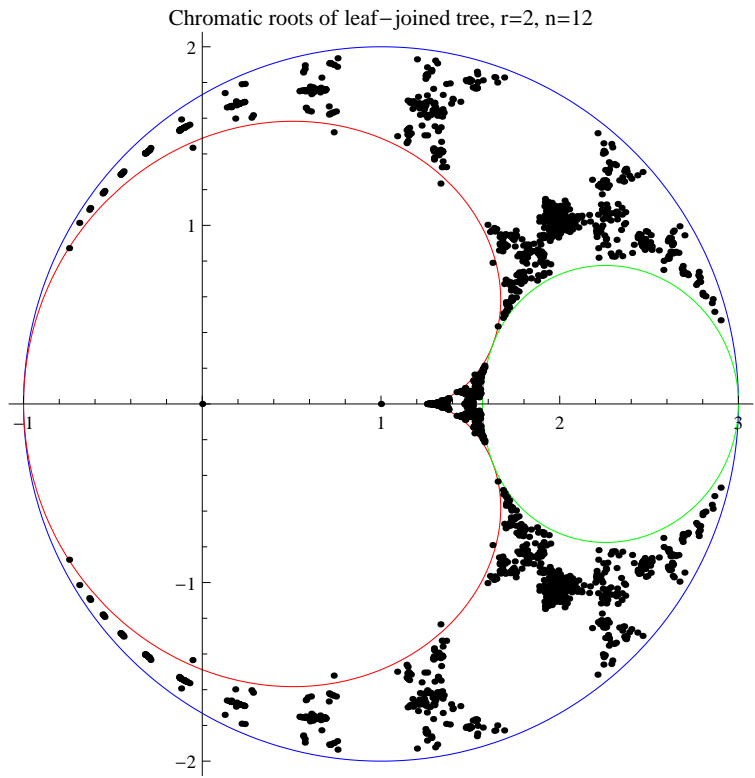}
\end{center}
\caption{
   Chromatic roots of the leaf-joined tree $G_n^r$ with $r=2$ and $n=12$.
   The blue circle represents the locus $|q-1|=2$
   where the fixed point at $y=1$ becomes marginal.
   The red cardioid represents the locus \reff{eq.Rq.fixed.q}
   with $|\lambda|=1$,
   where the fixed point \reff{eq.Rq.fixed.y} becomes marginal.
   The green egg-shaped curve represents the $q$-plane locus corresponding
   to \reff{eq.Rw.period2.w} with $|\lambda|=1$,
   where the period-2 orbit becomes marginal.
}
\label{fig.treeroots}
\end{figure}

In Figure~\ref{fig.treeroots} we plot the chromatic roots
of the graph $G_n^r$ with $r=2$ and $n=12$.\footnote{
   We first used {\sc Mathematica} to compute the polynomials $P_n(q)$,
   with exact integer coefficients,
   using the recursion
   \reff{eq.recursion.AnBn}/\reff{eq.recursion.AnBn.initial}
   with $v_\sharp = -1$.
   We then used the program {\sc MPSolve} \cite{Bini_MPSolve,Bini_00}
   to compute the zeros of $P_n$ to 30-digit accuracy.
   We were able to do this for $n \le 12$.
   The computation of the polynomials is extremely quick ---
   about two minutes for $n = 12$,
   on an Intel Core i7-2600 CPU processor running at 3.4~GHz ---
   and could easily have been pushed to larger $n$.
   The computation of the zeros is, however, much slower:
   approximately 0.8~hour for $n=10$, 3~hours for $n=11$,
   and 67~hours for $n=12$.
   This computation could be speeded significantly by coding
   the recursion \reff{eq.recursion.AnBn}/\reff{eq.recursion.AnBn.initial}
   directly as a user-defined C~program as explained
   in \cite[Section~6]{Bini_MPSolve};
   but we did not attempt to do this.
}
The blue circle represents the locus $|q-1|=2$
where the fixed point at $y=1$ becomes marginal.
The red cardioid represents the locus \reff{eq.Rq.fixed.q} with $|\lambda|=1$,
where the fixed point \reff{eq.Rq.fixed.y} becomes marginal;
the cusp of this cardioid lies at $q = 5/4$.
The green egg-shaped curve represents the $q$-plane locus corresponding
to \reff{eq.Rw.period2.w} with $|\lambda|=1$,
where the period-2 orbit becomes marginal.
The convergence of the chromatic roots to the circle $|q-1|=2$,
as asserted in Theorem~\ref{thm.leaf-joined.APP}, seems quite slow
(perhaps like $1/n$).
We expect that by a similar argument one can prove convergence
of chromatic roots to the red and green curves,
but again this convergence seems quite slow.

We see from Figure~\ref{fig.treeroots} that
all the chromatic roots lie in the region $|q-1| < 2$;
and we have confirmed this for $n \le 12$.
Let us formulate this as an explicit conjecture for general $r$:

\begin{conjecture}
  \label{conj.leaf-joined}
For every $r \ge 2$ and $n \ge 1$,
all the chromatic roots of the graph $G_n^r$
(the leaf-joined tree of branching factor $r$ and height $n$)
lie in the disc $|q-1| < r$.
\end{conjecture}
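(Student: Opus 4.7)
By Lemma~\ref{lemma.iteration} with $v_\sharp = -1$, a point $q \in \C \setminus \{0,1\}$ is a chromatic root of $G_n^r$ if and only if $R_q^n(\infty) = 1-q$, where $R_q(y) = ((q-1)/(q-2+y))^r$. Since $q=0$ and $q=1$ already lie in the disc $|q-1| < r$, the conjecture reduces to the purely dynamical statement: for every $q$ with $|q-1| \ge r$ (and $q \neq 0,1$), the forward orbit $\{R_q^n(\infty)\}_{n \ge 0}$ omits the point $1-q$. My plan is to exhibit, for each such $q$, an open forward-invariant set $\Omega_q \subset \Cbar$ (that is, $R_q(\Omega_q) \subseteq \Omega_q$) which contains the initial point $\infty$ but not $1-q$; the conjecture then follows by induction on $n$.

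When $|q-1| > r$, Lemma~\ref{lemma_B.2} makes $y=1$ an attracting fixed point; by Lemma~\ref{lemma_B.1} and Corollary~\ref{cor_B.3}, $R_q$ has a single free critical orbit which converges to $y=1$, so $R_q$ is hyperbolic and its Fatou set coincides with the basin $B_q$ of $y=1$. The naive choice $\Omega_q = B_q$ works whenever $1-q \notin B_q$; for example, at $q=-1$, $r=2$ one checks that $R_{-1}(1-q) = R_{-1}(2) = 4$ is a repelling fixed point, so $1-q = 2$ is in the Julia set. However, for $q$ of large modulus (e.g.\ $q=100$, $r=2$), the orbit of $1-q = -99$ also converges to $y=1$, so $1-q \in B_q$ and the basin is too coarse a region. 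The refinement I would pursue is to build $\Omega_q$ from the Koenigs linearizing coordinate $\phi_q$ at $y=1$: in these coordinates $R_q$ acts as multiplication by $\lambda = -r/(q-1)$, the orbit of $\infty$ is the geometric sequence $\{\lambda^n \phi_q(\infty)\}_{n \ge 0}$, and the chromatic-root equation reduces to the single scalar condition $\phi_q(1-q) = \lambda^n \phi_q(\infty)$. The conjecture would then follow from the strict Koenigs-coordinate inequality $|\phi_q(1-q)| > |\phi_q(\infty)|$ valid for all $|q-1| \ge r$.

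The boundary $|q-1| = r$ is the most difficult case, since there $|\lambda| = 1$ and the fixed point is only neutral. My plan here is to extend the construction from $|q-1| > r$ by continuity using Proposition~\ref{prop.attractingcase}(a), combined with Leau--Fatou theory when $\lambda$ is a root of unity and Siegel/Cremer theory otherwise. The main obstacle in both the hyperbolic and neutral regimes is geometric: because Theorem~\ref{thm.leaf-joined.APP} shows that chromatic roots actually accumulate on the entire circle $|q-1| = r$, any forward-invariant $\Omega_q$ must abut the target $1-q$ very tightly. Concrete attempts with round discs in any of the variables $y$, $t$, $\eta = (y-1)/(q-1)$, or the Koenigs coordinate all fail to be forward-invariant precisely on this critical circle --- the invariance inequality degenerating to an equality in the limit. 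This strongly suggests that the correct $\Omega_q$ must have a non-circular shape reflecting the actual geometry of the Julia set of $R_q$. Establishing the Koenigs-coordinate inequality above, or equivalently identifying such a region in closed form, is in my view exactly where the substantive content of the proof lies.
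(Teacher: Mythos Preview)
The paper does not prove this statement: it is stated as a \emph{conjecture}, supported only by numerical evidence for $r=2$ and $n \le 12$ (and remarked to be open). So there is no ``paper's own proof'' to compare against, and you should not expect your outline to close into a complete argument --- the authors themselves could not.

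That said, your reduction is correct and matches the paper's framing exactly: via Lemma~\ref{lemma.iteration} the conjecture is equivalent to showing that $R_q^n(\infty) \neq 1-q$ for all $n$ whenever $|q-1| \ge r$, and the paper explicitly rephrases the conjecture as ``the region $|q-1| \ge r$ where the fixed point at $y=1$ is attractive or marginal is free of chromatic roots''. Your observation that the immediate basin $B_q$ is too coarse (since $1-q$ can lie in it) is also right, and your Koenigs-coordinate reformulation is a sensible refinement: since $\phi_q \circ R_q = \lambda \phi_q$ on the basin and $|\lambda| < 1$, the inequality $|\phi_q(1-q)| > |\phi_q(\infty)|$ would indeed forbid $\lambda^n \phi_q(\infty) = \phi_q(1-q)$ for any $n \ge 0$. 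One small caution: the extended Koenigs map on the full basin is not injective, so $\phi_q(1-q) = \lambda^n \phi_q(\infty)$ is only a \emph{necessary} condition for $R_q^n(\infty) = 1-q$, not an equivalent one; but that is the direction you need, so the logic is fine.

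Where your proposal genuinely stops is exactly where you say it does: establishing the Koenigs inequality uniformly in $q$ on $|q-1| \ge r$, and handling the neutral boundary $|q-1| = r$ (where Koenigs linearization fails and one would need Leau--Fatou or Siegel/Cremer machinery, depending on the rotation number). You are right that Theorem~\ref{thm.leaf-joined.APP} forces any invariant region to pinch against $1-q$ as $q$ approaches the circle, which is precisely why this is hard. This is an honest identification of the obstruction, not a proof; the conjecture remains open.
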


\noindent
This conjecture can be rephrased as saying that the region $|q-1| \ge r$
where the fixed point at $y=1$ is attractive or marginal is
free of chromatic roots.

For $r=2$ it also appears that no chromatic roots lie on or inside
the green egg-shaped curve, i.e.\ in the region where the period-2 orbit
is attractive or marginal.
We have confirmed this also for $n \le 12$.

%
%

\section*{Acknowledgments}


We are extremely grateful to
the Isaac Newton Institute for Mathematical Sciences,
University of Cambridge, for support during the programme on
Combinatorics and Statistical Mechanics (January--June 2008),
where this work was mostly carried out.
We also thank the Laboratoire de Physique Th\'eorique
at the \'Ecole Normale Sup\'erieure for hospitality in April--June 2011
and April--July 2013.

This research was supported in part by
U.S.\ National Science Foundation grant PHY--0424082,
and by Australian Research Council Discovery Project DP110101596.



\end{document}